\thanks{}
\theoremstyle{plain}
\newtheorem{Thm}{Theorem}[section]
\theoremstyle{definition}
\theoremstyle{plain}
\newtheorem{thm}[Thm]{Theorem}
\newtheorem{lem}[Thm]{Lemma}
\newtheorem{cor}[Thm]{Corollary}
\newtheorem{prop}[Thm]{Proposition}
\theoremstyle{definition}
\newtheorem{defn}[Thm]{Definition}
\newtheorem{eg}[Thm]{Example}
\newtheorem{rmk}[Thm]{Remark}
\newenvironment{customthm}[1]
{\innercustomthm}
{\endinnercustomthm}
\newenvironment{customcor}[1]
{\innercustomcor}
{\endinnercustomcor}
\newenvironment{customrmk}[1]
{\innercustomrmk}
{\endinnercustomrmk}
\newcommand{\B}{B}
\newcommand{\A}{A}
\newcommand{\J}{J}
\newcommand{\K}{\mathcal{K}}
\newcommand{\D}{D}
\newcommand{\Ch}{D}
\newcommand{\Zh}{\mathcal{Z}}
\newcommand{\E}{E}
\newcommand{\Oh}{\mathcal{O}}
\newcommand{\T}{{\mathbb T}}
\newcommand{\N}{{\mathbb N}}
\newcommand{\Z}{{\mathbb Z}}
\newcommand{\C}{{\mathbb C}}
\newcommand{\Q}{{\mathbb Q}}
\newcommand{\F}{{\mathbb F}}
\newcommand{\aut}{\mathrm{Aut}}
\newcommand{\supp}{\mathrm{supp}}
\newcommand{\out}{\mathrm{Out}}
\newcommand{\hnn}{\mathrm{HNN}}
\newcommand{\eps}{\varepsilon}
\numberwithin{equation}{section}
\newcommand{\id}{\mathrm{id}}
\newcommand{\halpha}{\widehat{\alpha}}
\newcommand{\calpha}{\widehat{\alpha}}
\newcommand{\tih}{\widetilde {h}}
\newcommand\set[1]{\left\{#1\right\}}  
\newcommand\mset[1]{\left\{\!\!\left\{#1\right\}\!\!\right\}}
\newcommand{\IG}[0]{\mathbb{G}}
\newcommand{\CA}[0]{\mathcal{A}} \newcommand{\CB}[0]{\mathcal{B}}
\newcommand{\CC}[0]{\mathcal{C}} \newcommand{\CD}[0]{\mathcal{D}}
\newcommand{\CG}[0]{\mathcal{G}} \newcommand{\CH}[0]{\mathcal{H}}
 \newcommand{\CN}[0]{\mathcal{N}}
 \newcommand{\CP}[0]{\mathcal{P}}
\newcommand{\CQ}[0]{\mathcal{Q}} 
 \newcommand{\CT}[0]{\mathcal{T}}
\newcommand{\CW}[0]{\mathcal{W}}
\newcommand{\Ra}[0]{\Rightarrow}
\newcommand{\La}[0]{\Leftarrow}
\newcommand{\LRa}[0]{\Leftrightarrow}
\newcommand{\quer}[0]{\overline}
\newcommand{\eins}[0]{\mathbf{1}}			
\newcommand{\diag}[0]{\operatorname{diag}}
\newcommand{\ad}[0]{\operatorname{Ad}}
\newcommand{\ev}[0]{\operatorname{ev}}
\newcommand{\fin}[0]{{\subset\!\!\!\subset}}
\newcommand{\diam}[0]{\operatorname{diam}}
\newcommand{\Hom}[0]{\operatorname{Hom}}
\newcommand{\dst}[0]{\displaystyle}
\newcommand{\spp}[0]{\operatorname{supp}}
\newcommand{\lsc}[0]{\operatorname{Lsc}}
\newcommand{\del}[0]{\partial}
\newcommand{\fix}[0]{\operatorname{Fix}}
\newcommand{\GU}[0]{\CG^{(0)}}
\theoremstyle{definition}
\numberwithin{equation}{Thm}
\title[Boundary actions]{Boundary actions of Bass-Serre Trees and the applications to $C^*$-algebras}	
\begin{document}
\global\long\def\floorstar#1{\lfloor#1\rfloor}
\global\long\def\ceilstar#1{\lceil#1\rceil}	

\global\long\def\B{B}
\global\long\def\A{A}
\global\long\def\J{J}
\global\long\def\K{\mathcal{K}}
\global\long\def\D{D}
\global\long\def\Ch{D}
\global\long\def\Zh{\mathcal{Z}}
\global\long\def\E{E}
\global\long\def\Oh{\mathcal{O}}

\global\long\def\T{{\mathbb{T}}}
\global\long\def\BR{{\mathbb{R}}}
\global\long\def\N{{\mathbb{N}}}
\global\long\def\Z{{\mathbb{Z}}}
\global\long\def\C{{\mathbb{C}}}
\global\long\def\Q{{\mathbb{Q}}}

\global\long\def\aut{\mathrm{Aut}}
\global\long\def\supp{\mathrm{supp}}

\global\long\def\eps{\varepsilon}

\global\long\def\id{\mathrm{id}}

\global\long\def\halpha{\widehat{\alpha}}
\global\long\def\calpha{\widehat{\alpha}}

\global\long\def\tih{\widetilde{h}}

\global\long\def\opFol{\operatorname{F{\o}l}}

\global\long\def\opRange{\operatorname{Range}}

\global\long\def\opIso{\operatorname{Iso}}
\global\long\def\opisom{\operatorname{Isom}}
\global\long\def\dimnuc{\dim_{\operatorname{nuc}}}

\global\long\def\set#1{\left\{  #1\right\}  }


\global\long\def\mset#1{\left\{  \!\!\left\{  #1\right\}  \!\!\right\}  }

\global\long\def\Ra{\Rightarrow}
\global\long\def\La{\Leftarrow}
\global\long\def\LRa{\Leftrightarrow}

\global\long\def\quer{\overline{}}
\global\long\def\eins{\mathbf{1}}
\global\long\def\diag{\operatorname{diag}}
\global\long\def\ad{\operatorname{Ad}}
\global\long\def\ev{\operatorname{ev}}
\global\long\def\fin{{\subset\!\!\!\subset}}
\global\long\def\diam{\operatorname{diam}}
\global\long\def\Hom{\operatorname{Hom}}
\global\long\def\dst{{\displaystyle }}
\global\long\def\spp{\operatorname{supp}}
\global\long\def\spo{\operatorname{supp}_{o}}
\global\long\def\del{\partial}
\global\long\def\lsc{\operatorname{Lsc}}
\global\long\def\GU{\CG^{(0)}}
\global\long\def\HU{\CH^{(0)}}
\global\long\def\AU{\CA^{(0)}}
\global\long\def\BU{\CB^{(0)}}
\global\long\def\CUU{\CC^{(0)}}
\global\long\def\DU{\CD^{(0)}}
\global\long\def\QU{\CQ^{(0)}}
\global\long\def\TU{\CT^{(0)}}
\global\long\def\CUUU{\CC'{}^{(0)}}
\global\long\def\dom{\operatorname{dom}}
\global\long\def\ran{\operatorname{ran}}
\global\long\def\AUl{(\CA^{l})^{(0)}}
\global\long\def\BUl{(B^{l})^{(0)}}
\global\long\def\HUp{(\CH^{p})^{(0)}}
\global\long\def\sym{\operatorname{Sym}}
\global\long\def\stab{\operatorname{Stab}}
\newcommand{\cat}[0]{\operatorname{CAT}(0)}
\global\long\def\properlength{proper}
\global\long\def\deg{\operatorname{deg}}
\global\long\def\isom{\operatorname{Isom}}
\global\long\def\interior#1{#1^{\operatorname{o}}}
	\global\long\def\ln{\operatorname{ln}}

\author{Xin Ma}
	
	\address{X. Ma:  Institute for Advanced Study in Mathematics, Harbin Institute of Technology, Harbin, China, 150001}
  \email{xma17@hit.edu.cn}

\keywords{Bass-Serre theory, $C^*$-simplicity, Pure infiniteness}

\date{\today}

\author{Daxun Wang}
	\address{D. Wang: Yau Mathematical Sciences Center, Tsinghua University, Beijing, China}
\email{wangdaxun@mail.tsinghua.edu.cn}

\author{Wenyuan Yang}
\address{W. Yang: Beijing International Center for Mathematical Research, Peking University, Beijing 100871, China P.R.}
\email{wyang@math.pku.edu.cn}

\begin{abstract}
In this paper, we study Bass-Serre theory from the perspectives of $C^*$-algebras and topological dynamics. In particular, we investigate the actions of fundamental groups of graphs of groups on their Bass-Serre trees and the associated boundaries, through which we identify new families of $C^*$-simple groups including certain tubular groups, fundamental groups of certain graphs of groups with one vertex group acylindrically hyperbolic and outer automorphism groups $\operatorname{Out}(BS(p, q))$ of Baumslag-Solitar groups. In addition, we study $n$-dimensional Generalized Baumslag-Solitar ($\text{GBS}_n$) groups. We first recover a result by Minasyan and Valiunas on the characterization of $C^*$-simplicity for $\text{GBS}_1$ groups and identify new $C^*$-simple $\text{GBS}_n$ groups including the Leary-Minasyan group. These $C^*$-simple groups also provide new examples of $C^*$-selfless groups and highly transitive groups. Moreover, we demonstrate that natural boundary actions of these $C^*$-simple fundamental groups of graphs of groups give rise to the new purely infinite crossed product $C^*$-algebras.
\end{abstract}
\maketitle

\tableofcontents

\section{Introduction}
In recent years, there has been an increasing acknowledgment of the profound interplay between the fields of group theory, topological dynamics, and $C^*$-algebras. Groups and topological dynamical systems have emerged as valuable sources of examples and motivations for exploring $C^*$-algebras, particularly through the construction of group $C^*$-algebras and crossed product $C^*$-algebras. 

A countable discrete group $G$ is said to be $C^*$-\textit{simple} if its reduced $C^*$-algebra is simple. Numerous groups are already recognized as $C^*$-simple, including all Powers groups (\cite{Powers}) and we refer to the standard reference \cite{Harpe} for further background on this topic and refer to e.g. \cite{Su} and \cite{Raum} for constructions of non-discrete $C^*$-simple groups.  It was shown in \cite[Theorem 2.35]{D-G-O} that all acylindrically hyperbolic groups introduced by Osin in \cite{Osin} are $C^*$-simple, if and only if there are no non-trivial finite normal subgroups. Moreover, a new proof for this result is provided in \cite[Theorem 0.2]{A-Da} by verifying such acylindrically hyperbolic groups
have Property $P_{naive}$. Many groups admitting certain actions on trees are acylindrically hyperbolic by \cite[Theorem 2.1]{M-O}. Most recently, Minasyan and Valiunas have obtained in \cite{M-V} characterizations of $C^*$-simplicity of (finitely generated) one-relator groups and \textit{Generalized Baumslag-Solitar} (GBS) groups.

Kalantar and Kennedy provided a dynamical characterization of $C^*$-simplicity in \cite{K-K}, demonstrating that $G$ is $C^*$-simple if and only if the action of $G$ on its \textit{Furstenberg boundary} is topologically free. See also \cite{B-K-K-O}. Here, the Furstenberg boundary is a universal object within the category of all $G$-\textit{boundary} actions, as also introduced by Furstenberg. In applications, Furstenberg boundaries are often described in very abstract terms. By contrast, in the realm of geometric group theory, various concrete geometric boundaries associated with a group $G$ have been shown to be $G$-boundaries; for instance, the Gromov boundary for non-elementary hyperbolic groups, the end boundary of infinitely ended groups, and Floyd boundary of relatively hyperbolic groups. Consequently, the topological freeness on these boundaries can be transferred to the Furstenberg boundary due to its universal nature. Thus, it becomes highly demanding  to explore the topological freeness of the action on  geometric boundaries to detect the $C^*$-simplicity of groups. 

Furthermore, topological actions on geometric boundaries often exhibit paradoxicality in many aspects. This paradoxical nature usually leads to the corresponding crossed product $C^*$-algebra being \textit{purely infinite}, which is a significant regularity property in the structure theory of $C^*$-algebras (see \cite{K-Rord} and \cite{Kir-Rord}). Moreover, it plays a crucial role in the celebrated classification theorem established by Kirchberg and Phillips (see, e.g., \cite{Phillips} and \cite{Kir}). To be more precise, Anantharaman-Delaroche in \cite{A-D} as well as Laca and Spielberg in \cite{L-S}, independently introduced \textit{strong boundary} actions and \textit{local contractivity} for topological dynamical systems. Subsequently, in \cite{J-R}, Jolissaint and Robertson studied $n$-\textit{filling} actions, which is a generalization of strong boundary actions. These notions imply the crossed product of the action is purely infinite if the action is further assumed to be topologically free. Furthermore, the first author has generalized in \cite{M2} these results on pure infiniteness to minimal topological free systems with \textit{dynamical comparison} for actions of non-amenable groups. Then,  Gardella, Geffen, Kranz and Naryshkin have proven in \cite{G-G-K-N} that all minimal topologically free \textit{topologically amenable} actions of acylindrically hyperbolic groups have dynamical comparison and thus by \cite{M2}, the crossed products are nuclear, simple and purely infinite.

Motivated by this research,  we investigate in this paper the $C^*$-simplicity of groups that admit actions on trees and the pure infiniteness of the crossed product from the associated boundary actions.   In the paper \cite{M-W-Y}, we  study the case for groups acting on $\cat$ spaces. 

The theory of groups acting on trees is known as Bass-Serre theory, which has become an important tool in Geometric Group Theory. The fundamental theorem in Bass-Serre theory, recorded as Theorem \ref{thm fundamental theorem in Bass serre} in this paper, says that a group $G$ acting on a tree $T$ without inversions admits a decomposition as a graph of groups, denoted by $\IG=(\Gamma, \CG)$, where $\Gamma=T/G$ is the quotient graph with a family of subgroups $\CG$ consisting of appropriate stabilizers $G_v$ and $G_e$ for each (lifted) vertex $v$ and edge $e$ in $\Gamma$. Furthermore, $G$ as well as the tree $T$ can be recovered from these data $\IG=(\Gamma, \CG)$ as the so-called fundamental group $\pi_1(\IG, v)$ and  Bass-Serre tree $X_\IG$ of the graph of groups. We refer to \cite{Ba} and \cite{Ser} for these constructions, which are briefly recalled in Section \ref{sec: prelim} for their Definitions   \ref{defn: graph of groups} and \ref{defn: fundamental group}.

Amalgamated free products and HNN extensions provide basic examples of graphs of groups.   It also includes many other interesting groups such  as Baumslag-Solitar (BS) groups, and certain outer automorphism groups of BS groups in \cite{C} to be studied in this paper. 

An important large class of groups under our consideration is the family of  \textit{tubular} groups, named by Bridson. These are finitely generated groups that act on trees without inversions such that the vertex stabilizers are isomorphic to $\Z^2$ and the edge stabilizers are isomorphic to $\Z$. This class  includes some right-angled Artin groups, Wise's non-Hopfian $\cat$ group $W$ in \cite{Wise1}, simple curve group $G_d$ in \cite{Wise}, and Brady-Bridson group $\operatorname{BB}(p, r)$ in \cite{B-B} to study isoperimetric spectrum. The action of tubular groups on $\cat$ spaces have been studied in, e.g., \cite{Wise}, \cite{Button1}.  Quasi-isometric classification of tubular groups have been investigated in, e.g., \cite{Cashen}.  We refer to \cite{Wise},  \cite{Cashen} and \cite{Button1} for more information on this topic.

Another prevalent large class of groups investigated in this paper are $n$-\textit{dimensional Generalized Baumslag-Solitar} groups, abbreviated as $\text{GBS}_n$ groups for simplicity. These are fundamental groups of finite graph of groups whose vertex and edge groups are all isomorphic to $\Z^n$.  When $n=1$, $\text{GBS}_1$ groups are nothing but usual GBS groups. This class of groups have been studied in relation to JSJ decomposition of manifolds in \cite{Fo}. It has been shown in \cite{C-V} that all GBS groups possess the Haagerup property. 
For high dimension $\text{GBS}_n$ groups, a famous $\text{GBS}_2$ example, called \textit{Leary-Minasyan group} 
and denoted by $G_P$, was introduced in \cite{LM} serving as the first example disproving several conjectures for $\cat$ groups. We refer to \cite{L2}, \cite{M06}, \cite{C}, \cite{CR-K-Z}, \cite{W}, \cite{LM}, and \cite{Button2}  for more details on $\text{GBS}_n$ groups. 

The graph-of-groups decomposition has been proven useful in   the study of $C^*$-algebras. In the $C^*$-setting, de la Harpe and Pr\'{e}aux have investigated the actions of groups acting on trees in \cite{H-P} and provided useful criteria for the $C^*$-simplicity of amalgamated free products and HNN extensions. Subsequently, Ivanov and Omland in \cite{I-O}, as well as Bryder, Ivanov, and Omland in \cite{B-I-O}, established new criteria for $C^*$-simplicity for these two classes of groups. On the other hand, Brownlowe, Mundey, Pask, Spielberg, and Thomas introduced a graph $C^*$-algebraic method in \cite{B-M-P-S-T} to study the actions of fundamental groups on the boundaries of their Bass-Serre trees. In particular, they demonstrated pure infiniteness for a $C^*$-algebra $C^*(\IG)$ associated to a certain locally finite non-singular graph of groups $\IG$, which is stably isomorphic to the crossed product of boundary actions of the corresponding Bass-Serre tree. 

So far, in light of works \cite{H-P}, \cite{L-M}, \cite{I-O}, and \cite{B-I-O} on strong hyperbolicty and its relation to boundary actions in the sense of Furstenburg, the main difficulty in showing $C^*$-simplicity of a group acting on trees (and the pure infiniteness of the crossed product of its boundary action) is to establish the topological freeness of the boundary action. The cases on amalgamated free products, HNN extensions and (finitely generated) GBS groups have been studied in \cite{H-P},  \cite{I-O}, \cite{B-I-O}, and \cite{B-M-P-S-T}.

In this paper, we initiate a new approach in this direction combining the geometric method with a combinatorial interpretation of the boundary action of Bass-Serre trees in \cite{B-M-P-S-T}  based on \cite{Ba}. This allows us to establish several novel criteria determining the topological freeeness of the action of a fundamental group of a graph of groups on the boundary of its Bass-Serre trees (see Lemma \ref{lem: strongly faithful permanent}, Proposition \ref{prop: non locally finite topo free 1}, and Proposition \ref{prop: infinite GBS topo free}) and thus leads to new examples of $C^*$-simple groups and purely infinite crossed product $C^*$-algebras stemming from Bass-Serre theory.
The following are the applications of our key result, Theorem \ref{thm: boundary action},  
providing many new $C^*$-simple groups beyond the scope of acylindrically hyperbolic groups and their amenable actions in the literature mentioned above. Besides the Theorem \ref{thm: A} to \ref{thm: E}, we also remark that Theorem \ref{thm: boundary action} still has additional potential to detect $C^*$-simplicity and pure infiniteness of other classes of groups and their actions.

A graph of groups $\IG=(\Gamma, \CG)$ is said to be \textit{reduced} if there exists no \textit{collapsible} edges in the sense of Definition \ref{defn: reduced graph} in the graph $\Gamma$. Any graph of groups $\IG$ could be transformed to a reduced graph while the fundamental groups remain same by Remark \ref{rmk: reduced tree}. Therefore, in many cases in studying fundamental groups, it suffices to investigate reduced graphs of groups  (see Remark \ref{rmk: reduced advantage}). Applying Theorem \ref{thm: boundary action} mentioned above to the reduced graphs, we have the following stating that local $C^*$-simplicity determining the global $C^*$-simplicity of fundamental groups of graphs of groups as well as pure infiniteness of crossed product of boundary actions.

\begin{customthm}{A}[Theorem \ref{thm: reduced graph C star simple}]\label{thm: A}
     Let $\IG=(\Gamma, \CG)$ be a reduced graph of groups. Suppose
    \begin{enumerate}[label=(\roman*)]
        \item $\IG$ contains a non-degenerated edge $e$ (see Definition \ref{defn: non-degenerated}) with $o(e)\neq t(e)$ such that $G_{o(e)}$ and $G_{t(e)}$ are amenable and the group $G_{o(e)}*_{G_e}G_{t(e)}$ is $C^*$-simple; 
        \item or $\IG$ contains a non-ascending loop $e$ (see Definition \ref{defn:ascending}) with $o(e)=t(e)$ such that $G_{o(e)}$ is amenable and $G_{o(e)}*_{\alpha_e(G_e)}$ is $C^*$-simple. 
    \end{enumerate}
    Then $\pi_1(\IG, v)$ is $C^*$-simple and the crossed product $C(\overline{\partial_\infty X_\IG})\rtimes_r\pi_1(\IG, v)$ is a unital simple separable purely infinite $C^*$-algebra.
    \end{customthm}

A further application of Theorem \ref{thm: A} is to determine the $C^*$-simplicity of tubular groups. We have the following result.

\begin{customthm}{B}[Theorem \ref{thm: tubular}]\label{thm: B}
    Let $\IG=(\Gamma, \CG)$ be a tubular graph of groups. Suppose $\Gamma$ contains a loop $e$ with $o(e)=t(e)=v$. Denote by $(m_1, n_1)=\alpha_e(1)$ and $(m_2, n_2)=\alpha_{\bar{e}}(1)$. Suppose $|m_1|\neq |m_2|$ or $|n_1|\neq |n_2|$. Then the tubular group $\pi_1(\IG, v)$ is $C^*$-simple and the crossed product $A=C(\partial_\infty X_\IG)\rtimes_r \pi_1(\IG, v)$ is a unital Kirchberg algebra satisfying the UCT.
\end{customthm}

This particularly applies to a class $\CC_{t, 2}$ of tubular groups $G$ with the following presentation in which $(m_i, n_i)\neq (0, 0)$ and $(k_i, l_i)\neq (0, 0)$ for any $i=1,2$.
\[G=\langle a,b, x, y\ |\  [a, b]=1, x^{-1}a^{m_1}b^{n_1}x=a^{m_2}b^{n_2}, y^{-1}a^{k_1}b^{l_1}y=a^{k_2}b^{l_2}\rangle.  \] 
Note that this class contains the Wise's groups and the Brady-Bridson group above (see Example \ref{eg: tubular}) and so on. See more in \cite{Button1}.

\begin{customcor}{1}[Corollary \ref{cor: wise BB}]\label{cor: wiseBB 1}
    The group $G\in \CC_{t, 2}$ is $C^*$-simple if $(|m_1|, |n_1|)\neq (|m_2|, |n_2|)$ or  $(|k_1|, |l_1|)\neq (|k_2|, |l_2|)$. In particular,
the Wise's non-Hopfian $\cat$ group $W$, Brady-Bridson group $\operatorname{BB}(p, r)$ for $0<p<r$, and Wise's simple curve examples $G_d$ for $d\geq 2$ are $C^*$-simple.
\end{customcor}

Note that tubular groups are usually not one-relator groups by the presentation.
In addition, this class (even the subclass $\CC_{t,2}$) also includes many non-acylindrically hyperbolic groups (see Remark \ref{rmk: non ah tubular}). Therefore, our Theorem \ref{thm: B} and Corollary \ref{cor: wiseBB 1} have provided new examples of $C^*$-simple groups comparing to the results in \cite{D-G-O}, \cite{A-Da} and \cite{M-V}.
In addition, our Theorem \ref{thm: boundary action} can also be applied to another large class of reduced graphs which seems not covered by the combination of \cite[Theorem 2.1]{M-O} and \cite[Theorem 2.35]{D-G-O}.

\begin{customthm}{C}[Theorem \ref{thm: reduced graph c star simple 2}]\label{thm: C}
    Let $\IG=(\Gamma, \CG)$ be a reduced graph of groups.  Suppose there exists an edge $e\in E(\Gamma)$ satisfying that the vertex group $G_{o(e)}$ is an acylindrically hyperbolic group containing no non-trivial finite normal subgroup and the edge group $G_e\simeq \Z$. Then, setting $v=o(e)$, the group $\pi_1(\IG, v)$ is $C^*$-simple and the crossed product $C(\overline{\partial_\infty X_\IG})\rtimes_r \pi_1(\IG, v)$ is a unital simple separable purely infinite $C^*$-algebra.
\end{customthm}

The structure of outer automorphism groups of Baumslag-Solitar groups has been explored in \cite{Co}, \cite{G-H-M-R}, \cite{L}, and \cite{C}. When $p$ does not divide $q$ properly, $\out(BS(p, q))$ is amenable as shown in Remark \ref{rmk: amenable outer auto of BS}. Otherwise, $\out(BS(p, q))$ admits a non-singular (but not reduced) graph of groups decomposition, for which Theorem \ref{thm: boundary action} yields the following result. 

\begin{customthm}{D}[Theorem \ref{thm: out bs C simple}]\label{thm: D}
   $\out(BS(p, q))$ is $C^*$-simple if and only if $q=2p$ and $p>1$ 
\end{customthm}
Furthermore, the actions of $\out(BS(p, 2p))$ for $p > 1$ on the boundaries of the corresponding Bass-Serre trees produce unital Kirchberg algebras that satisfy the UCT and are thus classifiable by their K-theory. See more details in Theorem \ref{thm: out bs main}.
On the other hand, Combining \cite[Theorem 2.1]{M-O} with Lemma \ref{lem: minimal out bs}, Proposition \ref{prop: minimal on bdry} and Proposition \ref{prop: minimal 1}, the group $\out(BS(p, 2p))$ can be proven to be acylindrically hyperbolic and this leads to an alternative way to the $C^*$-simplicity of this group once the property $P_{naive}$, equivalently, \textit{infinite conjugacy class property} (ICC) for $\out(BS(p, 2p))$ is  verified directly. See Remark \ref{rmk: out bs acy hyperbolic}.

$\text{GBS}_n$ groups are not acylindrically hyperbolic (see Remark \ref{rmk: non ah gbs}). In addition, most of them are not one-relator groups. For this class, a consequence of Theorem \ref{thm: boundary action} is to recover in our framework a characterization of $C^*$-simplicity of (finitely generated) GBS groups first proven by Minasyan and Valiunas in \cite[Proposition 9.1]{M-V}. See Theorem \ref{thm: GBS C star simple}. Moreover, Theorem \ref{thm: boundary action} yields new infinitely generated $C^*$-simple GBS groups in Proposition \ref{prop: non locally finite GBS} as well as $\text{GBS}_n$ groups in Theorem \ref{thm: main GBSn} and Example \ref{eg: gbsn diagonal}. In particular, we have the following result that is applicable to the Leary-Minasyan group $G_P$ mentioned above. The definition of the matrices $A_e$ and $A_{\bar{e}}$ below can be found in Subsection \ref{subsec: gbsn}. 

\begin{customthm}{E}[Corollary \ref{cor: gbs2 including LM}]\label{thm: E}
    Let $\IG=(\Gamma, \CG)$ be a $\text{GBS}_2$ graph of groups containing a non-ascending loop $e$ such that   $M=A^{-1}_eA_{\bar{e}}$ is a unitary in $\operatorname{GL}(2, \Q)$ of the form
    \[\begin{bmatrix}
        \cos\theta & \sin\theta\\
        -\sin\theta & \cos\theta
    \end{bmatrix}
    \]
    in which $\theta$ is irrational (e.g., the loop $e$ as a subgraph yielding Leary-Minasyan group $G_p$ ).  Then $\pi_1(\IG, v)$ is $C^*$-simple. Morevoer, the crossed product $C(\overline{\partial_\infty X_\IG})\rtimes_r\pi_1(\IG, v)$ is a unital Kirchberg $C^*$-algebra satisfying the UCT.
    \end{customthm}

As a corollary, all $C^*$-simple groups mentioned above also have $2$-paradoxical towers in the sense of  \cite{G-G-K-N} because their actions on the boundary of the Bass-Serre trees are shown to be topologically free strong boundary actions, which are $2$-filling. Then by \cite[Theorem B]{G-G-K-N}, one has the following application to topological amenable actions.

\begin{customcor}{2}\label{cor: 2}
  Let $G$ be the $C^*$-simple groups mentioned above, i.e., 
  \begin{itemize}
      \item 
      certain fundamental groups of reduced graphs of groups in Theorem \ref{thm: boundary action}, \ref{thm: A}, and \ref{thm: C} 
      \item 
      certain tubular groups in Theorem \ref{thm: B}, 
      \item 
      outer automorphism groups of BS groups $\out(BS(p, 2p))$ for $p>1$ in Theorem \ref{thm: D}, and
      \item 
      all (finitely generated) GBS group that are non-elementary, non virtually $\F_n\times \Z$ and not isomorphic to $BS(1, n)$, Octopus GBS group in Proposition \ref{prop: non locally finite GBS}, $\text{GBS}_n$ groups in in Theorem \ref{thm: main GBSn} as well as Example \ref{eg: gbsn diagonal}, $\text{GBS}_2$ groups including Leary-Minasyan group in Theorem \ref{thm: E}.
  \end{itemize}Let $H$ be another countable discrete group. Suppose $G\times H\curvearrowright Z$ is a topological amenable minimal topologically free action on a compact metric space $Z$. Then its reduced crossed product is a unital Kirchberg algebra satisfying the UCT and thus classifiable by its Elliott invariant.
\end{customcor}

On the other hand, we remark that Theorem \ref{thm: boundary action}, Theorems \ref{thm: A} and  \ref{thm: C} cannot be deduced from Corollary \ref{cor: 2} because the action there may not be topologically amenable. Therefore, these crossed products could be non-nuclear.  

Finally, to the best knowledge of authors, these groups also provide new examples of \textit{highly transitive} groups. We refer to , e.g., \cite{H-O} and \cite{F-L-M-S} for the definition. Combining \cite[Theorem A]{F-L-M-S} with the results proven in this paper, it is straightforward to obtain the following result for all $C^*$-simple groups above, as we have shown that these groups admit minimal strongly hyperbolic actions on their Bass-Serre trees with a topologically free boundary action (the condition of strong hyperbolicity is called ``of general type'' in \cite{F-L-M-S}). 

\begin{customcor}{3}
     Let $G$ be the $C^*$-simple groups mentioned above, i.e., 
  \begin{itemize}
      \item 
      certain fundamental groups of reduced graphs of groups in Theorem \ref{thm: boundary action}, \ref{thm: A}, and \ref{thm: C} 
      \item 
      certain tubular groups in Theorem \ref{thm: B}, 
      \item 
      outer automorphism groups of BS groups $\out(BS(p, 2p))$ for $p>1$ in Theorem \ref{thm: D}, and
      \item 
      all (finitely generated) GBS group that are non-elementary, non virtually $\F_n\times \Z$ and not isomorphic to $BS(1, n)$, Octopus GBS group in Proposition \ref{prop: non locally finite GBS}, $\text{GBS}_n$ groups in in Theorem \ref{thm: main GBSn} as well as Example \ref{eg: gbsn diagonal}, $\text{GBS}_2$ groups including Leary-Minasyan group in Theorem \ref{thm: E}.
  \end{itemize}
    Then $G$ is highly transitive.
      \end{customcor}

\begin{customrmk}{F}
It was recently established in~\cite{O} that if a countable discrete group $G$ admits a topologically free strong boundary action, then $G$ is $C^*$-selfless; that is, its reduced group $C^*$-algebra $C_r^*(G)$ is selfless in the sense of~\cite{Ro}. Consequently, every group $G$ appearing in Corollary~\ref{cor: 2} is $C^*$-selfless, since, by Theorem~\ref{thm: boundary action}, their actions on the boundaries of the corresponding Bass--Serre trees have been shown to be topologically free strong boundary actions in this paper.
\end{customrmk}

\subsection*{Organization of the paper} In Section \ref{sec: prelim}, we recall the necessary background on Bass-Serre theory, topological dynamical systems, and $C^*$-algebras. In Section \ref{sec: main}, we primarily establish Theorem \ref{thm: boundary action} and its application to reduced graphs of groups to prove Theorem \ref{thm: A}. In Section \ref{sec: reduced graph}, we apply Theorem \ref{thm: boundary action} to prove Theorems \ref{thm: B} and \ref{thm: C}. In Section \ref{sec: Out(BS)}, we examine outer automorphism groups of BS groups and prove Theorem \ref{thm: D}. Finally, in Section \ref{sec: GBS}, we investigate the class of $\text{GBS}_n$ groups.

\section{Preliminaries}\label{sec: prelim}
In this section, we record several necessary backgrounds on Bass-Serre theory, topological dynamical systems, and $C^*$-algebras.

\subsection{Bass-Serre Theory}
The theory of graphs of groups, also known as Bass-Serre theory is a useful tool in geometric group theory and neighborhood areas. We first recall elements in this theory following the notations and terminologies in \cite{Ser} and \cite{Ba}.

\begin{defn}\label{defn: graph}
A (oriented) \textit{graph} $\Gamma$ consists of the \textit{vertex set} $V(\Gamma)$ and \textit{(oriented) edge set} $E(\Gamma)$, equipped with  two maps
$$E(\Gamma)\rightarrow V(\Gamma)\times V(\Gamma),\ \ \ \ e\mapsto (o(e),t(e))$$
and
$$E(\Gamma)\rightarrow E(\Gamma), \ \ \ \  e\mapsto \overline{e}$$
subject to the following condition: for each $e\in E(\Gamma)$ we have $\overline{\overline{e}}=e$, $\overline{e}\neq e$ and $o(e)=t(\overline{e})$.
An edge $e\in E(\Gamma)$ is refereed to as an \textit{oriented edge} of $\Gamma$, and $\overline{e}$ denotes the \textit{inverse} edge of $e$. The vertex $o(e)=t(\overline{e})$ is called the \textit{origin} of $e$, and the vertex $t(e)=o(\overline{e})$ is called the \textit{terminus} of $e$. The number of \textit{oriented} edges with origin at a vertex $v$ is called the \textit{valence} of  $v$.
\end{defn}

A \textit{path} $c$ in $\Gamma$ is either a vertex or a sequence $(e_1, \dots, e_n)$ of edges in $E(\Gamma)$ such that $t(e_i)=o(e_{i+1})$ for any $1\leq i\leq n-1$. The length of $c$, denoted by $\ell(c)$, is defined to be $0$ in the former case and $n$ in the latter. Moreover, we define the origin of $c$ to be $o(c)=o(e_1)$ and the terminus of $c$ to be $t(c)=t(e_n)$. 

\begin{defn}\label{defn: non singular and locally finite for graph}
    A graph (in particular a tree) $\Gamma$ is said to be \textit{locally finite} if the valence of any vertex is finite. It is called  \textit{non-singular} if there is no vertex in $\Gamma$ with valence one.
\end{defn}

\begin{defn}\label{defn: graph of groups}
    A \textit{graph of groups} $\IG=(\Gamma,\CG)$ consists of a connected graph $\Gamma$, and a family of groups $\CG=\{G_v, G_e: v\in V(\Gamma), e\in E(\Gamma)\}$ in which $G_v$ is referred as a \textit{vertex group} for each $v\in V(\Gamma)$ and $G_e$ is called an \textit{edge group}  satisfying $G_e=G_{\overline{e}}$    for each $e\in E(\Gamma)$, together with a monomorphism $\alpha_e: G_e\hookrightarrow G_{o(e)}$ for each $e\in E(\Gamma)$.    
    \end{defn}

For definiteness, we denote by $1_v$ and $1_e$ for the neutral elements of the vertex group $G_v$ and edge group $G_e$, respectively. Moreover, in this paper, we only consider countable graphs of groups, i.e., $\Gamma$ is countable, and all $G_v$ and $G_e$ are countable discrete. The following two examples are ``building blocks'' for general graph of groups.

\begin{eg}\label{example:edge of groups} One-edge graph of groups is pictured as follows. 
\begin{center}
\begin{tikzpicture}
      \draw (0,2) -- (2.5,2) node[midway, sloped, above] {};
      \draw[->, thick] (1.27,2) -- (1.29,2);

      \node[label={below, yshift=-0.3cm:}] at (0,2.8) {};
      \node[label={below, yshift=-0.3cm:}] at (1.25,2.4) {$G_e$};
      \node[label={below, yshift=-0.3cm:}] at (0,1.6) {$G_{o(e)}$};
      \node[label={below, yshift=-0.3cm:}] at (2.5,1.6) {$G_{t(e)}$};
      
      \tikzset{enclosed/.style={draw, circle, inner sep=0pt, minimum size=.1cm, fill=black}}
      
      \node[enclosed, label={right, yshift=.2cm:}] at (0,2) {};
      \node[enclosed, label={right, yshift=.2cm:}] at (2.5,2) {};
\end{tikzpicture}
\end{center}

The monomorphisms are $\alpha_e: G_e\hookrightarrow G_{o(e)} $ and $\alpha_{\overline{e}}: G_e\hookrightarrow G_{t(e)}$.
\end{eg}

\begin{eg}\label{example:loop of groups}
One-loop graph of groups is pictured as follows.

\begin{center}
\begin{tikzpicture}
      \node[label={below, yshift=-0.3cm:}] at (8.5,3.2) {};
      \node[label={below, yshift=-0.3cm:}] at (8.5,2) {$G_f$};
      \node[label={below, yshift=-0.3cm:}] at (5.5,2) {$G_v$};

      \tikzset{enclosed/.style={draw, circle, inner sep=0pt, minimum size=.1cm, fill=black}}

      \node[enclosed, label={right, yshift=.2cm:}] at (6,2) {};

      \draw(6, 2) arc(-180:180:1);
      \draw[->, thick] (8,1.98) -- (8,2.02);
\end{tikzpicture}
\end{center}
The monomorphisms are $\alpha_f: G_f\hookrightarrow G_v $ and $\alpha_{\overline{f}}: G_f\hookrightarrow G_v$ where $v=o(f)=t(f)$.
\end{eg}

\begin{defn}\label{defn: locally finite graph of groups}
A graph of groups $\IG=(\Gamma,\CG)$ is said to be \textit{locally finite} if 
\begin{enumerate}
	\item every vertex in $\Gamma$ has finite valence; and
	\item $[G_{o(e)}: \alpha_e(G_e)]<\infty$ for any $e\in E(\Gamma)$.
\end{enumerate} 
\end{defn}

\begin{defn}\label{defn: nonsingular}
A graph of groups $\IG=(\Gamma, \CG)$ is called \textit{non-singular} if for all $e\in E(\Gamma)$ such that $o(e)$ has valence 1, the monomorphism $\alpha_{e}$ is not surjective. In other words, if $e$ is the only edge adjacent to the vertex $o(e)$, then one must have $[G_{o(e)}:\alpha_e(G_e)]>1$.
\end{defn}

We remark that a graph of groups is locally finite (resp. non-singular) if and only if the associated Bass-Serre tree defined in Definition \ref{rmk: construct bass serre tree} below is locally finite (resp. non-singular) as defined earlier in Definition \ref{defn: non singular and locally finite for graph}. See Remark \ref{rmK: locally finite and non singular}.

\begin{defn}\cite[Paragraph 1.5]{Ba}\label{defn: path group}
Let $\IG=(\Gamma, \CG)$ be a graph of groups. Let $F(\IG)$ be a group generated by all vertex groups $G_v$  for $v\in V(\Gamma)$, and the edge set $E(\Gamma)$ of $\Gamma$, subject to the following relations: 
$$\overline{e}=e^{-1}\ \ \ \ \mbox{and}\ \ \ \ \bar{e}\alpha_{e}(g)e=\alpha_{\bar{e}}(g) \ \ \ \ \mbox{for any}\ e\in E(\Gamma),\ g\in G_{e}$$
Precisely, $F(\IG)$ is obtained from the free product  of $G_v$ and free group with basis consisting of edges $e\in E(\Gamma)$ as follows
$$(*_{v\in V(\Gamma)}G_v)* F(E(\Gamma))$$
quotient by the normal subgroup generated by $\{e\overline{e},e\alpha_{\bar{e}}(g)\bar{e}\alpha_{e}(g)^{-1}:e\in E(\Gamma), \ g\in G_e\}$, where $F(E(\Gamma))$ is the free group with the basis $E(\Gamma)$. We call $F(\IG)$ the \textit{path group} of $\IG$.
\end{defn}

We are particularly interested in elements in $F(\IG)$ that can be described using words in the following sense.

\begin{defn}(\cite[Definition 5.1.9]{Ser} and \cite[Paragraph 1.6]{Ba})
Let $c$ be a path in $\Gamma$ with length $n\ge 0$, i.e., $c$ is a vertex $v$ or $c=(e_1,\dots,e_n)$. 
A word $w$ in $(*_{v\in V(\Gamma)}G_v)* F(E(\Gamma))$ is said to be  \textit{of type $c$} if   
\begin{enumerate}[label=(\roman*)]
    \item 
    either  $w=g\in G_v$ where $c$ is a vertex $v$, 
    \item 
    or $w$ has the form
\[w=g_1e_1g_2e_2\dots g_ne_ng_{n+1}\]
where $g_i\in G_{o(e_i)}$ for $1\leq i\leq n$ and $g_{n+1}\in G_{t(e_n)}$.
\end{enumerate} We declare the \textit{length} of $w$ to be $\ell(w)=\ell(c)$.
Similarly, we define $o(w)=o(e_1)$ and $t(w)=t(e_n)$. We denote by
\[\CW_v=\{w: w\text{ is a word of type } c \text{ such that } o(w)=v\}\]
and write $[w]$ for the element in $F(\IG)$ represented by $w$.
\end{defn}

Denote by $\pi[v, x]$ the set of all elements $\gamma$ in $F(\IG)$ such that $\gamma=[w]$   for some word $w$ of type $c$ such that $o(c)=v$ and $t(c)=x$. It is direct to verify that the multiplication in $F(\IG)$ yields a subjective map $\pi[v, x]\times \pi[x, u]\to \pi[v, u]$.

\begin{defn}\label{defn: fundamental group}
Let $\IG=(\Gamma, \CG)$ be a graph of groups, and let $v$ be a base vertex of $\Gamma$. The \textit{fundamental group of $\IG=(\Gamma,\CG)$ at $v$}, denoted by $\pi_1(\IG, v)$, is the subgroup of $F(\IG)$ consists of elements $\gamma=[w]$ where $w$ is a word of type $c$ satisfying $o(c)=t(c)=v$. In other words, $\pi_1(\IG, v)=\pi[v, v]$.
\end{defn}

It was demonstrated in \cite[Proposition 5.1.20]{Ser}
that the fundamental group $\pi_1(\IG, v)$  does not depend on the choice of the base vertex $v$.  As examples,
Fundamental groups of  one-edge graph and one-loop graph of groups as described in  (\ref{example:edge of groups}) and (\ref{example:loop of groups}) give the well-known free product amalgamation and HNN extension we shall describe below.

\begin{eg}\label{example:free amalgamation}
In Example \ref{example:edge of groups}, setting $v=o(e)$,
the presentation of the fundamental group is given by
$$\pi_1(\IG, v)=\langle G_{o(e)}, G_{t(e)}\ |\  \alpha_{e}(g)=\alpha_{\overline{e}}(g)\ \mbox{for any}\ g\in G_e\rangle,$$
which is the free amalgamated product of the two vertex groups over the edge group, i.e. $\pi_1(\IG, v)= G_{o(e)}\ast_{G_e} G_{t(e)}$
\end{eg}

\begin{defn}\label{defn: ascending HNN}
    Let $H, K\leq G$ be subgroups of a group $G$ and $\theta: H\to K$ an isomorphism. We denote by $G*_H$ the HNN extension $\hnn(G, H, \theta)=\langle G, s| s^{-1}hs= \theta(h) \mbox{ for any } h\in H\rangle$. A HNN extension $\hnn(G, H, \theta)$ is said to be \textit{ascending} if $H$ or $\theta(H)$ equals $G$.
    \end{defn}
    
\begin{eg}\label{example:HNN extension}
In Example \ref{example:loop of groups}, 
the presentation of the fundamental group is given by
$$\pi_1(\IG, v)=\langle G_v, f\ |\  \overline{f}=f^{-1},\ \bar{f}\alpha_{f}(g)f=\alpha_{\bar{f}}(g)\ \mbox{for any}\ g\in G_{f}\rangle,$$
which is the HNN extension $G_v*_{\alpha_f(G_f)}$. In this case, the related isomorphism $\theta=\alpha_{\bar{f}}\circ \alpha^{-1}_{f}$.
\end{eg}


We now give a normalized form of words that could be used to identify elements in $F(\IG)$ and in $\pi_1(\IG, v)$. To that end, we need to fix a choice of \textit{left transversal} in $G_{o(e)}$ for each  $e\in E(\Gamma)$, denoted by $\Sigma_e\subset G_{o(e)}$, that is a section for $G_{o(e)}\to G_{o(e)}/\alpha_{e}(G_e)$ such that $1_{o(e)}\in \Sigma_e$. 
\begin{defn}\label{defn: normalized word}\cite[Paragraph 1.7 and 1.12]{Ba}
 Let $\IG=(\Gamma, \CG)$ be a graph of groups.  Let $c$ be a path in $\Gamma$. A word  $w$ of type $c$ in $(*_{v\in V(\Gamma)}G_v)* F(E(\Gamma))$
 is called \textit{reduced} if it satisfies the following conditions:
\begin{enumerate}
    \item if $c$ is a vertex $v$ (i.e. $w=g$), then $w\neq 1_v$;
    \item if $c=(e_1, \dots, e_n)$ (i.e. $w=g_1e_1g_2e_2\dots g_ne_ng_{n+1}$), then one has $g_{i+1}\notin \alpha_{e_{i+1}}(G_{e_{i+1}})$ for every index $1\leq i\leq n-1$ whenever $e_{i+1}=\overline{e}_i$.
\end{enumerate}
Further, we say $w$ is \textit{normalized} if $g_i\in \Sigma_{e_{i}}$ for every $1\leq i\leq n$ and $g_{n+1}\in G_{t(e_n)}$. 
A word $w$ is called  a \textit{path word} if either $w=1_v$ or $w$ is a nomalized word such that the length $\ell(w)>0$ and $g_{n+1}=1_{t(e_n)}$ In the latter case, we also write the path word $w$ in the form $w=g_1e_1\dots g_ne_n$.  
\end{defn}

\begin{defn}\label{defn: path map}
Let $v\in V(\Gamma)$. Denote by
\[\CN_v=\{w\in \CW_v: w\text{ is a normalized word}\}\]
and
\[\CP_v=\{w\in \CN_v: w\text{ is a path word}\}\cup \{1_v\}.\]
We define a map $P: \CN_v\to \CP_v$ as follows.
 Let $w=g_1e_1\dots g_ne_ng_{n+1}\in \CN_v$. Define $P(w)=g_1e_1\dots g_ne_n=wg^{-1}_{n+1}$, which is exactly the path word by canceling the final $g_{n+1}\in G_{t(c)}$  from $w$. 
\end{defn}

\begin{rmk}[normalization process]\label{rmk: transfer to reduced and normalized word}
    Let $w=g_1e_1\dots g_ne_ng$ be a word of type $c=(e_1, \dots, e_n)$. A subword of the form $e_ig_{i+1}e_{i+1}$ is said to be a \textit{reversal} in $w$ if $e_{i+1}=\bar{e}_i$ and $g_{i+1}=\alpha_{e_{i+1}}(h)$ for some $h\in G_{e_{i+1}}$  (\cite[Paragraph 1.7]{Ba}). By definition, $w$ is not reduced 
 if and only if $w$ contains a reversal. Indeed, if $w$ contains a reversal $e_ig_{i+1}e_{i+1}$, then  the word 
 \[w_1=g_1e_1\dots e_{i-1}(g_i\alpha_{\bar{e}_{i+1}}(h)g_{i+2})e_{i+2}\dots g_ne_ng\]
 represents the same element with $w$ in $F(\IG)$ and $\ell(w_1)=\ell(w)-2$. Proceeding this process by induction, one obtains a sequence $w=w_0, w_1, \dots, w_m$ such that there is no reversal in $w_m$, which is reduced. 

 Furthermore, if   $w=g_1e_1\dots g_ne_ng$ is a reduced word, one may convert it to a normalized word (with a fixed choice of left transversal). Write $g_1=s_1\cdot \alpha_{e_1}(h_1)$ for some $s_1\in \Sigma_{e_1}$ and $h_1\in G_{e_1}$ and one has $w=s_1e_1(\alpha_{\bar{e}_1}(h_1))g_2e_2\dots g_ne_ng$. Inductively, there exists $s_i\in \Sigma_{e_i}$ for $1\leq i\leq n$ and a $h\in G_{e_n}=G_{\bar{e}_n}$ such that 
\[w=s_1e_1\dots s_ne_n(\alpha_{\bar{e}_n}(h)g).\]
is a normalized word. This process thus leads to
a well-defined map $N: \CW_v\to \CN_v$ for each $v\in V(\Gamma)$ such that $N(w)$ is the normalized word obtained from the normalization process above.
\end{rmk}



\begin{rmk}\label{rmk: minimum length reduced words}

By \cite[Corollary 1.10]{Ba}, one may identify any $\gamma\in \pi_1(\IG, v)$ by its unique normalized representative $w$. 
In addition, according to Remark \ref{rmk: transfer to reduced and normalized word} and \cite[Corollary 1.10]{Ba},  a word $w$ is reduced if and only if $\ell(w)$ is the minimum among all $\ell(w')$ such that $w'$ representing the same element with $w$ in $F(\IG)$.  
\end{rmk}



We now are going to recall the so-called Fundamental Theorem of Bass-Serre theory. Roughly speaking,  it gives a duality between graphs of groups and group actions on trees. We first explain how to get a group action on the so-called Bass-Serre tree $X_\IG$ starting from  a graph of groups $\IG=(\Gamma, \CG)$. Let $G=\pi_1(\IG, v)$ for a base vertex $v$. 
The explicit description of the tree $X_\IG$ using path words from the base vertex $v$ are recorded in \cite[Theorem 1.17]{Ba} and also in \cite[Definition 2.13]{B-M-P-S-T}. 

\begin{defn}\cite[Definition 2.13]{B-M-P-S-T}\label{rmk: construct bass serre tree}
The \textit{Bass-Serre tree} $X_\IG$ for $\IG=(\Gamma, \CG)$ is defined as follows. The vertex set $V(X_\IG)$ is given by 
    \[V(X_\IG)=\bigsqcup_{u\in V(\Gamma)}\pi[v, u]/G_u=\{\gamma G_u: \gamma\in \pi[v, u], u\in V(\Gamma)\}.\]
    Let $\gamma_1=[w_1]\in \pi[v, u_1]$ and $\gamma_2=[w_2]\in \pi[v, u_2]$. Then there exists an edge $f\in E(X_\IG)$ between   $o(f)=\gamma_1G_{u_1}$ and $t(f)=\gamma_2G_{u_2}$ if and only if $N(w_1^{-1}w_2)\in G_{u_1}eG_{u_2}$, where $e\in E(\Gamma)$ with $o(e)=u_1$ and $t(e)=u_2$. 
    
 The action of $\pi_1(\IG, v)=\pi[v, v]$ on $X_\IG$ is given on vertices by $s\cdot \gamma G_u=s\gamma G_u$ for any $s\in \pi_1(\IG, v)$ and $\gamma G_u\in V(X_\IG)$. Then it is direct to see such actions extend to edge set $E(X_\IG)$.
\end{defn}

\begin{rmk}\label{rmK: locally finite and non singular}
By Definition \ref{rmk: construct bass serre tree}, it is easy to read off the graph structure of Bass-Serre tree from the graph of groups. 
\begin{enumerate}[label=(\roman*)]
    \item For a vertex $\gamma G_u$ in $X_\CG$, it is direct to verify its valence 
    \[\mathrm{val}(\gamma G_u)=\sum_{o(e)=t(\gamma)=u}[G_{o(e)}: \alpha_e(G_e)].\]
    Thus, a graph of groups $\IG$ is locally finite if and only if its Bass-Serre tree $X_\IG$ is locally finite in the sense of Definition \ref{defn: non singular and locally finite for graph}. 
    \item From the equation about the valence of vertices on $X_\IG$ above, it is also straightforward to see that $\IG$ is non-singular if and only if $X_\IG$ is non-singular in the sense of Definition \ref{defn: non singular and locally finite for graph}. 
    \item\label{path vertices} Let $\gamma G_u\in V(X_\IG)$ such that $\gamma=[w]$. Then $\gamma G_u$ can also be written as $[N(w)]G_u$ or $[P(N(w))]G_u$, where the maps $N$ and $P$ are defined in Remark \ref{rmk: transfer to reduced and normalized word} and Definition \ref{defn: path group}.
    \end{enumerate}
Moreover, it was shown in \cite[Paragraph 1.22]{Ba} that there exists an equivariant isomorphism between $\pi_1(\IG, v_1)\curvearrowright X_\IG$ and $\pi_1(\IG, v_2)\curvearrowright X_\IG$ for different base vertices $v_1\neq v_2\in V(\Gamma)$.    
\end{rmk}


On the other hand, there is a canonical way to build a graph of groups from an isometric action on trees without reversions. Such a process can be found in \cite[Section 5.4]{Ser} and the paragraph before \cite[Theorem 2.16]{B-M-P-S-T}. In conclusion, one has the following theorem stating the one-to-one correspondence between group acting on trees without inversions and the construction of graph of groups. See also \cite[Theorem 2.16]{B-M-P-S-T}.



\begin{thm}\cite[Section 5.4]{Ser}\label{thm fundamental theorem in Bass serre}
Let $G$ be a group acting on a tree $X$ without inversions. Then $G$ can be identified with the fundamental group of a certain graph of groups $\IG=(\Gamma, \CG)$, where $\Gamma$ is the quotient graph $X/G$. Moreover, there is an equivariant isomorphism of the Bass-Serre tree $X_\IG$ to $X$.
\end{thm}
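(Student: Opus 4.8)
The plan is to reverse the construction of Definition~\ref{rmk: construct bass serre tree}: from the action $G\curvearrowright X$ I would manufacture a graph of groups $\IG=(\Gamma,\CG)$ on $\Gamma=X/G$, build a homomorphism $\phi\colon F(\IG)\to G$, use it to produce an equivariant map $\psi\colon X_\IG\to X$, show $\psi$ is an isomorphism, and read off $\pi_1(\IG,v)\cong G$ as the restriction of $\phi$.

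First I would fix the combinatorial data. Since $\Gamma=X/G$ is connected, choose a spanning tree $T_0\subseteq\Gamma$ and lift it to a subtree $\widetilde T_0\subseteq X$ mapped isomorphically onto $T_0$ (a routine Zorn's lemma argument). For $u\in V(\Gamma)$ let $\tilde u$ be the vertex of $\widetilde T_0$ over $u$ and set $G_u:=\stab_G(\tilde u)$. For $e\in E(\Gamma)$ choose a lift $\tilde e\in E(X)$ with $o(\tilde e)=\widetilde{o(e)}$, taking $\tilde e\subseteq\widetilde T_0$ when $e\in T_0$, pick $\gamma_e\in G$ with $\gamma_e\widetilde{t(e)}=t(\tilde e)$ (and $\gamma_e=1$ for $e\in T_0$), and put $\gamma_{\bar e}:=\gamma_e^{-1}$, $\widetilde{\bar e}:=\gamma_e^{-1}\overline{\tilde e}$, $G_e:=\stab_G(\tilde e)$. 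The inclusion $\stab_G(\tilde e)\le\stab_G(\widetilde{o(e)})$ is the monomorphism $\alpha_e\colon G_e\hookrightarrow G_{o(e)}$, while $g\mapsto\gamma_e^{-1}g\gamma_e$ maps $G_e$ injectively into $\stab_G(\widetilde{t(e)})=G_{t(e)}$ and serves as $\alpha_{\bar e}$; one checks $G_{\bar e}=G_e$ and $\overline{\overline{\tilde e}}=\tilde e$ under these conventions, so $\IG=(\Gamma,\CG)$ is a genuine graph of groups.

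Next I would define $\phi\colon F(\IG)\to G$ to be the inclusion on each vertex group $G_v\hookrightarrow G$ and to send each edge $e$ to $\gamma_e$. The content is that the defining relations of $F(\IG)$ (Definition~\ref{defn: path group}) survive: $\phi(\bar e)=\gamma_e^{-1}=\phi(e)^{-1}$, and $\phi(\bar e\,\alpha_e(g)\,e)=\gamma_e^{-1}g\gamma_e=\alpha_{\bar e}(g)=\phi(\alpha_{\bar e}(g))$ for $g\in G_e$, since the relevant $\alpha$'s are honest inclusions and conjugations inside $G$. Transporting $G\curvearrowright X$ along $\phi$, set $\psi(\gamma G_u):=\phi(\gamma)\tilde u$ for a fixed base vertex $v$; this is well defined because $\phi(G_u)=\stab_G(\tilde u)$, it is $\phi$-equivariant, and the edge condition $w_1^{-1}w_2\in G_{u_1}eG_{u_2}$ of Definition~\ref{rmk: construct bass serre tree} unwinds precisely to adjacency of $\phi(\gamma_1)\tilde u_1$ and $\phi(\gamma_2)\tilde u_2$ along a $G$-translate of $\tilde e$. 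Surjectivity of $\psi$ uses connectedness of $X$: any vertex is $g\tilde u$, and tracing the geodesic from $\tilde v$ to it while recording the successive translates of chosen lifts realizes it as $\psi(\gamma G_u)$ for a path word $\gamma$; similarly for edges. Injectivity is where the tree hypothesis genuinely enters: if $\psi$ identifies two vertices, represent the relevant element of $F(\IG)$ by its reduced normal form $w$ (Remark~\ref{rmk: minimum length reduced words}); then $\phi([w])$ fixes $\tilde u$, but a reduced word of positive length traces a \emph{non-backtracking} edge-path in $X$, which in a tree cannot return to its origin, forcing $\ell(w)=0$. Hence $\psi$ is a $\phi$-equivariant graph isomorphism $X_\IG\to X$, and restricting $\phi$ to $\pi_1(\IG,v)=\pi[v,v]$ gives the desired isomorphism onto $G$ — injective by the same non-backtracking argument (a positive-length reduced loop word cannot map to $1_G$, as it would fix $\tilde v$) and surjective by the surjectivity argument applied to $g\tilde v$.

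I expect the one real obstacle to be this non-backtracking property of reduced words — equivalently the normal form theorem \cite[Corollary~1.10]{Ba} behind Remark~\ref{rmk: minimum length reduced words} — which is the only step that uses that $X$ is a tree rather than merely a connected graph. The remainder is bookkeeping concentrated in the first step: making the lifts $\tilde e$, the reverse-edge lifts $\widetilde{\bar e}$, and the transversal elements $\gamma_e$ mutually compatible so that $\phi$ preserves the relations of $F(\IG)$; fixing $\gamma_e=1$ on the spanning tree and defining $\widetilde{\bar e}=\gamma_e^{-1}\overline{\tilde e}$ keeps this under control.
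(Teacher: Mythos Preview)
The paper does not prove this theorem; it merely cites it from \cite[Section~5.4]{Ser} (and \cite[Theorem~2.16]{B-M-P-S-T}) as background. Your sketch is precisely the standard argument found in those references---lift a spanning tree, take stabilizers as vertex and edge groups, define $\phi$ on generators, and verify the induced map $X_\IG\to X$ is an equivariant isomorphism via the normal form theorem---so there is nothing to compare against, and your outline is correct.

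One caution worth flagging: you invoke Remark~\ref{rmk: minimum length reduced words} (the normal form theorem for $F(\IG)$) to deduce that a reduced word of positive length traces a non-backtracking path in $X$. Be sure you are not reasoning circularly here. The normal form theorem is proved for the \emph{abstract} path group $F(\IG)$ and its action on the \emph{abstract} tree $X_\IG$, independently of $X$; what you then need is the purely local check that if $e_{i+1}=\bar e_i$ and $g_{i+1}\notin\alpha_{e_{i+1}}(G_{e_{i+1}})$, then $\phi(g_{i+1})$ moves the edge $\gamma_{e_i}^{-1}\overline{\tilde e_i}$ off itself in $X$, so no backtracking occurs under $\psi$. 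This is immediate from $\alpha_{e_{i+1}}(G_{e_{i+1}})=\stab_G(\tilde e_{i+1})$, but it is the one place where the argument is not pure bookkeeping, and your write-up should make the dependency explicit.
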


A particular family of graph of groups of interests are \textit{reduced} graph of groups. Every graph of groups can be reduced to a reduced one without changing the fundamental group. Therefore, in many cases, it suffices to look at reduced graph of groups to investigate fundamental groups. See Remark \ref{rmk: reduced tree} below.  The following definition can be found in, e.g., \cite{M06}.
\begin{defn}\cite[Definition 1.2]{M06}\label{defn: reduced graph}
Let $\IG=(\Gamma, \CG)$ be a graph of groups. An edge $e\in E(\Gamma)$ is \textit{collapsible} if $o(e)\neq t(e)$ and $G_{t(e)}=\alpha_{\overline{e}}(G_e)$. If one collapses $\{e,\overline{e}\}$ to the vertex $o(e)$, the resulted graph of groups $\IG'=(\Gamma', \CG)$ is said to be obtained from $\IG=(\Gamma,G)$ by a \textit{collapse move}. The reverse of this move is called an \textit{expansion move}. A graph of groups is called \textit{reduced} if there is no collapsible edge.
\end{defn}

\begin{rmk}\label{rmk: reduced advantage}
From the above definition, we can see that a reduced graph of groups is always non-singular and satisfies $|\Sigma_{e}|\geq 2$ for any edge $e$ that is not a loop. But there exists non-singular graph of groups that is not reduced. See the following example in which the monomorohism $\alpha_e$ is surjective while $\alpha_{\bar{e}}$, $\alpha_{f}$ and $\alpha_{\bar{f}}$ are not surjective.
\end{rmk}

\begin{figure}[ht]
    \centering
\begin{tikzpicture}
      \draw (0,2) -- (4,2) node[midway, sloped, above] {};
      \draw[<-, thick] (0.97,2) -- (0.99,2);
      \draw[->, thick] (3.07,2) -- (3.09,2);

      \node[label={below, yshift=-0.3cm:}] at (0,2.8) {};
      \node[label={below, yshift=-0.3cm:}] at (2,2.4) {$v_2$};
      \node[label={below, yshift=-0.3cm:}] at (1,1.6) {$e$};
      \node[label={below, yshift=-0.3cm:}] at (0,2.4) {$v_1$};
      \node[label={below, yshift=-0.3cm:}] at (4,2.4) {$v_3$};
      \node[label={below, yshift=-0.3cm:}] at (3,1.6) {$f$};
      
      \tikzset{enclosed/.style={draw, circle, inner sep=0pt, minimum size=.1cm, fill=black}}
      
      \node[enclosed, label={right, yshift=.2cm:}] at (0,2) {};
      \node[enclosed, label={right, yshift=.2cm:}] at (2,2) {};
      \node[enclosed, label={right, yshift=.2cm:}] at (4,2) {};
\end{tikzpicture}
    \caption{A non-singular but not reduced graph of groups}
    \label{fig: non-singular not reduced}
\end{figure}
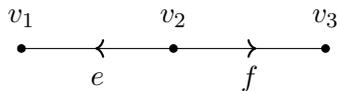

The following is a standard  well-known observation.

\begin{rmk}\label{rmk: reduced tree}
We note that if an edge $e$ (not a loop) has $G_{t(e)}=\alpha_{\bar{e}}(G_e)$, then $e$ contributes to a free amalgamated product $G_{o(e)}\ast_{G_e} G_{e}$ in the fundamental group of $\IG$. Note that $G_{o(e)}\ast_{G_e} G_{e}\cong G_{o(e)}$ holds if $e$ is collapsible. Thus collapsing such an edge in $\Gamma$ will not change the fundamental group. See Figure \ref{fig:collapse} below. In other words, every graph of groups $\IG$ can be collapsed into a reduced graph of groups $\IG'$ such that their fundamental groups satisfy  $\pi_1(\IG, v)=\pi_1(\IG', v)$.
\end{rmk}

\begin{figure}[ht]
    \centering
\begin{tikzpicture}
      \tikzset{enclosed/.style={draw, circle, inner sep=0pt, minimum size=.1cm, fill=black}}
      
      \node[enclosed, label={right, yshift=.2cm: \small $A$}] at (2.25,3.25) {};
      \node[enclosed, label={left, yshift=.2cm: \small$C$}] at (4.5,3.25) {};
      \node[label={below, yshift=-0.3cm:}] at (3.5,3) {\small$C$}; 
      \node[enclosed] at (11,3.25) {};
       \node[label={below, yshift=-0.3cm:}] at (11,3.6) {\small$A$};

      \draw[line width=0.5mm] (2.25,3.25) -- (4.5,3.25) node[midway, sloped, above] {};
      \draw (2.25,3.25) -- (1.75, 3.75) node[midway, right] {};
      \draw (2.25,3.25) -- (1.75, 2.75) node[midway, right] {};
      \draw (4.5,3.25) -- (5, 3.75) node[midway, right] {};
      \draw (4.5,3.25) -- (5, 2.75) node[midway, right] {};

      \draw[->, thick] (6.5, 3.3) -- (9, 3.3) node[midway, above] {\footnotesize collapse};
      \draw[<-, thick] (6.5, 3.1) -- (9, 3.1) node[midway, below] {\footnotesize expansion};
      
      \draw (11,3.25) -- (10.5, 3.75) node[midway, left] {};
      \draw (11,3.25) -- (10.5, 2.75) node[midway, left] {}; 
      \draw (11,3.25) -- (11.5, 3.75) node[midway, right]{};
      \draw (11,3.25) -- (11.5, 2.75) node[midway, right]{};
\end{tikzpicture}
    \caption{Example of collapse and expansion moves.}
    \label{fig:collapse}
\end{figure}
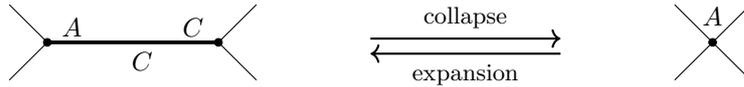

\subsection{Boundary actions} 
\label{SSecTDS}
In this subsection, we recall basic backgrounds on topological dynamics and boundary actions in the sense of Furstenburg.
Let $G$ be a countable discrete group and $Z$ a compact Hausdorff space. Let $\alpha: G\curvearrowright Z$ be an action of $G$ on $Z$ by homeomorphisms.  

We say an action $\alpha: G\curvearrowright Z$ is \textit{minimal} if all orbits are dense in $Z$. An action is said to be \textit{topologically free} provided that the set $\{z\in Z: \stab_G(z)=\{e\}\}$, is dense in $Z$. Equivalently, this amounts to saying that the fixed point set $\fix_Z(g)=\{z\in Z: gz=z\}$ is nowhere dense for any $g\in G\setminus\{1_G\}$. Since we will also discuss actions on non-compact spaces in this paper, we still adopt the definitions for minimality and topological freeness in the context that the underlying space $Z$ is not compact. 

A type of topological dynamical system of particular interest is $G$-boundary actions in the sense of Furstenburg. Now, denote by $P(Z)$ the set of all probability measures on $Z$. Furstenberg introduced the following definition in \cite{F}.
\begin{defn}\label{defn: boundary}
	\begin{enumerate}[label=(\roman*)]
		\item A $G$-action $\alpha$ on $Z$ is called \textit{strongly proximal} if for any probability measure $\eta\in P(Z)$, the closure of the orbit $\{g\eta: g\in G\}$ contains a Dirac mass $\delta_z$ for some $z\in Z$ 
		\item A $G$-action $\alpha$ on a compact Hausdorff space $Z$ is called a $G$-\textit{boundary action} if $\alpha$ is minimal and strongly proximal.
	\end{enumerate}
\end{defn}

The following definition appeared in \cite{L-S}.  See also \cite{Gla2}.

\begin{defn}\cite[Definition 1]{L-S}\label{defn: strong boundary}
We say an action $G\curvearrowright Z$ is a \textit{strong boundary action} (or \textit{extreme proximal}) if for any compact set $F\neq Z$ and non-empty open set $O$ there is a $g\in G$ such that $gF\subset O$.  
\end{defn}

\begin{rmk}\label{rmk: strong bdry is bdry}
Let $G\curvearrowright Z$ be a strong boundary action. Then in \cite{Gla1}, Glasner showed that $Z$ is a $G$-boundary in the sense of Definition \ref{defn: boundary} unless $Z$ contains exactly two points.
\end{rmk}

The following definition is widely used in the study of various boundaries by Proposition \ref{prop: north-south visual}. 

\begin{defn}\label{defn: ns dyna}
	Let $Z$ be a topological space and $g$ a homeomorphism of $Z$. We say $g$ has \textit{north-south dynamics} with respect to two fixed points $x, y\in Z$ if for any open neighborhoods $U$ of $x$ and $V$ of $y$, there is an $m\in \N$ such that $g^m(Z\setminus V)\subset U$ and $g^{-m}(Z\setminus U)\subset V$. The points $x,y$ shall be referred to as  \textit{attracting} and \textit{repelling} fixed points of $g$, respectively.
\end{defn}

A similar argument for the following proposition also appeared in, e.g., \cite{A-D} and \cite{L-S}. 
\begin{prop}\label{prop: north-south visual}
	Let $\alpha:G\curvearrowright Z$ be a continuous minimal action of a discrete group $G$ on an infinite compact Hausdorff space $Z$. Suppose there is a $g\in G$  performing the north-south dynamics. Then $\alpha$ is a strong boundary action and thus $\alpha$ is a $G$-boundary action by Remark \ref{rmk: strong bdry is bdry}.
\end{prop}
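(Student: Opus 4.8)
The plan is to establish the two defining properties of a strong boundary action separately: given any compact set $F \subsetneq Z$ and any nonempty open set $O \subseteq Z$, we must produce $g \in G$ with $gF \subset O$. The key is to exploit the hypothesized element $g_0 \in G$ with north-south dynamics, say with attracting point $x$ and repelling point $y$, together with minimality of $\alpha$. First I would observe that since $Z$ is infinite (in particular $x \neq y$), we may pick a small open neighborhood $U$ of $x$ and an open neighborhood $V$ of $y$ with $\overline{U} \cap \overline{V} = \emptyset$; shrinking $U$ if necessary, we may assume $\overline{U} \neq Z$.

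Next I would use the north-south dynamics to swallow $F$ into a small neighborhood of $x$. The subtle point is that $F$ need not be disjoint from $y$, so one cannot directly apply the definition of north-south dynamics to $F$ (the definition controls $g_0^m(Z \setminus V)$, and $F$ may meet $V$). To circumvent this, I would first move $F$ off of $y$ using minimality: since $F \neq Z$, there is a point $z_0 \notin F$, and by minimality the orbit of $z_0$ is dense, so I can find $h \in G$ such that $h^{-1} y \notin F$, i.e. $y \notin hF$; equivalently, after replacing $F$ by $h^{-1}F$ — wait, more carefully: choose $h \in G$ with $hz_0$ close enough to $y$ that... actually the cleanest route is: pick any open neighborhood $V'$ of $y$ with $\overline{V'} \subsetneq Z$; by minimality applied to a point outside $\overline{V'}$, there is $h \in G$ with $h(Z \setminus \overline{V'}) \ni$ some point, hmm. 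Let me instead argue directly: since $F$ is compact and $F \neq Z$, and $\alpha$ is minimal, the set $\{g \in G : gF \cap \overline{V} = \emptyset\}$ is nonempty — indeed, pick $p \notin F$; by minimality $\overline{Gp} = Z$, but I actually want the orbit of a point of $Z \setminus F$ to avoid $\overline V$, which follows because $\{z : Gz \text{ misses } \overline V\}$... this is getting circular. The correct standard move: by minimality there exists $h \in G$ with $h^{-1}(Z \setminus F)$ dense enough; concretely, since $Z \setminus F$ is open and nonempty, minimality gives $h \in G$ with $hy \in Z \setminus F$ and moreover (by continuity) an open neighborhood $W$ of $y$ with $hW \subseteq Z \setminus F$, i.e. $h^{-1}F \subseteq Z \setminus W$. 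Thus $h^{-1}F$ is a compact set disjoint from the open neighborhood $W$ of $y$.

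Now I would apply north-south dynamics: choosing neighborhoods $U$ of $x$ and $V \subseteq W$ of $y$ appropriately, there is $m \in \N$ with $g_0^m(Z \setminus V) \subset U$; since $h^{-1}F \subseteq Z \setminus W \subseteq Z \setminus V$, we get $g_0^m h^{-1} F \subset U$. At this point $U$ is an arbitrarily small neighborhood of the single point $x$. Finally, invoke minimality once more on the nonempty open set $O$: since $\overline{Gx} = Z$, there is $k \in G$ with $kx \in O$, and by continuity a neighborhood $U$ of $x$ (which we fix retroactively to be this small) with $kU \subseteq O$. Combining, $g := k g_0^m h^{-1}$ satisfies $gF \subseteq kU \subseteq O$, as required. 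This shows $\alpha$ is a strong boundary action, and then Remark \ref{rmk: strong bdry is bdry} (Glasner) gives that $\alpha$ is a $G$-boundary action, since $Z$ is infinite and hence has more than two points.

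The main obstacle — and the step requiring the most care — is the one where $F$ is moved off the repelling point $y$ before applying the north-south contraction: one must be careful that north-south dynamics only controls images of compacta that avoid a neighborhood of $y$, so the preliminary use of minimality to arrange $h^{-1}F \cap W = \emptyset$ (with the open set $W$ chosen first and $h$ produced by density of orbits, then shrunk via continuity) is essential and is where a naive argument would break down. The two subsequent uses of minimality — to move $x$ into $O$ and, implicitly, to guarantee $Z$ has a point outside any given proper compact set — are routine.
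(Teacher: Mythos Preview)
Your final argument is correct and follows essentially the same approach as the paper: use minimality to find $h$ moving $F$ off a neighborhood of the repelling point (the paper's $g_1$ with $g_1V\subset Z\setminus F$), apply the north-south dynamics of $g_0$ to contract into a small neighborhood $U$ of the attracting point, and use minimality again to find $k$ with $kU\subset O$ (the paper's $g_2$), yielding $kg_0^m h^{-1}F\subset O$. The paper presents this more symmetrically by fixing $U,V$ first and choosing $g_1,g_2$ simultaneously, but the logic is identical.
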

\begin{proof}
	Let $F$ be a proper compact set in $Z$ and $O$ a non-empty open set in $Z$.
 Write $O_1=Z\setminus F$, and $O_2=O$, which are non-empty open sets in $Z$. Suppose $x, y$ are attracting and repelling fixed points of $g$, respectively. First, by minimality of the action, one can find two open neighborhoods $U, V$ of $x, y$, respectively, small enough such that  there are $g_1, g_2\in G$ such that $g_1V\subset O_1$ and $g_2U\subset O_2$.
	Now our assumption on $g$ implies that there is an $m\in \N$ such that $g^m(Z\setminus V)\subset U$, which implies $g_2g^m(Z\setminus V)\subset O_2$. Then one observes that $Z=(g_2g^m)^{-1}O_2\cup g_1^{-1}O_1$. Write $h=g_2g^mg_1^{-1}$ for simplicity. This shows that $hF=h(Z\setminus O_1)\subset O$. Therefore $\alpha$ is a strong boundary action and thus is a $G$-boundary action.
\end{proof}

The study of boundary actions has a strong connection with $C^*$-simplicity of discrete groups and pure infiniteness of crossed product $C^*$-algebras as we will explain next.

\subsection{$C^*$-algebras of groups and dynamical systems}
Let $G$ be a countable discrete group, we say $G$ is $C^*$-\textit{simple} if its reduced $C^*$-algebra $C^*_r(G)$ is simple in the sense that it has no proper closed ideal.  On the other hand, for any action $G\curvearrowright Z$ on a compact Hausdorff space $Z$, one may define a (reduced) crossed product $C(Z)\rtimes_r G$. We refer to \cite{B-O} for the detailed definitions for the $C^*$-algebras $C_r^*(G)$ and $C(Z)\rtimes_r G$. 
So far, we have characterizations of  $C^*$-simplicity of a group $G$ using $G$-boundaries. See, e.g., \cite{B-K-K-O} and \cite{K-K}. The following theorem will be used in this paper.

\begin{thm}\cite[Theorem 6.2]{K-K}\label{thm: C simple equivalence}
Let $G$ be a discrete group. Denote by $\partial_F G$ the \textit{Furstenberg boundary} for $G$, which is the unique universal $G$-boundary in the sense that every $G$-boundary is a $G$-equivariant continuous image of $\partial_F G$. The following are equivalent.
\begin{enumerate}[label=(\roman*)]
    \item $G$ is $C^*$-simple.
    \item The reduced crossed product $C(\partial_F G)\rtimes G$ is simple.
    \item The reduced crossed product $C(B)\rtimes G$ is simple for any $G$-boundary $B$.
    \item The action of $G\curvearrowright \partial_F G$ is topologically free.
    \item The action of $G$ on some $G$-boundary $B$ is topologically free.
\end{enumerate}
\end{thm}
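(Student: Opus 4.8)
This is the Kalantar--Kennedy dynamical characterization of $C^*$-simplicity, so the plan is to deduce it from Hamana's theory of $G$-injective operator systems rather than to argue directly with $C^*_r(G)$. The foundational input, which I would establish first, is the identification $C(\partial_F G)\cong I_G(\C)$, the injective envelope of the trivial $G$-operator system $\C$ in the category of unital $G$-operator systems: any $G$-injective operator system is in particular injective, hence of the form $C(X)$ for an extremally disconnected compact space $X$, and one checks that the resulting $G$-action on $X$ is minimal, strongly proximal, and enjoys the universal property characterizing $\partial_F G$ among $G$-boundaries. Two structural consequences are then at hand. First, for any unital $G$-$C^*$-algebra $A$ the assignment $\lambda_g\mapsto u_g$ defines an embedding $\iota\colon C^*_r(G)\hookrightarrow A\rtimes_r G$: if $\varphi$ is any state on $A$ and $E\colon A\rtimes_r G\to A$ is the canonical conditional expectation, then $\varphi\circ E\circ\iota$ equals the canonical trace $\tau$ on $C^*_r(G)$, which is faithful, forcing $\iota$ to be injective. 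Second, $G$-injectivity lets one extend $\id_\C$ to a unital completely positive $G$-map $C^*_r(G)\to C(\partial_F G)$ and, packaging this with the crossed-product structure, to realize a tower $C^*_r(G)\subseteq C(\partial_F G)\rtimes_r G\subseteq I_G(C^*_r(G))$.

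With this in place I would dispose of the soft equivalences. Both $(iv)\Leftrightarrow(v)$ and $(ii)\Leftrightarrow(iii)$ follow from the universality of $\partial_F G$: every $G$-boundary $B$ receives a $G$-equivariant continuous surjection from $\partial_F G$, which gives a $G$-equivariant embedding $C(B)\hookrightarrow C(\partial_F G)$ and hence an embedding $C(B)\rtimes_r G\hookrightarrow C(\partial_F G)\rtimes_r G$; fixed-point sets and ideals transfer through these maps in the directions needed, while the implications $(iv)\Rightarrow(v)$ and $(ii)\Rightarrow(iii)$ are immediate since $\partial_F G$ is itself a $G$-boundary. The dynamical core is $(iv)\Leftrightarrow(ii)$. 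Since $G\curvearrowright\partial_F G$ is automatically minimal, the implication $(iv)\Rightarrow(ii)$ is the classical Archbold--Spielberg criterion: a minimal, topologically free action has simple reduced crossed product, because topological freeness forces $C(\partial_F G)$ to have the ideal intersection property inside $C(\partial_F G)\rtimes_r G$ while minimality rules out invariant ideals. For the converse, if the action fails to be topologically free then $\fix_{\partial_F G}(g)$ has nonempty interior for some $g\neq 1_G$; extremal disconnectedness of $\partial_F G$ upgrades the closure of this interior to a clopen set on which $g$ acts as the identity, and cutting $C(\partial_F G)\rtimes_r G$ down by the associated projection exhibits a nontrivial ideal.

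It remains to tie the $C^*$-simplicity of $G$, item $(i)$, to the simplicity of the boundary crossed product, and this is the hard part: none of the inclusions in $C^*_r(G)\subseteq C(\partial_F G)\rtimes_r G\subseteq I_G(C^*_r(G))$ carries an obvious $G$-invariant conditional expectation (there is no $G$-invariant state on $C(\partial_F G)$ for non-amenable $G$), so simplicity cannot be moved around for free; instead one exploits the rigidity of the $G$-injective envelope. For $(ii)\Rightarrow(i)$: a proper ideal $J\triangleleft C^*_r(G)$ gives a nonzero $G$-equivariant quotient $C^*_r(G)\to C^*_r(G)/J$, and by rigidity of the $G$-injective envelope the quotient map extends to a $G$-equivariant unital completely positive map on $C(\partial_F G)\rtimes_r G$ that annihilates the ideal generated by $J$, showing that ideal to be proper; hence simplicity upstairs forces $J=0$. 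For the contrapositive of $(i)\Rightarrow(iv)$: if some $g\neq 1_G$ has $\fix_{\partial_F G}(g)$ with nonempty interior, then the clopen set produced above carries a nontrivial stabilizer subgroup, from which one induces a representation of $C^*_r(G)$ that is not weakly contained in the regular representation, so $C^*_r(G)$ is not simple. I expect these last transfer arguments — turning the abstract rigidity and universality of $I_G(C^*_r(G))$ and $\partial_F G$ into concrete statements about ideals and representations of $C^*_r(G)$ — to be the main obstacle; the remainder is standard crossed-product theory together with the foundational identification $C(\partial_F G)=I_G(\C)$.
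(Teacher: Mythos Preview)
The paper does not prove this theorem: it is quoted verbatim from \cite[Theorem 6.2]{K-K} and used as a black box, so there is no ``paper's own proof'' to compare your proposal against. Your outline is broadly the Kalantar--Kennedy strategy (identify $C(\partial_F G)$ with the $G$-injective envelope $I_G(\C)$, build the tower $C^*_r(G)\subseteq C(\partial_F G)\rtimes_r G\subseteq I_G(C^*_r(G))$, and exploit rigidity), which is the right framework.

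That said, one step in your sketch is misstated. You write that $(ii)\Rightarrow(iii)$ is ``immediate since $\partial_F G$ is itself a $G$-boundary'', but the immediate direction is $(iii)\Rightarrow(ii)$; the implication $(ii)\Rightarrow(iii)$ is not obtained just by embedding $C(B)\rtimes_r G$ into $C(\partial_F G)\rtimes_r G$, since a subalgebra of a simple $C^*$-algebra need not be simple. Likewise $(v)\Rightarrow(iv)$ is not a formality: topological freeness does not automatically lift along the quotient $\partial_F G\to B$, because the preimage of a nowhere dense set under a continuous surjection can have interior. In the actual Kalantar--Kennedy argument these equivalences are routed through $(i)$ (or through the characterization via amenability of stabilizer URS's in the later work \cite{B-K-K-O}), not handled by naive transfer along the factor map. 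If you want a self-contained proof you should close the loop as, say, $(iii)\Rightarrow(ii)\Rightarrow(i)\Rightarrow(iv)\Rightarrow(v)\Rightarrow(i)\Rightarrow(iii)$ or similar, using the injective-envelope rigidity for the steps touching $(i)$.
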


We now turn to characterize structure properties of crossed product $C(Z)\rtimes_r G$ by dynamical properties. The following result is fundamental in this direction.

\begin{rmk}\label{rmk: equivalence of dynamical properties}
It is well known that if the action $G\curvearrowright Z$ is topologically free and minimal then the reduced crossed product $C(Z)\rtimes_r G$ is simple (see \cite{A-S}) and it is also known that the crossed product $C(Z)\rtimes_r G$ is nuclear if and only if the action $G\curvearrowright Z$ is (topological) amenable (see \cite{B-O}). In this setting, Archbold and Spielberg \cite{A-S} further showed $C(Z)\rtimes_r G$ is simple and nuclear if and only if the action is minimal, topologically free, and (topologically) amenable.
\end{rmk}

\begin{rmk}\label{rmk: equivalence of C simple and topo free}
Let $\alpha: G\curvearrowright Z$ be a $G$-boundary action. Suppose $\alpha$ is (topologically) amenable.  Then Theorem \ref{thm: C simple equivalence} and Remark \ref{rmk: equivalence of dynamical properties} imply that $G$ is $C^*$-simple if and only if $\alpha$ is topologically free. The only non-trivial part is the direction of ``only if''. If $G$ is $C^*$-simple, then $C(Z)\rtimes_r G$ is simple by Theorem \ref{thm: C simple equivalence}(iii) and nuclear since $\alpha$ is also amenable. Then Remark \ref{rmk: equivalence of dynamical properties} implies that $\alpha$ is topologically free.
\end{rmk}

The notion of pure infiniteness for $C(Z)\rtimes_r G$ is an important regularity property for $C^*$-algebras, which we shall describe now.  Let $A$ be a unital $C^*$-algebra. We denote by $\precsim$ the \textit{Cuntz subequivalence relation} on $M_\infty(A)\coloneqq \bigcup_{n=1}^\infty M_n(A)$ defined by saying $a\precsim b$ if there exists a sequence of matrices $x_n$ with proper dimensions such that $x^*_nbx_n\to a$ as $n\to \infty$. we refer to \cite{A-P-T} as a standard reference for this topic.
A non-zero positive element $a$ in $A$ is said to be \textit{properly infinite} if
$a\oplus a\precsim a$, where $a\oplus a$ denotes the diagonal matrix with entry $a$ in $M_2(A)$. A $C^\ast$-algebra $A$ is said to be \textit{purely infinite} if there are no characters
on $A$ and if, for every pair of positive elements $a,b\in A$ such that $b$ belongs to the closed ideal in $A$ generated by $a$, one has $b\precsim a$. A very useful characterization of pure infiniteness was provided in \cite{K-Rord} that a $C^\ast$-algebra $A$ is purely infinite if and
only if every non-zero positive element $a$ in $A$ is properly infinite.
Recall that a $C^*$-algebra $A$ is called a \textit{Kirchberg algebra} if it is simple nuclear separable and purely infinite.  The pure infiniteness plays an important role in the Kirchberg-Philips classification theorem of Kirchberg algebras satisfying the UCT. (see, e.g., \cite{Kir} and \cite{Phillips}). \textit{Uniform coefficient theorem}, written as the UCT for simplicity, is a technical condition posed on nuclear $C^*$-algebras to be classified. By the classical result of \cite{Tu} and Remark \ref{rmk: equivalence of dynamical properties}, a crossed product $C(Z)\rtimes_r G$ satisfies the UCT if the action is (topological) amenable.
In general, The following was established in \cite{L-S}. See also \cite{A-D}.

\begin{thm}\cite[Theorem 5]{L-S}\label{thm: strong bdry action pure infinite}
    Let $G\curvearrowright Z$ be a topological free strong boundary action. Then $C^*$-algebra $A=C(Z)\rtimes_r G$ is purely infinite and simple. In addition, if $Z$ is metrizable, then $A$ is separable.
\end{thm}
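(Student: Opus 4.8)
The plan is to obtain simplicity directly from the Archbold--Spielberg criterion recalled in Remark \ref{rmk: equivalence of dynamical properties}, and then to establish pure infiniteness via the characterization of \cite{K-Rord}: a $C^*$-algebra is purely infinite if and only if each of its non-zero positive elements is properly infinite. For simplicity, I would first observe that a strong boundary action is automatically minimal --- apply Definition \ref{defn: strong boundary} with $F=\{z\}$ and an arbitrary non-empty open set --- so that minimality together with the hypothesis of topological freeness forces $A=C(Z)\rtimes_r G$ to be simple. It is also convenient to note at the outset that one may assume $Z$ infinite, since otherwise $A$ is finite-dimensional and cannot be purely infinite, and that a minimal action on an infinite compact Hausdorff space has no isolated points, so $Z$ is perfect and every non-empty open subset of $Z$ is infinite.

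For pure infiniteness I would argue by a two-step reduction. The first step is a \emph{detection lemma}: every non-zero positive $a\in A$ Cuntz-dominates a non-zero positive function in $C(Z)$. This is where topological freeness is used inside the crossed product. Writing $E\colon A\to C(Z)$ for the canonical (faithful) conditional expectation, we have $E(a)\neq 0$, say $E(a)\geq\kappa>0$ on a non-empty open set $U$; choose a finite $F\ni e$ and an approximation $c=\sum_{g\in F}c_gu_g$ with $\|a-c\|$ small. Topological freeness makes the set of points moved by every $g\in F\setminus\{e\}$ open and dense, so it meets $U$, and near such a point one can pick a positive contraction $h\in C(Z)$ with $gV\cap V=\emptyset$ for all $g\in F\setminus\{e\}$, where $V$ is a small neighbourhood containing $\supp h$. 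Then $hu_g h=h\,(g\cdot h)\,u_g=0$ for $g\in F\setminus\{e\}$, so $hch=hE(c)h$ and hence $hah$ is close to $hE(a)h\geq\kappa h^2$; R{\o}rdam's comparison lemma (\cite{A-P-T}) together with the estimate $b^*ab\precsim a$ then yields a non-zero positive $f_0\in C(Z)$ with $f_0\precsim a$.

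The second step is that every non-zero positive $f\in C(Z)$ is properly infinite in $A$, and here the strong boundary property does the work. On $W=\{f>\|f\|/2\}$ pick nested open sets $W_0\subseteq\overline{W_0}\subseteq W_1\subseteq\overline{W_1}\subseteq W$ with $\overline{W_1}$ a proper compact set and a contraction $g_0\in C(Z)$ that equals $1$ on $W_0$ and is supported in $W_1$, so $g_0\precsim f$. Applying Definition \ref{defn: strong boundary} twice (using that $Z$ is perfect to keep the images apart) gives $g_1,g_2\in G$ with $g_1\overline{W_1}$ and $g_2\overline{W_1}$ disjoint and both contained in $W_0$. The elements $b_i:=u_{g_i}g_0u_{g_i}^*$ are positive functions, each unitarily equivalent to $g_0$, with disjoint supports inside $\{g_0=1\}$, so $b_1+b_2\leq g_0$, and therefore $g_0\oplus g_0\sim b_1\oplus b_2\sim b_1+b_2\precsim g_0$ in $M_2(A)$. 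Combining the two steps: for a non-zero positive $a$, take $f_0\precsim a$ from the first step and a properly infinite $g_0\precsim f_0$ from the second; simplicity of $A$ makes $g_0$ full, so $a\precsim 1_A\precsim g_0^{\oplus m}\precsim g_0\precsim a$, whence $a\sim g_0$ is properly infinite. Thus every non-zero positive element of $A$ is properly infinite, so $A$ is purely infinite by \cite{K-Rord}; separability in the metrizable case is immediate since then $C(Z)$ is separable and $G$ is countable.

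I expect the detection lemma to be the main obstacle. It is the only place where topological freeness must be exploited within $A$ rather than on the space $Z$, and it requires the Archbold--Spielberg/Kishimoto-type averaging estimate together with a careful choice of the cutoff $h$ so that the non-diagonal part of $a$ is annihilated while $hE(a)h$ remains bounded below; by contrast, the paradoxical decomposition producing proper infiniteness of functions is a comparatively mechanical consequence of the strong boundary property once one is back inside the abelian subalgebra $C(Z)$.
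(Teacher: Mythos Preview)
The paper does not supply its own proof of this statement: Theorem \ref{thm: strong bdry action pure infinite} is quoted verbatim from \cite[Theorem 5]{L-S} and used as a black box. Your reconstruction is essentially the Laca--Spielberg/Anantharaman-Delaroche argument and is correct: the two-step scheme (a detection step producing a non-zero $f_0\in C(Z)$ with $f_0\precsim a$ via topological freeness and the faithful expectation, followed by a paradoxical step making every non-zero function properly infinite via the strong boundary property) is exactly how \cite{L-S} and \cite{A-D} proceed, and your final bootstrapping $a\precsim 1_A\precsim g_0^{\oplus m}\precsim g_0\precsim a$ using simplicity is the standard way to conclude. The only cosmetic point is that your exclusion of finite $Z$ is really a hidden hypothesis of \cite{L-S} rather than a reduction (the theorem is stated there for infinite compact $Z$), but this does not affect the argument.
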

We conclude this section with the following remark.

\begin{rmk}\label{rmk: n-filling}
A topological action $G\curvearrowright Z$ on a Hausdorff space $Z$ is said to be $n$-filling if for any $n$ non-empty open sets $O_1,\dots, O_n$, there exists $g_1\dots, g_n\in G$ such that $\bigcup_{i=1}^ng_iO_i=Z$. It was shown in \cite{J-R} that if $Z$ is also compact, then the action $G\curvearrowright Z$ is $2$-filling if and only if it is a strong boundary action.
We remark that Theorem \ref{thm: strong bdry action pure infinite} has been generalized to $n$-filling actions in \cite{J-R} and minimal actions having dynamical comparison in \cite{M2}. 
\end{rmk}

\section{Boundary actions on Bass-Serre trees}\label{sec: main}
This section studies the general theory of the action of the fundamental group of a graph of groups on the boundary of the Bass-Serre tree. We begin by recalling the actions on trees by automorphisms, which can be found in e.g., \cite[Section 3]{H-P}, and then focus on the action on the boundary of the Bass-Serre tree. 

Recall that this paper only considers \textbf{countable} trees (i.e. with countably many vertices and edges). 

\subsection{Group actions on trees and their boundaries}
Let $X$ be a  (possibly locally infinite) simplicial tree, equipped with the path metric $d$. A finite sequence $\mu=(x_1,\dots,x_n)$ of vertices in $X$ is said to be a \textit{geodesic segment} if $d(x_i, x_j)=|i-j|$ for any $1\leq i, j\leq n$. An infinite sequence $(x_n)_{n\in \N}$ of vertices on $X$ is said to be a \textit{geodesic ray} if $d(x_n, x_m)=|m-n|$ for any $m, n\in \N$. A bi-infinite sequence $(x_n)_{n\in \Z}$ is said to be \textit{geodesic line} if $d(x_n, x_m)=|m-n|$ for any $m, n\in \Z$.
From \cite{Tits}, an automorphism $s\in \aut(X)$ can be of three different types:
\begin{enumerate}
    \item[$\circ$] It is an \textit{inversion}, i.e. $s(e)=\bar{e}$ for some $e\in E(X)$.
    \item[$\circ$] It is \textit{elliptic} if it fixes a vertex $v\in V(X)$.
    \item[$\circ$] It is \textit{hyperbolic} if otherwise.
\end{enumerate}
A hyperbolic automorphism $s\in \aut(X)$   has a unique  \textit{axis}, which is a geodesic line $T_s=(x_n)_{n\in \Z}$ satisfying  that  $d(x, s(x))=\min\{d(z, s(z)): z\in V(X)\}$ for any $x\in T_s$.

Two geodesic rays $(x_n)$ and $(y_n)$ are said to be \textit{cofinal} if they coincide eventually, i.e. there exists $m_0, n_0\geq 0$ such that $x(m_0+n)=y(n_0+n)$ holds for any $n\geq 0$. It is direct to verify that being cofinal is an equivalence relation. Then the \textit{boundary} of the tree $X$ is defined as the set of all such equivalence classes, usually denoted by $\partial X$.   
Fixing a base vertex $x_0$ of $X$, it is a standard fact that each equivalence class contains exactly one geodesic ray starting from $x_0$. Therefore, the boundary $\partial X$ may be identified with the set of all geodesic rays starting from $x_0$. We warn that, unlike some literature, we follow \cite{H-P} that the notation $\partial X$ is just a set without being equipped with any topology. We will describe two possible topologies on the set $\partial X$.

First, we equip $\partial X$ with the inverse limit topology by identifying $\partial X$ as an inverse limit space of finite sets $S_n=\{x\in X: d(x, x_0)=n\}$. To be more specific, the neighborhood system for $X$ is given by all
\[U(\mu)=\{\xi\in \partial X: \mu \text{ is an initial segment of }\xi\}\]
for all geodesic segment $\mu$. This topology is also called \textit{cone topology} (see Remark \ref{rmk: action on bdry of tree} below). Equip $\partial X$ with this topology, we denote by $\partial_\infty X$, the boundary of $X$. We remark that $\partial_\infty X$ is compact if and only if $X$ is locally finite. Moreover, if $X$ is locally finite, the $X\sqcup \partial_\infty X$ forms a compactification of $X$. We denote by $\tau_c$ for this cone topology.

Second, no matter whether $X$ is locally finite or not, there is always a way to define a topology $\tau_s$ on $X\sqcup \partial X$, called \textit{shadow topology} (see \cite[Section 4]{M-S}) such that $(X\sqcup \partial X, \tau_s)$ is a compact metrizable space, which is a compactification of $X$. There is a very nice connection between these two topologies.

\begin{prop}\cite[Proposition 4.4]{M-S}\label{prop: topologies coincide}
    The topologies $\tau_c$ and $\tau_s$ coincide on $X\sqcup \partial X$ if and only if $X$ is locally finite. In general, they always coincide by restricting on $\partial X$.
\end{prop}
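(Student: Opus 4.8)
The proof is a point-by-point comparison of the two neighbourhood systems, so the first thing I would do is record them explicitly on $\overline{X} := X \sqcup \partial X$, the base vertex $x_0$ being fixed. For a boundary point $\xi = (x_0 = y_0, y_1, y_2, \dots)$, the cone topology $\tau_c$ has the sets $U([x_0,y_n]) = \{z \in \overline{X} : [x_0,y_n] \text{ is an initial segment of the geodesic from } x_0 \text{ to } z\}$, $n \ge 1$, as a neighbourhood basis, while $X$ is $\tau_c$-open, each vertex being $\tau_c$-isolated. For the shadow topology $\tau_s$, by the construction of \cite[Section 4]{M-S} a neighbourhood basis at $\xi$ is given by the shadows $C(y_n) := \{z \in \overline{X} : y_n \in [x_0,z]\}$; a vertex of finite valence is $\tau_s$-isolated, but a vertex of infinite valence is not, since every shadow-basic neighbourhood of it still contains cofinitely many of its neighbours together with the boundary points beyond them. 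The one bookkeeping identity underlying everything is the tautology $U([x_0,v]) = C(v)$ for each vertex $v$: both sides are the set of $z$ whose geodesic from $x_0$ runs through $v$.

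Granting this, the ``in general'' clause drops out immediately: on $\partial X$ the families $\{U([x_0,v]) \cap \partial X\}_v$ and $\{C(v) \cap \partial X\}_v$ are literally the same collection of subsets, so $\tau_c$ and $\tau_s$ induce the same topology on $\partial X$.

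For the dichotomy on $\overline{X}$ I would argue both directions from these bases. If $X$ is locally finite, then every vertex has finite valence and hence is isolated in both topologies; combined with $U([x_0,v]) = C(v)$ this exhibits a common neighbourhood basis at every point of $\overline{X}$, so $\tau_c = \tau_s$. (A more robust phrasing that avoids fussing over $\tau_s$ near vertices: when $X$ is locally finite the shadows $C(v)$ are $\tau_c$-clopen, so every basic $\tau_s$-open set, being assembled from shadows, is $\tau_c$-open; the identity map $(\overline{X},\tau_c) \to (\overline{X},\tau_s)$ is then a continuous bijection from a compact space — $(\overline{X},\tau_c)$ is the compactification $X \sqcup \partial_\infty X$ of $X$ — onto a Hausdorff space, hence a homeomorphism.) Conversely, if $X$ is not locally finite, then $(\overline{X},\tau_s)$ is compact by \cite{M-S}, whereas $(\overline{X},\tau_c)$ is not: since $X$ is open in the compactification $(\overline{X},\tau_c)$, the complement $\partial_\infty X$ is $\tau_c$-closed, and it is non-compact precisely because $X$ is not locally finite; so $\tau_c \neq \tau_s$. (Locally, one sees the same thing: a vertex of infinite valence is $\tau_c$-isolated but not $\tau_s$-isolated.)

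The only input that is not soft point-set topology is the behaviour of $\tau_s$ near vertices — that the shadow $C(v)$ is a basic $\tau_s$-neighbourhood of each of its points and that $v$ is $\tau_s$-isolated exactly when its valence is finite — which I would quote from \cite[Section 4]{M-S}; this, rather than anything in the comparison, is where the real content sits. Everything else (matching the bases, and the compact-to-Hausdorff rigidity of a continuous bijection) is routine.
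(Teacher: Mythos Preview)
The paper does not supply its own proof of this proposition: it is stated with the citation \cite[Proposition 4.4]{M-S} and used as a black box, so there is no in-paper argument to compare against. Your sketch is a reasonable direct verification of the cited result; the identification $U([x_0,v]) = C(v)$ together with the compactness dichotomy (or the local observation that a vertex of infinite valence is $\tau_c$-isolated but not $\tau_s$-isolated) is exactly how one proves it, and nothing more is needed here.
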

Therefore, we may keep the notation $\partial_\infty X$ for the boundary of $X$ no matter which topology we put on it.
Now, let $G$ be a countable discrete group and $G\curvearrowright X$ by automorphisms. Note that this action is an isometric action with respect to the path metric $d$. The action $G\curvearrowright X$ is said to be \textit{minimal} if there exists no proper $G$-invariant subtree $X'\subset X$. The isometric action $G\curvearrowright X$ can be extended to a continuous action $G\curvearrowright (X\sqcup \partial X, \tau_s)$. Note that $\partial_\infty X$ is a $G$-invariant subset and the restricted action $G\curvearrowright \partial_\infty X$ can be described explicitly as follows.

\begin{rmk}\label{rmk: action on bdry of tree}
    We remark that two geodesic rays $\xi, \eta$ are cofinal in the tree $X$ if and only if $\xi$ and $\eta$ are \textit{asymptotic} in the sense of \cite[Definition II. 8.1]{B-H} by viewing $X$ as a complete $\cat$ space. 
        Therefore, the boundary $\partial_\infty X$ is exactly the visual boundary for $X$ as a complete $\cat$ space (see \cite[Section II. 8]{B-H}). It is well-known that $\partial_\infty X$ is metrizable totally disconnected space as $X$ is countable. The sets $U(\mu)$ defined above are clopen. 
Moreover, $\partial_\infty X$ is compact if and only if $X$ is proper, i.e., $X$ is locally finite. See, e.g, \cite[Section 3]{H-P}. 
Let $G$ act on $X$ by isometry with respect to the path metric. Then such an action induces a continuous action of $G$ on $\partial_\infty X$ by $g\cdot [(x_n)]=[(g\cdot x_n)]$ by \cite[Corollary 8.9]{B-H}, where $[(x_n)]$ is the equivalence class of being cofinal that contains the geodesic $(x_n)$. Then if we characterize the $\partial_\infty X$ by all geodesic rays emitting from one vertex $x$. Then the action is explicitly given by $h \cdot (x_n)=(y_n)$ such that $y_1=x_1=x$ and $(y_n)$ is cofinal with $(g\cdot x_n)$. The underlying topology for this action is thus exactly the topology generated by all $U(\mu)$ above.
\end{rmk}

\begin{rmk}
    Let $s\in \aut(X)$ be a hyperbolic automorphism of the tree $X$. Then its axis $T_s=(x_n)_{n\in \Z}$ yields two fixed points $s^\infty=[(x_n)_{n\in \N}]$ and $s^{-\infty}=[(x_n)_{n\in -\N}]$ in $\partial_\infty X$ about which, the automorphism $s$ has the north-south dynamics in the sense of Definition \ref{defn: ns dyna}. Note that admitting north-south dynamics is actually a characterization of the hyperbolicity of $s$. See, e.g., \cite[Section 3, pp. 461]{H-P}. We also remark that $s^{\infty}$ and $s^{-\infty}$ are only fixed points for $s$ on $\partial_\infty X$ (see, e.g., \cite[Remarks 12(i)]{H-P}).
    \end{rmk}

    We say an action $G\curvearrowright X$ is \textit{strongly hyperbolic} if it contains two hyperbolic elements $h, g$ such that their fixed points set $\{h^\infty, h^{-\infty}\}$ and $\{g^\infty, g^{-\infty}\}$ are disjoint. We now turn to the freeness of the boundary actions. The following is a general observation.

\begin{prop}\label{prop: topo free permanant}
    Let $\alpha: G\curvearrowright Z$ be a continuous action on a compact Hausdorff space $Z$ by a countable discrete group $G$. Let $A\subset Z$ be a $G$-invariant dense subset. Then the restricted action $\alpha: G\curvearrowright A$ is topologically free if and only if $\alpha: G\curvearrowright Z$ is topologically free.
    \end{prop}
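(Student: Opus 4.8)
The plan is to prove the two directions separately, with the forward direction (freeness on $A$ implies freeness on $Z$) being essentially immediate and the reverse direction requiring a small topological argument. Recall that topological freeness of an action on a space $Y$ means that for every $g \in G \setminus \{1_G\}$ the fixed-point set $\fix_Y(g)$ is nowhere dense in $Y$ (equivalently, the points with trivial stabilizer are dense).

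First I would observe that for any $g \in G$, since $\alpha$ is continuous and $Z$ is Hausdorff, the set $\fix_Z(g)$ is closed in $Z$; likewise $\fix_A(g) = \fix_Z(g) \cap A$ is closed in the subspace $A$. The key elementary fact linking the two is that, because $A$ is dense in $Z$, a closed subset $C \subset Z$ has empty interior in $Z$ if and only if $C \cap A$ has empty interior in $A$. Indeed, if $C$ has nonempty interior $U$ in $Z$, then $U \cap A$ is nonempty (density) and open in $A$ and contained in $C \cap A$; conversely, if $C \cap A$ contains a nonempty set $V$ open in $A$, write $V = W \cap A$ for some open $W \subset Z$, and note $W \setminus C$ is open in $Z$ and disjoint from $A$, hence empty by density, so $W \subset C$, giving $C$ nonempty interior. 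Since $\fix_Z(g)$ is closed, ``$\fix_Z(g)$ nowhere dense in $Z$'' is the same as ``$\fix_Z(g)$ has empty interior in $Z$'', and similarly for $\fix_A(g)$ in $A$; so the above equivalence applied to $C = \fix_Z(g)$ gives, for each fixed $g \neq 1_G$, that $\fix_Z(g)$ is nowhere dense in $Z$ iff $\fix_A(g)$ is nowhere dense in $A$. Here I use that $G$-invariance of $A$ guarantees the restricted action $\alpha\colon G \curvearrowright A$ is well-defined, so $\fix_A(g)$ makes sense as the fixed set of the restricted action.

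Quantifying over all $g \in G \setminus \{1_G\}$ then yields both implications at once: $\alpha\colon G \curvearrowright A$ is topologically free $\iff$ for all $g \neq 1_G$, $\fix_A(g)$ nowhere dense in $A$ $\iff$ for all $g \neq 1_G$, $\fix_Z(g)$ nowhere dense in $Z$ $\iff$ $\alpha\colon G \curvearrowright Z$ is topologically free. (The characterization of topological freeness via nowhere-dense fixed sets, as recorded in the excerpt's discussion of Definition-level material on boundary actions, is valid in both the compact and non-compact settings, which is exactly why the statement is phrased for a general $G$-invariant dense $A$.)

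The only mild subtlety — and the closest thing to an obstacle — is making sure the closedness of $\fix_Z(g)$ is genuinely used and that one does not need $A$ itself to be locally compact or compact: the argument works for any topological subspace $A$ that is dense, precisely because we reduce everything to the closed set $\fix_Z(g)$ living in $Z$ and only pull back to $A$ at the level of empty-interior statements. I would state the density-interior equivalence as a one-line sublemma and then the proof is a two-line deduction. No delicate estimates are involved; the whole content is the interplay between ``closed with empty interior'' and ``nowhere dense'' together with the behaviour of interiors under intersection with a dense subspace.
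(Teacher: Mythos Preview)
Your proof is correct and follows essentially the same approach as the paper's: both directions reduce to the fact that a closed set $C\subset Z$ has empty interior in $Z$ iff $C\cap A$ has empty interior in the dense subspace $A$. The paper argues each direction directly by chasing complements (showing $Z\setminus\fix_Z(g)$ and $A\setminus\fix_A(g)$ are dense), whereas you isolate this equivalence as a sublemma first, but the content is identical.
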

    \begin{proof}    
         Suppose the restricted action $\alpha: G\curvearrowright A$ is topologically free. Denote by $\fix_Z(g)=\{z\in Z: gz=z\}$ the fixed point set for $g\in G$.  Note the topological freeness for $\alpha: G\curvearrowright Z$ is equivalent to that $Z\setminus\fix_Z(g)$ is dense in $Z$ for any $g\in G\setminus\{1_G\}$. Let $g$ be a non-trivial element in $G$. We denote by $\fix_A(g)=\{z\in A: gz=z\}$ the fixed point set for $g$ in A, which satisfies $A\setminus \fix_A(g)\subset Z\setminus\fix_Z(g)$. Now because the restricted action $G\curvearrowright A$ is assumed to be topologically free, one has  $A\setminus \fix_A(g)$ is dense in $A$. Now let $O$ be open in $Z$. Since $A$ is dense in $Z$, the set $A\cap O$ is a non-empty open set in $A$, which implies that $A\cap O\cap (A\setminus \fix_A(g))\neq \emptyset$ and therefore $O\cap (Z\setminus \fix_Z(g))\neq \emptyset$ holds. This shows that $Z\setminus \fix_Z(g)$ is dense.

         For the converse, suppose $\alpha: G\curvearrowright Z$ is topologically free. Then $Z\setminus \fix_Z(g)$ is an open dense set in $Z$ for any $g\in G\setminus \{1_G\}$. Then observe $A\setminus \fix_A(g)=A\cap (Z\setminus \fix_Z(g))$. Now, let $O$ be a non-empty open set in $Z$ and one has 
         \[(A\setminus \fix_A(g))\cap O\cap A=A\cap O\cap  (Z\setminus \fix_Z(g))\neq \emptyset\]
         because $A$ is dense and $O\cap (Z\setminus \fix_Z(g))$ is a non-empty open set in $Z$.
          \end{proof}
This leads to the following result. 

\begin{cor}\label{cor: bdry topo free}
   Let $X$ be a tree. In the space $(X\sqcup \partial X, \tau_s)$, the restricted action $\alpha: G\curvearrowright \partial_\infty X$ is topologically free if and only if $\alpha: G\curvearrowright \overline{\partial_\infty X}$ is topologically free.
\end{cor}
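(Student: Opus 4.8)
The plan is to deduce this corollary directly from Proposition \ref{prop: topo free permanant} by supplying the two hypotheses it requires: that the ambient space is compact Hausdorff, and that $\partial_\infty X$ sits inside it as a $G$-invariant dense subset. First I would take the ambient space to be $\overline{\partial_\infty X}$, the closure of $\partial_\infty X$ inside the compact metrizable space $(X\sqcup\partial X,\tau_s)$; being a closed subspace of a compact Hausdorff (indeed metrizable) space, it is itself compact Hausdorff, so the standing hypothesis of Proposition \ref{prop: topo free permanant} is met. The restricted action $\alpha\colon G\curvearrowright\overline{\partial_\infty X}$ makes sense because $\overline{\partial_\infty X}$ is $G$-invariant: $\partial_\infty X$ is $G$-invariant (it is a $G$-invariant subset of $X\sqcup\partial X$ under the extended action, as recalled in Remark \ref{rmk: action on bdry of tree}), and the $G$-action on $X\sqcup\partial X$ is by homeomorphisms, so it carries closures to closures, whence $g\cdot\overline{\partial_\infty X}=\overline{g\cdot\partial_\infty X}=\overline{\partial_\infty X}$.

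Next I would observe that $\partial_\infty X$ is, by the very definition of $\overline{\partial_\infty X}$, a dense $G$-invariant subset of $\overline{\partial_\infty X}$. Then Proposition \ref{prop: topo free permanant}, applied with $Z=\overline{\partial_\infty X}$ and $A=\partial_\infty X$, immediately gives that $\alpha\colon G\curvearrowright\partial_\infty X$ is topologically free if and only if $\alpha\colon G\curvearrowright\overline{\partial_\infty X}$ is topologically free, which is exactly the statement. One small point worth spelling out is that the topology on $\partial_\infty X$ induced as a subspace of $(X\sqcup\partial X,\tau_s)$ agrees with the topology for which we normally regard $\partial_\infty X$ as a $G$-space — this is guaranteed by Proposition \ref{prop: topologies coincide}, which says $\tau_c$ and $\tau_s$ restrict to the same topology on $\partial X$, so that there is no ambiguity in what ``topologically free'' means for $G\curvearrowright\partial_\infty X$.

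There is essentially no obstacle here: the corollary is a routine specialization of the preceding proposition, and the only things to check are the trivial facts that a closed subset of a compact Hausdorff space is compact Hausdorff and that a homeomorphic action preserves the closure of an invariant set. If anything, the mild subtlety is purely bookkeeping — making sure that when $X$ is not locally finite (so $\partial_\infty X$ itself need not be compact, and $X\sqcup\partial X$ must be topologized via $\tau_s$ rather than $\tau_c$) one still has a genuine compact Hausdorff space $\overline{\partial_\infty X}$ available to play the role of $Z$. That is handled by invoking the metrizable compactification $(X\sqcup\partial X,\tau_s)$ from \cite{M-S} discussed just above, together with Proposition \ref{prop: topologies coincide}.
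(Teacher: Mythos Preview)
Your proposal is correct and is exactly the intended argument: the paper presents this corollary immediately after Proposition~\ref{prop: topo free permanant} with the phrase ``This leads to the following result,'' meaning it is a direct specialization with $Z=\overline{\partial_\infty X}$ and $A=\partial_\infty X$. Your added remarks about $G$-invariance of the closure and the agreement of $\tau_c$ and $\tau_s$ on $\partial X$ via Proposition~\ref{prop: topologies coincide} are the right justifications for the hypotheses, and nothing more is needed.
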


In the case that the action $G\curvearrowright X$ contains hyperbolic elements, we have similar results for minimality as recorded in \cite{H-P}. 
\begin{prop}\cite[Remark, pp. 462]{H-P}\label{prop: minimal 1}
    Let $G\curvearrowright X$ be an action on a tree $X$ by automorphism and $X$ is non-singular in the sense of Definition \ref{defn: non singular and locally finite for graph}. Suppose the continuous action $G\curvearrowright \partial_\infty X$ is minimal and $G$ contains a hyperbolic element. Then the action $G\curvearrowright X$ by automorphism on the tree is minimal. 
\end{prop}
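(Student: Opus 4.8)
The plan is to argue by contrapositive: assuming $G\curvearrowright X$ is \emph{not} minimal, I will produce a proper closed $G$-invariant subset of $\partial_\infty X$, contradicting the minimality of $G\curvearrowright\partial_\infty X$. First I would recall that if $G\curvearrowright X$ is not minimal then there is a proper nonempty $G$-invariant subtree $X'\subsetneq X$ (one can always extract such a subtree; the union of all $G$-invariant subtrees strictly contained in $X$ behaves well because $X$ is a tree, or one takes $X'$ generated by an orbit of a vertex). Since $G$ contains a hyperbolic element $h$, its axis $T_h$ is $h$-invariant, hence the orbit $GT_h$ spans a $G$-invariant subtree; in particular the two endpoints $h^{\infty},h^{-\infty}$ lie in $\partial_\infty X'$ for any $G$-invariant subtree containing the axis, so $\partial_\infty X'\neq\emptyset$.

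The key step is to show that $\partial_\infty X'$, viewed inside $\partial_\infty X$, is a \emph{proper} closed $G$-invariant subset. It is $G$-invariant because $X'$ is, and it is closed in $\partial_\infty X$ because $X'$ is a subtree: a geodesic ray cofinal with rays lying eventually in $X'$ must itself eventually lie in $X'$, so the complement is a union of basic clopen sets $U(\mu)$ with $\mu$ leaving $X'$. The properness is where non-singularity enters. Because $X$ is non-singular and $X'\subsetneq X$, there is a vertex $v\in X'$ and an edge $e$ of $X$ with $o(e)=v$ but $e\notin X'$; following the non-singularity of $X$, the component of $X\setminus\{v\}$ through $e$ contains a vertex of valence $\geq 2$ at every step, hence contains an infinite geodesic ray $\xi$ starting at $v$ and never returning to $X'$. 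Then $\xi\in\partial_\infty X\setminus\partial_\infty X'$, so $\partial_\infty X'$ is proper. This contradicts minimality of $G\curvearrowright\partial_\infty X$, since $\overline{G\xi}$ would have to be all of $\partial_\infty X$ yet is contained in the closed proper set $\partial_\infty X'$... wait—rather, the point $\xi\notin\partial_\infty X'$ has dense orbit by minimality, but $\partial_\infty X'$ is closed $G$-invariant and nonempty, so $\partial_\infty X'=\partial_\infty X$, contradiction.

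I would then conclude that no proper $G$-invariant subtree exists, i.e.\ $G\curvearrowright X$ is minimal. I expect the main obstacle to be the properness argument for $\partial_\infty X'$: one must genuinely use non-singularity of $X$ to manufacture a boundary point outside $X'$, since for a singular tree a proper subtree can perfectly well carry the entire boundary (e.g.\ pruning a valence-one vertex changes nothing at infinity). A secondary point requiring a little care is the passage between "not minimal" and "admits a proper invariant subtree," and ensuring the invariant subtree one works with actually contains the axis of a hyperbolic element so that one is not reasoning about an empty boundary; this is handled by replacing $X'$ with the $G$-invariant subtree spanned by $X'$ together with $GT_h$ if necessary, or simply noting that any $G$-invariant subtree of a tree on which $G$ acts with a hyperbolic element must contain that element's axis.
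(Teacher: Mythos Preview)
Your argument is correct and is essentially the standard proof one finds in the cited reference \cite{H-P}; the paper itself does not give a proof but merely attributes the result to \cite[Remark, pp.~462]{H-P}. The contrapositive you outline---take a proper $G$-invariant subtree $X'$, observe that any $G$-invariant subtree contains the axis of every hyperbolic element (so $\partial_\infty X'\neq\emptyset$), use convexity of subtrees and non-singularity of $X$ to exhibit a boundary point outside $\partial_\infty X'$, and conclude that $\partial_\infty X'$ is a nonempty proper closed $G$-invariant subset of $\partial_\infty X$---is exactly the intended argument. Two minor clean-ups: your first formulation of the contradiction is garbled (you wrote that $\overline{G\xi}$ is contained in $\partial_\infty X'$, but $\xi\notin\partial_\infty X'$); the correct statement, which you arrive at, is simply that minimality forces every nonempty closed invariant set to equal $\partial_\infty X$. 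Also, you do not need to ``replace $X'$ with $X'\cup GT_h$'': any nonempty $G$-invariant subtree automatically contains the axis of every hyperbolic element of $G$, since the axis is the unique minimal $\langle h\rangle$-invariant subtree.
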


\begin{prop}\cite[Proposition 14]{H-P}\label{prop: minimal 2}
    Let $G\curvearrowright X$ be a minimal action by automorphism such that $X$ is not isomorphic to a line. Suppose there are no fixed points on $\partial_\infty X$ by $G$. Then $G\curvearrowright X$ is strongly hyperbolic.
\end{prop}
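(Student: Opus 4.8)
The plan is to reduce everything to one combinatorial fact: that $G$ contains a hyperbolic element $h$ and an element $g$ with $g\{h^{\infty},h^{-\infty}\}\cap\{h^{\infty},h^{-\infty}\}=\varnothing$. Granting this, $ghg^{-1}$ is hyperbolic with axis $gT_{h}$ and boundary fixed point set $g\{h^{\infty},h^{-\infty}\}$ (recall that a hyperbolic tree automorphism fixes on $\partial_\infty X$ exactly the two endpoints of its axis, see \cite[Remarks 12(i)]{H-P}), so $\{h,\,ghg^{-1}\}$ witnesses strong hyperbolicity of $G\curvearrowright X$. Thus it remains to (1) produce one hyperbolic element, and (2) produce the conjugator $g$. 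For (1): passing to the barycentric subdivision we may assume the action has no inversions, so each element is elliptic or hyperbolic; if there were no hyperbolic element, $G$ would consist of elliptic automorphisms, and a group of elliptic automorphisms of a tree has a global fixed vertex or fixes a point of $\partial_\infty X$ (Tits \cite{Tits}; see also \cite{Ser}). In the first case $\{v\}$ would be a $G$-invariant subtree, forcing $X$ to be a single vertex by minimality (which we exclude); the second case is ruled out by hypothesis. Fix such $h$ and write $p=h^{\infty}$, $q=h^{-\infty}$.

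For (2) I would argue by contradiction, assuming $g\{p,q\}\cap\{p,q\}\neq\varnothing$ for every $g\in G$. First, the $G$-orbit of $p$ is not contained in $\{p,q\}$: it cannot be $\{p\}$ since $G$ fixes no boundary point, and it cannot be $\{p,q\}$, for then $G$ would stabilize $\{p,q\}$, hence the unique bi-infinite geodesic joining $p$ and $q$, which is $T_{h}$, and minimality would force $X=T_{h}$ to be a line. Hence there is $a\in G$ with $ap\notin\{p,q\}$. Set $P=\stab_G(p)$ and $Q=\stab_G(q)$. The assumption says that
\[
G=\{g\in G:\ gp\in\{p,q\}\}\ \cup\ \{g\in G:\ gq\in\{p,q\}\},
\]
and each of the two sets on the right is a union of at most two left cosets, of $P$ respectively of $Q$ (namely $P$, and possibly one further coset of $P$; similarly for $Q$). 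Thus $G$ is a union of at most four cosets of the two subgroups $P$ and $Q$, so by B.\,H.\,Neumann's lemma one of them has finite index in $G$; say $[G:P]<\infty$. Then the normal core $N=\bigcap_{g\in G}gPg^{-1}=\bigcap_{g\in G}\stab_G(gp)$ has finite index in $G$ and fixes every point of the orbit $Gp$, in particular the two distinct ends $p$ and $ap$. Since $[G:N]<\infty$, some power $h^{k}$ with $k\ge 1$ lies in $N$, whence $h^{k}$ fixes $ap$; but $h^{k}$ is hyperbolic with axis $T_{h}$, so it fixes on $\partial_\infty X$ only $p$ and $q$ --- contradicting $ap\notin\{p,q\}$. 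This contradiction produces the desired $g$.

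I expect step (2) to be the main obstacle: the delicate point is excluding the ``parabolic-type'' configurations in which every element of $G$ shares an endpoint with the axis of $h$, even though $G$ fixes no end and does not stabilize $\{p,q\}$. The coset-covering device above --- writing $G$ as a finite union of cosets of the two endpoint stabilizers, extracting a finite-index one via Neumann's lemma, and contradicting it with the observation that a finite-index subgroup must contain a nontrivial power of $h$ while no power of $h$ fixes a third boundary point --- is the one step that is not a routine invocation of the preliminaries; the rest (existence of a hyperbolic element, behaviour of axes under conjugation, and the count of boundary fixed points of a hyperbolic automorphism) is standard.
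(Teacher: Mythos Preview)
The paper does not prove this proposition; it is quoted verbatim from \cite[Proposition 14]{H-P} without argument, so there is no in-paper proof to compare against. Your proof is correct and complete.

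A brief comment on the approach: the reduction to finding $g\in G$ with $g\{h^{\infty},h^{-\infty}\}\cap\{h^{\infty},h^{-\infty}\}=\varnothing$ and then conjugating is standard. Your handling of step~(2) via B.~H.~Neumann's coset-covering lemma is a clean and slightly unusual device; the more common argument (and, as far as one can infer, the one in \cite{H-P}) is geometric, showing directly that if every hyperbolic element of $G$ shared an axis endpoint with $h$ then all these endpoints would coincide and be globally fixed. Your route trades that tripod/ping-pong analysis for an index argument, which has the advantage of not requiring any discussion of how distinct axes sit in the tree.

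One minor point in step~(1): after barycentric subdivision the global fixed point furnished by Tits' theorem may be an edge midpoint rather than a vertex, in which case minimality forces $X$ to be a single edge rather than a single vertex. This is still a finite tree with $\partial_\infty X=\varnothing$, so the proposition is vacuous there; under the implicit standing assumption that $X$ is infinite (equivalently $\partial_\infty X\neq\varnothing$), your argument goes through unchanged.
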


The following proposition was shown in \cite{L-M} and \cite{B-I-O} and we record the version in \cite{B-I-O}.

\begin{prop}\cite[Lemma 3.5]{B-I-O}\label{prop: minimal 3}
    Let $G\curvearrowright X$ be a minimal strongly hyperbolic action. Then the induced topological action $G\curvearrowright \overline{\partial_\infty X}$ is a strong boundary action and thus minimal.
\end{prop}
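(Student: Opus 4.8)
The plan is to reduce the statement to Proposition~\ref{prop: north-south visual}. For that we need three things about the compact space $Z=\overline{\partial_\infty X}$: that it is infinite and compact Hausdorff, that $G\curvearrowright Z$ is minimal, and that some element of $G$ has north--south dynamics on $Z$ in the sense of Definition~\ref{defn: ns dyna}. Compactness and the Hausdorff property are immediate since $Z$ is closed in the compact metrizable space $(X\sqcup\partial X,\tau_s)$; and $Z$ is infinite, since if $h,g\in G$ are hyperbolic elements with disjoint fixed-point pairs (as provided by strong hyperbolicity) then $gh^{\infty}\neq h^{\infty}$, so the orbit of $h^{\infty}$ already contains infinitely many points. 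Thus the real content is to verify north--south dynamics on $Z$ and minimality.

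\emph{North--south dynamics on $Z$.} Fix a hyperbolic $h\in G$. Recall that $h$ fixes no vertex of $X$ and has north--south dynamics on $\partial_\infty X$ with attracting and repelling points $h^{\infty},h^{-\infty}$. Given open neighbourhoods $U\ni h^{\infty}$ and $V\ni h^{-\infty}$ in $Z$, I would shrink them, using metrizability, to open sets $U',V'$ in $Z$ with $\overline{U'}\subseteq U$ and $\overline{V'}\subseteq V$, apply the north--south dynamics of $h$ on the dense $G$-invariant subset $\partial_\infty X$ to the traces $U'\cap\partial_\infty X$ and $V'\cap\partial_\infty X$ to obtain a single $m$ with $h^{m}\bigl((\partial_\infty X)\setminus V'\bigr)\subseteq U'$ and $h^{-m}\bigl((\partial_\infty X)\setminus U'\bigr)\subseteq V'$, and then transfer these inclusions to $Z$. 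The transfer uses that $h^{\pm m}$ are homeomorphisms of $Z$, hence commute with taking closures in $Z$, together with the elementary fact that $Z\setminus V\subseteq\overline{(\partial_\infty X)\setminus V'}$ (any point of $Z$ outside $V$ lies outside $\overline{V'}$, hence is a limit of boundary points lying outside $V'$, since $\partial_\infty X$ is dense in $Z$); this yields $h^{m}(Z\setminus V)\subseteq\overline{U'}\subseteq U$ and, symmetrically, $h^{-m}(Z\setminus U)\subseteq V$ for the same $m$. Since $h$ is hyperbolic, $h^{\infty}$ and $h^{-\infty}$ are its only fixed points on $Z$, so this is genuine north--south dynamics.

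\emph{Minimality.} Let $C\subseteq Z$ be a non-empty closed $G$-invariant set, and pick $z\in C$. Using the north--south dynamics just established for the two hyperbolic elements $h,g$ whose fixed-point pairs $\{h^{\pm\infty}\}$ and $\{g^{\pm\infty}\}$ are disjoint, one shows $h^{\infty}\in C$ by a short ping-pong argument: if $z\neq h^{-\infty}$, then for every neighbourhood $U$ of $h^{\infty}$, choosing a neighbourhood $V$ of $h^{-\infty}$ disjoint from $U$ and not containing $z$ gives $m$ with $h^{m}z\in U$, so $C$, being $G$-invariant and closed, meets $U$; letting $U$ shrink shows $h^{\infty}\in C$. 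If instead $z=h^{-\infty}$, the same reasoning applied first to $g$ gives $g^{\infty}\in C$, and then to $h$ gives $h^{\infty}\in C$, since $g^{\infty}\neq h^{-\infty}$. Hence $\overline{G\cdot h^{\infty}}\subseteq C$. Finally, the closure of $G\cdot h^{\infty}$ inside $\partial_\infty X$ is a closed $G$-invariant subset of $\partial_\infty X$ with at least two points (it contains $h^{\infty}$ and $gh^{\infty}$), so its convex hull in the tree $X$ is a non-empty $G$-invariant subtree, which equals $X$ by minimality of $G\curvearrowright X$ (cf.\ Remark~\ref{rmk: action on bdry of tree} for viewing $X$ as a $\cat$ space); since the ends of a subtree that is a convex hull are exactly the closure of the defining set of ends, $G\cdot h^{\infty}$ is dense in $\partial_\infty X$, hence dense in $Z$, and so $C=Z$. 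With minimality and north--south dynamics in hand, Proposition~\ref{prop: north-south visual} applied with the element $h$ gives that $G\curvearrowright\overline{\partial_\infty X}$ is a strong boundary action, which is in particular minimal.

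\emph{Main obstacle.} The one delicate point is the passage in the second step from north--south dynamics on $\partial_\infty X$ to north--south dynamics on $\overline{\partial_\infty X}$: when $X$ is not locally finite these two spaces genuinely differ (by Proposition~\ref{prop: topologies coincide}, and the closure may pick up vertices of $X$), so one cannot simply restrict an ambient dynamical statement, and must argue throughout via density of $\partial_\infty X$ in $Z$ with careful bookkeeping of which closure is taken in which space. The remainder is a standard ping-pong argument combined with the convex-hull characterisation of minimal subtrees.
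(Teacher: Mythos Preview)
The paper does not actually prove this proposition; it merely quotes it as \cite[Lemma 3.5]{B-I-O}, so there is no in-paper argument to compare against. Your proof is correct and self-contained: the density transfer of north--south dynamics from $\partial_\infty X$ to its closure $Z$ is carried out carefully (the key inclusion $Z\setminus V\subseteq\overline{(\partial_\infty X)\setminus V'}$ and the commutation of homeomorphisms with closures are exactly what is needed), and the minimality step via the convex hull of $L=\overline{G\cdot h^\infty}\cap\partial_\infty X$ is the standard tree-theoretic route---your implicit claim that $\partial_\infty T_L=L$ for closed $L$ holds because any end outside $L$ is separated from $L$ by an edge that no geodesic between points of $L$ can cross. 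Two minor remarks: the sentence ``Since $h$ is hyperbolic, $h^{\infty}$ and $h^{-\infty}$ are its only fixed points on $Z$'' is true (a hyperbolic automorphism fixes no vertex, and any extra point of $Z$ is a vertex of $X$) but is not needed for Definition~\ref{defn: ns dyna}; and once you have established that $\overline{G\cdot h^\infty}=Z$, the ping-pong Step showing $h^\infty\in C$ could be replaced by the observation that north--south dynamics on $Z$ forces every orbit closure to contain $h^\infty$, which streamlines the logic slightly.
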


As a combination of these three propositions, one has the following result. 

\begin{prop}\label{prop: minimal 3 and strong bdry}
    Let $G\curvearrowright X$ be an action on a non-singular tree $X$ by automorphisms. Suppose $X$ is not isomorphic to a line. Consider the following statements.
    \begin{enumerate}[label=(\roman*)]
    \item $G\curvearrowright X$ by automorphism is minimal. 
    \item The continuous action $G\curvearrowright \partial_\infty X$ is minimal.
    \item The continuous action $G\curvearrowright \overline{\partial_\infty X}$ is minimal.
\end{enumerate}
If $G$ contains a hyperbolic element, then (ii)$\Leftrightarrow$(iii)$\Rightarrow$(i). If the action $G\curvearrowright X$ is strongly hyperbolic, then these three conditions are equivalent.
    \end{prop}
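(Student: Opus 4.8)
The plan is to assemble Proposition \ref{prop: minimal 3 and strong bdry} from the three preceding propositions, which already contain the substantive content; the work here is purely bookkeeping about which implications need which hypothesis. First I would handle the equivalence (ii)$\Leftrightarrow$(iii). Since $\partial_\infty X$ is dense in $\overline{\partial_\infty X}$ and $G$-invariant in the space $(X\sqcup\partial X,\tau_s)$, I would invoke a density argument analogous to Proposition \ref{prop: topo free permanant}: a $G$-invariant dense subset of a $G$-space is minimal (i.e.\ has all orbits dense in itself) if and only if the ambient space is minimal. Actually the cleanest route is: if $G\curvearrowright\overline{\partial_\infty X}$ is minimal then so is any $G$-invariant subset that is itself a union of orbits only if that subset is all of $\overline{\partial_\infty X}$ — so I should be careful. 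The correct statement is that $G\curvearrowright\partial_\infty X$ minimal (every $G$-orbit dense in $\partial_\infty X$, hence dense in $\overline{\partial_\infty X}$ too since $\partial_\infty X$ is dense there, while $\overline{\partial_\infty X}\setminus\partial_\infty X\subseteq X$ cannot be approached... ) forces every orbit in $\overline{\partial_\infty X}$ to be dense, giving (ii)$\Rightarrow$(iii); conversely (iii)$\Rightarrow$(ii) because minimality of the big space forces the closed $G$-invariant subset $\partial_\infty X$ — wait, $\partial_\infty X$ need not be closed. I would instead appeal directly to Proposition \ref{prop: minimal 3}: when the action is strongly hyperbolic, (i)$\Rightarrow$(iii); and more robustly, I would argue (ii)$\Leftrightarrow$(iii) by noting that orbit closures in $\overline{\partial_\infty X}$ of boundary points, intersected with $\partial_\infty X$, relate the two notions, using that $X\subseteq \overline{\partial_\infty X}$ has empty interior when $X$ is infinite (every vertex is a limit of boundary points in $\tau_s$).

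Next, for the implication (ii)$\Leftrightarrow$(iii)$\Rightarrow$(i) under the hypothesis that $G$ contains a hyperbolic element: this is exactly Proposition \ref{prop: minimal 1}, which states that if $X$ is non-singular and $G\curvearrowright\partial_\infty X$ is minimal and $G$ has a hyperbolic element, then $G\curvearrowright X$ is minimal. So I would just cite it. For the final sentence — that when $G\curvearrowright X$ is strongly hyperbolic all three are equivalent — it remains to show (i)$\Rightarrow$(iii) (then combined with (iii)$\Rightarrow$(ii)$\Rightarrow$(i) from the first part we get a cycle). A strongly hyperbolic action contains a hyperbolic element, so by definition $G$ does not fix a point of $\partial_\infty X$ (indeed it has two hyperbolic elements with disjoint fixed-point pairs, so no single point is fixed by all of $G$); hence by Proposition \ref{prop: minimal 2}, since $X$ is not a line, $G\curvearrowright X$ being minimal gives $G\curvearrowright X$ strongly hyperbolic — but we are already assuming that, so I instead directly apply Proposition \ref{prop: minimal 3}: a minimal strongly hyperbolic action has $G\curvearrowright\overline{\partial_\infty X}$ a strong boundary action, in particular minimal, which is (iii). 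That closes the loop.

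I would write it as: ``Proof. We first prove (ii)$\Leftrightarrow$(iii).'' [density argument], ``then (ii)$\Rightarrow$(i) is Proposition \ref{prop: minimal 1}'', ``finally, if the action is strongly hyperbolic, it contains a hyperbolic element so the first part applies, and (i)$\Rightarrow$(iii) follows from Proposition \ref{prop: minimal 3} once we observe a strongly hyperbolic action is in particular minimal-compatible... '' — more precisely (i) together with strong hyperbolicity feeds Proposition \ref{prop: minimal 3} directly. $\square$

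The main obstacle I anticipate is the equivalence (ii)$\Leftrightarrow$(iii): one must be careful because $\partial_\infty X$ sits inside $\overline{\partial_\infty X}$ as a dense but possibly non-closed, non-open subset, so neither direction is the trivial ``restrict/extend minimality'' statement. The clean fix is to show that for $\xi\in\partial_\infty X$, the orbit closure $\overline{G\xi}$ taken in $\overline{\partial_\infty X}$ satisfies $\overline{G\xi}\cap\partial_\infty X = \overline{G\xi}^{\,\partial_\infty X}$ and that any point of $X$ lying in $\overline{G\xi}$ is automatically a limit of boundary points in its closure (using local structure of $\tau_s$ on $X\sqcup\partial X$, cf.\ Proposition \ref{prop: topologies coincide} and \cite{M-S}); combined with the density of $\partial_\infty X$ in $\overline{\partial_\infty X}$ this yields that $\overline{G\xi}=\partial_\infty X$ in $\partial_\infty X$ iff $\overline{G\xi}=\overline{\partial_\infty X}$ in the big space, for every boundary point $\xi$. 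Since minimality of a $G$-space is equivalent to density of every orbit, and every orbit of $\overline{\partial_\infty X}$ starting at a point of $X$ stays in... — actually $X$ itself need not be $G$-invariantly trivial, but if $G$ has a hyperbolic element then $\overline{Gx}$ for $x\in X$ contains boundary points (the endpoints of translates of axes), so in the relevant cases $X$ contains no nonempty closed invariant set and (iii) is equivalent to minimality of the boundary part alone. I would phrase the argument to avoid this subtlety entirely by invoking Proposition \ref{prop: minimal 3} for (i)$\Rightarrow$(iii) and Proposition \ref{prop: minimal 1} for (ii)$\Rightarrow$(i), and proving (iii)$\Rightarrow$(ii) by the elementary observation that if every orbit is dense in $\overline{\partial_\infty X}$ then in particular every orbit of a boundary point, which lies in $\partial_\infty X$, is dense in $\partial_\infty X$ (as $\partial_\infty X$ inherits the subspace topology and $\overline{\partial_\infty X}$ is the closure), while (ii)$\Rightarrow$(iii) follows since $\overline{G\xi}\supseteq\overline{\partial_\infty X\,}=\overline{\partial_\infty X}$ once $G\xi$ is dense in $\partial_\infty X$.
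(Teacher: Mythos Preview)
Your implications (iii)$\Rightarrow$(ii), (ii)$\Rightarrow$(i), and (under strong hyperbolicity) (i)$\Rightarrow$(iii) are handled correctly and match the paper. The gap is in your direct argument for (ii)$\Rightarrow$(iii) under the mere existence of a hyperbolic element. Your final formulation, ``$\overline{G\xi}\supseteq\overline{\partial_\infty X}$ once $G\xi$ is dense in $\partial_\infty X$,'' only treats points $\xi\in\partial_\infty X$. When $X$ is not locally finite, $\overline{\partial_\infty X}$ contains vertices of $X$ (those of infinite valence), and you must show \emph{their} orbits are dense too. You noticed this issue earlier in your proposal and even sketched a fix (for $x\in X$, a hyperbolic element $g$ gives $g^nx\to g^\infty$, so $\overline{Gx}$ meets $\partial_\infty X$ and one concludes by (ii)), but then you abandoned that line and your closing summary omits it.

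The paper avoids this subtlety by a clean bootstrap: from (ii) and a hyperbolic element one first gets (i) via Proposition~\ref{prop: minimal 1}; minimality of $G\curvearrowright\partial_\infty X$ in particular rules out a global fixed boundary point, so Proposition~\ref{prop: minimal 2} (using that $X$ is not a line) upgrades the action to \emph{strongly} hyperbolic; then Proposition~\ref{prop: minimal 3} applied to this now-strongly-hyperbolic minimal action yields (iii). In other words, the paper never attempts a bare density argument for (ii)$\Rightarrow$(iii); it routes through (i) and Proposition~\ref{prop: minimal 2} to manufacture the strong hyperbolicity needed to invoke Proposition~\ref{prop: minimal 3}. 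Either adopt that route, or reinstate and make precise your own orbit-closure argument for points of $X\cap\overline{\partial_\infty X}$.
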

\begin{proof}
    Suppose there exists a hyperbolic automorphism $s\in G$. If (ii) holds, i.e., $G\curvearrowright \partial_\infty X$ is minimal, then Proposition \ref{prop: minimal 1} implies that $G\curvearrowright X$ is minimal. This has established (i).
    The minimality of $G\curvearrowright \partial_\infty X$ implies that there is no global fixed point on $\partial_\infty X$ for $G$. Then Proposition \ref{prop: minimal 2} shows that $G\curvearrowright X$ is in fact strongly hyperbolic. Then Proposition \ref{prop: minimal 3} shows that (iii) holds. 
On the other hand, (iii)$\Rightarrow$(ii) is trivial because $\partial_\infty X$ is a $G$-invariant subset of $\overline{\partial_\infty X}$.

Finally, suppose $G\curvearrowright X$ is strongly hyperbolic. Then Proposition \ref{prop: minimal 3} shows that (i)$\Rightarrow$(iii). Therefore, these three conditions are equivalent.
\end{proof}
 We remark that in general, (i) and (ii) in Proposition \ref{prop: minimal 3 and strong bdry} are not equivalent. See an example in Remark \ref{rmk: ascending loop non minimal}.

\subsection{Describing boundary of the Bass-Serre tree}\label{subsection: visual bdry of BS tree}
We now turn to the boundary of Bass-Serre trees.  
In this paper, $\IG=(\Gamma, \CG)$ always denotes a \textbf{countable} graph of groups. Then the Bass-Serre tree $X_\IG$ associated with $\IG$ is also countable. From now on, for each $e\in E(\Gamma)$, we always fix a left coset representative set $\Sigma_e\subset G_{o(e)}$ for $G_{o(e)}/\alpha_{e}(G_e)$ such that $1_{o(e)}\in \Sigma_e$. We begin with the following observation.

\begin{defn}\label{defn: non-degenerated}
    Let $\IG=(\Gamma, \CG)$ be a graph of groups. An edge $e\in E(\Gamma)$ is said to be \textit{non-degenerated} if $o(e)\neq t(e)$ and satisfies $|\Sigma_e|\geq 3$ and $|\Sigma_{\bar{e}}|\geq 2$. The amalgamated free product $G_{o(e)}*_{G_e} G_{t(e)}$ obtains from a non-degenerated edge (see Example \ref{example:free amalgamation}) is also said to be non-degenerated. 
    \end{defn}

\begin{rmk}\label{rmk: infinite bdry}
  Let $\IG=(\Gamma, \CG)$ be a graph of groups. A useful fact for the Bass-Serre tree $X_\IG$ is that in either of the following cases, the boundary $\partial_\infty X_\IG$ is infinite.
     \begin{enumerate}[label=(\roman*)]
         \item\label{nondegenerate} There exists a non-degenerated edge $e\in E(\Gamma)$.
         \item There exists a loop $e\in E(\Gamma)$ with $o(e)=t(e)$ such that $|\Sigma_e|\geq 2$ or $|\Sigma_{\bar{e}}|\geq 2$. 
         \end{enumerate}
    Indeed, in the first case, the family of infinite normalized words of the form
    \[h_{i_1}eg\bar{e}h_{i_2}eg\bar{e}\dots\]
    is infinite,
    where $h_{i_j}\in \Sigma_e\setminus \{1_{o(e)}\}$ for any $j\in \N$ and $g\in \Sigma_{\bar{e}}\setminus \{1_{t(e)}\}$. In the second case, the family of infinite normalized words of the form
    \[g_{i_1}eg_{i_2}e\dots\]
    is also infinite
    in which $g_{i_j}\in \Sigma_e$ for any $j\in \N$.
 \end{rmk}
 
Now, Recall the definition of normalized words, path words in Definition \ref{defn: normalized word} with respect to these $\Sigma_e$ and set $\CP_v$ of path words with the same origin $v$ in Definition \ref{defn: path map}.
Let $h_1, h_2\in \CP_v$. We say $h_2$ \textit{extends} $h_1$, denoted by $h_1\subset h_2$ if $h_2=h_1g_ke_k\dots g_ne_n$ holds without cancellation  for some non-trivial path word $g_ke_k\dots g_ne_n$. Take $\CP_v$ as a vertex set and we assign an edge between two path words $h_1$ and $h_2$ whenever $h_2=h_1ge$ holds without cancellation for an edge $e\in E(\Gamma)$ and a $g\in \Sigma_e$. Then, it is straightforward to verify that the graph $\CP_v$ becomes a tree.  See also \cite[Theorem 1.17, Remark 1.18]{Ba}.

\begin{rmk}\label{rmk: identify bass serre tree}
Recall the definition of Bass-Serre tree $X_\IG$ associated with a graph of groups $\IG$ in Definition \ref{rmk: construct bass serre tree} and the action of $\pi_1(\IG, v)$ on it. For each $\gamma G_u\in V(X_\IG)$ with $\gamma=[w]$ and $w$ is a $\IG$-word, Remark \ref{rmK: locally finite and non singular}\ref{path vertices} implies  $[P(N(w))]G_u=\gamma G_u$. This allows us to use path words to identify vertices of $X_\IG$. Indeed, as in \cite[Remark 1.18]{Ba},  there is a bijective map $\varphi: \CP_v\to X_\IG$ by $\varphi(w)=[w]G_u$ with $u=t(w)$, which can be verified to be a tree isomorphism without difficulty.

Moreover, through the isomorphism $\varphi$ one may define an action of $\pi_1(\IG, v)$ on $\CP_v$ by  $s\cdot w=P(N(w_0w))$, where $s=[w_0]\in \pi_1(\IG, v)$. Under this action, the map $\varphi: \CP_v\to X_\IG$ is a $\pi_1(\IG, v)$-equivariant isomorphism.
\end{rmk}

\begin{lem}\label{lem: path words permance}
    Let $h_1, h_2\in \CP_v$ such that $h_2$ extends $h_1$. Let $s=[w]\in \pi_1(\IG, v)$ such that $w$ is a normalized word with $\ell(w)<\ell(h_1)$. Then the path word $P(N(wh_2))$ also extends the path word $P(N(wh_1))$.
\end{lem}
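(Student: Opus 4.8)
The plan is to run the normalization process on the words $wh_1$ and $wh_2$ in parallel and track how the reversal-cancellations interact with the part of $h_2$ that strictly extends $h_1$. Write $h_2 = h_1 \cdot u$ where $u = g_k e_k \cdots g_n e_n$ is the non-trivial path word witnessing the extension; since $h_1,h_2\in\CP_v$ are path words, $wh_1$ and $wh_2$ are genuine words of type equal to the concatenated paths, and $\ell(wh_2)=\ell(wh_1)+\ell(u)$. First I would argue that in reducing $wh_1$ to a reduced word, every reversal that gets cancelled involves a letter coming from $w$ and the *adjacent* letter of $h_1$: indeed, since $\ell(w)<\ell(h_1)$ and $h_1$ is itself reduced (being a path word), no reversal can occur entirely inside $h_1$, and the cancellation ``front'' between $w$ and $h_1$ can eat at most $\ell(w)$ edges of $h_1$ — strictly fewer than $\ell(h_1)$. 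Hence the reduction of $wh_1$ produces a reduced word of the shape $w' \cdot (\text{a nonempty tail of } h_1)$, where $w'$ is determined entirely by $w$ and the first few letters of $h_1$.

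The key point is then that this same cancellation front behaves identically when we reduce $wh_2$: because $h_2$ begins with exactly the same letters as $h_1$ (it extends $h_1$), the sequence of reversals and the resulting substitutions in the initial portion of $wh_2$ are literally the same as for $wh_1$, so the reduction of $wh_2$ produces $w' \cdot (\text{the same nonempty tail of }h_1) \cdot u$. In other words, the reduced form of $wh_2$ extends the reduced form of $wh_1$ by the untouched block $u$. I would make this precise by an induction on the number of reversal-cancellation steps, using the description of a single cancellation step in Remark \ref{rmk: transfer to reduced and normalized word}, together with Remark \ref{rmk: minimum length reduced words} to guarantee the cancellation terminates after at most $\ell(w)$ steps without ever reaching into the tail.

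Next I would pass from reduced words to normalized words: the normalization step (pushing edge-group elements through the $\alpha_{e_i}$ into the transversals $\Sigma_{e_i}$, as in Remark \ref{rmk: transfer to reduced and normalized word}) is applied left-to-right, so once the shared initial segment of the two reduced words has been normalized identically, the remaining block $u$ attached to $wh_2$ gets normalized in exactly the same way as the corresponding (empty) tail plus $u$ — the only subtlety is a single edge-group element $h\in G_{e_n}$ that may be created at the very end, but this is absorbed into $g_{n+1}$, which is then killed by $P$. Applying $P$ to both normalized words removes the terminal vertex-group element, yielding $P(N(wh_1))$ and a word of the form $P(N(wh_1))\cdot (\text{normalized form of } u)$, i.e.\ $P(N(wh_2))$ extends $P(N(wh_1))$ by a non-trivial path word, which is exactly the claim.

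The main obstacle I anticipate is bookkeeping the boundary case of the cancellation front: one must rule out that the reduction ``just barely'' consumes all of $h_1$ (it cannot, by the strict inequality $\ell(w)<\ell(h_1)$, but this needs to be stated carefully, since a single cancellation removes *two* edges, so naively one might worry about $\ell(w)$ versus $\ell(h_1)/2$ — in fact each step removes one edge of $w$ and one of $h_1$, keeping the fronts balanced) and that the normalization of the shared prefix does not ``leak'' a nontrivial edge-group element into the block $u$ in a way that changes its path-word content — it can only change the *initial vertex-group letter* $g_k$ of $u$ within its coset, which still leaves a non-trivial path word and hence a genuine extension. Once these two points are nailed down, the rest is the routine left-to-right tracking described above.
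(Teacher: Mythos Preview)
Your proposal is correct and follows essentially the same approach as the paper: both arguments track the cancellation front at the $w$--$h_1$ boundary, use that each reversal step removes one edge from $w$ and one from $h_1$ so that the strict inequality $\ell(w)<\ell(h_1)$ guarantees a nonempty tail of $h_1$ survives, and then observe that left-to-right normalization treats the shared prefix identically. The only cosmetic difference is that the paper packages this as a formal induction on $\ell(h_1)$ (peeling off one reversal at a time reduces to $\ell(w')=\ell(w)-1<\ell(h_1)-1=\ell(h'_1)$), whereas you induct on the number of cancellation steps; these are equivalent bookkeeping choices for the same argument.
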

\begin{proof}
     Write $w=g$ or $w=g_1e_1\dots g_ne_ng$ in normalized form in the sense of Definition \ref{defn: normalized word}. Write $h_1$ and $h_2$ explicitly by
    $h_1=r_1f_1\dots r_kf_k$ and $h_2=r_1f_1\dots r_lf_l$ such that $l>k$ and all $f_i\in E(\Gamma)$ with $o(f_1)=v$ and $r_i\in \Sigma_{f_i}$.
    We now proceed by induction on the lengths of $h_1$. Let $\ell(h_1)=1$, i.e., $h_1=r_1f_1$ and therefore in this case $\ell(w)=0$, i.e., $w=g\in G_v$.
    
    First, note that $wh_1=(gr_1)f_1$ and $wh_2=(gr_1)f_1r_2f_2\dots r_lf_l$ are reduced by definition. Then by the normalization procedure described in Remark \ref{rmk: transfer to reduced and normalized word}, there exists $r'_i\in \Sigma_{f_i}$ for $1\leq i\leq l$ and $g'\in G_{t(f_1)}$ and $g''\in G_{t(f_l)}$ such that $N(wh_1)=r'_1f_1g'$ and $N(wh_2)=r'_1f_1r'_2f_2\dots r'_lf_lg''$, which implies
$P(N(wh_2))=r'_1f_1\dots r'_lf_l$ extends $P(N(wh_1))=r'_1f_1$ by definition.

Now let $k\geq 2$ and suppose the statement holds for all path words $h_1$ and $h_2$ and normalized  $w$ with length $\ell(w)<\ell(h_1)<k$. Now let $w, h_1, h_2$ be given with $\ell(w)=n$ and $\ell(h_1)=k$, i.e., $s=g_1e_1\dots g_ne_ng$ and $h_1=r_1f_1\dots r_kf_k$. Then 
\[wh_1= g_1e_1\dots g_ne_n(gr_1)f_1\dots r_kf_k\] and 
\[wh_2= g_1e_1\dots g_ne_n(gr_1)f_1\dots r_lf_l.\]
Suppose $e_n(gr_1)f_1$ is a reversal in the sense of Remark \ref{rmk: transfer to reduced and normalized word}. In this case, $f_1=\bar{e}_n$.  Then there exists a $r'=\alpha_{e_n}(\alpha^{-1}_{\bar{e}_n}(gr_1))$ such that 
\[wh_1=g_1e_1\dots g_{n-1}e_{n-1}(g_nr'r_2)f_2\dots r_kf_k= (g_1e_1\dots g_{n-1}e_{n-1}(g_nr'))\cdot (r_2f_2\dots r_kf_k)\]
and similarly
\[wh_2=(g_1e_1\dots g_{n-1}e_{n-1}(g_nr'))\cdot (r_2f_2\dots r_lf_l).\]
We define normalized word $w'=g_1e_1\dots g_{n-1}e_{n-1}(g_nr')$ and path words $h'_1=r_2f_2\dots r_kf_k$ and $h'_2=r_2f_2\dots r_lf_l$. Observe that $\ell(w')=n-1<k-1=\ell(h'_1)$. Then the induction hypothesis implies that 
\[P(N(wh_2))=P(N(w'h'_2)) \text{ extends } P(N(w'h'_1))=P(N(wh_1))\]
since the normalization of a word is unique by Remark \ref{rmk: transfer to reduced and normalized word}.
Otherwise, there is no reversal in the presentation of $wh_1= g_1e_1\dots g_ne_n(gr_1)f_1\dots r_kf_k$ and $wh_2= g_1e_1\dots g_ne_n(gr_1)f_1\dots r_lf_l$ and thus they are already reduced. Then the normalization procedure described in Remark \ref{rmk: transfer to reduced and normalized word} shows that $P(N(wh_2))$ extends $P(N(wh_1))$.   This finishes the proof. 
 \end{proof}

 \begin{defn}\label{defn: infinite normalized word}
We say an infinite sequence $\xi=g_1e_1g_2e_2\dots$ is an \textit{infinite reduced word} if there is no reversal on $\xi$.
Furthermore, $\xi$ is said to be an \textit{infinite normalized word} if any initial segment $g_1e_1g_2e_2\dots g_ne_n$ is a path word in the sense of Definition \ref{defn: normalized word}. We set the origin of $\xi$ by $o(\xi)=o(e_1)$.
\end{defn}

\begin{rmk}\label{rmk: geodesic}
Therefore, let $u\in V(\Gamma)$ with $u\neq v$ and $\gamma\in \pi[v, u]$. Recall $\gamma G_u$ is a vertex on the tree $X_\CG$ with $\gamma=[w']$ in Remark \ref{rmk: construct bass serre tree}. Define $w=P(N(w'))$ and write $w=g_1e_1\dots g_ne_n$ explicitly. Define a path of vertices $([w_m]G_{v_m})_{m=0}^n$ in which $w_0=1_v$, $v_0=v$ and $w_m=g_1e_1\dots g_me_m$, $v_m=t(e_m)$ for $1\leq m\leq n$. Then the minimum length of each path word $w_m$ by Remark \ref{rmk: minimum length reduced words} shows that $([w_m]G_{v_m})_{m=0}^n$ is a geodesic from $[1_v]G_v$ to $\gamma G_u$. In the picture of $\CP_v$, the sequence $(w_m)_{m=0}^n$ is a geodesic from $1_v$ to $w$. In general, let $(w_i)_{i=0}^\infty$ be a sequence of path words such that  $1_v=w_0\subset w_1\subset\dots$. Then $(w_i)^\infty_{i=0}$ identifies a geodesic in the tree $\CP_v$.
\end{rmk}

The following combinatorial characterization of the boundary of a Bass-Serre tree and the action on it below have been formulated in \cite[Section 2.3.1]{B-M-P-S-T} under the assumption that the $\IG$ is locally finite and non-singular. However, working in a more general framework above on actions on the visual boundary of $\cat$ space, the following shows these assumptions are not necessary. In addition, to be self-contained, we choose to provide more geometric details.
\begin{rmk}\label{rmk: action on bdry of Bass-Serre tree}
In Remark \ref{rmk: identify bass serre tree}, one may identify $\pi_1(\IG, v)\curvearrowright X_\IG$ by the $ \pi_1(\IG, v)\curvearrowright\CP_v$. Note that these actions by automorphism preserve the path metric on the tree. Let $\xi=(\gamma_nG_{u_n})_{n=0}^{\infty}$ be a geodesic in $X_\IG$, where $\gamma_n=[w_n]$ such that $w_n$ are path words  with $o(w_n)=v$ and $w_0=1_v$. Then $(w_n)$ corresponds to a geodesic ray in $\CP_v$ starting from $1_v$ and by the definition of the tree $\CP_v$, 
one necessarily has $w_0\subset w_1\subset\dots$. This further determines an infinite normalized word $\xi=g_1e_1\dots$ such that $w_n=g_1e_1\dots g_ne_n$ for $n\geq 1$. Therefore, by Remark \ref{rmk: action on bdry of tree}, the boundary  $\partial \CP_v$, and thus $\partial X_\CG$, can be identified by the set of infinite normalized words $\xi$ starting from $1_v$, i.e. the origin $o(\xi)=v$. Equipped with the cone topology generated by 
\[U(\mu)=\{\gamma\in \partial_\infty X_\CG: \mu \text{ is an initial segment of } \gamma\}\]
 for all $n\in \N$ and all path words $\mu=g_1e_1\dots g_ne_n$ with $o(\mu)=v$, it is ready verifying that the map $\varphi: \CP_v\to X_\IG$ extends to a homeomorphism $\bar\varphi:\partial_\infty \CP_v\to \partial_\infty X_\CG$.
 
 Now, for the action $\pi_1(\IG, v)$ on $\partial_\infty X_\IG$, let $s=[w]\in \pi_1(\IG, v)$ be a non-trivial element. Then the geodesic ray $\eta=(\gamma_nG_{u_n})_{n=0}^{\infty}$ on $X_\IG$ will be translated to $(s\gamma_nG_{u_n})_{n=0}^{\infty}=([P(N(ww_n))]G_{u_n})_{n=0}^{\infty}$, which is still a geodesic ray that not necessarily starts from $[1_v]G_v$. Thus, regarding $\eta$ as an element in $\partial_\infty X_\CG$, Remark \ref{rmk: action on bdry of tree} implies that  $s\cdot \eta=\eta'= (\gamma'_nG_{v_n})_{n=0}^\infty$ such that $\gamma'_0G_v=[1_v]G_v$ and  $\eta'$ is cofinal with $[P(N(ww_n))]G_{u_n})_{n=0}^{\infty}$ on $X_\IG$. 

Using the homeomorphism $\bar{\varphi}: \partial_\infty\CP_v\to \partial_\infty X_\IG$, one defines an action $\pi_1(\IG, v)\curvearrowright \partial_\infty \CP_v$ by $s\cdot \xi=\bar\varphi^{-1}(s\cdot\bar\varphi(\xi))$, where $s\cdot \bar\varphi(\xi)$ is described above. Then this makes $\bar\varphi$ a topological conjugate map between these two boundary dynamical systems and thus one may identify $\pi_1(\IG, v)\curvearrowright \partial_\infty X_\CG$ by $\pi_1(\IG, v)\curvearrowright \partial_\infty \CP_v$, which will be described explicitly next.

 Let $s=[w]\in \pi_1(\IG, v)$  and $\xi=(w_n)_{n=0}^\infty$ with $w_0=1_v$ be a geodesic ray in the tree $\CP_v$. Then $s\cdot\xi$ is exactly the geodesic ray in $\CP_v$ starting from $1_v$ and cofinal with $(P(N(ww_n)))_{n=0}^\infty$. Denote by $n_0=\ell(w)$ and write $P(N(ww_{n_0}))$ explicitly by $1_v$ or $g_1e_1\dots g_me_m$. Then, Lemma \ref{lem: path words permance} implies that $P(N(ww_k))$ extends $P(N(ww_l))$ for any $k>l\geq n_0+1$ as $\ell(w)\leq \ell(w_{n_0})<\ell(w_l)$.
Define a sequence of vertices $(t_n)_{n=0}^\infty$ on $\CP_v$ by
 \begin{enumerate}[label=(\roman*)]
     \item  $t_0=1_v$,
     \item $t_i=g_1e_1\dots g_ie_i$ for $1\leq i\leq m$,
     \item $t_{m+k}=P(N(ww_{n_0+k}))$ for $k\geq 1$,
     \end{enumerate}
Note that $t_0\subset t_1\subset\dots$ holds. Remark \ref{rmk: geodesic} entails $(t_n)_{n=0}^\infty$ is a geodesic and thus the unique geodesic ray starting from $1_v$ and cofinal with $(P(N(ww_n)))_{n=0}^\infty$. 

 In $\partial_\infty \CP_v$,
the infinite normalized word $\sigma$ determined by $(t_n)_{n=0}^\infty$ is thus obtained by applying the reduction procedure in Remark \ref{rmk: transfer to reduced and normalized word} and then the normalization procedures (possibly infinite times) described in Remark \ref{rmk: transfer to reduced and normalized word} to the infinite word $w\xi$  from left to right (see an explicit example in Remark \ref{rmk: ascending loop non minimal}).  We denote by $N(w\xi)$ this resulting infinite normalized word $\sigma$.
As a summary,  in this picture, the boundary action $\pi_1(\IG, v)\curvearrowright \partial_\infty X_\IG$ can be identified by $s\cdot \gamma=N(w\gamma)$ where $s=[w]\in \pi_1(\IG, v)$.
 \end{rmk}

\subsection{Repeatable words, flowness, and minimal boundary actions}

The following notion, appeared as in \cite[Definition 5.14]{B-M-P-S-T}, plays an important role in the investigation of the boundary action of $\pi_1(\IG, v)$.

\begin{defn}\label{defn: repeatable}
Let $w=g_1e_1g_2e_2...g_{n}e_n$ be a path word in the sense of Definition \ref{defn: normalized word}. We say $w$ is \textit{repeatable} if $o(e_1)=t(e_n)$ and $g_1e_1\neq 1_{o(\overline{e}_n)}\overline{e}_n$.
\end{defn}

\begin{rmk}
Note that a repeatable word $w$ represents an element of the fundamental group $\pi_1(\IG,v)$ where $v=o(e_1)=t(e_n)$ is the base vertex in this context. We then denote by $w^m$ the concatenation of $w$ by itself $m$ times. We note that the word $w^m$ has no reversals. In particular, it is also a path word. We then denote by $w^\infty$ the point in $\partial_\infty X_\IG$ by concatenating $w$ by itself for infinite times under the identification in Remark \ref{rmk: action on bdry of Bass-Serre tree}.
\end{rmk}

The following concept  in \cite[Paragraph 6.7]{Ba} will show that the element $[w]$ in $\pi_1(\IG, v)$ representing by a repeatable word $w$ acts on the Bass-Serre tree $X_\IG$ hyperbolically.

\begin{defn}\label{defn: coherent}
    Two edges $e, f$ of a tree is said to be \textit{coherent} if the geodesic $[o(e), o(f)]$ from the origin $o(e)$ of $e$ to the origin $o(f)$ of $f$ contains exactly one of $e$ and $f$.
\end{defn}

\begin{lem}\label{lem: coherent hyperbolic}\cite[Lemma 6.8]{Ba}
    Let $e$ be an edge of a tree $X$ and $s\in \aut(X)$. If $e$ and $se\neq e$ are coherent, then $s$ is hyperbolic and $e$ lies on $T_s$, the axis of $s$.
\end{lem}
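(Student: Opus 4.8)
The plan is to build the axis of $s$ explicitly out of $e$ and its $s$-translates. The first move is to unwind the definition of coherence into usable geometric form. Possibly after replacing $e$ by $\bar e$ — which changes neither conclusion, since $\bar e$ lies on $T_s$ exactly when $e$ does, and since one checks directly that $(e,se)$ is coherent iff $(\bar e,\overline{se})$ is — coherence of $e$ and $se$ says precisely that the geodesic $[o(e),o(se)]$ begins with the edge $e$ and does not pass through $t(se)$. Hence, writing $q:=[t(e),o(se)]$ (possibly trivial), the concatenation $P:=e\,q\,se$ is reduced, so $P$ is the geodesic $[o(e),t(se)]$; set $n:=\ell(P)=\ell(q)+2\ge 2$. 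Note also $se\neq\bar e$: otherwise $[o(e),o(se)]$ would be the single edge $e$, which contains both $e$ and $\bar e$, contradicting coherence. Thus $s$ is not an inversion, so $s$ is elliptic or hyperbolic.

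Next I would glue the translates together. Applying $s$, the segment $sP=se\,sq\,s^2e$ is the geodesic $[o(se),t(s^2e)]$, and because $o(se)$ and $t(se)$ are endpoints of $P$ and of $sP$ respectively, an elementary tree argument shows $P\cap sP$ is exactly the shared edge $se$. Since every pair of consecutive edges in the word $e\,q\,se\,sq\,s^2e$ already occurs consecutively in $P$ or in $sP$, this word is reduced, so $P\cup sP$ is a geodesic segment of length $2n-1$. Iterating in both directions, $L:=\bigcup_{k\in\Z}s^kP$ is a bi-infinite geodesic line with $sL=L$, and in the obvious parametrisation $s$ acts as a translation: since $s$ sends $o(e)$ (at position $0$ in $P$) to $o(se)$ (at position $n-1$ in $P$), the translation length equals $n-1\ge 1>0$.

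It remains to identify $s$ as hyperbolic with $T_s=L$. As $s$ translates $L$ nontrivially it fixes no vertex of $L$; and since $sL=L$, the nearest-point projection $X\to L$ is $s$-equivariant, so a fixed vertex of $s$ anywhere in $X$ would project to a fixed vertex of $s$ on $L$, which is impossible. Hence $s$ is not elliptic, and being also not an inversion it is hyperbolic. An $s$-invariant geodesic line on which $s$ acts by nontrivial translation is necessarily the axis, so $T_s=L\supseteq P\ni e$, as desired.

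The gluing steps are routine tree combinatorics; the step I expect to be the main obstacle is the first one — faithfully translating the combinatorial definition of coherence into the statement that $[o(e),o(se)]$ starts with $e$ and avoids $t(se)$, and handling the $e\leftrightarrow\bar e$ case split this forces — after which the construction of $L$ and the verification of positive translation length are mechanical.
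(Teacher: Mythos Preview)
The paper does not prove this lemma; it merely cites \cite[Lemma 6.8]{Ba}. Your sketch is essentially a correct self-contained proof along the standard lines: build the line $L=\bigcup_k s^kP$ from the segment $P=e\cdot q\cdot se$, check that consecutive translates glue without backtracking, and conclude via the $s$-equivariant nearest-point projection to $L$.

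One genuine slip to fix: from $se\neq\bar e$ you infer ``$s$ is not an inversion,'' but an inversion means $sf=\bar f$ for \emph{some} edge $f$, not necessarily the given $e$. Fortunately your own machinery patches this immediately. The nearest-point projection $\pi:X\to L$ is $s$-equivariant and $s$ acts on $L$ as translation by $n-1\ge 1$; if $sf=\bar f$ for some edge $f$, then $s$ swaps $\pi(o(f))$ and $\pi(t(f))$ on $L$, which is impossible for a nonzero translation. So the inversion case is excluded by the same projection argument you use to exclude the elliptic case, and the conclusion stands.
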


\begin{prop}\label{prop: repeatable hyperbolic}
   Let $\IG=(\Gamma, \CG)$ be a graph of groups and $w=g_1e_1\dots g_ne_n$ be a repeatable word with $o(e_1)=v$. Then the element  $\gamma=[w]\in\pi_1(\IG, v)$ is a hyperbolic element in $\aut(X_\IG)$.
\end{prop}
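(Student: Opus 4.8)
The plan is to apply Lemma~\ref{lem: coherent hyperbolic}: it suffices to exhibit an edge $e$ of $X_\IG$ such that $\gamma e \neq e$ and $e$, $\gamma e$ are coherent, since then Lemma~\ref{lem: coherent hyperbolic} immediately gives that $\gamma$ is hyperbolic (with $e$ on its axis). Working in the tree $\CP_v$ via the $\pi_1(\IG,v)$-equivariant isomorphism $\varphi$ of Remark~\ref{rmk: identify bass serre tree}, the natural candidate is the edge of $\CP_v$ joining the path word $1_v$ to the path word $g_1e_1$, i.e. the initial edge of the geodesic ray $w^\infty$. Call this edge $e$. Then $\gamma e$ is the edge joining $\gamma\cdot 1_v = P(N(w))$ to $\gamma\cdot (g_1e_1) = P(N(w g_1 e_1))$, which, since $w = g_1e_1\dots g_ne_n$ is already a path word and $wg_1e_1 = g_1e_1\dots g_ne_n g_1 e_1$ has no reversals precisely because $w$ is \emph{repeatable} (the condition $g_1e_1 \neq 1_{o(\bar e_n)}\bar e_n$ rules out the only possible reversal $e_n g_1 e_1$), is the edge joining $w$ to $w^2$ along the ray $w^\infty$.

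The key steps, in order, are: first, identify $e$ as above and compute $\gamma e$ explicitly as the second edge of the geodesic ray $w^\infty$ in $\CP_v$, using that $w^m$ has no reversals (noted in the Remark following Definition~\ref{defn: repeatable}) together with the description of the action in Remark~\ref{rmk: action on bdry of Bass-Serre tree} / Remark~\ref{rmk: identify bass serre tree}. Second, observe $\gamma e \neq e$: the endpoints of $e$ are path words of lengths $0$ and $1$, while the endpoints of $\gamma e$ are path words of lengths $n$ and $n+1$ (here one uses that $n = \ell(w) \geq 1$ and that $w, w^2$ are reduced, so by Remark~\ref{rmk: minimum length reduced words} these lengths are the actual tree-distances from the base vertex $1_v$), so for $n\geq 1$ the two edges are distinct. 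Third, verify coherence: the geodesic from $o(e) = 1_v$ to $o(\gamma e) = w$ is exactly the initial segment $(1_v, g_1e_1, g_1e_1g_2e_2, \dots, w)$ of the ray $w^\infty$ (this is Remark~\ref{rmk: geodesic}), and this segment contains the edge $e$ (its first edge) but not the edge $\gamma e$ (which is the next edge, from $w$ to $w^2$, lying strictly beyond $w$). Hence exactly one of $e, \gamma e$ lies on $[o(e), o(\gamma e)]$, so $e$ and $\gamma e$ are coherent by Definition~\ref{defn: coherent}. Lemma~\ref{lem: coherent hyperbolic} then finishes the argument.

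I expect the main obstacle to be the bookkeeping in step one: one must be careful that the ray $w^\infty$ really is a geodesic ray in $\CP_v$, i.e. that the consecutive path words $1_v \subset g_1e_1 \subset g_1e_1g_2e_2 \subset \cdots$ and then $w \subset wg_1e_1 \subset \cdots$ are genuine path words with no reversals at the junctions. The only potentially problematic junction is where one copy of $w$ meets the next, i.e. the subword $e_n g_1 e_1$; the repeatability hypothesis $o(e_1) = t(e_n)$ and $g_1 e_1 \neq 1_{o(\bar e_n)}\bar e_n$ is exactly what guarantees this is not a reversal (since a reversal would require $e_1 = \bar e_n$ and $g_1 \in \alpha_{e_1}(G_{e_1})$, forcing $g_1 = 1$ by the normalization convention $g_1 \in \Sigma_{e_1}$). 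Everything else is then a matter of unwinding the definitions, and no delicate estimate is needed. An alternative, essentially equivalent, route would be to show directly that $\gamma$ translates along the line obtained by concatenating $w^\infty$ with $(w^{-1})^\infty$ appropriately and invoke the characterization of hyperbolic automorphisms via north--south dynamics (Remark following Remark~\ref{rmk: action on bdry of tree}), but the coherence argument via Lemma~\ref{lem: coherent hyperbolic} is cleaner and is clearly what the cited Lemma~\ref{lem: coherent hyperbolic} is set up for.
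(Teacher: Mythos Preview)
Your approach is correct and essentially identical to the paper's: both take the initial edge $\epsilon$ from $1_v$ to $g_1e_1$, compute $\gamma\epsilon$ as the edge from $w$ to $wg_1e_1$ (using that repeatability prevents a reversal at the junction $e_ng_1e_1$), and verify coherence via the geodesic $[1_v,w]$ from Remark~\ref{rmk: geodesic} before invoking Lemma~\ref{lem: coherent hyperbolic}. One cosmetic slip to fix: you twice misidentify $\gamma e$ as ``the edge joining $w$ to $w^2$'' and ``the second edge of the ray $w^\infty$'', whereas (as your own length count $n$, $n{+}1$ shows) it is the edge from $w$ to $wg_1e_1$, i.e.\ the $(n{+}1)$-st edge of $w^\infty$; this does not affect the argument.
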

\begin{proof}
    Define $w_0=1_v$ and $w_i=g_1e_1\dots g_ie_i$ for $0<i\leq n$, which are path words. Then $\epsilon=([1_v]G_v, [w_1]G_{t(e_1)})$ is an edge on $X_\IG$ by Remark \ref{rmk: construct bass serre tree} and $o(\epsilon)=[1_v]G_v$. Define $\gamma=[w]$. Then $\gamma\cdot \epsilon=([w]G_v, [wg_1e_1]G_{t(e_1)})$ with $o(\gamma\cdot \epsilon)=[w]G_v$.  Then Remark \ref{rmk: geodesic} implies that the geodesic segment $[o(\epsilon), o(\gamma\cdot \epsilon)]=([w_i]G_{t(w_i)})_{i=0}^n$ consisting of edges $\epsilon_i=([w_i]G_{t(w_i)}, [w_{i+1}]G_{t(w_{i+1})})$ for $0\leq i\leq n-1$, none of which is $\gamma\cdot \epsilon$. This is because the repeatability of $w$ implies that $wg_1e_1$ is still a path word and therefore $[wg_1e_1]G_{t(e_1)}$ is none of the vertices $[w_i]G_{t(w_i)}$ for $0\leq i\leq n$. Then Lemma \ref{lem: coherent hyperbolic} shows that $[w]$ acts hyperbolicly.
\end{proof}

We now investigate the minimality of the boundary action. The following concept was introduced in \cite[Definition 5.3]{B-M-P-S-T} using a graph theoretical interpretation. However, we would like to formulate it using words.

\begin{defn}\label{defn:flowness}
Let $\IG=(\Gamma, \CG)$ be a graph of groups. Given two edges $e, f\in E(\Gamma)$, we say $f$ \textit{flows to} $e$ if there exists a word $\nu=g_1e_1\dots g_ne_ng$  with $g\in \Sigma_f$  such that the word
\[1_{o(e)}e\nu f=1_{o(e)}eg_1e_1\dots g_ne_ngf\]
is a path word in the sense of Definition \ref{defn: normalized word}.  Let $\xi=h_1f_1h_2f_2\dots$ be an infinite normalized word in the sense of Definition \ref{defn: infinite normalized word}. We say \textit{$\xi$ flows to $e$} if $f_i$ flows to $e$ for some edge element $f_i$ in $\xi$.
\end{defn}

The following criterion detecting the minimality of the action $\pi_1(\IG, v)\curvearrowright \partial_\infty X_\IG$ was shown in \cite[Theorem 5.5]{B-M-P-S-T} in the locally finite setting. However, the proof holds in general. To be self-contained and avoid groupoid languages in \cite{B-M-P-S-T}, we choose to provide the proof here. 

\begin{prop}\label{prop: minimal on bdry}
Let $\IG=(\Gamma, \CG)$ be a graph of groups. Choose a vertex $v\in V(\Gamma)$ as the base vertex. Suppose $\xi$ flows to $e$ for every infinite normalized word $\xi$ and edge $e\in E(\Gamma)$. Then the action  $\alpha: \pi_1(\IG, v)\curvearrowright \partial_\infty X_\IG$ is minimal.
\end{prop}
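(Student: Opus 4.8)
The plan is to verify minimality directly from its definition: for every point $\xi\in\partial_\infty X_\IG$ and every basic clopen set $U(\mu)$, I will exhibit $g\in\pi_1(\IG,v)$ with $g\cdot\xi\in U(\mu)$. I use the combinatorial model of Remark~\ref{rmk: action on bdry of Bass-Serre tree}, so $\xi=h_1f_1h_2f_2\cdots$ is an infinite normalized word with $o(\xi)=v$ and $\mu=g_1e_1\cdots g_ne_n$ is a path word with $o(\mu)=v$; the case $n=0$ is trivial, so assume $n\geq 1$. The conceptual idea is that flowness is exactly the device needed to \emph{graft} the prescribed initial segment $\mu$ onto a tail of $\xi$ by means of a group element, thereby steering $\xi$ into $U(\mu)$.

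First I would apply the hypothesis to the infinite normalized word $\xi$ and the edge $e=e_n$: since $\xi$ flows to $e_n$, some edge element $f_i$ occurring in $\xi$ flows to $e_n$, so by Definition~\ref{defn:flowness} there is a normalized word $\nu$ with $o(\nu)=t(e_n)$ and $t(\nu)=o(f_i)$, whose last letter lies in $\Sigma_{f_i}$, such that $1_{o(e_n)}e_n\nu f_i$ is a path word. Writing $\xi=\tau_i\xi'$ with $\tau_i=h_1f_1\cdots h_if_i$ and $\xi'=h_{i+1}f_{i+1}h_{i+2}f_{i+2}\cdots$, I would form
\[\zeta:=(\mu\,\nu\,f_i)\,\xi'=\mu\,\nu\,f_i\,h_{i+1}f_{i+1}h_{i+2}f_{i+2}\cdots,\]
and check that $\zeta$ is a genuine infinite normalized word with $o(\zeta)=v$, hence a point of $\partial_\infty X_\IG$ lying in $U(\mu)$. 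For this the only thing to verify is that no reversal appears where $\mu$ meets $\nu$, where $\nu$ meets $f_i$, or where $f_i$ meets $\xi'$: the first two are non-reversing because $1_{o(e_n)}e_n\nu f_i$ is a path word, and the last because the subword $f_ih_{i+1}f_{i+1}$ already occurs inside the reversal-free word $\xi$; the transversal conditions hold for the same reasons (the degenerate case in which $\nu$ reduces to a single group element is likewise covered by the fact that $1_{o(e_n)}e_n\nu f_i$ is a path word).

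It then remains to realize $\zeta$ as a translate of $\xi$. Since $\mu\nu f_i$ and $\tau_i$ both represent elements of $\pi[v,t(f_i)]$, the element $g:=[\mu\nu f_i][\tau_i]^{-1}$ lies in $\pi[v,t(f_i)]\cdot\pi[t(f_i),v]\subseteq\pi[v,v]=\pi_1(\IG,v)$; choose a word $w$ with $g=[w]$, say $w=(\mu\nu f_i)(\tau_i^{-1})$. To compute $g\cdot\xi$ I would use Remark~\ref{rmk: action on bdry of Bass-Serre tree}, according to which $g\cdot\xi$ is the increasing union of the path words $P(N(w\xi_k))$, where $\xi_k$ denotes the length-$k$ initial path word of $\xi$. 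For $k\geq i$ one has $\xi_k=\tau_i\xi'_{k-i}$ with $\xi'_{k-i}$ an initial path word of $\xi'$, so $w\xi_k=(\mu\nu f_i)\tau_i^{-1}\tau_i\xi'_{k-i}$ represents the same element as $(\mu\nu f_i)\xi'_{k-i}$; since $(\mu\nu f_i)\xi'_{k-i}$ is itself a path word (by the same seam analysis as for $\zeta$), uniqueness of normalized representatives (Remark~\ref{rmk: minimum length reduced words}) forces $P(N(w\xi_k))=(\mu\nu f_i)\xi'_{k-i}$. These path words increase to $\zeta$, whence $g\cdot\xi=\zeta\in U(\mu)$. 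As $\xi$ and $U(\mu)$ were arbitrary, every orbit is dense, i.e.\ the action is minimal.

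The bookkeeping in the seam and transversal checks is routine. The step I expect to require the most care is the identity $g\cdot\xi=\zeta$: the boundary action is only described via the left-to-right reduction-normalization process, so one must be sure that the formal cancellation $\tau_i^{-1}\tau_i=1$ inside $w\xi$ is compatible with that process. The clean way around this, as above, is to truncate $\xi$ to its finite initial path words $\xi_k$, identify $[w\xi_k]$ with the path word $(\mu\nu f_i)\xi'_{k-i}$ using uniqueness of the normalized form, and only then pass to the limit, rather than trying to track the infinitely many reductions by hand.
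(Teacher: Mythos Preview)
Your proof is correct and follows essentially the same approach as the paper: both apply flowness to the last edge $e_n$ of $\mu$ to obtain $\nu$ and $f_i$, then act by the element represented by $\mu\nu f_i\tau_i^{-1}$ (the paper writes this as $\mu\nu h_i^{-1}\bar{f}_{i-1}\cdots h_1^{-1}$, which is the same element after cancelling $f_i\bar{f}_i$) to push $\xi$ into $U(\mu)$. Your truncation argument for the identity $g\cdot\xi=\zeta$ is a careful justification of what the paper asserts in one line via $N(\mu\nu f_i h_{i+1}f_{i+1}\cdots)=\mu\nu f_i h_{i+1}f_{i+1}\cdots$.
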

\begin{proof}
   Recall that in Remark \ref{rmk: action on bdry of Bass-Serre tree}, the boundary $\partial_\infty X_\IG$ is identified with the set of all infinite normalized words $\xi$ with the origin $o(\xi)=v$, equipped with the topology generated by $U(\mu)$ for path words $\mu=g_1e_1\dots g_ne_n$ with $o(\mu)=v$. Therefore, to show the minimality of  $\alpha: \pi_1(\IG, v)\curvearrowright \partial_\infty X_\IG$, it suffices to prove that for any infinite normalized word $\xi=h_1f_1\dots h_if_i \dots$  and path word $\mu=g_1e_1\dots g_ne_n$ with $o(\xi)=o(\mu)=v$, there exists a $\gamma\in \pi_1(\CG, v)$ such that $\gamma\cdot \xi\in U(\mu)$.

   To this end, since the infinite normalized word $\xi$ flows to $e_n$ by assumption, there exist an edge $f_i$ on $\xi$ and a  word $\nu=s_1e'_1s_2e'_2\dots s_me'_mg$ with $g\in \Sigma_{f_i}$ such that the following word
   \[1_{o(e_n)}e_n\nu f_i=1_{o(e_n)}e_ns_1e'_1\dots s_me'_mgf_i\]
is a path word. Define an element $\gamma=[y]\in \pi_1(\IG, v)$ such that 
\[y=\mu\nu h^{-1}_i\bar{f}_{i-1}\dots h^{-1}_2\bar{f}_1h^{-1}_1,\]
which implies
\[\gamma\cdot \xi=N(g_1e_1\dots g_ne_n\nu f_ih_{i+1}f_{i+1}\dots)=g_1e_1\dots g_ne_n\nu f_ih_{i+1}f_{i+1}\dots.\]
The last equality holds because $1_{o(e_n)}e_n\nu f_i$ is a path word and $g_n\in \Sigma_{e_n}$. This thus shows that $\gamma\cdot \xi\in U(\mu)$.
 \end{proof}

The following criterion for the topological freeness of boundary actions was demonstrated in, e.g., \cite[Proposition 3.8]{B-I-O}. An action $G\curvearrowright S$ on a set $S$ is said to be \textit{strongly faithful} if for any finite set $F\subset G\setminus \{1_G\}$, there exists an  $x\in S$ such that $g\cdot x\neq x$ for any $g\in F$.

\begin{prop}\cite[Proposition 3.8]{B-I-O}\label{prop: strongly faithful and topo free}    
Let $G\curvearrowright X$ be a strongly hyperbolic minimal action on a countable tree $X$ by automorphisms. Then the following are equivalent.
\begin{enumerate}
    \item $G\curvearrowright X$ is strongly faithful.
    \item The induced action $G\curvearrowright \partial_\infty X$ is strongly faithful.
    \item The induced action $G\curvearrowright \partial_\infty X$ is topologically free.
    \end{enumerate}
 \end{prop}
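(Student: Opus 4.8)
The plan is to run the cycle of implications $(1)\Rightarrow(2)\Rightarrow(3)\Rightarrow(1)$, after recording a few preliminary facts. First, for a nontrivial $g\in G$ the fixed set $\fix_{\partial_\infty X}(g)$ is: the pair of endpoints of its axis if $g$ is hyperbolic, a finite set if $g$ is an inversion, and $\partial X^g$ if $g$ is elliptic, where $X^g$ denotes the nonempty proper subtree of vertices fixed by $g$; moreover, for any vertex $x\notin X^g$ the set $\partial X^g$ lies inside the single boundary cylinder at $x$ cut off by the first edge of the geodesic from $x$ to $X^g$. Second, $\partial_\infty X$ is perfect: by Proposition~\ref{prop: minimal 3} and Remark~\ref{rmk: strong bdry is bdry} the action $G\curvearrowright\overline{\partial_\infty X}$ is an infinite compact minimal system, so its set of isolated points --- being open and invariant --- is empty; hence every nonempty open subset of $\partial_\infty X$, in particular every boundary cylinder, is infinite. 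Third, $\overline{\partial_\infty X}$ is compact metrizable and $G\curvearrowright\overline{\partial_\infty X}$ is minimal (Proposition~\ref{prop: minimal 3} and Proposition~\ref{prop: topologies coincide}), strong faithfulness passes freely between the actions on $\partial_\infty X$ and $\overline{\partial_\infty X}$, and by Corollary~\ref{cor: bdry topo free} so does topological freeness.

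For $(3)\Rightarrow(1)$: given a finite $F\subseteq G\setminus\{1\}$, each $\fix_{\partial_\infty X}(g)$, $g\in F$, is closed and nowhere dense (for hyperbolic or inversion $g$ by perfectness, for elliptic $g$ by hypothesis $(3)$), hence so is their finite union, so one finds $\xi\in\partial_\infty X$ with $g\xi\neq\xi$ for all $g\in F$. Fixing a basepoint $x_0$ and the geodesic ray $r=(x_0,r_1,r_2,\dots)$ to $\xi$, the set $r\cap X^g$ is for each $g\in F$ a convex subset of $r$ that is not a sub-ray (since $\xi\notin\partial X^g$), hence a bounded segment --- empty when $g$ is not elliptic; any vertex $r_N$ with $N$ exceeding all these finitely many bounds is moved by every $g\in F$, so $G\curvearrowright X$ is strongly faithful. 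For $(2)\Rightarrow(3)$, suppose some $g\neq1$ fixes pointwise a nonempty open set $V\subseteq\overline{\partial_\infty X}$; by minimality $\{hV:h\in G\}$ covers the compact space $\overline{\partial_\infty X}$, so $\overline{\partial_\infty X}=h_1V\cup\dots\cup h_nV$ for some $h_i$, and then $\{h_igh_i^{-1}:1\le i\le n\}$ is a finite subset of $G\setminus\{1\}$ such that no point of $\overline{\partial_\infty X}$ is moved by all of its elements --- contradicting strong faithfulness on $\overline{\partial_\infty X}$; thus the action is topologically free there, and hence on $\partial_\infty X$.

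It remains to prove $(1)\Rightarrow(2)$, which I expect to be the main obstacle. Given a finite $F\subseteq G\setminus\{1\}$, pick by $(1)$ a vertex $x_0$ moved by every $g\in F$; one wants a boundary point outside $\bigcup_{g\in F}\fix_{\partial_\infty X}(g)$, which by the first fact is contained in $S\cup\bigcup_{g\in F_e}\partial X^g$, with $S$ finite and $F_e\subseteq F$ the elliptic elements (each with $x_0\notin X^g$). If $x_0$ may be chosen of valence larger than $|F_e|$, then some boundary cylinder at $x_0$ avoids all the $\partial X^g$, is infinite by the second fact, and yields the required point after also discarding the finite set $S$. The crux --- and the place where minimality and strong hyperbolicity must genuinely be used --- is to rule out the remaining possibility, namely that the proper subtrees $X^g$ ($g\in F_e$) together cover $\partial_\infty X$; equivalently, that some nontrivial elliptic $g$ has $X^g$ containing a half-tree of $X$. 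I would exclude this by using minimality to produce a hyperbolic element whose axis runs through such a half-tree and then conjugating $g$ by its powers to obtain a finite collection of nontrivial elements with no common fixed vertex, contradicting $(1)$ --- or, should the vertex-level bookkeeping prove awkward (the complement of a half-tree being itself a half-tree), by transferring the argument to the compact boundary $\overline{\partial_\infty X}$, where the covering/conjugation trick used for $(2)\Rightarrow(3)$ is available. Carrying this ``spreading'' of a fixed-point-free vertex out to a fixed-point-free boundary point through the degenerate configurations is the technical heart of the proof; once it is done, $G\curvearrowright\partial_\infty X$ is strongly faithful and the cycle closes.
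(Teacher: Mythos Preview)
The paper does not supply a proof of this proposition; it is simply quoted from \cite[Proposition 3.8]{B-I-O} with no argument given. So there is nothing in the paper to compare your proposal against.

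On your proposal itself: the implications $(2)\Rightarrow(3)$ and $(3)\Rightarrow(1)$ are fine and standard. The gap is in $(1)\Rightarrow(2)$, and it is more serious than you acknowledge. Your sketched endgame is to produce, from an elliptic $g$ whose fixed tree $X^g$ is ``too large'', finitely many conjugates $h_igh_i^{-1}$ with \emph{no common fixed vertex}, and then declare this a contradiction to strong faithfulness of $G\curvearrowright X$. But it is not: strong faithfulness fails exactly when some finite set $F$ satisfies $\bigcup_{g\in F}\fix_X(g)=X$ (every vertex is fixed by \emph{some} element of $F$), whereas ``no common fixed vertex'' is the statement $\bigcap_i\fix_X(h_igh_i^{-1})=\emptyset$. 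These are different conditions, and the latter does not violate $(1)$. Your boundary-level alternative has the same defect: covering $\overline{\partial_\infty X}$ by finitely many translates $h_i\,\partial X^g$ yields a finite family such that every boundary point is \emph{fixed} by some $h_igh_i^{-1}$ --- again the wrong quantifier pattern to contradict $(1)$. (Incidentally, your ``equivalently'' is also not right: that the finitely many $\partial X^g$ cover $\partial_\infty X$ gives, via Baire, only that some $\partial X^g$ has nonempty interior, which is weaker than $X^g$ containing a half-tree.) So as it stands $(1)\Rightarrow(2)$ is not merely incomplete but aimed at the wrong target; you need a different mechanism to promote a vertex moved by all of $F$ to a boundary point moved by all of $F$.
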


 Due to the complicated combinatorial nature of actions of the fundamental groups on the Bass-Serre trees in the framework of the graph of groups, it is usually not easy to characterize when the action is strongly faithful in a very general manner. Nevertheless, we provide the following.

\begin{lem}\label{lem: strongly faithful permanent}
    Let $\IG=(\Gamma, \CG)$ be a graph of groups. Let $\Gamma'\subset \Gamma$ be a nontrivial subgraph of $\Gamma$ containing at least one edge and $v\in V(\Gamma')$. Suppose $\pi_1(\IG', v)\curvearrowright X_{\IG'}$ is strongly faithful. Then $\pi_1(\IG, v)\curvearrowright X_{\IG}$ is also strongly faithful.
    \end{lem}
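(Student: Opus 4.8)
The plan is to exhibit, for each finite set $F \subset \pi_1(\IG, v) \setminus \{1\}$, a point in $\partial_\infty X_\IG$ (equivalently, an infinite normalized word based at $v$) that is moved by every element of $F$, by ``copying'' a witness point coming from the subgraph $\Gamma'$ and then extending it arbitrarily into $\Gamma$. First I would observe that since $\Gamma'$ contains an edge and $v \in V(\Gamma')$, the inclusion $\Gamma' \hookrightarrow \Gamma$ induces an injective homomorphism $\iota \colon \pi_1(\IG', v) \hookrightarrow \pi_1(\IG, v)$: a normalized word of type $c$ with $c$ a closed path in $\Gamma'$ is, verbatim, a normalized word in $F(\IG)$, and by Remark 2.13 (uniqueness of normalized representatives, via \cite[Corollary 1.10]{Ba}) distinct elements of $\pi_1(\IG', v)$ stay distinct in $\pi_1(\IG, v)$. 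Moreover, $\CP_v(\IG')$ is a $\pi_1(\IG',v)$-invariant subtree of $\CP_v(\IG)$, and the identification of $X_{\IG'}$ with $\CP_v(\IG')$ (Remark 2.29) is compatible with that of $X_\IG$ with $\CP_v(\IG)$, because the normalization process of Remark 2.15 only ever involves edges and group elements already occurring in the word. Hence the $\pi_1(\IG',v)$-action on $X_{\IG'}$ is the restriction of the $\pi_1(\IG,v)$-action on $X_\IG$ along $\iota$, and likewise on boundaries: $\partial_\infty X_{\IG'}$ embeds $\pi_1(\IG',v)$-equivariantly into $\partial_\infty X_\IG$ as the set of infinite normalized words all of whose edges lie in $E(\Gamma')$.

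The subtlety is that an arbitrary $g \in F \subset \pi_1(\IG, v)$ need not lie in the image of $\iota$, so I cannot simply quote strong faithfulness of $\pi_1(\IG',v) \curvearrowright X_{\IG'}$ directly. Instead I would split $F$ as $F = F_1 \sqcup F_2$, where $F_1 = F \cap \iota(\pi_1(\IG',v))$ and $F_2 = F \setminus F_1$. For $F_1$, strong faithfulness of $\pi_1(\IG',v) \curvearrowright X_{\IG'}$ (hence, by Proposition 2.49, of the boundary action $\pi_1(\IG',v)\curvearrowright \partial_\infty X_{\IG'}$; note $X_{\IG'}$ has infinite boundary since it contains an edge and the relevant action is strongly hyperbolic — or, if one prefers to avoid that hypothesis, one argues directly with the tree) yields an infinite normalized word $\eta = h_1 f_1 h_2 f_2 \cdots$ with all $f_i \in E(\Gamma')$ such that $\iota^{-1}(g)\cdot \eta \neq \eta$ for every $g \in F_1$, i.e. $g\cdot \eta \neq \eta$ in $\partial_\infty X_\IG$. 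For each $g \in F_2$: since $g \neq 1$, its normalized word $w_g$ is nontrivial, and I want to show that after prepending/appending a suitable tail the word $\eta$ is still moved by $g$. The cleanest route is to note that $g$ acts on the tree $X_\IG$ either hyperbolically or elliptically; in either case $\fix_{\partial_\infty X_\IG}(g)$ is a proper closed subset (at most two points if hyperbolic — Remark 2.36 — and if elliptic, $g$ fixes a vertex, so its fixed-point set in $\partial_\infty X_\IG$ is a proper clopen union of subtrees' boundaries, using that the tree is non-trivial at the relevant vertices). Since $F_2$ is finite, $\bigcup_{g \in F_2} \fix_{\partial_\infty X_\IG}(g)$ is a proper subset whose complement is open and, using the presence of the edge in $\Gamma'$, can be arranged to meet the clopen set $U(h_1 f_1 \cdots h_N f_N)$ for $N$ large; by shrinking this neighborhood one finds an infinite normalized word $\xi$ agreeing with $\eta$ on its first $N$ letters (hence still moved by every $g \in F_1$, provided $N$ is taken larger than the lengths of all $w_g$, $g \in F_1$, so that Lemma 2.30 guarantees $g\cdot\xi$ and $g\cdot\eta$ share a long common prefix distinct from that of $\xi$) and avoided by every $\fix(g)$, $g\in F_2$.

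The main obstacle, and the part requiring genuine care, is the $F_2$ step: controlling the fixed-point sets of elements that genuinely involve edges outside $\Gamma'$, and simultaneously keeping the long common prefix with $\eta$ alive so that the $F_1$-elements still move $\xi$. The key technical inputs are Lemma 2.30 (path words are preserved under left translation by short normalized words, so a long common initial segment of two boundary points is carried to a long common initial segment of their translates) and the elementary fact that for any $g \neq 1$ the set $\fix_{\partial_\infty X_\IG}(g)$ is nowhere dense — which for hyperbolic $g$ is immediate from north–south dynamics (Remark 2.36) and for elliptic $g$ follows because $g$ fixing a vertex $u$ but not being trivial means $g$ permutes the edges at $u$ nontrivially or acts nontrivially on some branch, so it cannot fix an open set of boundary points of the non-singular-enough tree. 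One then combines these: first pick $N$ past all relevant word-lengths; then, since finitely many nowhere dense sets cannot cover the nonempty open set $U(h_1f_1\cdots h_Nf_N)$, choose $\xi$ in the complement; finally check with Lemma 2.30 that $g\cdot\xi \neq \xi$ for $g\in F_1$ as well. This produces the required point and establishes strong faithfulness of $\pi_1(\IG,v)\curvearrowright X_\IG$, again via Proposition 2.49 in reverse if one wants it phrased on the tree rather than the boundary.
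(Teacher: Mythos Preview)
Your splitting $F = F_1 \sqcup F_2$ matches the paper's, and your treatment of $F_1$ is fine. The gap is in the $F_2$ step. You assert that for every nontrivial $g$ the set $\fix_{\partial_\infty X_\IG}(g)$ is nowhere dense, and then invoke a Baire-type argument. But for elliptic $g$ this is exactly the topological freeness statement you are ultimately trying to reach, and it is \emph{false} in general: an element lying in many edge groups can fix a half-tree and hence an open set of boundary points (this is precisely what happens in unimodular GBS groups, cf.\ Theorem \ref{thm: GBS topological freeness}). Nothing in the hypotheses prevents elements of $F_2$ from being elliptic with large fixed-point set on $\partial_\infty X_\IG$, so your argument as written is circular.

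The paper avoids this entirely by observing that the \emph{same} witness $\xi=h_1f_1h_2f_2\cdots$ (all $f_i\in E(\Gamma')$) already works for every $\gamma\in F_2$, with no modification and no Baire reasoning. The point is purely combinatorial: if $\gamma=[y]\in F_2$ with $y=g_1e_1\cdots g_ne_ng$ normalized, then some $e_i\notin E(\Gamma')$ (this is what membership in $F_2$ means). Since $y$ is already normalized, the normalization of $y\xi$ can only cancel or rewrite at the junction between $e_n$ and $f_1$ and propagate rightward; the prefix $g_1e_1\cdots g_ie_i$ survives intact. Hence the infinite normalized word $\gamma\cdot\xi$ contains the edge $e_i\notin E(\Gamma')$, whereas $\xi$ contains only edges of $\Gamma'$, so $\gamma\cdot\xi\neq\xi$. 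No control on $\fix(\gamma)$ is needed beyond this single inequality.
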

    \begin{proof}
        Let  $F\subset \pi_1(\IG, v)\setminus \{1\}$ be a finite set.  We choose a $\xi\in \partial_\infty X_{\IG'}$ depending on $F$. If $F\cap \pi_1(\IG', v)$ is non-empty, since $\pi_1(\IG', v)\curvearrowright X_{\IG'}$ is strongly faithful, choose $\xi\in \partial_\infty X_{\IG'}$ to be an infinite normalized word with origin $o(\xi)=v$ such that 
        $\gamma\cdot \xi\neq \xi$ for any $\gamma\in F\cap \pi_1(\IG', v)$.
        Otherwise, if $F$ is disjoint from $\pi_1(\IG', v)$, choose $\xi$ to be an arbitrary infinite normalized word in $X_{\IG'}$ with $o(\xi)=v$. 
        
        Write explicitly the chosen word $\xi$ as $\xi=h_1f_1h_2f_2\dots$ with all $f_i\in E(\Gamma')$.
        Then if $F\setminus \pi_1(\IG', v)$ is non-empty,       
        let $\gamma=[y]\in F\setminus \pi_1(\IG', v)$ with $y$ a normalized word.
        Note that $\ell(y)\geq 1$ holds necessarily and if one  writes $y$ into the explicit normalized form
        $y=g_1e_1\dots g_ne_ng$, then
        there is at least one $e_i\notin E(\Gamma')$.
         This implies
        \[\gamma\cdot \xi=N(g_1e_1\dots g_ie_ig_{i+1}\dots g_ne_n(gh_1)f_1h_2f_2\dots).\]
        By the normalization process in Remarks \ref{rmk: transfer to reduced and normalized word} and  \ref{rmk: action on bdry of Bass-Serre tree} and the fact that $y$ is already normalized, the standard induction argument shows that the edge $e_i$ appears in $\gamma\cdot \xi$.
         This implies $\gamma\cdot \xi\neq \xi$ because $e_i\notin E(\Gamma')$ while all edges $f_i$ on $\xi$ come from $E(\Gamma')$. Thus, for any $\gamma\in F$, one has $\gamma\cdot \xi\neq \xi$, which shows that $\pi_1(\IG, v)\curvearrowright X_\IG$ is still strongly faithful.
           \end{proof}

Then we have the following results as our main theorem in this section. Recall that  non-singular graph of groups in Definition \ref{defn: nonsingular} means that the Bass-Serre tree $X_\IG$ is non-singular (i.e. having no leaves) in the sense of Definition  \ref{defn: non singular and locally finite for graph}. 

\begin{thm}\label{thm: boundary action}    Let $\IG=(\Gamma, \CG)$ be a non-singular graph of groups. Suppose
    \begin{enumerate}[label=(\roman*)]
\item\label{nonlinear} $X_\IG$ is not isomorphic to a line;
       \item\label{word} there is a repeatable word $w=g_0e_1g_1e_2...g_{n-1}e_n$;
     \item\label{minimal} for any infinite normalized word $\xi$ and edge $e\in E(\Gamma)$, one has $\xi$ flows to $e$.
    \end{enumerate}
    Then the action $\alpha: \pi_1(\IG, v)\curvearrowright \overline{\partial_\infty X_\IG}$ is a strong boundary action. In particular, the action $\alpha$ is a $\pi_1(\IG, v)$-boundary action where $o(w)=v$. 
    
    Further, suppose there exists a nontrivial non-singular subgraph $\Gamma'\subset \Gamma$ containing at least one edge such that
    \begin{enumerate}
    \item the Bass-Serre tree $X_{\IG'}$ is not isomorphic to a line where $\IG'=(\Gamma', \CG)$;
    \item the word $w$ in  \ref{word} above is a $\IG'$-word, i.e., $w$ belongs to $F(\IG')$;
        \item\label{c simple}  each vertex group $G_u$ is amenable for any $u\in V(\Gamma')$ and $\pi_1(\IG', v)$ is $C^*$-simple where $v=o(w)$.
        \end{enumerate}
     Then the original action $\alpha$ is topologically free. Consequently, $\pi_1(\IG, v)$ is $C^*$-simple and the reduced crossed product $C(\overline{\partial_\infty X_\IG})\rtimes_r \pi_1(\IG, v)$ is a unital simple separable purely infinite $C^*$-algebra.
 \end{thm}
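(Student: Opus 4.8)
The overall strategy splits cleanly into two halves, matching the two-part statement. For the first half, the plan is to verify the hypotheses of Proposition \ref{prop: minimal 3 and strong bdry} and Proposition \ref{prop: north-south visual} (equivalently Proposition \ref{prop: minimal 3}). First I would use hypothesis \ref{word}: by Proposition \ref{prop: repeatable hyperbolic} the element $[w]\in\pi_1(\IG,v)$ acts hyperbolically on $X_\IG$, so in particular $[w]$ exhibits north-south dynamics on $\partial_\infty X_\IG$ with attracting/repelling fixed points $[w]^\infty, [w]^{-\infty}$. Next, hypothesis \ref{minimal} together with Proposition \ref{prop: minimal on bdry} gives that $\pi_1(\IG,v)\curvearrowright\partial_\infty X_\IG$ is minimal; combined with the existence of a hyperbolic element and the fact that $\IG$ is non-singular and $X_\IG$ is not a line (hypothesis \ref{nonlinear}), Proposition \ref{prop: minimal 3 and strong bdry} upgrades this to strong hyperbolicity of $G\curvearrowright X_\IG$, and then minimality of $G\curvearrowright\overline{\partial_\infty X_\IG}$. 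Now I would invoke Proposition \ref{prop: minimal 3} directly (minimal strongly hyperbolic action $\Rightarrow$ the action on $\overline{\partial_\infty X_\IG}$ is a strong boundary action), or alternatively feed the north-south element into Proposition \ref{prop: north-south visual}; either way $\alpha$ is a strong boundary action, and since $\partial_\infty X_\IG$ is infinite (it contains $[w]^\infty$ and, by Proposition \ref{prop: repeatable hyperbolic}/north-south dynamics, many translates), Remark \ref{rmk: strong bdry is bdry} makes it a $\pi_1(\IG,v)$-boundary.

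For the second half I need topological freeness. The plan is: by the first half the action $G\curvearrowright X_\IG$ is strongly hyperbolic and $G\curvearrowright\partial_\infty X_\IG$ is minimal, so by Proposition \ref{prop: strongly faithful and topo free} it suffices to show $G\curvearrowright X_\IG$ is strongly faithful. This is where the subgraph hypotheses enter. Since $w$ lies in $F(\IG')$ and is repeatable, it is also a repeatable word for $\IG'$; together with the assumptions that $X_{\IG'}$ is not a line and $\Gamma'$ is non-singular, the first part of the theorem (or rather Proposition \ref{prop: minimal 3 and strong bdry} plus the hyperbolicity of $[w]$) shows $\pi_1(\IG',v)\curvearrowright X_{\IG'}$ is itself a strongly hyperbolic action. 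Now I use hypothesis \ref{c simple}: $\pi_1(\IG',v)$ is $C^*$-simple, so by Theorem \ref{thm: C simple equivalence} its action on any $\pi_1(\IG',v)$-boundary is topologically free — take the boundary $\overline{\partial_\infty X_{\IG'}}$, restrict to $\partial_\infty X_{\IG'}$ via Corollary \ref{cor: bdry topo free}, and apply Proposition \ref{prop: strongly faithful and topo free} again (now for $\IG'$) to conclude $\pi_1(\IG',v)\curvearrowright X_{\IG'}$ is strongly faithful. (The amenability of the vertex groups $G_u$, $u\in V(\Gamma')$, is what lets us conclude things about $\pi_1(\IG',v)$ cleanly — e.g. it is used to ensure the minimality/strong-boundary machinery applies and, in the downstream Theorem \ref{thm: A}, that $C^*$-simplicity of the relevant amalgam is the only input needed; at this stage I would check whether it is actually needed here or is carried along for the applications.) Finally Lemma \ref{lem: strongly faithful permanent} transports strong faithfulness from $\pi_1(\IG',v)\curvearrowright X_{\IG'}$ up to $\pi_1(\IG,v)\curvearrowright X_\IG$, and Proposition \ref{prop: strongly faithful and topo free} turns that into topological freeness of $\alpha$ on $\partial_\infty X_\IG$, hence on $\overline{\partial_\infty X_\IG}$ by Corollary \ref{cor: bdry topo free}.

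For the concluding $C^*$-algebraic statement: topological freeness of $\alpha$ on the boundary $\overline{\partial_\infty X_\IG}$ gives $C^*$-simplicity of $\pi_1(\IG,v)$ by Theorem \ref{thm: C simple equivalence}(v). Since $\alpha$ is a topologically free strong boundary action on the compact space $\overline{\partial_\infty X_\IG}$, Theorem \ref{thm: strong bdry action pure infinite} gives that $A=C(\overline{\partial_\infty X_\IG})\rtimes_r\pi_1(\IG,v)$ is simple and purely infinite; it is unital because the space is compact, and separable because $\overline{\partial_\infty X_\IG}$ is metrizable (it carries the shadow topology, which is metrizable by the discussion around Proposition \ref{prop: topologies coincide}).

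**Main obstacle.** The delicate point is the bookkeeping in the second half: one must be careful that the various actions invoked really are strongly hyperbolic and minimal in the precise sense the cited propositions demand, and in particular that passing to the subgraph $\IG'$ preserves enough structure (non-singularity of $\Gamma'$ is assumed, but one should double-check that $X_{\IG'}$ embeds compatibly so that $\overline{\partial_\infty X_{\IG'}}$ is genuinely a $\pi_1(\IG',v)$-boundary). The use of strong faithfulness as the bridge — rather than trying to verify topological freeness of the big action directly — is what makes the argument go through, and Lemma \ref{lem: strongly faithful permanent} is the crucial technical input; the main risk is a subtle mismatch between "strongly faithful as an action on the tree $X$" and "strongly faithful on $\partial_\infty X$", which Proposition \ref{prop: strongly faithful and topo free} resolves only under the strong-hyperbolicity-plus-minimality hypothesis, so that hypothesis must be re-established for $\IG'$ and not merely assumed.
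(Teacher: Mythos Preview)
Your first half is correct and matches the paper's proof: hyperbolicity from the repeatable word via Proposition \ref{prop: repeatable hyperbolic}, minimality on $\partial_\infty X_\IG$ from the flow hypothesis via Proposition \ref{prop: minimal on bdry}, then strong hyperbolicity and the strong boundary action on $\overline{\partial_\infty X_\IG}$ via Propositions \ref{prop: minimal 1}, \ref{prop: minimal 2}, \ref{prop: minimal 3} (packaged in Proposition \ref{prop: minimal 3 and strong bdry}).

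The second half has a genuine gap, and it is exactly at the point where you hedge. You write that since $\pi_1(\IG',v)$ is $C^*$-simple, ``by Theorem \ref{thm: C simple equivalence} its action on any $\pi_1(\IG',v)$-boundary is topologically free.'' Theorem \ref{thm: C simple equivalence} says no such thing: item (iii) gives simplicity of $C(B)\rtimes_r G$ for every boundary $B$, and item (v) gives topological freeness on \emph{some} boundary, but neither yields topological freeness on an \emph{arbitrary} specified boundary. In fact the implication is false: $\F_2\times\F_2$ is $C^*$-simple, yet its action on the Gromov boundary of the first factor (a genuine $G$-boundary) has the entire second factor in the kernel, so it is not even faithful. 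So you cannot get topological freeness of $\pi_1(\IG',v)\curvearrowright\overline{\partial_\infty X_{\IG'}}$ from $C^*$-simplicity alone.

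This is precisely where the amenability of the vertex groups $G_u$, $u\in V(\Gamma')$, enters --- it is not ``carried along for the applications.'' Amenability of the vertex stabilizers forces the boundary action $\pi_1(\IG',v)\curvearrowright\partial_\infty X_{\IG'}$ to be topologically amenable (cf.\ \cite[Proposition 5.2.1, Lemma 5.2.6]{B-O}). Once the action is amenable, Remark \ref{rmk: equivalence of C simple and topo free} applies: $C^*$-simplicity gives simplicity of the reduced crossed product by Theorem \ref{thm: C simple equivalence}(iii), amenability makes it nuclear, and then Archbold--Spielberg (Remark \ref{rmk: equivalence of dynamical properties}) returns topological freeness. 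From there your bridge via strong faithfulness and Lemma \ref{lem: strongly faithful permanent} is correct and is exactly what the paper does. One minor point you should also make explicit: to run the first-half argument for $\IG'$ you need minimality of $\pi_1(\IG',v)\curvearrowright\partial_\infty X_{\IG'}$, which follows because every $\IG'$-infinite normalized word is an $\IG$-word, so hypothesis \ref{minimal} restricts.
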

\begin{proof}
  By assumption \ref{word}, there exists a repeatable word $w$ with origin $o(w)=v$. Then Proposition \ref{prop: repeatable hyperbolic} implies that $[w]$ is a hyperbolic element acting on the tree $X_\IG$ with the base vertex $v$. On the other hand, by the assumption that $\IG$ is non-singular, the tree $X_\IG$ is non-singular by Remark \ref{rmK: locally finite and non singular}. Moreover, Proposition \ref{prop: minimal on bdry} proves that $ \pi_1(\IG, v)\curvearrowright \partial_\infty X_\IG$ is  minimal. Then Proposition \ref{prop: minimal 1} shows that the action $\pi_1(\IG, v)\curvearrowright X_\IG$ by automorphism is minimal. In addition, since $\pi_1(\IG, v)\curvearrowright \partial_\infty X_\IG$ is minimal, there are no global fixed points. Then using the assumption \ref{nonlinear} that $X_\IG$ is not isomorphic to a line, Proposition \ref{prop: minimal 2} entails that $\pi_1(\IG, v)\curvearrowright X_\IG$ is strongly hyperbolic.  Thus, Proposition \ref{prop: minimal 3 and strong bdry} entails that $\alpha: \pi_1(\IG, v)\curvearrowright \overline{\partial_\infty X_\IG}$ is a strong boundary action. Finally, strong hyperbolicity and minimality of the action  $\pi_1(\IG, v)\curvearrowright X_\IG$ imply that $|\partial_\infty X_\IG|$ is infinite (see, e.g., \cite[Proposition 14]{H-P}). Therefore, $\overline{\partial_\infty X_\IG}$ is a  $\pi_1( \IG, v)$-boundary action by Remark \ref{rmk: strong bdry is bdry}. This has established the first part of the theorem.
  
Now suppose there exists a non-trivial non-singular subgraph $\Gamma'\subset \Gamma$ satisfying the assumptions above.  First, the restricted action $\alpha': \pi_1(\IG', v)\curvearrowright \partial_\infty X_{\IG'}$ is still minimal by Proposition \ref{prop: minimal on bdry} because the assumption \ref{minimal} above implies that all $\IG'$-infinite normalized words $\eta$ flow to $e$ for any $e\in E(\Gamma')$. Then the same argument in the previous paragraph above shows that $\pi_1(\IG', v)\curvearrowright X_{\IG'}$ is a strongly hyperbolic minimal action by automorphism and  $\overline{\partial_\infty X_{\IG'}}$ is a $\pi_1(\IG', v)$-boundary.
Moreover, the action $\alpha'$ is topologically amenable by \cite[Proposition 5.2.1, Lemma 5.2.6]{B-O} since all vertex groups $G_u$ are assumed to be amenable for any vertex $u\in V(\Gamma')$ (see also \cite[Remark 4.5]{B-I-O}). In addition, since $\pi_1(\IG', v)$ is $C^*$-simple by assumption (\ref{c simple}), the action $\alpha'$ is topologically free by Remark \ref{rmk: equivalence of C simple and topo free} and Proposition \ref{prop: topo free permanant}. This implies that the action $\pi_1(\IG', v)\curvearrowright  X_{\IG'}$ is strongly faithful by Proposition \ref{prop: strongly faithful and topo free}. Therefore, the action $\pi_1(\IG, v)\curvearrowright X_\IG$ is also strongly faithful by Lemma \ref{lem: strongly faithful permanent} and thus the original action $\alpha: \pi_1(\IG)\curvearrowright \overline{\partial_\infty X_\IG}$ is topologically free by Propositions \ref{prop: strongly faithful and topo free}  and \ref{prop: topo free permanant}. Therefore, $\pi_1(\IG, v)$ is $C^*$-simple by Theorem \ref{thm: C simple equivalence} because $\overline{\partial_\infty X_\IG}$ is a topologically free $\pi_1(\IG, v)$-boundary. Moreover, the reduced crossed product $C(\overline{\partial_\infty X_\IG})\rtimes_r \pi_1(\IG, v)$ is a unital simple separable purely infinite $C^*$-algebra by Theorem \ref{thm: strong bdry action pure infinite}.     
\end{proof}

\begin{rmk}
    The subgraph $\Gamma'$ in Theorem \ref{thm: boundary action} could be chosen as a single edge $e$ with two vertices or one vertex, i.e. Examples \ref{example:edge of groups} or \ref{example:loop of groups}. These correspond to amalgamated free products or HNN extension of groups. The $C^*$-simplicity of amalgamated free products and HNN extension of groups have been studied thoroughly in the literature (see, e.g., \cite{H-P}, \cite{I-O} and \cite{B-I-O}). See Theorem \ref{thm: reduced graph C star simple} below.
\end{rmk}

\subsection{On reduced graph of groups} In this part, we mainly study the boundary actions of the reduced graph of groups in the sense of Definition \ref{defn: reduced graph}. As mentioned in Remark \ref{rmk: reduced advantage}, every graph of groups can be reduced to a reduced graph of groups and this process will not change the fundamental group. Therefore, it suffices to investigated the reduced graph of groups in many cases, for example, GBS groups in Section \ref{sec: GBS}.

The following appeared in \cite[Proposition 18]{H-P} and the proof may also follow from Remark \ref{rmk: infinite bdry} together with some routine arguments. We therefore omit the proof.

\begin{prop}\label{prop: linear}
Let $\IG=(\Gamma, G)$ be a reduced graph of groups. Then $\partial_\infty X_\IG$ is infinite if and only if $X_\IG$ is not isomorphic to a line. More concretely, it is equivalent to that neither of the following holds:
\begin{enumerate}[label=(\roman*)]
    \item $\IG=(\Gamma,G)$ is an edge of groups as in Example \ref{example:edge of groups} such that $[G_{o(e)}:G_e]=[G_{t(e)}:G_e]=2$.
    \item $\IG=(\Gamma,G)$ is a loop of groups as in Example \ref{example:loop of groups} such that both $\alpha_f$ and $\alpha_{\overline{f}}$ are surjective.
\end{enumerate}
\end{prop}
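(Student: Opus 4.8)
The plan is to split the statement into two parts: (a) $X_\IG$ is isomorphic to a line if and only if $\IG$ is an edge of groups as in (i) or a loop of groups as in (ii); and (b) if $X_\IG$ is not isomorphic to a line, then $\partial_\infty X_\IG$ is infinite. Since a line has a two-point (in particular finite) boundary, (a) and (b) together yield both equivalences in the proposition. Throughout I assume $E(\Gamma)\neq\emptyset$, which is the only case of interest (otherwise $X_\IG$ is a single vertex). The main tool for (a) is the valence formula in Remark \ref{rmK: locally finite and non singular}: a vertex of $X_\IG$ lying over $u\in V(\Gamma)$ has valence $\sum_{e\in E(\Gamma),\,o(e)=u}[G_{o(e)}:\alpha_e(G_e)]$. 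Because a reduced graph of groups is non-singular (Remark \ref{rmk: reduced advantage}), so is $X_\IG$, i.e.\ every vertex has valence at least $2$; hence $X_\IG$, being a non-singular tree with at least one edge, is a line exactly when every vertex has valence $2$, i.e.\ when $\sum_{o(e)=u}[G_{o(e)}:\alpha_e(G_e)]=2$ for every $u\in V(\Gamma)$.

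For (a) I would then analyse this equality using reducedness. The key point is that in a reduced graph of groups $[G_{o(e)}:\alpha_e(G_e)]\geq 2$ for every non-loop edge $e$: if $\alpha_e$ were onto, then $\overline e$ would be a collapsible edge in the sense of Definition \ref{defn: reduced graph}. Thus if $\Gamma$ carries a loop $f$ at a vertex $v$, the sum at $v$ already contains the two terms for $f$ and $\overline f$, each $\geq 1$, so equality to $2$ forces $[G_v:\alpha_f(G_f)]=[G_v:\alpha_{\overline f}(G_f)]=1$ and forces $v$ to be incident to no other edge; connectedness of $\Gamma$ then makes $\Gamma$ the single loop, i.e.\ case (ii). If $\Gamma$ has no loop, every term in every such sum is $\geq 2$, so equality forces each vertex of $\Gamma$ to be incident to exactly one oriented edge, of index $2$, and a connected graph all of whose vertices have valence one is a single edge, i.e.\ case (i). Conversely (i) and (ii) visibly make all valences equal to $2$. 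This establishes (a).

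For (b), assume $X_\IG$ is not a line. By the above, some vertex $x^*$ of $X_\IG$ has valence $\geq 3$, so the set $B$ of such ``branch'' vertices is nonempty, and it is $\pi_1(\IG,v)$-invariant since automorphisms preserve valence. Since $\Gamma$ has an edge and $\IG$ is reduced, $\pi_1(\IG,v)$ acts on $X_\IG$ with a hyperbolic element: a repeatable word in the sense of Definition \ref{defn: repeatable} exists — e.g.\ a word of the form $1_v f$ coming from a loop $f$, or one of the form $g_1 e\,g_2\overline e$ with $g_1\in\Sigma_e\setminus\{1_{o(e)}\}$ and $g_2\in\Sigma_{\overline e}\setminus\{1_{t(e)}\}$ coming from a non-loop edge $e$ (both sets nonempty by reducedness) — and any repeatable word represents a hyperbolic element by Proposition \ref{prop: repeatable hyperbolic}. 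Translating $x^*$ by the powers of such a hyperbolic element (whose orbits are infinite) produces infinitely many distinct elements of $B$, so $X_\IG$ is a non-singular tree with infinitely many branch vertices; such a tree has infinite boundary, since a non-singular tree with only finitely many branch vertices is a finite subtree with finitely many rays attached and hence has only finitely many ends. Therefore $\partial_\infty X_\IG$ is infinite, proving (b).

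The only genuine subtlety I anticipate is bookkeeping rather than anything conceptual: keeping straight the relations among collapsible edges, the indices $[G_{o(e)}:\alpha_e(G_e)]$, the transversal sizes $|\Sigma_e|$, and the valences of $X_\IG$; and checking that the repeatable words exhibited above are genuinely reduced and normalized (free of reversals), which is exactly what forces the chosen coset representatives to be non-trivial and so uses $|\Sigma_e|\geq 2$ for non-loop edges. When $\Gamma$ has a single edge one may bypass the branch-vertex argument altogether, as the hint suggests: a single non-loop edge that is not of type (i) becomes, after possibly swapping $e$ and $\overline e$, a non-degenerate edge ($|\Sigma_e|\geq 3$ and $|\Sigma_{\overline e}|\geq 2$; see Definition \ref{defn: non-degenerated}), and a single loop that is not of type (ii) has $|\Sigma_f|\geq 2$ or $|\Sigma_{\overline f}|\geq 2$; in either case Remark \ref{rmk: infinite bdry} already gives an infinite boundary.
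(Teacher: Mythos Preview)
Your argument is correct and far more detailed than what the paper provides: the paper omits the proof, citing \cite[Proposition~18]{H-P} and remarking that for $\Gamma$ with more than one edge it follows from Remark~\ref{rmk: infinite bdry} by routine arguments. Your part~(a) via the valence formula is exactly right. Part~(b) takes a genuinely different route from the paper's hint: instead of exhibiting explicit infinite families of normalized words as in Remark~\ref{rmk: infinite bdry}, you manufacture a hyperbolic element from a repeatable word and use its infinite orbit on a branch vertex. This is more conceptual and treats all cases at once, whereas the paper's suggested approach via Remark~\ref{rmk: infinite bdry} does not immediately cover, say, a reduced $\Gamma$ consisting of several non-loop edges all of index~$2$ (no edge is non-degenerate and there is no loop). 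Your last paragraph correctly observes that for single-edge $\Gamma$ one can short-circuit everything with Remark~\ref{rmk: infinite bdry}.

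One logical slip in part~(b): to deduce ``infinitely many branch vertices $\Rightarrow$ infinite boundary'' you need the contrapositive \emph{finitely many ends $\Rightarrow$ finitely many branch vertices}, not the implication you wrote. Indeed, ``finitely many branch vertices $\Rightarrow$ finitely many ends'' is false for non-locally-finite trees (a single vertex with countably many rays attached has one branch vertex and infinitely many ends), and in any case its contrapositive goes the wrong way. The fix is the argument you essentially already have in mind: a non-singular tree with only finitely many ends is the union of finitely many rays from a base vertex, hence has only finitely many branch points.
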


\begin{defn}\label{defn:ascending}
Let $\IG=(\Gamma, \CG)$ be a graph of groups. 
A loop $e\in E(\Gamma)$ is called \textit{ascending} if at least one of the two monomorphisms $\alpha_e$ or $\alpha_{\overline{e}}$ is surjective.
\end{defn}

It is worth comparing Definition \ref{defn:ascending} with Definition \ref{defn: ascending HNN}. Let $\IG=(\Gamma, \CG)$ be a loop $e$ with $o(e)=t(e)=v$ in Example \ref{example:loop of groups} and the fundamental group $\pi_1(\CG, v)$ is isomorphic to the HNN extension $G_v*_{G_e}$ as illustrated in Example \ref{example:HNN extension}.
It is straightforward to see $G_v*_{G_e}$ is ascending if and only if $e$ is ascending.

\begin{lem}\label{Lem: not loop repeatable}
Let $\IG=(\Gamma, \CG)$ be a reduced graph of groups consisting at least one edge.  Then there is a $\IG$-repeatable word $w$.
\end{lem}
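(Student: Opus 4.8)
The plan is to produce a repeatable word by case analysis on the structure of the reduced graph $\Gamma$, exploiting the two facts recorded for reduced graphs of groups in Remark \ref{rmk: reduced advantage}: every non-loop edge $e$ has $|\Sigma_e|\geq 2$, and $\IG$ is non-singular. Recall a path word $w=g_1e_1\dots g_ne_n$ is repeatable if $o(e_1)=t(e_n)$ and $g_1e_1\neq 1_{o(\bar e_n)}\bar e_n$; the second condition just rules out that $w$ begins by immediately backtracking along $e_n$.

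First I would dispose of the case where $\Gamma$ contains a loop $e$ with $o(e)=t(e)=v$. Since $\IG$ is reduced, at least one of $|\Sigma_e|$, $|\Sigma_{\bar e}|$ is $\geq 2$ (otherwise $\alpha_e$ and $\alpha_{\bar e}$ would both be surjective and $e$ would be collapsible — actually for a loop one uses non-singularity / the reduced hypothesis directly); say $|\Sigma_e|\geq 2$ so there is $g\in\Sigma_e\setminus\{1_v\}$. Then $w=ge$ is a path word based at $v$ with $o(e)=t(e)$, and $ge\neq 1_v\bar e$ since $e\neq\bar e$; hence $w$ is repeatable. (If instead only $|\Sigma_{\bar e}|\geq 2$, replace $e$ by $\bar e$.)

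Next, the case where $\Gamma$ has no loops. Pick any edge $e_1$ and set $u=o(e_1)$. Because $\IG$ is non-singular and $\Gamma$ has at least one edge, $u$ has valence $\geq 2$ in $\Gamma$ unless... here one must be slightly careful: non-singularity of the graph of groups allows a valence-one vertex provided the inclusion is non-surjective, but the \emph{tree} $X_\IG$ is non-singular, and it is on the tree that repeatability is detected. The clean route is: choose $g_1\in\Sigma_{e_1}\setminus\{1_u\}$ (possible since $e_1$ is not a loop, so $|\Sigma_{e_1}|\geq 2$); this gives a path word $g_1e_1$ ending at $x:=t(e_1)$. Now take any edge $e_2$ with $o(e_2)=x$ and $e_2\neq\bar e_1$ — such an edge exists because $X_\IG$ non-singular forces $\mathrm{val}(x)\geq 2$ in $\Gamma$ in a suitable sense, or more directly because a valence-one vertex $x$ in $\Gamma$ with its edge having surjective $\alpha$ would make that edge collapsible, contradicting reducedness, while non-surjectivity still leaves the possibility $e_2=\bar e_1$; in that subcase one instead prepends rather than appends. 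Continuing, one walks through the graph; since $\Gamma$ is connected and finite-or-countable with no collapsible edges, a short combinatorial argument produces a closed path $e_1,\dots,e_n$ in $\Gamma$ with $e_{i+1}\neq\bar e_i$ (a genuine reduced cycle in the underlying graph, or a path from $u$ that comes back) together with choices $g_i\in\Sigma_{e_i}\setminus\{1\}$ making $w=g_1e_1\dots g_ne_n$ a path word with $o(e_1)=t(e_n)=v$ and $g_1e_1\neq 1\cdot\bar e_n$.

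The main obstacle, and the part deserving the most care, is the no-loop case when the underlying graph $\Gamma$ is a \emph{tree} (so there is no cycle to go around): here one must build the repeatable closed path out of an edge traversed forward and backward, say $e$ then $\bar e$, and the repeatability condition $g_1e_1\neq 1_{o(\bar e_n)}\bar e_n$ becomes the real constraint. Concretely, for a single non-loop edge $e$ with $u=o(e)$, $x=t(e)$, one wants a word like $g_1 e\, g_2 \bar e$ based at $u$ with $g_1\in\Sigma_e\setminus\{1_u\}$ and $g_2\in\Sigma_{\bar e}$ chosen so that $eg_2\bar e$ is not a reversal (i.e. $g_2\notin\alpha_{\bar e}(G_e)$, equivalently $g_2\in\Sigma_{\bar e}\setminus\{1_x\}$, which needs $|\Sigma_{\bar e}|\geq 2$); reducedness of $\IG$ guarantees $|\Sigma_{\bar e}|\geq 2$ for the edge oriented so that $\alpha_{\bar e}$ is the non-surjective one, and since $\IG$ is reduced \emph{neither} $\alpha_e$ nor $\alpha_{\bar e}$ is surjective for a non-loop edge, so both $|\Sigma_e|\geq 2$ and $|\Sigma_{\bar e}|\geq 2$; then $w=g_1 e\, g_2\bar e$ is a path word, $o(e)=t(\bar e)=u$, and $g_1 e\neq 1_u e$ while the last edge is $\bar e$ so $g_1 e\neq 1_{o(e)}e=1_u e$ exactly says $g_1\neq 1_u$, which holds — hence $w$ is repeatable. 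I expect the write-up to reduce to (i) the loop case above, and (ii) reducing the general no-loop case to this single-edge construction by passing to a sub-path, with the only genuine content being the bookkeeping that the reduced hypothesis supplies the needed non-trivial coset representatives at each step.
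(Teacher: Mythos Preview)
Your overall strategy and the non-loop construction $w=g_1e\,g_2\bar e$ (with $g_1\in\Sigma_e\setminus\{1\}$, $g_2\in\Sigma_{\bar e}\setminus\{1\}$) match the paper exactly. The paper's case split is cleaner---(there exists a non-loop edge) versus (every edge is a loop, so $\Gamma$ is a rose)---which avoids your detour through ``walk in the graph and find a cycle''; that detour is harmless but unnecessary, since a single non-loop edge already produces the repeatable word.

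There is, however, a genuine slip in your loop case. You assert that reducedness (or non-singularity) forces $|\Sigma_e|\geq 2$ or $|\Sigma_{\bar e}|\geq 2$ for a loop $e$. This is false: collapsibility in Definition~\ref{defn: reduced graph} requires $o(e)\neq t(e)$, so a loop is \emph{never} collapsible, and a reduced graph may well consist of a single loop with both $\alpha_e$ and $\alpha_{\bar e}$ surjective (this is precisely case (ii) of Proposition~\ref{prop: linear}). Non-singularity does not help either, since a loop contributes valence at least two at its vertex. Fortunately the fix is already implicit in your own argument: you observe that $ge\neq 1_v\bar e$ holds simply because $e\neq\bar e$, and this is true for \emph{any} $g\in\Sigma_e$, in particular for $g=1_v$. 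So $w=1_ve$ is repeatable with no hypothesis on $|\Sigma_e|$, which is exactly what the paper does.
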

\begin{proof}
Suppose there exists an $e\in E(\Gamma)$ such that $o(e)\neq t(e)$, i.e., $e$ is not a loop. Then, since $\IG=(\Gamma, \CG)$ is reduced, one has $|\Sigma_e|\geq 2$ and $|\Sigma_{\bar{e}}|\geq 2$. This allows to
define $w=geh\bar{e}$ where $g\in \Sigma_e\setminus \{1_o(e)\}$ and $h\in \Sigma_{\bar{e}}\setminus \{1_{t(e)}\}$.  It is direct to see $w$ is repeatable in the sense of Definition \ref{defn: repeatable}.

Otherwise, one has $o(f)=t(f)$ holds for any $f\in E(\Gamma)$. Because $\Gamma$ is connected by Definition \ref{defn: graph of groups}, there has to be only one vertex $v$ such that $o(f)=t(f)=v$ for any $f\in E(\Gamma)$. Let $e$ be one of these loops. Then the word $w=1_ve$ is also repeatable. 
\end{proof}

We are now in a position to present an example of ascending-loop graphs, demonstrating  the three conditions in Proposition \ref{prop: minimal 3 and strong bdry} are not equivalent in general. 

\begin{rmk}\label{rmk: ascending loop non minimal}
    Let $\IG=(\Gamma, \CG)$ be the graph of groups that is an ascending loop. Without loss of generality, denote this loop by $e$ such that $\Sigma_e=\{1_v\}$ where $o(e)=t(e)=v$. Define an infinite normalized word $\xi=1_ve1_ve\dots$ as a point in $\partial_\infty X_\IG$. Observe that any non-trivial element $\gamma$ in $\pi_1(\IG, v)$ can be represented by normalized words $w$ such that
 either $w=1e1e\dots 1eh$ or $w=g_1\bar{e}g_2\bar{e}\dots g_n\bar{e}g$. Then it is direct to see $\gamma\cdot \xi=\xi$ by Remark \ref{rmk: action on bdry of Bass-Serre tree}. We provide an explicit example for this calculation whose idea works for general $\gamma$. Let $\gamma=[1eh]$ for some non-trivial $h$. By definition, one has
   $\gamma\cdot \xi=N(1_vehe1_ve\dots)$.
   Then because $h\in \alpha_e(G_e)$, the normalization for infinite reduced words $1_vehe1_ve\dots$ in Remark \ref{rmk: action on bdry of Bass-Serre tree} is
   \[1_vehe1_ve\dots \Rightarrow 1_ve1_veh_1e1_ve\dots \Rightarrow 1_ve1_ve1_veh_2e\dots \Rightarrow \dots\]
   for some non-trivial proper   $h_1, h_2, \dots$ in $G_v$. This implies that $\gamma\cdot \xi=\xi$.

   However, the action $\pi_1(\IG, v)\curvearrowright X_\IG$ by automorphism is minimal by \cite[Proposition 17]{H-P}.  Thus the three conditions in Proposition \ref{prop: minimal 3 and strong bdry} are not equivalent in general.
\end{rmk}

The next result explains the equivalence between non-ascending loop graphs and flowness of normalized words to edges in Definition \ref{defn:flowness}. 
\begin{prop}\label{prop: no loop minimal}
Let $\IG=(\Gamma, \CG)$ be a reduced graph of groups. Then $\xi$ flows to $e$ for every infinite normalized word $\xi$ and edge $e\in E(\Gamma)$
if and only if $\IG$ is not an ascending loop.
\end{prop}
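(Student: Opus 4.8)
The plan is to prove the equivalence by establishing the two implications separately. The ``only if'' part (if every infinite normalized word flows to every edge, then $\IG$ is not an ascending loop) is a short argument by contraposition; the ``if'' part is where the work lies, and I would prove it in the stronger form that $f$ flows to $e$ for \emph{every} ordered pair of edges $f,e\in E(\Gamma)$ — since by Definition~\ref{defn: infinite normalized word} an infinite normalized word contains (infinitely many) edge elements, this is enough.

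\emph{The ``only if'' direction.} Suppose $\IG$ is an ascending loop: $\Gamma$ is a single loop $e$ at a vertex $v$ and, after possibly swapping $e$ with $\bar e$, $\alpha_e$ is surjective, so $\Sigma_e=\{1_v\}$. Consider the infinite normalized word $\xi=1_v e\,1_v e\,1_v e\cdots$, which is normalized because $e\neq\bar e$ rules out reversals. Its only edge element is $e$, so it is enough to show $e$ does not flow to $\bar e$. If it did, Definition~\ref{defn:flowness} would supply a normalized word $\nu=g_1e_1\cdots g_me_mg$ with $g\in\Sigma_e=\{1_v\}$ such that $W:=1_v\bar e\,\nu\,e$ is a path word. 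Every edge of $W$ is $e$ or $\bar e$, and, as $\Sigma_e=\{1_v\}$, every coefficient of $W$ immediately preceding an occurrence of the edge $e$ equals $1_v\in\alpha_e(G_e)$. Reading $W$ from the left, it begins with $\bar e$ and ends with $e$; at its leftmost occurrence of the edge $e$ the preceding edge is $\bar e=\overline{e}$ and the preceding coefficient lies in $\alpha_e(G_e)$, so this is a reversal in the sense of Remark~\ref{rmk: transfer to reduced and normalized word}. Hence $W$ is not reduced, contradicting that it is a path word.

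\emph{The ``if'' direction.} Fix $f,e\in E(\Gamma)$. The aim is a reduced normalized word $\nu$ from $t(e)$ to $o(f)$ ending in some $g\in\Sigma_f$ for which $W:=1_{o(e)}e\,\nu\,f$ is a path word. Since $\nu$ is already reduced, the only reversals in $W$ can occur at the two new junctions: the $e$-junction is a reversal exactly when $\nu$ begins with $\bar e$ and a trivial initial coefficient, and the $f$-junction is a reversal exactly when $\nu$'s last edge is $\bar f$ and its trailing coefficient $g$ is trivial (using that a coefficient from $\Sigma_d$ lies in $\alpha_d(G_d)$ iff it is trivial). So it suffices to build $\nu$ avoiding both. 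As $\Gamma$ is connected, $\nu$ may be taken to realize any prescribed edge-walk from $t(e)$ to $o(f)$, with trivial coefficients at non-backtracking steps and, at a backtracking step through a bridge $d$, a nontrivial coefficient from $\Sigma_{\bar d}$ — available because, by Remark~\ref{rmk: reduced advantage}, every non-loop edge $d$ of a reduced graph of groups has $|\Sigma_{\bar d}|\geq2$ (this keeps $\nu$ reduced). With this freedom, the two bad endpoint-junctions are avoided as follows: if $\nu$ is forced to begin with $\bar e$ then $\bar e$ is a bridge, so $|\Sigma_{\bar e}|\geq2$ and we give it a nontrivial initial coefficient; symmetrically for a forced terminal $\bar f$. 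The one remaining delicate configuration is $t(e)=o(f)$ with $f=\bar e$: if $|\Sigma_{\bar e}|\geq2$ take $\nu$ to be a single nontrivial element of $\Sigma_{\bar e}$; if $|\Sigma_{\bar e}|=1$ then $\alpha_{\bar e}$ is surjective, which by reducedness forces $\bar e$ — hence $e$ — to be a loop at $v:=t(e)$, and since $\IG$ is \emph{not} an ascending loop, $\Gamma$ has a further edge $d$ incident to $v$; we route $\nu$ as a short reduced detour out along $d$ and back (using $|\Sigma_{\bar d}|\geq2$ at the return bridge if needed), which avoids both junctions because $d\notin\{e,\bar e\}$. In every case $W$ is reduced, normalized and ends in an edge, hence a path word, so $f$ flows to $e$. (When $\Gamma$ is a single non-ascending loop $e_0$ one has $|\Sigma_{e_0}|,|\Sigma_{\bar e_0}|\geq2$ and $1_{o(d)}d\,g\,f$ is already a path word for a suitable $g\in\Sigma_f$, so this case is immediate.)

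I expect the endpoint-reversal bookkeeping — and within it the borderline case $t(e)=o(f)$, $f=\bar e$, $|\Sigma_{\bar e}|=1$ — to be the only genuine obstacle: handling it uses, at the same time, that a bridge in a reduced graph cannot have a surjective edge monomorphism and that $\Gamma$ is not a single loop, which is exactly the standing hypothesis. Conceptually this dovetails with Remark~\ref{rmk: ascending loop non minimal}: flowness of all infinite normalized words to all edges is the combinatorial shadow of minimality of the boundary action, and a single ascending loop is precisely the exception.
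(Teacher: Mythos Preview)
Your proposal is correct. The ``if'' direction follows essentially the same route as the paper: reducing to showing that $f$ flows to $e$ for every pair (the paper restricts to $f\neq e$ via Remark~\ref{rmk: flowness reduced}, but this is cosmetic), then building a walk from $t(e)$ to $o(f)$ with carefully chosen coefficients and handling the two endpoint junctions, with the same detour trick in the delicate case where $e$ is a loop with $|\Sigma_{\bar e}|=1$.

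The ``only if'' direction is genuinely different from the paper's. The paper argues indirectly: by Remark~\ref{rmk: ascending loop non minimal} the boundary action of an ascending loop has a global fixed point, so it is not minimal, and then Proposition~\ref{prop: minimal on bdry} (in contrapositive) yields some $\xi$ and some edge to which $\xi$ does not flow. You instead give a direct combinatorial argument that the edge $e$ does not flow to $\bar e$, by observing that any candidate path word $1_v\bar e\,\nu\,e$ must contain a first occurrence of the letter $e$, and the coefficient immediately before it is forced to lie in $\Sigma_e=\{1_v\}$, producing a reversal. Your argument is self-contained and avoids the detour through boundary dynamics; the paper's route, on the other hand, makes explicit the conceptual link between failure of flowness and failure of minimality that you allude to at the end.

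One terminological quibble: you use ``bridge'' to mean ``non-loop edge'', which is not the standard graph-theoretic usage. The intended statement --- that $|\Sigma_{\bar e}|\geq 2$ whenever $e$ is not a loop in a reduced graph of groups --- is correct (Remark~\ref{rmk: reduced advantage}) and is exactly what is needed, but the wording could confuse a reader.
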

Before presenting the proof, let us make the following helpful remark.
\begin{rmk}\label{rmk: flowness reduced}
Let $\IG=(\Gamma, \CG)$ be a graph of groups.  To show $\xi$ flows to $e$ in the sense of Definition \ref{defn:flowness} for any infinite normalized word $\xi$ and $e\in E(\Gamma)$, it suffices to show $f$ flows to $e$ for any $f\in E(\Gamma)\setminus \{e\}$. Indeed, let $\xi$ be an infinite normalized word, and suppose $\xi$ is of the form $g_1eg_2e\dots$. Then $e$ has to be a loop. Define $\nu=1_{t(e)}=1_{o(e)}$ and this implies that $1_{o(e)}e1_{t(e)}e$ is a path word. Thus $\xi$ flows to $e$ by definition. Otherwise, there has to be an $f$ on $\xi$ that is not $e$. Then by our assumption that $f$ flows to $e$, one has $\xi$ flows to $e$.
\end{rmk}

\begin{proof}
If $\IG$ is an ascending loop, then Remark \ref{rmk: ascending loop non minimal} implies that $\pi_1(\IG, v)\curvearrowright \overline{\partial_\infty X_\IG}$ is not minimal because  $\pi_1(\IG, v)$ has a global fixed point. Then Proposition \ref{prop: minimal on bdry} implies that there exists an infinite normalized word $\xi$ and an $e\in E(\Gamma)$ such that $\xi$ does not flow to $e$.

Now suppose $\IG$ is not an ascending loop. It suffices to show $f$ flows to $f'$ for any $f\neq f'\in E(\Gamma)$ by Remark \ref{rmk: flowness reduced}. That is to find a word $\nu$ such that $1_{o(f')}f'\nu f$ is a path word.  Let $T$ be a maximal tree in $\Gamma$.
 Choose an edge path $c$ connecting $t(f')$ and $o(f)$ in $T$ with the minimum length. Suppose $c$ is of length $0$, which means $t(f')=o(f)$.
If $f\neq \bar{f'}$, define $\nu=1_{o(f)}$. Otherwise, necessarily one has $f=\bar{f'}$. In this case, if $|\Sigma_f|\geq 2$,  choose $h\in \Sigma_f\setminus \{1_{o(f)}\}$ and define $\nu=h$. Otherwise, since $\IG$ is reduced, $|\Sigma_f|=1$ implies that $f$ has to be an ascending loop by Remark \ref{rmk: reduced advantage}. However, by assumption, $\IG$ is not just an ascending loop. The connectivity of $\Gamma$ implies that there has to be an edge $e$ with $o(e)=o(f)$ and $e\notin \{f, f'\}$ in which $f'=\bar{f}$. If $e$ is still a loop, we define $\nu=1_{o(e)}e1_{o(e)}$. If $e$ is not a loop, we define $\nu=1_{o(e)}eg\bar{e}1_{o(e)}$ where $g\in \Sigma_{\bar{e}}\setminus \{1_{t(e)}\}$. In any case, it is direct to see $1_{o(f')}f'\nu f$ is a path word in the sense of Definition \ref{defn:flowness}.

Otherwise, the shortest path $c$ is of length $n$ with the form $(e_1,...,e_n)$. This implies $e_i\neq \bar{e}_{i+1}$ for any $i=1,\dots, n-1$ and these $e_i$ are not loops and thus in particular $|\Sigma_{e_1}|\geq 2$ and $|\Sigma_{\bar{e}_n}|\geq 2$ since $\CG$ is reduced.  Define $\nu=g_1e_11_{o(e_2)}e_2\dots 1_{o(e_n)}e_ng_n$ where $g_1\in \Sigma_{e_1}\setminus \{1_{o(e_1)}\}$ and $g_n\in \Sigma_{\bar{e}_n}\setminus \{1_{o(\bar{e}_n)}\}$. Then, it is straightforward to see that  $1_{o(f')}f'\nu f$ is a path word.
\end{proof}

Combining Lemma \ref{Lem: not loop repeatable}, Proposition \ref{prop: no loop minimal}, and  Theorem \ref{thm: boundary action}, we have the following result, recorded as Theorem \ref{thm: A} in Introduction.

\begin{thm}\label{thm: reduced graph C star simple}
    Let $\IG=(\Gamma, \CG)$ be a reduced graph of groups. Suppose 
    \begin{enumerate}[label=(\roman*)]
        \item $\IG$ contains a non-degenerated edge $e$ with $o(e)\neq t(e)$ such that $G_{o(e)}$ and $G_{t(e)}$ are amenable and the group $G_{o(e)}*_{G_e}G_{t(e)}$ is $C^*$-simple; 
        \item or $\IG$ contains a non-ascending loop $e$ with $o(e)=t(e)$ such that $G_{o(e)}$ is amenable and $G_{o(e)}*_{G_e}$ is $C^*$-simple. 
    \end{enumerate}
    Then $\pi_1(\IG, v)$ is $C^*$-simple and the crossed product $C(\overline{\partial_\infty X_\IG})\rtimes_r\pi_1(\IG, v)$ is a unital simple purely infinite $C^*$-algebra.
\end{thm}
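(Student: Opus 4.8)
The plan is to derive Theorem \ref{thm: reduced graph C star simple} directly from Theorem \ref{thm: boundary action} by verifying its hypotheses, using the structural results about reduced graphs of groups established just above (Proposition \ref{prop: linear}, Lemma \ref{Lem: not loop repeatable}, and Proposition \ref{prop: no loop minimal}). In both cases (i) and (ii) we take $\Gamma'$ to be the subgraph consisting of the single edge $e$ (together with its endpoint(s)), so that $\IG'=(\Gamma',\CG)$ is a one-edge graph of groups in the sense of Example \ref{example:edge of groups} in case (i) and a one-loop graph of groups in the sense of Example \ref{example:loop of groups} in case (ii); correspondingly $\pi_1(\IG',v)$ is the amalgam $G_{o(e)}\ast_{G_e}G_{t(e)}$ or the HNN extension $G_{o(e)}\ast_{G_e}$, which is $C^*$-simple by assumption.

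First I would check the three numbered hypotheses of the first part of Theorem \ref{thm: boundary action} for $\IG$ itself. Since $\IG$ is reduced, it is non-singular (Remark \ref{rmk: reduced advantage}). For \ref{nonlinear}: in case (i) the edge $e$ is non-degenerated, so $|\Sigma_e|\geq 3$; in case (ii) the loop $e$ is non-ascending, so $|\Sigma_e|\geq 2$ and $|\Sigma_{\bar e}|\geq 2$; in either situation Remark \ref{rmk: infinite bdry} (resp.\ Proposition \ref{prop: linear}) shows $\partial_\infty X_\IG$ is infinite, hence $X_\IG$ is not a line. For \ref{word}: Lemma \ref{Lem: not loop repeatable} produces a $\IG$-repeatable word, and in fact the proof of that lemma produces one supported on the edge $e$ — namely $w=geh\bar e$ with $g\in\Sigma_e\setminus\{1\}$, $h\in\Sigma_{\bar e}\setminus\{1\}$ in case (i), and $w=1_ve$ (or $w=g_1 e g_2 e$ if one needs a genuinely repeatable loop word; either way supported on $e$) in case (ii) — so that $w$ is a $\IG'$-word as required by hypothesis (2) of the second part. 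For \ref{minimal}: since $\IG$ is reduced and, crucially, is \emph{not} merely an ascending loop (it contains a non-degenerated edge in case (i), and a non-ascending loop in case (ii), and even if $\Gamma$ had exactly one edge that edge is non-ascending), Proposition \ref{prop: no loop minimal} gives that every infinite normalized word flows to every edge.

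Next I would check the three additional hypotheses of the second part of Theorem \ref{thm: boundary action} for the subgraph $\Gamma'$. Hypothesis (1): $X_{\IG'}$ is not a line — in case (i) this follows from Proposition \ref{prop: linear}(i) since non-degeneracy forces $|\Sigma_e|\geq 3>2$; in case (ii) from Proposition \ref{prop: linear}(ii) since the loop is non-ascending, so neither $\alpha_e$ nor $\alpha_{\bar e}$ is surjective. Hypothesis (2): $w\in F(\IG')$ by the choice above. Hypothesis (3): each vertex group occurring in $\Gamma'$ — that is $G_{o(e)}$ and $G_{t(e)}$ in case (i), just $G_{o(e)}$ in case (ii) — is amenable by assumption, and $\pi_1(\IG',v)=G_{o(e)}\ast_{G_e}G_{t(e)}$ (resp.\ $G_{o(e)}\ast_{G_e}$) is $C^*$-simple by assumption. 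Theorem \ref{thm: boundary action} then yields at once that $\alpha\colon\pi_1(\IG,v)\curvearrowright\overline{\partial_\infty X_\IG}$ is a topologically free $\pi_1(\IG,v)$-boundary action, whence $\pi_1(\IG,v)$ is $C^*$-simple and $C(\overline{\partial_\infty X_\IG})\rtimes_r\pi_1(\IG,v)$ is unital, simple, separable and purely infinite.

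The only genuinely delicate point — and the step I would write out carefully rather than waving at — is the verification that the repeatable word guaranteed by Lemma \ref{Lem: not loop repeatable} can be chosen \emph{supported on the prescribed edge $e$}, so that it simultaneously witnesses hypothesis \ref{word} for $\IG$ and hypothesis (2) for $\IG'$; the cited proof of the lemma does exactly this in the non-loop case, and in the loop case one must make sure the chosen loop word is repeatable in the sense of Definition \ref{defn: repeatable} (i.e.\ that $g_1e_1\neq 1_{o(\bar e_n)}\bar e_n$), which for a single non-ascending loop $e$ is immediate by taking $w=1_ve$ since then $n=1$, $e_1=e$, and $1_ve\neq 1_v\bar e$. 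Everything else is bookkeeping with the reducedness hypotheses and the already-proven propositions, so this is essentially a corollary rather than a new argument.
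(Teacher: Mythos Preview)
Your proof is correct and follows essentially the same route as the paper's own argument: take $\Gamma'$ to be the single edge $e$, observe that reducedness gives non-singularity, that the presence of a non-degenerated edge or non-ascending loop rules out the ascending-loop case so Proposition \ref{prop: no loop minimal} applies, and that Lemma \ref{Lem: not loop repeatable} (applied to $\IG'$) supplies a $\IG'$-repeatable word, after which Theorem \ref{thm: boundary action} finishes. Your write-up is in fact more explicit than the paper's---you spell out why $X_\IG$ and $X_{\IG'}$ are not lines via Proposition \ref{prop: linear} and Remark \ref{rmk: infinite bdry}, and you check the repeatability condition for $w=1_ve$ in the loop case---whereas the paper leaves these verifications implicit.
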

\begin{proof}
    First note that $\IG$ is non-singular because $\IG$ is reduced by Remark \ref{rmk: reduced advantage}. Since $\Gamma$ contains such an edge $e$ in the statement, $\IG$ is not just an ascending loop.  Then Proposition \ref{prop: no loop minimal} implies that every $\IG$-infinite normalized words $\xi$ flows to $f$ for any $f\in E(\Gamma)$. Now define a subgraph $\Gamma'$ such that $E(\Gamma')=\{e\}$ and note that $\IG'=(\Gamma', G)$ is still non-singular by definition. In addition,  Lemma \ref{Lem: not loop repeatable} shows that there is a $\IG'$-repeatable word. Therefore, Theorem \ref{thm: boundary action} entails the conclusion.
\end{proof}

We are now going to present an application of Theorem \ref{thm: reduced graph C star simple}, where the non-ascending loop in (ii)  is given by the Baumslag-Solitar group $BS(p, q)$  as follows
\[BS(p,q)=\langle x,t\ |\ tx^pt^{-1}=x^q\rangle,\]
where $p,q\in \Z\setminus\{0\}$. Alternatively, $BS(p, q)$ is an HNN extension of the form $\Z*_\Z$, so it is  the fundamental group of the group of graphs $\CG=(\Gamma, G)$ such that $\Gamma$ consists of only a loop $e$ with $G_{o(e)}\simeq \Z$ and $G_e\simeq \Z$ such that $|\Sigma_e|=|q|$ and $|\Sigma_{\bar{e}}|=|p|$. It has been shown in \cite[Theorem 3]{H-P} that $BS(p, q)$ is $C^*$-simple if and only if $|p|\neq |q|$ and $\min\{|p|, |q|\}\geq 2$. The following corollary is thus a direct application of Theorem \ref{thm: reduced graph C star simple}.

\begin{cor}
     Let $\IG=(\Gamma, \CG)$ be a reduced graph of groups. Suppose $\CG$ contains a loop $e$ with $G_e$ and $G_{o(e)}$ are isomorphic to $\Z$ and  $\min\{|\Sigma_{e}|, |\Sigma_{\bar{e}}|\}\geq 2$ as well as $|\Sigma_e|\neq |\Sigma_{\bar{e}}|$. Then $\pi_1(\IG, v)$ is $C^*$-simple and the crossed product $C(\overline{\partial_\infty X_\IG})\rtimes_r\pi_1(\IG, v)$ is a unital simple separable purely infinite $C^*$-algebra.
    \end{cor}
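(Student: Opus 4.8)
The plan is to read this off as the special case of Theorem \ref{thm: reduced graph C star simple}(ii) in which the relevant loop produces a Baumslag--Solitar group, invoking the $C^*$-simplicity criterion for $BS(p,q)$ of de la Harpe and Pr\'eaux.

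First I would identify the one-loop sub-datum: since both $G_e$ and $G_{o(e)}$ are isomorphic to $\Z$, the HNN extension $G_{o(e)}*_{G_e}$ is of the form $\Z*_\Z$, hence isomorphic to $BS(p,q)$ with $|\Sigma_e|=|q|$ and $|\Sigma_{\bar e}|=|p|$ (exactly as recorded in the discussion preceding the statement). The hypotheses $\min\{|\Sigma_e|,|\Sigma_{\bar e}|\}\geq 2$ and $|\Sigma_e|\neq|\Sigma_{\bar e}|$ then say precisely that $\min\{|p|,|q|\}\geq 2$ and $|p|\neq|q|$, so by \cite[Theorem 3]{H-P} the group $G_{o(e)}*_{G_e}\cong BS(p,q)$ is $C^*$-simple.

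It remains to verify the other hypotheses of Theorem \ref{thm: reduced graph C star simple}(ii). The vertex group $G_{o(e)}\simeq\Z$ is amenable. The loop $e$ is non-ascending in the sense of Definition \ref{defn:ascending}, because $[G_{o(e)}:\alpha_e(G_e)]=|\Sigma_e|\geq 2$ and $[G_{o(e)}:\alpha_{\bar e}(G_e)]=|\Sigma_{\bar e}|\geq 2$ force both $\alpha_e$ and $\alpha_{\bar e}$ to be non-surjective. Hence Theorem \ref{thm: reduced graph C star simple} applies and yields that $\pi_1(\IG,v)$ is $C^*$-simple and that $C(\overline{\partial_\infty X_\IG})\rtimes_r\pi_1(\IG,v)$ is a unital simple purely infinite $C^*$-algebra; separability is automatic since $\IG$ is a countable graph of groups, so $X_\IG$ is a countable tree, $\overline{\partial_\infty X_\IG}$ is compact metrizable, and consequently $C(\overline{\partial_\infty X_\IG})$ and the crossed product are separable.

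I do not foresee a genuine obstacle here: the statement is essentially a repackaging of Theorem \ref{thm: reduced graph C star simple}(ii) together with \cite[Theorem 3]{H-P}. The only point demanding a small amount of care is the bookkeeping identifying $\{|\Sigma_e|,|\Sigma_{\bar e}|\}$ with the Baumslag--Solitar parameters $\{|p|,|q|\}$ and checking that the two inequalities in the hypothesis coincide exactly with the conditions $|p|\neq|q|$ and $\min\{|p|,|q|\}\geq 2$ appearing in the de la Harpe--Pr\'eaux criterion.
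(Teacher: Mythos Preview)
Your proposal is correct and matches the paper's own argument: the corollary is stated as a direct application of Theorem \ref{thm: reduced graph C star simple}(ii), using the fact (from \cite[Theorem 3]{H-P}) that $BS(p,q)$ is $C^*$-simple exactly when $\min\{|p|,|q|\}\geq 2$ and $|p|\neq|q|$, together with the amenability of $G_{o(e)}\simeq\Z$ and the non-ascending condition. Your bookkeeping identifying $\{|\Sigma_e|,|\Sigma_{\bar e}|\}$ with $\{|p|,|q|\}$ and your remark on separability are accurate; separability in fact already comes for free from Theorem \ref{thm: boundary action}, which underlies Theorem \ref{thm: reduced graph C star simple}.
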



\section{Graph of groups with an acylindrically hyperbolic vertex group and tubular groups}\label{sec: reduced graph}

\subsection{Acylindrically hyperbolic vertex groups}
In this subsection, we shall study the reduced graph of groups with one vertex group being acylindrically hyperbolic. Acylindrically hyperbolic groups introduced by Osin in \cite{Osin} form a very large class of groups with certain negative curvature,  including many known groups such as non-elementary hyperbolic groups, relative hyperbolic groups,  $\cat$ groups with rank-one elements, and many mapping class groups. Let $G$ be a group. A subgroup $H\leq G$ is said to be \textit{s-normal} if $|g^{-1}Hg\cap H|=\infty$ for any $g\in G$. It is shown in \cite[Lemma 7.1]{Osin} that acylindrical hyperbolicity is inherited to s-normal subgroups. For non-examples, all virtually solvable groups, e.g., $\Z^n$, are not acylindrically hyperbolic. Moreover, non-trivial Baumslag-Solitar groups are not acylindrically hyperbolic (\cite[Example 7.3]{Osin}).

A group $G$ is said to have Property $P_{naive}$ if for any finite $F\subset G\setminus \{1_G\}$, there exists an infinite order element $h\in G$ such that for any $g\in F$, the subgroup $\langle g, h\rangle$ of $G$ generated by $g$ and $h$  is isomorphic to the free product $\langle g\rangle *\langle h\rangle$. It was proven in \cite{A-Da} that an acylindrically hyperbolic group $G$ satisfies the property $P_{naive}$ if $G$ has no non-trivial finite normal subgroups.

The following elementary fact about free groups  will be  used later on.
\begin{lem}\label{lem: free vertex group}
    Let $F$ be a finite set of reduced words in $\F_n$ ($n\geq 2$) and $E$ a finite set of reduced words that do not end with an generator $a$ or its inverse $a^{-1}$. Then there exists a non-trivial reduced word $g\in \F_n\setminus E$  such that $g$ does not end with $a$ or $a^{-1}$, and 
    $wg\notin \langle a \rangle$ for any $w\in F$, and $g^{-1}wg\notin \langle a \rangle$ for any $w\in F\setminus \{1_{\F_n}\}$.
\end{lem}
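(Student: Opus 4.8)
The plan is to take $g$ to be a high power of a fixed basis generator different from $a$. Since $n \ge 2$, choose a basis element $b$ of $\F_n$ with $b \ne a$, put $L = \max\bigl(\{\,|w| : w \in F \cup E\,\}\cup\{0\}\bigr)$ where $|\cdot|$ denotes reduced word length, and set $g = b^k$ with $k > L$. First I would record the easy properties: $g$ is a nontrivial reduced word ending in $b \notin \{a,a^{-1}\}$, and $g \notin E$ since $|g| = k > L \ge |w|$ for every $w \in E$.

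Next I would verify $wg \notin \langle a\rangle$ for $w \in F$. For $w = 1_{\F_n}$ this is clear since $b^k$ contains the letter $b$ and hence is not a power of $a$. For $w \ne 1_{\F_n}$, collect the maximal trailing $b$-power in the reduced form of $w$, writing $w = w_0 b^r$ with $r \in \Z$ and $w_0$ either trivial or ending in a letter outside $\{b,b^{-1}\}$; since $|r| \le |w| \le L < k$ we have $r + k > 0$, so $wg = w_0 b^{r+k}$ is already reduced and contains $b$, hence lies outside $\langle a\rangle$.

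Finally I would treat $g^{-1}wg \notin \langle a\rangle$ for $w \in F \setminus\{1_{\F_n}\}$ by splitting on whether $w \in \langle b\rangle$. If $w = b^m$ with $m \ne 0$, then $g^{-1}wg = b^m \notin \langle a\rangle$, because $\langle a\rangle \cap \langle b\rangle = \{1\}$ in a free group. If $w \notin \langle b\rangle$, collect the maximal initial and trailing $b$-powers to write the reduced form $w = b^p v b^q$ with $p,q \in \Z$ and $v$ a nontrivial reduced word whose first and last letters lie outside $\{b,b^{-1}\}$, so $|p|,|q| \le |w| \le L$; then $g^{-1}wg = b^{p-k}v b^{q+k}$, which is reduced because no cancellation occurs at either junction, and since $q + k > 0$ it contains $b$, hence lies outside $\langle a\rangle$.

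The one place that needs care — the main (if modest) obstacle — is the normal-form bookkeeping: justifying the decompositions $w = w_0 b^r$ and $w = b^p v b^q$ and checking that the displayed products $w_0 b^{r+k}$ and $b^{p-k}v b^{q+k}$ are genuinely reduced, so that \emph{no} hidden cancellation brings $wg$ or $g^{-1}wg$ back inside $\langle a\rangle$. Taking $k$ strictly larger than every reduced length occurring in $F \cup E$ is exactly what makes the trailing exponent $r+k$ (resp.\ $q+k$) positive, forcing a surviving occurrence of the letter $b$ and hence placing the word outside $\langle a\rangle$.
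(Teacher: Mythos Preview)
Your proof is correct. The approach is the same in spirit as the paper's---choose $g$ dominated by a generator $b\neq a$, long enough that cancellation with any $w\in F\cup E$ cannot eliminate the $b$-letters---but the concrete choice differs: the paper takes $g=b^{k+1}ab^{k+1}$, where $k$ is the maximal exponent of the first or last letter among words in $E\cup F$, while you take the simpler $g=b^k$ with $k$ exceeding all word lengths. The paper's middle $a$ acts as a barrier that makes the conjugation case $g^{-1}wg$ go through uniformly without splitting on whether $w\in\langle b\rangle$; your choice is more elementary but requires that extra (easy) case split. Both work, and your bookkeeping for the decompositions $w=w_0b^r$ and $w=b^pvb^q$ is sound.
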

\begin{proof}
    Denote by \[k=\max\{|d|: d \mbox{ is the power of the beginning or ending letter of a word } w\in E\cup F\}.\]
    Choose another generator $b\in \F_n$ other than $a$ and define $g=b^{k+1}ab^{k+1}$. Note $g$ does not end with $a$ or $a^{-1}$. By the choice of $k$, one first has $g\neq h$ for any $h\in E$.
    Then if $w=1_{\F_n}$, one has $wg=g\notin \langle a \rangle$ trivially. Otherwise, each $w\in F\setminus \{1_{\F_n}\}$ can be written as $c_1^{m_1}\dots c_l^{m_l}$ where each $c_i$ belongs to the set of generators of $\F_n$ and $c_i\neq c_{i+1}$ and thus $wg=c_1^{m_1}\dots c_l^{m_l}b^{k+1}ab^{k+1}$. If $c_l\neq b$, the word $wg$ is already reduced and thus not in $\langle a \rangle$. If $c_l=b$, since $k\geq |m_l|$, one has $k+1+m_l\neq 0$. In addition, note that $c_{l-1}\neq b$ and therefore $wg=c_1^{m_1}\dots c^{m_{l-1}}_{l-1}b^{k+1+m_l}ab^{k+1}\notin \langle a \rangle$.
Furthermore, the same argument also shows  $g^{-1}wg=(b^{-k-1}a^{-1}b^{-k-1})c_1^{m_1}\dots c_l^{m_l}(b^{k+1}ab^{k+1})\notin \langle a \rangle$.
\end{proof}

The following explicit criteria for strong faithfulness in the case that $\IG$ is not locally finite will be used in the proof of Theorem \ref{thm: reduced graph c star simple 2}.

\begin{prop}\label{prop: non locally finite topo free 1}
    Let $\IG=(\Gamma, \CG)$ be a graph of groups.  Suppose that there exists an edge $e$ such that $|\Sigma_e|=[G_{o(e)}: \alpha_e(G_e)]=\infty$ and $|\Sigma_{\bar{e}}|\geq 2$. Suppose for any finite $F\subset G_{o(e)}$, and finite $E\subset \Sigma_e$, there exists a $g\in \Sigma_e\setminus E$ such that $Fg\cap \alpha_e(G_e)=\emptyset$ and $g^{-1}(F\setminus \{1_{o(e)}\})g\cap \alpha_e(G_e)=\emptyset$. Then $\pi_1(\IG, v)\curvearrowright  X_\IG$ is strongly faithful.
    \end{prop}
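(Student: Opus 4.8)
The plan is to reduce to a single-edge graph of groups and then to recognise that, for such, strong faithfulness is essentially a statement about left translation on the coset space $G_{o(e)}/\alpha_e(G_e)$, which the hypothesis is designed to control.

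First I would reduce to one edge. Set $v:=o(e)$; this is harmless, since by Remark~\ref{rmK: locally finite and non singular} the action $\pi_1(\IG,v)\curvearrowright X_\IG$ is independent of the base vertex up to equivariant isomorphism. Let $\Gamma'\subset\Gamma$ be the subgraph with edge set $\{e,\bar e\}$ and vertex set $\{o(e),t(e)\}$, and $\IG'=(\Gamma',\CG)$; then $\pi_1(\IG',v)$ is the amalgam $G_{o(e)}\ast_{\alpha_e(G_e)}G_{t(e)}$ (if $o(e)\ne t(e)$) or the HNN extension $G_{o(e)}\ast_{\alpha_e(G_e)}$ (if $o(e)=t(e)$), and the data $\Sigma_e,\Sigma_{\bar e}$ and the standing hypotheses are all intrinsic to the edge $e$, hence remain valid for $\IG'$. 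By Lemma~\ref{lem: strongly faithful permanent} it then suffices to treat $\IG=\IG'$.

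Next, working in the one-edge Bass--Serre tree, I would single out the vertices $v_s:=[s\,e]\,G_{t(e)}$ for $s\in\Sigma_e$: these are the $e$-neighbors of $v=[1_v]G_{o(e)}$, pairwise distinct and at distance one from $v$. A short computation with the defining relation $\bar e\,\alpha_e(c)\,e=\alpha_{\bar e}(c)$ and the invariance of reduced length (Remark~\ref{rmk: minimum length reduced words}) gives: for $\gamma\in G_{o(e)}$, $\gamma\cdot v_s=v_s$ if and only if $s^{-1}\gamma s\in\alpha_e(G_e)$; while for $\gamma\notin G_{o(e)}=\stab_{\pi_1(\IG,v)}(v)$, the element $\gamma$ can fix \emph{at most one} vertex among the $v_s$, since fixing two of them would force $\gamma$ to fix the geodesic joining them, which runs through $v$. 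Given a finite $F\subset\pi_1(\IG,v)\setminus\{1\}$, I would then put $F_0:=F\cap G_{o(e)}$, let $E\subset\Sigma_e$ consist of $1_v$ together with the finitely many $s$ for which some element of $F\setminus G_{o(e)}$ fixes $v_s$, and apply the hypothesis to $F_0$ and $E$ to obtain $s_1\in\Sigma_e\setminus E$ with $s_1^{-1}\gamma s_1\notin\alpha_e(G_e)$ for every $\gamma\in F_0$. Then $v_{s_1}$ is moved by every $\gamma\in F$: by the displayed criterion if $\gamma\in F_0$, and because $s_1\notin E$ if $\gamma\in F\setminus G_{o(e)}$. Hence $\pi_1(\IG,v)\curvearrowright X_\IG$ is strongly faithful, and unwinding the reduction of the first step yields the proposition.

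The main obstacle, I expect, is organising the dichotomy in the previous paragraph cleanly: elements of $G_{o(e)}$ may fix arbitrarily many of the $v_s$, so for them one genuinely needs the uniform genericity supplied by the hypothesis, whereas elements outside $G_{o(e)}$ fix at most one $v_s$ and are disposed of merely by enlarging the finite excluded set $E$. Everything else — checking that the one-edge subgroup acts as expected so that Lemma~\ref{lem: strongly faithful permanent} applies, and verifying the fixed-point criterion for the $v_s$ — is routine normal-form bookkeeping in $F(\IG)$. I would also note that only the second of the two conditions furnished by the hypothesis, namely $g^{-1}(F\setminus\{1_{o(e)}\})g\cap\alpha_e(G_e)=\emptyset$, is actually used, and that the moved vertex $v_{s_1}$ already lies at distance one from $v$, so $|\Sigma_{\bar e}|\geq 2$ is not needed for this particular conclusion.
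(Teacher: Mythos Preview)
Your argument is correct and takes a genuinely different route from the paper's.  The paper works directly on the boundary: given the finite set of nontrivial elements, it chooses $g\in\Sigma_e$ via the hypothesis, fixes some $h\in\Sigma_{\bar e}\setminus\{1_{t(e)}\}$, and builds the infinite normalized word $\xi=geh\bar e\,geh\bar e\dots\in\partial_\infty X_\IG$; it then runs a three-case normal-form computation (elements beginning with the letter $e$, elements of length $0$, and the rest) to check that each element moves $\xi$, and this computation uses \emph{both} halves of the hypothesis as well as $|\Sigma_{\bar e}|\geq 2$.  You instead pass to the one-edge subgraph via Lemma~\ref{lem: strongly faithful permanent} and exhibit a moved \emph{vertex} of $X_{\IG'}$ at distance one from $v$, using only the conjugation condition and the geometric fact that a tree isometry not fixing $v$ fixes at most one neighbor of $v$.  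Your approach is cleaner and, as you observe, shows that the condition $Fg\cap\alpha_e(G_e)=\emptyset$ and the assumption $|\Sigma_{\bar e}|\geq 2$ are not needed for this proposition; the paper incurs them only because it insists on producing a boundary point, which in turn meshes with its surrounding combinatorial machinery (the same style of argument reappears in Proposition~\ref{prop: infinite GBS topo free}).
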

    \begin{proof}
Let $e\in E(\Gamma)$ such that $|\Sigma_e|=[G_{o(e)}, \alpha_e(G_e)]=\infty$ and $|\Sigma_{\bar{e}}|\geq 2$. Choose the base vertex $v=o(e)$. Let $\gamma_1, \dots, \gamma_n\in \pi_1(\IG, v)$ be non-trivial elements. Our goal is to find a $\xi\in \partial_\infty X_\IG$ such that $\gamma_i\cdot \xi\neq \xi$ for any $1\leq i\leq n$. Here, all $\gamma_i$ can be represented by normalized words $w_i$ such that either $w_i=g_i$ for a $g_i\in G_v\setminus \{1_v\}$ or 
    \[w_i=g_{1, i}e_{1, i}\dots g_{m_i, i}e_{m_i, i}g_i\]
    in a normalized form such that $o(w_i)=o(e_{1, i})=v$ in the sense of Definition \ref{defn: normalized word}. 
    
    Form the index sets $I=\{1\leq i\leq n: e_{1,i}=e\}$ and $J=\{1\leq i\leq n: w_i=g_i\}$. Then $E:=\{g_{1, i}: i\in I\}$ and $F:=\{g_i: i\in J\}$ are finite subsets of $G_v$. Denote $M=\{w_i: i\in I\}$.
    Note that by definition, $M$ is disjoint from $F$. By assumption, there exists a $g\in \Sigma_e\setminus E$ such that $g^{-1}(F\setminus \{1_{o(e)}\})g\cap \alpha_e(G_e)=\emptyset$ and $Fg\cap \alpha_e(G_e)=\emptyset$.
Define a $\xi\in \partial_\infty X_\CG$ by
    \[\xi=geh\bar{e}geh\bar{e}\dots\]
    for an $h\in \Sigma_{\bar{e}}\setminus \{1_{o(\bar{e})}\}$. 
    Let $w_i\notin M\sqcup F$. One has
     \[\gamma_i\cdot \xi=[w_i]\cdot \xi=N(g_{1, i}e_{1, i}\dots g_{m_i, i}e_{m_i, i}(g_ig)eh\bar{e}\dots).\]
Note that $g_ig\notin \alpha_e(G_e)$ by assumption that $Fg\cap \alpha_e(G_e)=\emptyset$. Thus the infinite word 
\[g_{1, i}e_{1, i}\dots g_{m_i, i}e_{m_i, i}(g_ig)eh\bar{e}\dots\] is already reduced in the sense of Definition \ref{defn: infinite normalized word}.
Then there exists an $s_i\in \Sigma_e\setminus\{1_{o(e)}\}$ such that $g_ig\in s_i\alpha_e(G_e)$. The normalization process in Remark \ref{rmk: transfer to reduced and normalized word} and \ref{rmk: action on bdry of Bass-Serre tree} implies that for each $k\in \N$ there are $h_{k, i}\in \Sigma_{\bar{e}}\setminus \{1_{o(\bar{e})}\}$ and $s_{k,i}\in \Sigma_{e}\setminus \{1_o(e)\}$ for $k\geq 1$ such that 
     \[\gamma_i\cdot \xi=[w_i]\cdot \xi=g_{1, i}e_{1, i}\dots g_{m_i, i}e_{m_i, i}s_{i}eh_{1, i}\bar{e}s_{1,i}eh_{2, i}\bar{e}\dots\]    
    which is of the normalized form.
     This verifies that $\gamma_i\cdot \xi\neq \xi$ because $e$ is not equal to any $e_{1, i}$ for $1\leq i\leq n$. Now let $w_i=g_{1, i}e_{1, i}\dots g_{m_i, i}e_{m_i, i}g_i\in M$, i.e., $e_{1, i}=e$. One still has
    \[\gamma_i\cdot \xi=[w_i]\cdot \xi=g_{1, i}e_{1, i}\dots g_{m_i, i}e_{m_i, i}s_{1, i}eh_{1, i}\bar{e}s_{2,i}eh_{2, i}\bar{e}\dots\]
    for some proper $s_{k,i}\in \Sigma_{e}\setminus \{1_o(e)\}$ and $h_{k, i}\in \Sigma_{\bar{e}}\setminus \{1_o(\bar{e})\}$ where $k\geq 1$.    
    Thus one has $\gamma_i\cdot \xi\neq \xi$ because $g\neq g_{1, i}$ for any $i\in I$ by the choice of $g$. 
    
    The last case is $w_i\in F\setminus \{1_{o(e)}\}$, which means $w_i=g_i\in G_v$. Then our assumption on $g$ that $g^{-1}(F\setminus \{1_{o(e)}\})g\cap \alpha_e(G_e)=\emptyset$ implies that 
    $w_ig=h_is_i$
    for some $h_i\in \Sigma_{e}\setminus \{1_v, g\}$ and an $s_i\in \alpha_e(G_e)$. Then one has
    \[\gamma_i\cdot \xi=[w_i]\cdot \xi=N(g_igeh\bar{e}\dots)\]
    which is of the form
    \[\gamma_i\cdot \xi=h_ies'_i\bar{e}\dots\]
    for some $s'_i\in \Sigma_{\bar{e}}\setminus \{1_{o(\bar{e})}\}$. This implies that $\gamma_i\cdot \xi\neq \xi$ because $h_i\neq g$.

    In summary, we have shown that for $\IG=(\Gamma, \CG)$, the action $\pi_1(\IG, v)\curvearrowright X_\IG$ is strongly faithful where $v=o(e)$. 
    \end{proof}

We are ready to prove the main result of this subsection.    
\begin{thm}\label{thm: reduced graph c star simple 2}
    Let $\IG=(\Gamma, \CG)$ be a reduced graph of groups.  Suppose there exists an edge $e\in E(\Gamma)$ such that the vertex group $G_{o(e)}$ is an acylindrically hyperbolic group containing no non-trivial finite normal subgroup and the edge group $G_e\simeq \Z$. Then, setting $v=o(e)$, the group $\pi_1(\IG, v)$ is $C^*$-simple and the crossed product $C(\overline{\partial_\infty X_\IG})\rtimes_r \pi_1(\IG, v)$ is a unital simple separable purely infinite $C^*$-algebra.
\end{thm}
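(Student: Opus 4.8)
The plan is to apply Theorem~\ref{thm: boundary action} to get a strong boundary action on the (shadow) compactified boundary, and then to upgrade it to topological freeness by verifying the strong faithfulness criterion of Proposition~\ref{prop: non locally finite topo free 1}. Throughout I write $v=o(e)$, $G_v=G_{o(e)}$ and $Z=\alpha_e(G_e)$; since $G_e\cong\Z$ and $\alpha_e$ is injective, $Z$ is an infinite cyclic subgroup of $G_v$, and since acylindrically hyperbolic groups are never virtually cyclic, $Z$ has infinite index in $G_v$, so $|\Sigma_e|=[G_v:Z]=\infty$. First I would check the three hypotheses of the first part of Theorem~\ref{thm: boundary action}: $\IG$ is non-singular because it is reduced (Remark~\ref{rmk: reduced advantage}); $X_\IG$ is not isomorphic to a line, since by Proposition~\ref{prop: linear} the only two obstructions are that $\IG$ be a single non-loop edge with $[G_{o(e)}:Z]=[G_{t(e)}:\alpha_{\bar e}(G_e)]=2$ or a single loop with $\alpha_e$ or $\alpha_{\bar e}$ surjective, and the former is excluded by $[G_v:Z]=\infty$ while the latter would force $G_v\cong\Z$; there is a repeatable word of origin $v$, namely $geh\bar e$ with $g\in\Sigma_e\setminus\{1_v\}$ and $h\in\Sigma_{\bar e}\setminus\{1_{t(e)}\}$ if $e$ is not a loop (using $|\Sigma_{\bar e}|\ge2$, valid since $\IG$ is reduced) or $1_ve$ if $e$ is a loop, as in Lemma~\ref{Lem: not loop repeatable}; and every infinite normalized word flows to every edge by Proposition~\ref{prop: no loop minimal}, because $\IG$ is not an ascending loop (a one-loop $\IG$ cannot be ascending, again as that would give $G_v\cong\Z$). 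Thus Theorem~\ref{thm: boundary action} applies and, by its proof, $\alpha\colon\pi_1(\IG,v)\curvearrowright\overline{\partial_\infty X_\IG}$ is a strong boundary action, it is a $\pi_1(\IG,v)$-boundary, and $\pi_1(\IG,v)\curvearrowright X_\IG$ is a minimal strongly hyperbolic action on a countable tree.

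Next I would prove that $\alpha$ is topologically free. By Corollary~\ref{cor: bdry topo free} and Proposition~\ref{prop: strongly faithful and topo free} (applicable since $\pi_1(\IG,v)\curvearrowright X_\IG$ is minimal and strongly hyperbolic) it suffices to show $\pi_1(\IG,v)\curvearrowright X_\IG$ is strongly faithful, and for that I would verify the combinatorial hypothesis of Proposition~\ref{prop: non locally finite topo free 1} for the edge $e$. We already have $|\Sigma_e|=\infty$, and $|\Sigma_{\bar e}|\ge2$ (if $e$ is not a loop because $\IG$ is reduced, if $e$ is a loop because $\alpha_{\bar e}(G_e)\cong\Z$ also has infinite index in $G_v$). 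So, given a finite $F\subset G_v$ and a finite $E\subset\Sigma_e$, one must produce $g\in\Sigma_e\setminus E$ with $Fg\cap Z=\emptyset$ and $g^{-1}(F\setminus\{1_v\})g\cap Z=\emptyset$. The key observation is that both conditions on $g$ depend only on the left coset $gZ$ (left-multiplying or conjugating by a power of a generator of $Z$ preserves membership in $Z$), while $\Sigma_e$ meets each left coset of $Z$ in exactly one element; hence it is enough to exhibit infinitely many left cosets $gZ$ satisfying both conditions and then select one whose $\Sigma_e$-representative avoids the finite set $E$.

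The cosets violating $Fg\cap Z=\emptyset$ are exactly $\{w^{-1}Z:w\in F\}$, finitely many. For the second condition, fix $w\in F\setminus\{1_v\}$; if $w$ is not conjugate into $Z$ there is nothing to exclude, and otherwise, choosing $g_0$ with $u:=g_0^{-1}wg_0\in Z\setminus\{1_v\}$, one checks that any $g$ with $g^{-1}wg\in Z$ lies in $g_0\,C$, where $C:=\{h\in G_v:|h^{-1}Zh\cap Z|=\infty\}$ is the commensurator of $Z$ in $G_v$ (indeed $h:=g_0^{-1}g$ then satisfies $h^{-1}\langle u\rangle h\subseteq Z\cap h^{-1}Zh$, an infinite subgroup). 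Thus the bad cosets for the second condition lie in finitely many left cosets of $C$, and the heart of the matter is that $C$ has infinite index in $G_v$: otherwise its normal core $K\trianglelefteq G_v$ would be a finite-index — hence, via s-normality and \cite[Lemma~7.1]{Osin}, acylindrically hyperbolic — subgroup of $G_v$, and the infinite cyclic subgroup $Z\cap K$ would be commensurated by all of $K$, i.e.\ s-normal in $K$, so by \cite[Lemma~7.1]{Osin} the infinite cyclic group $Z\cap K$ would be acylindrically hyperbolic, which is absurd. Since $[G_v:C]=\infty$, removing finitely many cosets of $C$ (one family per $w\in F$) together with the finitely many bad cosets from the first condition still leaves infinitely many good left cosets of $Z$; choosing one whose $\Sigma_e$-representative is not in $E$ completes the verification, so $\pi_1(\IG,v)\curvearrowright X_\IG$ is strongly faithful and $\alpha$ is topologically free.

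Finally, $\alpha\colon\pi_1(\IG,v)\curvearrowright\overline{\partial_\infty X_\IG}$ is then a topologically free strong boundary action on a compact metrizable space, so $\pi_1(\IG,v)$ is $C^*$-simple by Theorem~\ref{thm: C simple equivalence}, and $C(\overline{\partial_\infty X_\IG})\rtimes_r\pi_1(\IG,v)$ is a unital simple separable purely infinite $C^*$-algebra by Theorem~\ref{thm: strong bdry action pure infinite}. I expect the genuinely delicate step to be the commensurator argument — ruling out that $C$ has finite index in $G_v$, and the accompanying bookkeeping showing that only finitely many cosets of $Z$ are excluded — since everything else is a direct appeal to the structural results already in place.
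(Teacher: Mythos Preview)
Your proof is correct, but the route is genuinely different from the paper's. The paper invokes Property~$P_{naive}$ (via \cite{A-Da}, using the hypothesis of no non-trivial finite normal subgroup) to find $b\in G_v$ with $\langle a,b\rangle\cong\F_2$ where $a$ generates $Z=\alpha_e(G_e)$; it then makes a bespoke choice of transversal $\Sigma_e=R_2\cdot R_1$ built from coset representatives for $G_v/\F_2$ and $\F_2/\langle a\rangle$, and produces the required $g$ explicitly as a word $b^{k+1}ab^{k+1}$ inside $\F_2$ using the elementary Lemma~\ref{lem: free vertex group}. Your argument bypasses $P_{naive}$ entirely: you observe that the conditions in Proposition~\ref{prop: non locally finite topo free 1} depend only on the left $Z$-coset of $g$, bound the bad cosets by finitely many left cosets of the commensurator $C$ of $Z$, and show $[G_v:C]=\infty$ by passing to the normal core and using that s-normal subgroups inherit acylindrical hyperbolicity \cite[Lemma~7.1]{Osin}. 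The paper's approach is more hands-on and yields an explicit witness; your approach is cleaner, works for an arbitrary choice of $\Sigma_e$, and---notably---never uses the ``no non-trivial finite normal subgroup'' hypothesis, so it in fact establishes the conclusion under the weaker assumption that $G_{o(e)}$ is merely acylindrically hyperbolic.
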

\begin{proof}
Identifying $G_e$ with $\Z$, we denote by $a=\alpha_e(1)\in G_v$.  Since $G_{v}$ is an acylindrically hyperbolic group containing no non-trivial finite normal subgroup, then $G_{v}$ satisfies the property $P_{naive}$ by \cite{A-Da}, which implies that there exists a $b\in G_{v}$ such that $H=\langle a, b\rangle\simeq \F_2$. Therefore, $\alpha_e$ actually maps $G_e$ into the subgroup $H\leq G_v$ such that $\alpha_e(m)=a^m$. We also identify $H$ by $\F_2$ directly, and define
\[R_1=\{w\in \F_2: w\mbox{ is a reduced word not ending with } a\mbox{ or }a^{-1}\}\]
to be the left coset representative set for $\F_2/\Z$. Then choose a left coset representative set $R_2$ containing $1_v$ for $G_v/\F_2$ and define $\Sigma_e=R_2\cdot R_1$.

Now, let $F\subset G_v$ and $E\subset \Sigma_e$ be finite sets. Write $F=\{r_1w_1,\dots, r_nw_n\}$ for some $r_i\in R_2$ and $w_i\in \F_2$. We denote by $F'=\{w_i: 1\leq i\leq n\}\subset \F_2$ and $E'=E\cap R_1$. Then for $F'$ and $E'$, Lemma \ref{lem: free vertex group} implies that there exists a $g\in R_1$ such that $g\in R_1\setminus E'$, and $wg\notin \langle a \rangle$ holds for any $w\in F'$ and $g^{-1}wg\notin \langle a \rangle$ holds for any $w\in F'\setminus \{1_{\F_n}\}$.

Since $g\in R_1\setminus E'$, one has $g\in \Sigma_e\setminus E$. Now, let $rw\in F$ with $r\in R_2$ and $w\in F'$. Then if $r=1_v$, then $rwg=wg\notin \langle a \rangle$. On the other hand, if $r\neq 1_v$, one even has $rwg\notin \F_2$. Therefore, one always has $Fg\cap \alpha_e(G_e)=\emptyset$ as $\alpha_e(G_e)=\langle a \rangle\leq \F_2$.

Then let $rw\in F\setminus \{1_v\}$ with $r\in R_2$ and $w\in F'$. If $r=1_v$, then one necessarily has $w\neq 1_{\F_n}$. This implies that $g^{-1}rwg=g^{-1}wg\notin \langle a\rangle=\alpha_e(G_e)$. Otherwise, if $r\neq 1_v$, then one will have $g^{-1}rwg\notin \F_2$ because otherwise, one has $r\in g\F_2g^{-1}w^{-1}=\F_2$, which is a contradiction to the fact $r\in R_2\setminus \{1_v\}$. Thus, $g^{-1}(F\setminus \{1_v\})g\cap \alpha_e(G_e)=\emptyset$ holds.

On the other hand, if $e$ is a loop, then $|\Sigma_{\bar{e}}|=[G_v, G_e]$ is still infinite. If $e$ is not a loop, one still has $|\Sigma_{\bar{e}}|\geq 2$ because $\IG$ is reduced. Then Lemma \ref{Lem: not loop repeatable} shows that there exists a $\IG$-repeatable word $w$ such that $o(w)=v$, which works as a hyperbolic isometry on the tree $X_\IG$ by Proposition \ref{prop: repeatable hyperbolic}. Then since $\IG$ is not an ascending loop, Proposition \ref{prop: no loop minimal} and 
\ref{prop: minimal on bdry} shows that the boundary action $\pi_1(\IG, v)\curvearrowright \partial_\infty X_\IG$ is minimal. Furthermore, Proposition  \ref{prop: linear} implies that $X_\IG$ is not isomorphic to a line. By the same argument in Theorem \ref{thm: boundary action}, the action $\pi_1(\IG, v)\curvearrowright X_\IG$ by automorphism is strongly hyperbolic minimal and the boundary action $\pi_1(\IG, v)\curvearrowright \partial_\infty X_\IG$ is a strong boundary action. Finally, Proposition \ref{prop: non locally finite topo free 1} shows that $\pi_1(\IG, v)\curvearrowright X_\IG$ is strongly faithful and therefore $\pi_1(\IG, v)\curvearrowright\overline{\partial_\infty X_\CG}$ is topologically free by Propositions \ref{prop: strongly faithful and topo free} and \ref{prop: topo free permanant}. Thus, the group $\pi_1(\IG, v)$ is $C^*$-simple the reduced crossed product $C(\overline{\partial_\infty X_\IG})\rtimes_r \pi_1(\IG, v)$ is a unital simple separable purely infinite $C^*$-algebra.
\end{proof}

\subsection{$C^*$-simplicity of tubular groups}
The class of \textit{tubular} groups introduced by Wise (see, e.g., \cite{Wise}) is an important class of groups in geometric group theory. These groups can be written as a fundamental group $\pi_1(\IG, v)$ for a graph of groups $\IG=(\Gamma, \CG)$ where $\Gamma$ is finite, and all vertex groups $G_v\simeq \Z\oplus \Z$ and all edge groups $G_e\simeq \Z$. For simplicity, we call such a graph of groups a \textit{tubular graph of groups}.

The following criterion on the $C^*$-simplicity of HNN extensions appeared in \cite{B-I-O}.

\begin{thm}\cite[Theorem 4.10]{B-I-O}\label{thm: c star simple HNN}
    Let $G*_H$ be a non-ascending HNN extension. Suppose for any finite set $F\subset H\setminus \{1_H\}$, there exists a $g\in G*_H$ such that $gFg^{-1}\cap H=\emptyset$.  Then $G*_H$ is $C^*$-simple.
\end{thm}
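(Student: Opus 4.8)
The plan is to realise $G*_H$ as the fundamental group of a one-loop graph of groups, feed it into Theorem \ref{thm: boundary action} to obtain a boundary action on the compactified Bass--Serre tree, and then reduce $C^*$-simplicity to strong faithfulness of the tree action, which is verified by hand with the hypothesis $gFg^{-1}\cap H=\emptyset$ doing the essential work. Concretely, write $G*_H=\hnn(G,H,\theta)$ as $\pi_1(\IG,v)$ where $\IG=(\Gamma,\CG)$ is the one-loop graph of groups of Example \ref{example:loop of groups}: one vertex $v$, one loop $e$, vertex group $G_v=G$, edge group $G_e=H$, with $\alpha_e\colon H\hookrightarrow G$ the inclusion and $\alpha_{\bar e}=\theta\colon H\hookrightarrow\theta(H)\le G$. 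A loop is never collapsible, so $\IG$ is reduced, and non-ascendingness says exactly that neither $\alpha_e$ nor $\alpha_{\bar e}$ is surjective, i.e.\ $|\Sigma_e|=[G:H]\ge 2$ and $|\Sigma_{\bar e}|=[G:\theta(H)]\ge 2$. Thus every vertex of $X_\IG$ has valence $[G:H]+[G:\theta(H)]\ge 4$, so $\IG$ is non-singular, $X_\IG$ is not isomorphic to a line (see also Proposition \ref{prop: linear}), and $\partial_\infty X_\IG$ is infinite (Remark \ref{rmk: infinite bdry}) and has no isolated points. Lemma \ref{Lem: not loop repeatable} provides a $\IG$-repeatable word, and since $\IG$ is a reduced loop that is not ascending, Proposition \ref{prop: no loop minimal} shows that every infinite normalized word flows to every edge. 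Hence hypotheses \ref{nonlinear}--\ref{minimal} of Theorem \ref{thm: boundary action} hold, and its first part gives that $\alpha\colon G*_H\curvearrowright\overline{\partial_\infty X_\IG}$ is a strong boundary action; in particular $\overline{\partial_\infty X_\IG}$ is a $G*_H$-boundary, and, as in the proof of that theorem, $G*_H\curvearrowright X_\IG$ is minimal (Proposition \ref{prop: minimal 1}) and strongly hyperbolic (Proposition \ref{prop: minimal 2}).

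By Theorem \ref{thm: C simple equivalence}(v) it then suffices to prove that $\alpha$ is topologically free; via Corollary \ref{cor: bdry topo free} this is equivalent to topological freeness of $G*_H\curvearrowright\partial_\infty X_\IG$, and since the tree action is minimal and strongly hyperbolic on a countable tree, Proposition \ref{prop: strongly faithful and topo free} reduces it to showing that $G*_H\curvearrowright X_\IG$ is \emph{strongly faithful}. I would argue this along the lines of the proof of Proposition \ref{prop: non locally finite topo free 1}: given a finite set $F\subset (G*_H)\setminus\{1\}$, split it into the elements acting hyperbolically on $X_\IG$ and the elliptic ones. A hyperbolic $\gamma$ fixes on $\partial_\infty X_\IG$ only the two endpoints of its axis, so collectively these are finitely many points, hence nowhere dense as $\partial_\infty X_\IG$ has no isolated points. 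An elliptic $\gamma$ is conjugate into the vertex group $G=G_v$, so after an equivariant translation it is enough to control $\fix_{\partial_\infty X_\IG}(h)=\partial\bigl(\fix_{X_\IG}(h)\bigr)$ for $1\ne h\in G$; the claim to establish is that this set is nowhere dense, after which one builds a single infinite normalized word $\xi$ avoiding the fixed sets of all members of $F$ by prescribing its initial block in terms of the coset representatives $\Sigma_e,\Sigma_{\bar e}$, exactly as in that proposition.

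The hypothesis $gFg^{-1}\cap H=\emptyset$ enters precisely in the nowhere-density of these elliptic fixed sets, and making that airtight is the main obstacle. It first forces $G*_H\curvearrowright X_\IG$ to be faithful: if $1\ne\gamma$ fixed $X_\IG$ pointwise then so would $g\gamma g^{-1}$ for the $g$ supplied by the hypothesis applied to $F=\{\gamma\}$, and in particular $g\gamma g^{-1}$ would fix an edge whose stabiliser is $H$, so $g\gamma g^{-1}\in H$, contradicting the choice of $g$. More strongly, if a nontrivial elliptic element had fixed subtree containing an entire half-tree of $X_\IG$, one could conjugate --- moving the root of that half-tree to $v$ and then adjusting by an element of $G$ --- to arrange $h\in H\setminus\{1\}$ fixing everything of $X_\IG$ beyond a specified edge at $v$ with stabiliser $H$; iterating the ``shift into the surviving branch'' move together with the conjugation hypothesis must then produce a contradiction. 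This is exactly the mechanism separating the $C^*$-simple case $BS(2,3)$, where nontrivial elliptic fixed subtrees are infinite but thin, from the excluded case $BS(2,2)$, where $x^2$ acts trivially on a whole branch. Organising this normal-form bookkeeping --- which conjugates of $h$ to use and how the hypothesis kills the surviving branch --- is the technical heart; the rest is assembling the cited results. Once strong faithfulness is in place, $\alpha$ is a topologically free $G*_H$-boundary action, so $G*_H$ is $C^*$-simple by Theorem \ref{thm: C simple equivalence}.
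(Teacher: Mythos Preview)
This theorem is not proved in the paper at all: it is simply quoted from \cite[Theorem 4.10]{B-I-O} and used as a black box (for instance in Proposition~\ref{prop: tubular}). So there is no ``paper's own proof'' to compare against, and your write-up is really an attempt to rederive a result the authors import from elsewhere using the machinery of the present paper.

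On its own merits, your plan is sound through the first paragraph: the one-loop graph of groups is reduced and non-singular when the HNN extension is non-ascending, and the first part of Theorem~\ref{thm: boundary action} together with Propositions~\ref{prop: minimal 1}, \ref{prop: minimal 2}, \ref{prop: no loop minimal} does yield a minimal strongly hyperbolic action on $X_\IG$ and a strong boundary action on $\overline{\partial_\infty X_\IG}$. The reduction to strong faithfulness via Proposition~\ref{prop: strongly faithful and topo free} is also correct.

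The gap is exactly where you flag it, and it is a real one, not just bookkeeping. For a nontrivial $h\in H$ the hypothesis produces some $g\in G*_H$ with $ghg^{-1}\notin H$. Since $H$ is the stabiliser of the base edge $\epsilon$ at $v$, this says precisely that $h$ does not fix the single edge $g^{-1}\epsilon$. But to conclude that $\fix_{\partial_\infty X_\IG}(h)$ is nowhere dense you must rule out $h$ fixing \emph{every} half-tree $T_\mu$; knowing one edge in $X_\IG$ that $h$ moves does not do this, because that edge need not lie in the particular $T_\mu$ under consideration, and the hypothesis gives you no control over which $g$ you get. Your ``shift into the surviving branch and iterate'' idea is suggestive, but as written there is no mechanism linking the $g$ supplied by the hypothesis to the combinatorics of the specific cone you are trying to kill. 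Making this work requires a genuinely different argument---closer to what is actually done in \cite{B-I-O}, where the criterion is established by analysing how conjugation moves elements in and out of $H$ along \emph{arbitrary} reduced words, not just one---and is not a matter of cleaning up notation. Until that step is filled in, the proposal is an outline rather than a proof.
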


\begin{prop}\label{prop: tubular}
    Let $\IG=(\Gamma, \CG)$ be a graph of groups such that $\Gamma$ is a loop $e$ with $o(e)=t(e)=v$ such that $G_e\simeq \Z$ and $G_{o(e)}\simeq \Z\oplus \Z$. Denote by $(m_1, n_1)=\alpha_e(1)$ and $(m_2, n_2)=\alpha_{\bar{e}}(1)$. If $|m_1|\neq |m_2|$ or $|n_1|\neq |n_2|$, then $\pi_1(\IG, v)$ is $C^*$-simple.
\end{prop}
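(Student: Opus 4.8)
The group $\pi_1(\IG,v)$ is the HNN extension $G_v*_{\alpha_e(G_e)}\cong\Z^2*_{\Z}$ of Example \ref{example:HNN extension}, with stable letter $s$ coming from the loop $e$, base group $G_v\cong\Z^2$, associated subgroup $H:=\alpha_e(G_e)=\langle a\rangle$ for $a=(m_1,n_1)$, and defining relation $s^{-1}as=b$ with $b:=(m_2,n_2)$, so that $\theta(H)=\langle b\rangle$. Since $\alpha_e$ and $\alpha_{\bar e}$ are injective, $a$ and $b$ are non-zero, hence $H$ and $\theta(H)$ are proper (cyclic) subgroups of $\Z^2$ and the HNN extension is non-ascending. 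By Theorem \ref{thm: c star simple HNN} it therefore suffices to prove that for every finite $F\subset H\setminus\{1\}$ there exists $g\in\pi_1(\IG,v)$ with $gFg^{-1}\cap H=\emptyset$. The plan is to take $g$ to be a sufficiently high power of $s^{-1}$ and to follow the orbit of $F$ under iterated conjugation by $s^{-1}$ inside $\Z^2$ until it leaves $H$ for good.

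Before that I would normalise the configuration. Replacing the loop $e$ by $\bar e$ yields an isomorphic fundamental group (via $s\mapsto s^{-1}$), replaces $\theta$ by $\theta^{-1}$, and interchanges $a$ and $b$, i.e. $(m_1,n_1)$ and $(m_2,n_2)$; so it preserves both the hypothesis and the statement to be proved. When $a$ and $b$ are parallel one has $|m_1|/|m_2|=|n_1|/|n_2|$, so the hypothesis ($|m_1|\ne|m_2|$ or $|n_1|\ne|n_2|$) forces $\gcd(|m_1|,|n_1|)\ne\gcd(|m_2|,|n_2|)$; hence, after possibly reorienting, we may assume that either (a) $a$ and $b$ are linearly independent over $\Q$, or (b) $a$ and $b$ are parallel with $\gcd(|m_1|,|n_1|)>\gcd(|m_2|,|n_2|)$. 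In case (b) write $a=\alpha c$ and $b=\beta c$ with $c$ primitive, $\alpha>0$ and $\beta\ne 0$; then $\alpha=\gcd(|m_1|,|n_1|)>\gcd(|m_2|,|n_2|)=|\beta|$, so $\alpha\ge 2$ and, since $0<|\beta|<\alpha$, also $\alpha\nmid\beta$.

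Now fix a finite $F\subset H\setminus\{1\}$, write $F=\{ka:k\in S\}$ with $S\subset\Z\setminus\{0\}$ finite, and set $M=\max_{k\in S}|k|$. Choose $N>\log_2 M$ and put $g=s^{-N}$. Fix $k\in S$, let $u_0=ka\in H$, and, as long as $u_{j-1}\in H$, set $u_j=s^{-1}u_{j-1}s=\theta(u_{j-1})\in\theta(H)\subseteq\Z^2$. I claim $u_0,\dots,u_N$ cannot all lie in $H$. In case (a) this is clear, since $u_1=kb\notin\langle a\rangle=H$. In case (b), if $u_0,\dots,u_N\in H$ then an easy induction gives $u_j=k\,\beta^{\,j}\alpha^{1-j}c$ for $0\le j\le N$, and $u_N\in H=\langle\alpha c\rangle$ forces $\alpha^{N}\mid k\beta^{N}$; picking a prime $\pi$ with $v_\pi(\alpha)>v_\pi(\beta)$ (which exists since $\alpha\nmid\beta$, where $v_\pi(\cdot)$ denotes the $\pi$-adic valuation) we get $N\le N\bigl(v_\pi(\alpha)-v_\pi(\beta)\bigr)\le v_\pi(k)\le\log_2 M<N$, a contradiction. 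So there is a least index $j_0\le N$ with $u_{j_0}\notin H$, and then $u_{j_0}=\theta(u_{j_0-1})\in\Z^2\setminus H$. Since applying $s^{-1}$-conjugation $j_0$ times sends $ka$ to $u_{j_0}$,
\[
g(ka)g^{-1}=s^{-N}(ka)s^{N}=s^{-(N-j_0)}\,u_{j_0}\,s^{\,N-j_0},
\]
which (recalling $s$ is the edge $e$) is represented by the word $\bar{e}^{\,N-j_0}u_{j_0}e^{\,N-j_0}$. This word is reduced in the sense of Definition \ref{defn: normalized word}: its only candidate reversal is the subword $\bar e\,u_{j_0}\,e$, and this fails to be one because $u_{j_0}\notin\alpha_e(G_e)=H$. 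Hence, by Remark \ref{rmk: minimum length reduced words}, $g(ka)g^{-1}$ has length $2(N-j_0)$; if $N>j_0$ this is positive, so the element does not lie in the vertex group $G_v$, while if $N=j_0$ it equals $u_{j_0}\in\Z^2\setminus H$. In every case $g(ka)g^{-1}\notin H$, so $gFg^{-1}\cap H=\emptyset$, and Theorem \ref{thm: c star simple HNN} shows that $\pi_1(\IG,v)$ is $C^*$-simple.

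The only genuine difficulty is case (b), the parallel case: when $a$ and $b$ point in the same direction a single conjugation by $s^{-1}$ need not move $ka$ out of $H$, so one must iterate, with the number of steps controlled by the $\pi$-adic valuations via the divisibility relation $\alpha\nmid\beta$ that the preliminary reorientation secures, and then appeal to the normal-form theorem for HNN extensions (Remark \ref{rmk: minimum length reduced words}) to be sure that once the orbit of $ka$ exits $H$ along the chain it never re-enters $G_v$, hence never re-enters $H$. Everything else — non-ascendingness, the non-parallel case, and the reduction to (a) or (b) — is routine bookkeeping.
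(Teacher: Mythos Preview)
Your proof is correct and follows essentially the same approach as the paper: both verify the hypothesis of Theorem~\ref{thm: c star simple HNN} by conjugating elements of $H=\alpha_e(G_e)$ by powers of the stable letter and using the normal form (Remark~\ref{rmk: minimum length reduced words}) to certify that the result eventually leaves $H$. Your case split into ``$a,b$ linearly independent'' versus ``$a,b$ parallel'' (with the reorientation to force $\alpha\nmid\beta$ and the $\pi$-adic valuation bound) is a tidier packaging of the same arithmetic that the paper carries out via its gcd/coprimality analysis and the maximal-$i_0$ argument.
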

\begin{proof}
The group $\pi_1(\IG, v)$ is an HNN extension $G_v*_{\alpha_e(G_e)}\simeq (\Z\oplus \Z)*_{\Z}$.
By Theorem \ref{thm: c star simple HNN} ($e$ is non-ascending), it suffices to verify that for any finite $F\subset \alpha_e(G_e)\setminus \{1_v\}$ there exists a $\IG$-normalized word $y$ such that the normalized word $N(yhy^{-1})\notin \alpha_e(G_e)$ for any $h\in F$. Suppose $|m_1|\neq |m_2|$ or $|n_1|\neq |n_2|$. 

We look at the case that $m_1, n_1, m_2, n_2$ are all non-zero.
Denote by $d\geq 1$ the greatest common divisor of $|m_1|$ and $|m_2|$, and by $c\geq 1$ the greatest common divisor of $|n_1|$ and $|n_2|$. Then write $m_1=dl_1$ and $m_2=dl_2$ such that $l_1$ and $l_2$ are coprime. Similarly, we write $n_1=ck_1$ and $n_2=ck_2$ for coprime integers $k_1, k_2$. Note that $|l_1|\neq |l_2|$ or $|k_1|\neq |k_2|$ and they are all non-zero.

Now, for $x\in G_e\setminus \{1_e\}=\Z\setminus \{0\}$, one has $\alpha_e(x)=(m_1x, n_1x)=(dl_1x, ck_1x)$ and thus \[\bar{e}\alpha_e(x)e=\alpha_{\bar{e}}(x)=(m_2x, n_2x)=(dl_2x, ck_2x).\] 
Then if $(dl_2x, ck_2x)\in \alpha_e(G_e)$, then necessarily one has $(dl_2x, ck_2x)=(dl_1y, ck_1y)$ for some integer $y$. Therefore,  if $l_2/l_1\neq k_2/k_1$, choosing $g=1_ve$, then one has \[g^{-1}\alpha_e(x)g=\bar{e}\alpha_e(x)e=\alpha_{\bar{e}}(x)\in G_v\setminus\alpha_e(G_e).\] Otherwise, for $(dl_2x, ck_2x)$,  if $l_2x/l_1=k_2x/k_1$ is still an integer, one has that
\[\bar{e}1_v\bar{e}\alpha_e(x)e1_ve=\bar{e}(dl_1(l_2x/l_1), ck_1(k_2/k_1))e=(dl_2^2x/l_1, ck_2^2x/k_1).\]
Now, denote by $i_0\geq 0$ the maximal integer such that $l_2^{i_0}x/l^{i_0}_1=k_2^{i_0}x/k_1^{i_0}$ is an integer. Such an $i_0$ exists because $l^{i}_2, l^{i}_1$ and $k^{i}_2, k^{i}_1$ are also coprime pairs for any positive integer $i$. Then by induction, choose the path word $g=1_ve1_ve\dots 1_ve$ by repeating $1_ve$ for $i_0+1$ times, one has 
\[g^{-1}\alpha_e(x)g=(dl_2(l_2^{i_0}x/l_1^{i_0}), ck_2(k^{i_0}_2x/k^{i_0}_1)),\]
which implies that $g^{-1}\alpha_e(x)g\in G_v\setminus\alpha_e(G_e)$ by the maximality of $i_0$. 

Now, suppose one of $m_1, n_1, m_2, n_2$ is zero. For instance, suppose $m_1=0$. For $x\in G_e\setminus \{1_e\}=\Z\setminus \{0\}$, one still has $\alpha_e(x)=(0, n_1x)$ and 
$\bar{e}\alpha_e(x)e=\alpha_{\bar{e}}(x)=(m_2x, n_2x)$.  Thus, if $m_2\neq 0$, choosing $g=1_ve$, then 
\[g^{-1}\alpha_e(x)g=(m_2x, n_2x)\in G_v\setminus\alpha_e(G_e).\]
If $m_2=0$ as well, then since $\alpha_e, \alpha_{\bar{e}}$ are monomorphism. the integer $n_1, n_2$ necessarily are different non-zero integers. Define $c\geq 1$ to be the greatest common divisor for $|n_1|$ and $|n_2|$ and write $n_1=ck_1$ and $n_2=ck_2$ for coprime integers $k_1, k_2$. Then let $i_0\geq 0$ be the maximal integer such that $k^{i_0}_2x/k^{i_0}_1$ is still an integer and define $g=1_ve\dots 1_ve$ by repeating $1_ve$ for $i_0+1$ times. The same argument above shows that \[g^{-1}\alpha_e(x)g=(0, ck_2(k^{i_0}_2x/k^{i_0}_1))\in G_v\setminus \alpha_e(G_e)\]
The same method works for all other cases that $n_1, m_2, n_2=0$.

As a summary, we have demonstrated that for any $x\in G_e\setminus \{1_o\}$, there exists a $g_x=1_ve\dots 1_ve$ such that $g_x^{-1}\alpha_e(x)g_x\in G_v\setminus \alpha_e(G_e)$. Then for any $g=1_ve\dots 1_ve$ such that $\ell(g)> \ell(g_x)$, one has 
\[g^{-1}\alpha_e(x)g=\bar{e}1_v\dots 1_v\bar{e}(g^{-1}_x\alpha_e(x)g_x)e1_v\dots 1_ve\]
is a reduced word containing edge $e$ as $g_x^{-1}\alpha_e(x)g_x\in G_v\setminus \alpha_e(G_e)$, which implies $N(g^{-1}xg)\notin\alpha_e(G_e)$. Therefore, letting $F\subset \alpha_e(G_e)\setminus \{1_v\}$ and choosing a $g=1_ve\dots 1_ve$ with $\ell(g)$ is large enough, the normalized word $N(g^{-1}xg)$ is not contained in $\alpha_e(G_e)$.
Thus, the group $\pi_1(\IG, v)$ is $C^*$-simple.
\end{proof}

As a consequence of Proposition \ref{prop: tubular} and Theorem \ref{thm: reduced graph C star simple}, we have the following.
\begin{thm}\label{thm: tubular}
    Let $\IG=(\Gamma, \CG)$ be a tubular graph of groups. Suppose $\Gamma$ contains a loop $e$ with $o(e)=t(e)=v$. Denote by $(m_1, n_1)=\alpha_e(1)$ and $(m_2, n_2)=\alpha_{\bar{e}}(1)$. Suppose $|m_1|\neq |m_2|$ or $|n_1|\neq |n_2|$. Then the tubular group $\pi_1(\IG, v)$ is $C^*$-simple and the crossed product $A=C(\partial_\infty X_\IG)\rtimes_r \pi_1(\IG, v)$ is a unital Kirchberg algebra satisfying the UCT.
    \end{thm}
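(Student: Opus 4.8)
The plan is to obtain this as an application of Theorem \ref{thm: reduced graph C star simple}(ii) (hence ultimately of the master result Theorem \ref{thm: boundary action}), feeding in Proposition \ref{prop: tubular} for the $C^*$-simplicity hypothesis, and then to promote the resulting ``unital simple separable purely infinite'' crossed product to a Kirchberg algebra satisfying the UCT by exploiting that \emph{every} vertex group of a tubular graph of groups is $\Z^2$, hence amenable.

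First I would record two automatic features of any tubular graph of groups. Since every edge monomorphism $\alpha_e\colon G_e\cong\Z\hookrightarrow G_{o(e)}\cong\Z^2$ has cyclic image, it has infinite index; in particular $\alpha_{\bar e}$ is never onto $G_{t(e)}\cong\Z^2$, so no edge is collapsible in the sense of Definition \ref{defn: reduced graph} and $\IG$ is reduced, and no valence-one vertex has a surjective $\alpha_e$, so $\IG$ is non-singular. For the same reason the loop $e$ in the statement is non-ascending in the sense of Definition \ref{defn:ascending}. Moreover $G_v\cong\Z^2$ is amenable, and injectivity of $\alpha_e,\alpha_{\bar e}$ forces $(m_i,n_i)\neq(0,0)$, so the hypothesis ``$|m_1|\neq|m_2|$ or $|n_1|\neq|n_2|$'' is precisely the one appearing in Proposition \ref{prop: tubular}.

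Next, the sub-graph-of-groups supported on the loop $e$ has fundamental group the HNN extension $G_v\ast_{\alpha_e(G_e)}\cong\Z^2\ast_\Z$ with attaching data $(m_1,n_1),(m_2,n_2)$, so Proposition \ref{prop: tubular} gives that this HNN extension is $C^*$-simple. Hence $\IG$, together with its non-ascending loop $e$, satisfies all three hypotheses of Theorem \ref{thm: reduced graph C star simple}(ii), which yields: $\pi_1(\IG,v)$ is $C^*$-simple, the boundary action $\pi_1(\IG,v)\curvearrowright\overline{\partial_\infty X_\IG}$ is a topologically free strong boundary action, and $A\coloneqq C(\overline{\partial_\infty X_\IG})\rtimes_r\pi_1(\IG,v)$ is unital (the space $\overline{\partial_\infty X_\IG}$ is compact metrizable as a closed subspace of $(X_\IG\sqcup\partial X_\IG,\tau_s)$), simple, separable and purely infinite. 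It remains to see that $A$ is nuclear and satisfies the UCT. Here I would invoke tubularity once more: the vertex stabilizers of $\pi_1(\IG,v)\curvearrowright X_\IG$ are conjugates of the $G_u\cong\Z^2$ and hence amenable, so by \cite[Proposition 5.2.1, Lemma 5.2.6]{B-O} --- applied now to the whole graph $\IG$, exactly as it is applied to the subgraph $\IG'$ in the proof of Theorem \ref{thm: boundary action} --- the action $\pi_1(\IG,v)\curvearrowright\overline{\partial_\infty X_\IG}$ is topologically amenable. Then $A$ is nuclear by Remark \ref{rmk: equivalence of dynamical properties} and satisfies the UCT by \cite{Tu}, so $A$ is a unital Kirchberg algebra satisfying the UCT.

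The real content sits in Proposition \ref{prop: tubular} --- the elementary but careful argument that, for each $x\in G_e\setminus\{1\}$, a sufficiently long path word $1_ve\cdots1_ve$ conjugates $\alpha_e(x)$ out of $\alpha_e(G_e)$, organized around the coprime factorizations $m_i=dl_i$, $n_i=ck_i$ --- and that is already established. What is left is genuinely routine: the bookkeeping that tubular graphs of groups are automatically reduced, non-singular and have non-ascending loops, and the observation that amenability of \emph{all} vertex groups is what upgrades Theorem \ref{thm: reduced graph C star simple}'s ``purely infinite simple'' conclusion to ``Kirchberg algebra satisfying the UCT''; this last point genuinely uses every vertex group, not merely $G_v$, since in the general setting of Theorem \ref{thm: boundary action} the boundary crossed product need not be nuclear.
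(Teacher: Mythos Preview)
Your proposal is correct and follows essentially the same route as the paper: observe that all $|\Sigma_e|=[\Z^2:\alpha_e(\Z)]=\infty$ so the tubular graph of groups is automatically reduced (and the loop $e$ is non-ascending with amenable vertex group), feed Proposition~\ref{prop: tubular} into Theorem~\ref{thm: reduced graph C star simple}(ii), and then upgrade to a Kirchberg algebra with UCT via amenability of \emph{all} vertex groups $\Z^2$ using \cite[Proposition 5.2.1, Lemma 5.2.6]{B-O} and \cite{Tu}. Your write-up is in fact more explicit than the paper's about why the hypotheses of Theorem~\ref{thm: reduced graph C star simple} hold, but the argument is the same.
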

    \begin{proof}
    In a tubular graph $\IG=(\Gamma, \CG)$, since all vertex group $G_v\simeq \Z\oplus \Z$ and $G_e\simeq \Z$, all $|\Sigma_e|=\infty$ and therefore $\IG$ is a reduced graph.
    By Theorem \ref{thm: reduced graph C star simple} and Proposition \ref{prop: tubular}, the tubular group $\pi_1(\IG, v)$ is $C^*$-simple and the crossed product $C(\partial_\infty X_\IG)\rtimes_r \pi_1(\IG, v)$ is a unital simple purely infinite separable $C^*$-algebra. Moreover, all vertex groups $G_{v}\simeq \Z\oplus \Z$ are amenable, the boundary action  is topologically amenable by  \cite[Proposition 5.2.1, Lemma 5.2.6]{B-O} and thus $A$ is nuclear by Remark \ref{rmk: equivalence of dynamical properties} and satisfies the UCT by \cite{Tu}.  Therefore, $A$ is a unital Kirchberg algebra satisfying the UCT.    \end{proof}

We remark that not all tubular groups are $C^*$-simple. For example, the amalgamated free product $(\Z\oplus \Z)*_\Z(\Z\oplus \Z)$ has a normal subgroup isomorphic to $\Z$. However, one may apply Theorem \ref{thm: tubular} to a large class consisting of certain tubular groups whose graph is a 2-rose graph. A graph $\Gamma$ is called a \textit{rose graph} if $V(\Gamma)=\{v\}$ and $E(\Gamma)$ consists of several loops $e$ with $o(e)=t(e)=v$.

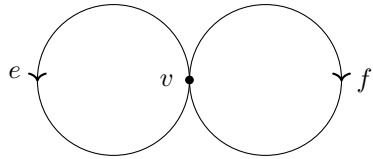
\begin{figure}[ht]
    \centering
\begin{tikzpicture}
      \node[label={left, yshift=0cm:}] at (1.7,2) {\small$v$};
      \node[label={left, yshift=0cm:}] at (-0.3,2.1) {\small$e$};
      \node[label={left, yshift=0cm:}] at (4.3,2) {\small$f$};
   
      \tikzset{enclosed/.style={draw, circle, inner sep=0pt, minimum size=.1cm, fill=black}}
      \node[enclosed, label={right, yshift=.2cm:}] at (2,2) {};

      \draw[->, thick] (0,2.02) -- (0,1.98);
      \draw[->, thick] (4,2.02) -- (4,1.98);
      \draw(0, 2) arc(-180:180:1);
      \draw(2, 2) arc(-180:180:1);

\end{tikzpicture}
    \caption{Tubular 2-Rose graph: $G_v\cong \mathbb{Z}\times \mathbb{Z}\cong \langle a,b\ |\ [a,b]=1 \rangle$, $G_e=\langle x \rangle\cong \mathbb{Z}$ where the monomorphism $\alpha_e: x\mapsto a^{m_1}b^{n_1}$ and $\alpha_{\bar{e}}: x\mapsto  a^{m_2}b^{n_2}$ and $G_f=\langle y\rangle \cong \mathbb{Z}$ where $\alpha_f: y\mapsto a^{k_1}b^{l_1}$ and $\alpha_{\bar{f}}: y\mapsto a^{k_2}b^{l_2}$.}
    \label{fig: 2-rose tubular}
\end{figure}

Denote by $\CC_{t,2}$ the following class of groups
\[G=\langle a,b, x, y\ |\  [a, b]=1, x^{-1}a^{m_1}b^{n_1}x=a^{m_2}b^{n_2}, y^{-1}a^{k_1}b^{l_1}y=a^{k_2}b^{l_2}\rangle,\]
in which $(m_i, n_i)\neq (0, 0)$ and $(k_i, l_i)\neq (0, 0)$ for any $i=1,2$.
From the definition, every $G\in \CC_{t, 2}$ is the fundamental group of the graph of groups in Figure \ref{fig: 2-rose tubular}. In particular, this class includes the following examples. 
\begin{eg}\label{eg: tubular}
    \begin{enumerate}[label=(\roman*)]
        \item Wise's non-Hopfian $\cat$ group $W$ if $m_1=1, n_1=0, m_2=n_2=2$ and $k_1=0, l_1=1, k_2=l_2=2$ in \cite{Wise1}. 
\item  Brady-Bridson group $\operatorname{BB}(p, r)$
where $0<p<r$ if $m_1=1, n_1=0, m_2=r, n_2=1$ and $k_1=p, l_1=0, k_2=r, l_2=-1$ in \cite{B-B}.
\item Wise's simple curve examples $G_d$ where $d\geq 2$ if  $m_1=d, n_1=1, m_2=1, n_2=1$ and $k_1=1, l_1=d, k_2=1, l_2=1$ in \cite{Wise}
\end{enumerate}
\end{eg}

According to the above discussion, the following is a direct consequence of Theorem \ref{thm: tubular}.

\begin{cor}\label{cor: wise BB}
The group $G\in \CC_{t, 2}$ is $C^*$-simple if $(|m_1|, |n_1|)\neq (|m_2|, |n_2|)$ or  $(|k_1|, |l_1|)\neq (|k_2|, |l_2|)$. In particular,
the Wise's non-Hopfian $\cat$ group $W$, Brady-Bridson group $\operatorname{BB}(p, r)$ for $0<p<r$, and Wise's simple curve examples $G_d$ for $d\geq 2$ are $C^*$-simple.
\end{cor}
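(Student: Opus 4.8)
The plan is to recognize the class $\CC_{t,2}$ as a family of tubular graphs of groups and then quote Theorem~\ref{thm: tubular} essentially verbatim. Indeed, by inspecting the defining presentation, every $G\in\CC_{t,2}$ is the fundamental group $\pi_1(\IG,v)$ of the graph of groups $\IG=(\Gamma,\CG)$ depicted in Figure~\ref{fig: 2-rose tubular}: $\Gamma$ is the $2$-rose with a single vertex $v$, vertex group $G_v=\langle a,b\mid[a,b]=1\rangle\cong\Z^2$, and two loops $e,f$ with edge groups $G_e=\langle x\rangle\cong\Z$, $G_f=\langle y\rangle\cong\Z$ and monomorphisms $\alpha_e(x)=a^{m_1}b^{n_1}$, $\alpha_{\bar e}(x)=a^{m_2}b^{n_2}$, $\alpha_f(y)=a^{k_1}b^{l_1}$, $\alpha_{\bar f}(y)=a^{k_2}b^{l_2}$. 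Since each inclusion $\Z\hookrightarrow\Z^2$ has infinite index, $\IG$ is a reduced tubular graph of groups, so it falls within the scope of Theorem~\ref{thm: tubular}.

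Next I would translate the numerical hypothesis of the corollary. The condition $(|m_1|,|n_1|)\neq(|m_2|,|n_2|)$ is precisely ``$|m_1|\neq|m_2|$ or $|n_1|\neq|n_2|$'' for the loop $e$, and likewise $(|k_1|,|l_1|)\neq(|k_2|,|l_2|)$ is the same condition for the loop $f$. Thus, under the hypothesis of Corollary~\ref{cor: wise BB}, at least one of the two loops of $\Gamma$ satisfies the running assumption of Theorem~\ref{thm: tubular}. Applying that theorem to this loop yields immediately that $\pi_1(\IG,v)=G$ is $C^*$-simple; as a bonus one also gets that the crossed product $C(\partial_\infty X_\IG)\rtimes_r G$ is a unital Kirchberg algebra satisfying the UCT.

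For the three named examples one only needs the elementary verifications against the data tabulated in Example~\ref{eg: tubular}. For Wise's group $W$ one has $(|m_1|,|n_1|)=(1,0)\neq(2,2)=(|m_2|,|n_2|)$; for $\operatorname{BB}(p,r)$ with $0<p<r$ one has $n_1=0\neq1=n_2$, hence $(|m_1|,|n_1|)\neq(|m_2|,|n_2|)$; and for $G_d$ with $d\geq2$ one has $(|m_1|,|n_1|)=(d,1)\neq(1,1)=(|m_2|,|n_2|)$. In each case the hypothesis of the corollary holds for the loop $e$, so $C^*$-simplicity follows. There is essentially no real obstacle here: the analytic work has already been done in Theorem~\ref{thm: tubular} (which rests on Theorem~\ref{thm: reduced graph C star simple} and Proposition~\ref{prop: tubular}), and all that remains is the identification of $\CC_{t,2}$ with tubular $2$-rose graphs of groups together with these numerical checks. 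The only point worth flagging is that one needs just \emph{one} of the two loops to be ``non-balanced'' in the sense $|m_1|\neq|m_2|$ or $|n_1|\neq|n_2|$ — the other loop is allowed to be entirely arbitrary, since Theorem~\ref{thm: tubular} only asks for the existence of a single good loop in $\Gamma$.
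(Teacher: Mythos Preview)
Your proposal is correct and follows exactly the paper's approach: the paper simply states that the corollary is ``a direct consequence of Theorem~\ref{thm: tubular}'' in light of the identification of $\CC_{t,2}$ with the $2$-rose tubular graph of groups and the parameter values recorded in Example~\ref{eg: tubular}. You have merely spelled out these routine verifications in more detail than the paper does.
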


To complement our discussion, we next show that many fundamental groups $\pi_1(\IG, v)$ in Proposition \ref{prop: tubular} are not acylindrically hyperbolic.

\begin{prop}\label{prop: non acy hyperbolic tubular}
 Let $\IG=(\Gamma, \CG)$ be a graph of groups such that $\Gamma$ is a loop $e$ with $o(e)=t(e)=v$ such that $G_e\simeq \Z$ and $G_{o(e)}\simeq \Z\oplus \Z$. Denote by $(m_1, n_1)=\alpha_e(1)$ and $(m_2, n_2)=\alpha_{\bar{e}}(1)$. Suppose
 \begin{enumerate}[label=(\roman*)]
     \item either $m_1, n_1, m_2, n_2$ are all non-zero and satisfying $m_1/m_2=n_1/n_2$;
     \item or $m_1=m_2=0$;
     \item or $n_1=n_2=0$.
 \end{enumerate}
Then $\pi_1(\IG, v)$ is not acylindrically hyperbolic.
\end{prop}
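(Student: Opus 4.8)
The plan is to show that under any of the three hypotheses the HNN extension $G = \pi_1(\IG, v) = (\Z\oplus\Z)\ast_{\Z}$ has an infinite cyclic $s$-normal subgroup, and then invoke \cite[Lemma 7.1]{Osin}: since $\Z$ is not acylindrically hyperbolic, no group containing an $s$-normal copy of $\Z$ can be acylindrically hyperbolic. Concretely, I would identify the candidate $s$-normal subgroup in each case with (a subgroup of) the vertex group $G_v\simeq \Z^2$, and check that its intersection with every conjugate is infinite. Recall $g^{-1}Hg$ being infinite for all $g$ can be checked on a set of $g$ ranging over a transversal-type family built from the generating set $\{a,b,e\}$, using the normal-form/reduction machinery of Remark \ref{rmk: transfer to reduced and normalized word}.

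First I would dispose of cases (ii) and (iii), which are the cleanest. Say $m_1 = m_2 = 0$; then both $\alpha_e$ and $\alpha_{\bar e}$ send the generator of $G_e$ into the subgroup $\la b\ra \leq \la a, b\ra = G_v$, so the stable letter $e$ (equivalently $t$) commutes with nothing new but the relator $t x^p t^{-1} = x^q$ only involves powers of $b$. The key observation is that the element $a$ (the first $\Z^2$ coordinate) satisfies $e^{-1} a e = a$ is \emph{not} forced, but rather: the subgroup $\la a\ra$ is normalized by $G_v$ and, because all the edge-group embeddings land in $\la b\ra$, one checks via the HNN normal form that $w^{-1}a w \in G_v$ and in fact $w^{-1}\la a\ra w \cap \la a\ra$ is infinite for every $w\in G$ — more precisely $\la a \ra$ is itself normal, since conjugating $a$ by $e$ or $\bar e$ leaves it in the kernel direction unchanged (the edge maps touch only the $b$-coordinate), and conjugating by elements of $G_v$ fixes $\la a\ra$ as $G_v$ is abelian. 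A normal copy of $\Z$ is certainly $s$-normal, so $G$ is not acylindrically hyperbolic. Case (iii) is symmetric with $a$ and $b$ interchanged.

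For case (i), write $m_1/m_2 = n_1/n_2 = p/q$ in lowest terms, so $(m_1,n_1) = p\cdot(d,c)$ and $(m_2,n_2) = q\cdot(d,c)$ for a fixed primitive vector $(d,c)\in\Z^2$ and coprime $p,q$. Let $H = \la a^d b^c\ra \leq G_v$, the cyclic subgroup generated by that primitive direction; then $\alpha_e(G_e) = \la a^{m_1}b^{n_1}\ra = H^p$ and $\alpha_{\bar e}(G_e) = H^q$ sit inside $H$, and the HNN relation reads $e^{-1}(a^db^c)^p e = (a^db^c)^q$ inside $\la H, e\ra$, which is therefore a quotient-compatible copy of the Baumslag–Solitar group $BS(p,q)$ — but more usefully, $\la H, e\ra$ is itself an ascending-type situation in which $H$ is commensurated by $e$. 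The plan is to show $\la H,e\ra\cap gHg^{-1}$ is infinite for all $g\in G$: conjugation by elements of $G_v$ fixes $H$ pointwise-as-a-set (abelian); conjugation by $e^{\pm1}$ sends $H^p$ to $H^q$ and vice versa, so some positive power of any element of $H$ survives; and a general $g$ in normal form reduces to a product of such moves, so $g^{-1}Hg$ always meets $H$ in a finite-index (hence infinite) subgroup of $g^{-1}Hg$. Thus $H\simeq\Z$ is $s$-normal in $G$, and \cite[Lemma 7.1]{Osin} finishes the case.

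The main obstacle I anticipate is case (i): verifying $s$-normality of $H$ against an \emph{arbitrary} $g\in G$ (not just $g\in G_v\cup\{e^{\pm1}\}$) requires an induction on the syllable length of a reduced HNN word for $g$, tracking how the primitive vector $a^db^c$ — or rather the rational powers of it produced by repeatedly applying $\alpha_e^{-1}\alpha_{\bar e}$ — get multiplied by factors of $p/q$, exactly as in the divisibility bookkeeping in the proof of Proposition \ref{prop: tubular}. One must check that this process never escapes $H$ entirely, i.e. that after clearing denominators one still lands on a nonzero power of $a^db^c$ that lies in both $H$ and $g^{-1}Hg$; coprimality of $p$ and $q$ guarantees that $\gcd$ considerations only ever cost a bounded factor, so the intersection stays infinite. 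Everything else is routine manipulation in $\Z^2$ and the HNN normal form.
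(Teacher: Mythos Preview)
Your overall strategy---find an infinite $s$-normal subgroup and invoke \cite[Lemma 7.1]{Osin}---matches the paper's, and your treatment of case (i) via the cyclic subgroup $H=\langle a^d b^c\rangle$ in the common primitive direction is correct and in fact slightly cleaner than the paper's argument, which instead shows the whole vertex group $\Z^2$ is $s$-normal by exhibiting explicit elements $h_j=(m_1 l_j,n_1 l_j)$ with $l_j=(m_1n_1m_2n_2)^j$ that survive conjugation by any reduced word of bounded length.

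However, your handling of cases (ii) and (iii) contains a genuine error. When $m_1=m_2=0$ the subgroup $\langle a\rangle$ is \emph{not} normal, nor even $s$-normal: since $a\notin\alpha_e(G_e)=\langle b^{n_1}\rangle$, Britton's Lemma gives that $e^{-1}a^k e$ is already in reduced form for every $k\ne 0$, so $e^{-1}\langle a\rangle e\cap G_v=\{1\}$ and a fortiori $e^{-1}\langle a\rangle e\cap\langle a\rangle=\{1\}$. The stable letter does not ``leave the $a$-direction unchanged''; it only conjugates elements of the associated subgroup $\alpha_e(G_e)$ back into $G_v$, and $a$ lies outside that subgroup. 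The fix is immediate: use $\langle b\rangle$ rather than $\langle a\rangle$, i.e.\ the direction that \emph{contains} both edge-group images. This is exactly your case (i) argument specialised to the primitive vector $(d,c)=(0,1)$, so once case (i) is written out, cases (ii) and (iii) are already covered.
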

We remark that the groups $G=\pi_1(\IG, v)$ in Proposition \ref{prop: non acy hyperbolic tubular} always contain a Baumslag-Solitar group as $\alpha_e$ and $\alpha_{\bar{e}}$ maps $\Z$ to the same copy of $\Z$ in  $G_v$.
 \begin{proof}
     It suffices to show $G_v=\Z\oplus \Z$ is a s-normal subgroup of $\pi_1(\IG, v)$. So one needs to verify $w^{-1}(\Z\oplus \Z)w\cap (\Z\oplus \Z)$ is infinite for any normalized word 
     \[w=g_1e^{\epsilon_1}g_2e^{\epsilon_2}\dots g_ke^{\epsilon_k}g_{k+1}\]
     where $g_i\in (\Z\oplus \Z)$ and $\epsilon_i=1$ or $-1$ for $1\leq i\leq k$. Also, $e^{-1}$ is used to denote $\bar{e}$ for simplicity. In general, one has
     \[\bar{e}(m_1l, n_1l)e=(m_2l, n_2l)\]
     for any $l\in \Z$.

     Now suppose $m_1, n_1, m_2, n_2$ are all non-zero and $m_1/m_2=n_1/n_2$. For $w$ above with length $\ell(w)=k$, define integers $l_j=m^j_1n^j_1m^j_2n^j_2$ for all integers $j>k+1$. Then define \[h_j=(m_1l_j, n_1l_j)=(m^{j+1}_1n_1^{j}m^j_2n^j_2, m^{j}_1n_1^{j+1}m^j_2n^j_2)=(m_2d_j, n_2d_j)\]
     where $d_j=m^{j+1}_1n_1^{j}m^{j-1}_2n^j_2=m^j_1n^{j+1}_1m^j_2n^{j-1}_2$ as $m_1/m_2=n_1/n_2$.
     Then by induction on the length $\ell(w)=k$ of $w$ and the fact that $G_v=\Z\oplus \Z$ is abelian, one has
     \[w^{-1}h_jw=(p_j, q_j)\in \Z\oplus \Z\]
     in which $p_j
     =m^{j+a_k}_1n^{j+b_k}_1m^{j+c_k}_2n^{j+d_k}_2$ and $q_j=m^{j+a'_k}_1n^{j+b'_k}_1m^{j+c'_k}_2n^{j+d'_k}_2$ for some integers $a_k, b_k, c_k, d_k, a'_k, b'_k, c'_k, d'_k\in [-k, k]$; and there exists integers $x_j, y_j$ such that $p_j=m_1x_j=m_2y_j$ and $q_j=n_1x_j=n_2y_j$.
     This implies that $w^{-1}(\Z\oplus \Z)w\cap (\Z\oplus \Z)$ is infinite.

     If $m_1=m_2=0$ then $n_1, n_2$ are all non-zero because $\alpha_e$ and $\alpha_{\bar{e}}$ are monomophisms. Similarly, if $n_1=n_2=0$ then $m_1, m_2$ are all non-zero. In these cases, define $l_j=n^j_1n^j_2$ or $l_j=m^j_1m^j_2$ for all $j>k+1$. Then the same argument above shows that $w^{-1}(\Z\oplus \Z)w\cap (\Z\oplus \Z)$ is infintie.
\end{proof}   

\begin{rmk}\label{rmk: non ah tubular}
    We remark that many groups in Corollary \ref{cor: wise BB} are not acylindrically hyperbolic. For example, for the group $G\in \CC_{t, 2}$, if one additionally assumes $m_1, n_1, m_2, n_2, k_1, l_1, k_2, l_2$ are all non-zero and satisfy $m_1/m_2=n_1/n_2=k_1/l_1=k_2/l_2$. Then $G$ is not acylindrically hyperbolic. To this end, it suffices to show $w^{-1}(\Z\oplus \Z)w\cap (\Z\oplus \Z)$ is infinite for any normalized word 
     \[w=g_1x_1g_2x_2\dots g_kx_kg_{k+1}\]
     where $g_i\in (\Z\oplus \Z)$ and $x_i\in \{e, \bar{e}, f, \bar{f}\}$ for $1\leq i\leq k$. For this $w$ of length $\ell(w)=k$ and any $j>k+1$, define an integer $d_j=m^j_1n^j_1m^j_2n^j_2k_1^jl^j_1k_2^jl^j_2$ and $h_j=(m_1d_j, n_1d_j)$. Then the same argument in Proposition \ref{prop: non acy hyperbolic tubular} shows that $w^{-1}h_jw\in \Z\oplus \Z$ and therefore $\Z\oplus \Z$ is a s-normal group in $G$. 

     In addition, using the same idea, it is straightforward to see there exist other possible $m_1, n_1, m_2, n_2$ and $k_1, l_1, k_2, l_2$ satisfying the Proposition \ref{prop: non acy hyperbolic tubular} yielding non acylindrically hyperbolic groups in $\CC_{t, 2}$. But every such an example contains a Baumslag-Solitar group that is not isomorphic to $\Z\oplus \Z$. On the other hand, it seems from the literature \cite[Theorem 4.2]{Button3} that there is a characterization of (non-) acylindrically hyperbolicity for tubular groups, from which our Theorem \ref{thm: tubular} could yield more non-acylindrically hyperbolic $C^*$-simple groups as  lll such groups contain a $s$-normal subgroup $\Z$. On the other hand, Wise's groups $W$ and $G_d$ in Example \ref{eg: tubular} are indeed acylindrically hyperbolic (see \cite[Example 5.4 and 5.6]{Button1}). Therefore, their $C^*$-simplicity also follows from \cite{A-Da} once the ICC property for them is verified.
    \end{rmk}

\section{Outer automorphism groups of Baumslag-Solitar groups}\label{sec: Out(BS)}
In this section, we provide an application of Theorem \ref{thm: boundary action} to the $C^*$-simplicity of outer automorphism groups of Baumslag-Solitar groups. In the interesting cases, such a group admits a non-singular but not reduced graph of groups decomposition, for which our Theorem \ref{thm: boundary action} is applied to.

Recall $BS(p,q)=\langle x,t\ |\ tx^pt^{-1}=x^q\rangle$ where $p, q\in \Z\setminus \{0\}$. By interchanging $t\leftrightarrow t^{-1}$, one may always assume $1\leq p\leq |q|$. 
Moreover, it suffices to investigate the case $q=pn$ for $p, |n|>1$;  in other cases, the outer automorphism group $\out(BS(p, q))$ is known to be amenable and thus not $C^*$-simple. We record this fact in the following remark.


\begin{rmk}\label{rmk: amenable outer auto of BS}
It was proven in \cite[Proposition 5]{Co} that all $\aut(BS(1, q))$ are metabelian and thus amenable. This implies that $\out(BS(1, q))$ is also amenable. When $p$ does not divide $q$ properly, the following are known (see, e.g., \cite{G-H-M-R}, \cite{L} and \cite{C}):
\begin{itemize}
    \item[-] $\out(BS(p,q)) = \mathbb{Z}_{2|p-q|} \rtimes \mathbb{Z}_2$ if $p$ does not divide $q$.
    \item[-] $\out(BS(p,q)) = \mathbb{Z} \rtimes (\mathbb{Z}_2 \times \mathbb{Z}_2)$ if $p=q$.
    \item[-] $\out(BS(p,q)) = \mathbb{Z}_{2p} \rtimes \mathbb{Z}_2$ if $p = -q$.
\end{itemize}
\end{rmk}
In what follows, let us assume  $q=pn$ for $p, |n|>1$. It was shown in \cite[Section 4]{C} that $\out(BS(p,q))$ admits a graph of groups structure $\IG=(\Gamma,\mathcal{G})$, which means $\out(BS(p, np))\simeq \pi_1(\IG, v_0)$.  This was achieved by studying $\out(BS(p, q))$ on a certain tree $X_{p, q}$ and then applying Theorem \ref{thm fundamental theorem in Bass serre}. As a result, the corresponding graph of groups $\IG=(\Gamma, \CG)$ turns out to be a ray where $\Gamma$ is pictured in Figure \ref{fig: Out(BS)} and the vertex groups and edge groups are determined as the stabilizers of vertices and edges on $X_{p, q}$. See more details in \cite[Section 4]{C}.

\begin{figure}[ht]
    \centering
\begin{tikzpicture}
      \node[label={below, yshift=-0.3cm:}] at (0,0.5) {$v_0$};
      \node[label={below, yshift=-0.3cm:}] at (1.5,0.5) {$v_1$};
      \node[label={below, yshift=-0.3cm:}] at (3,0.5) {$v_2$};
      \node[label={below, yshift=-0.3cm:}] at (4.5,0.5) {$v_3$};
      \node[label={below, yshift=-0.3cm:}] at (6.25,0.5) {$\cdots$};
      \node[label={below, yshift=-0.3cm:}] at (0.8,-0.5) {$e_0$};
      \node[label={below, yshift=-0.3cm:}] at (2.3,-0.5) {$e_1$};
      \node[label={below, yshift=-0.3cm:}] at (3.8,-0.5) {$e_2$};

      \tikzset{enclosed/.style={draw, circle, inner sep=0pt, minimum size=.1cm, fill=black}}
     
      \node[enclosed, label={right, yshift=.2cm:}] at (0,0) {};
      \node[enclosed, label={right, yshift=.2cm:}] at (1.5,0) {};
      \node[enclosed, label={right, yshift=.2cm:}] at (3,0) {};      \node[enclosed, label={right, yshift=.2cm:}] at (4.5,0) {};

      \draw[->] (0, 0) -- (8, 0);
      \draw[->] (0, 0) -- (0.8, 0);
      \draw[->] (1.5, 0) -- (2.3, 0);
      \draw[->] (3, 0) -- (3.8, 0);

\end{tikzpicture}
    \caption{The graph of groups $\IG$ for $\out(BS(p,q))$.}
    \label{fig: Out(BS)}
\end{figure}
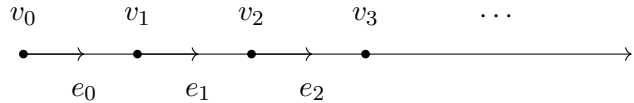

We record the known information on vertex and edge groups of $\IG$ shown in \cite[Section 4]{C}. First, $G_{v_0}$ is isomorphic to $\mathbb{Z}_{p|n-1|} \rtimes \mathbb{Z}_2$, generated by the following automorphisms $\psi$ and $\iota$ on $BS(p,q)=\langle x,t\ |\ tx^pt^{-1}=x^q\rangle$ defined by:
\begin{align*}
\psi: \hspace{0.5cm} & x \mapsto x \hspace{2cm} \iota: \hspace{0.5cm} x \mapsto x^{-1}\\
&t \mapsto xt \hspace{3cm}  t \mapsto t
\end{align*}
with the presentation $G_{v_0}=\langle \psi, \iota\  | \ \psi^{p|n-1|}=\iota^2=1,\ \iota\psi=\psi^{-1}\iota \rangle$. For $k \geq 1$,  the vertex group $G_{v_k}$ is isomorphic to $\mathbb{Z}_{n^k|n-1|} \rtimes \mathbb{Z}_2$ generated by the following automorphisms: 
\begin{align*}
\phi_{k}: \hspace{0.5cm} & x \mapsto x \hspace{2cm} \iota: \hspace{0.5cm} x \mapsto x^{-1}\\
&t \mapsto (t^{-k}x^pt^k)t \hspace{1.6cm}  t \mapsto t
\end{align*}
with the presentations
\[G_{v_k}=\langle \phi_k, \iota\ |\ \iota^2= \phi_k^{|n^k(n-1)|}=1, \ \iota\phi_k=\phi_k^{-1}\iota \rangle.\]
For edge groups, first $G_{e_0}$ is isomorphic to $\mathbb{Z}_{|n-1|}\rtimes \mathbb{Z}_2=\langle x_0, y_0\ |\ x_0^{|n-1|}=y^2_0=1, y_0x_0=x^{-1}_0y_0\rangle$. The two monomorphisms associated to $e_0$ are given by 
\begin{align*}
\alpha_{e_0}: \hspace{0.5cm} & x_0 \mapsto \psi^p \hspace{2cm} \alpha_{\bar{e}_0}: \hspace{0.5cm} x_0 \mapsto \phi_1^{|n|}\\
&y_0 \mapsto \iota \hspace{3.7cm}  y_0 \mapsto \iota
\end{align*}
These imply that $[G_{v_0}:\alpha_{e_0}(G_{e_0})]=|\Sigma_{e_0}|=p$
and $[G_{v_1}:\alpha_{\overline{e}_0}(G_{e_0})]=|\Sigma_{\bar{e}_0}|=|n|$.
Let $k\geq 1$. The edge group $G_{e_k}$ is isomorphic to $\mathbb{Z}_{|n^k(n-1)|}\rtimes \mathbb{Z}_2=\langle x_0, y_0\ |\ x_0^{|n^k(n-1)|}=y^2_0=1, y_0x_0=x^{-1}_0y_0\rangle$. Two monomorphisms associated to $e_k$ are given by
\begin{align*}
\alpha_{e_k}: \hspace{0.5cm} & x_0 \mapsto \phi_k \hspace{2cm} \alpha_{\bar{e}_k}: \hspace{0.5cm} x_0 \mapsto \phi_{k+1}^{|n|}\\
&y_0 \mapsto \iota \hspace{3.7cm}  y_0 \mapsto \iota
\end{align*}
This implies that $[G_{v_k}:\alpha_{e_k}(G_{e_k})]=|\Sigma_{e_k}|=1$ and $[G_{v_{k+1}}:\alpha_{\overline{e}_k}(G_{e_k})]=|\Sigma_{\bar{e}_k}|=|n|$.


\begin{rmk}\label{rmk: out bs not linear}
    We remark that the $\partial_\infty X_\IG$ is infinite. Indeed, since $|\Sigma_{e_0}|=p>1$ and $|\Sigma_{\bar{e}_0}|=|n|>1$ and $|\Sigma_{\bar{e}_k}|=|n|>1$,  Choose an $s_0\in \Sigma_{e_0}$. Then the family of infinite normalized words (with a period)
    \[1_{v_0}e_01_{v_1}e_1\dots1_{v_k}e_ks_k\bar{e}_k1_{v_k}\bar{e}_{k-1}\dots 1_{v_1}\bar{e}_0s_0e_01_{v_1}e_1\dots 1_{v_k}e_ks_k\bar{e}_k1_{v_k}\bar{e}_{k-1}\dots\]
among all $s_k\in \Sigma_{\bar{e}_k}$ and all $k\geq 1$ form a infinite family of elements in $\partial_\infty X_\IG$. In particular, the tree $X_\IG$ is not isomorphic to a line.
\end{rmk}

As described above, each edge $e_k$ is a collapsible edge for $k>0$ and thus the graph of groups $\IG$ is not reduced. Using Remark \ref{rmk: reduced tree} to collapse the edge $e_1$, the group $\out(BS(p, q))$ also admits the following graph of group decomposition.

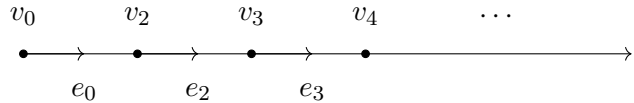
\begin{figure}[ht]
    \centering
\begin{tikzpicture}
      \node[label={below, yshift=-0.3cm:}] at (0,0.5) {$v_0$};
      \node[label={below, yshift=-0.3cm:}] at (1.5,0.5) {$v_2$};
      \node[label={below, yshift=-0.3cm:}] at (3,0.5) {$v_3$};
      \node[label={below, yshift=-0.3cm:}] at (4.5,0.5) {$v_4$};
      \node[label={below, yshift=-0.3cm:}] at (6.25,0.5) {$\cdots$};
      \node[label={below, yshift=-0.3cm:}] at (0.8,-0.5) {$e_0$};
      \node[label={below, yshift=-0.3cm:}] at (2.3,-0.5) {$e_2$};
      \node[label={below, yshift=-0.3cm:}] at (3.8,-0.5) {$e_3$};

      \tikzset{enclosed/.style={draw, circle, inner sep=0pt, minimum size=.1cm, fill=black}}
     
      \node[enclosed, label={right, yshift=.2cm:}] at (0,0) {};
      \node[enclosed, label={right, yshift=.2cm:}] at (1.5,0) {};
      \node[enclosed, label={right, yshift=.2cm:}] at (3,0) {};      \node[enclosed, label={right, yshift=.2cm:}] at (4.5,0) {};

      \draw[->] (0, 0) -- (8, 0);
      \draw[->] (0, 0) -- (0.8, 0);
      \draw[->] (1.5, 0) -- (2.3, 0);
      \draw[->] (3, 0) -- (3.8, 0);

\end{tikzpicture}
    \caption{The new graph of groups $\IG_1$ for $\out(BS(p,q))$.}
    \label{fig: OutBS1}
\end{figure}

Denote by $\IG_1=(\Gamma_1, \CG_1)$ the new graph of groups. Compared to the original graph of groups $\IG$, the new $\IG_1$ has the same vertex groups $G_{v_k}$  and $G_{e_k}$  and monomorphisms $\alpha_{e_k}$ and $\alpha_{\bar{e}_k}$ for $k=0, 2, 3\dots$ except the only changed monomorphism is that now $\alpha_{\bar{e}_0}: G_{e_0}\to G_{v_2}$ is determined by $x_0\mapsto \phi^{n^2}_2$ and $y_0\mapsto \iota$ and thus $|\Sigma_{\bar{e}_0}|=n^2>2$ as $|n|>1$. 

We next recall $C^*$-simplicity results for amalgamated free products in \cite{H-P}, \cite{I-O} and \cite{B-I-O}. The following was proven in \cite{I-O} for non-degenerated free products. See \cite[Proposition 19]{H-P} for other equivalent criteria. 

\begin{thm}\cite[Theorem 3.2]{I-O}\label{thm: c star simple amalgamated free product}
    Let $G=G_0*_H G_1$ be a non-degenerated amalgamated free product group. Suppose for any finite $F\subset H\setminus \{1_H\}$, there exists a $g\in G$ such that $g^{-1}Fg\cap H= \emptyset$. Then $G$ is $C^*$-simple.
\end{thm}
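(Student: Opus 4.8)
The plan is to deduce $C^*$-simplicity from the Kalantar--Kennedy criterion (Theorem \ref{thm: C simple equivalence}) by exhibiting a topologically free $G$-boundary, namely the compactified boundary $\overline{\partial_\infty X}$ of the Bass--Serre tree $X$ of the splitting $G=G_0\ast_H G_1$; this is precisely the one-edge instance of Theorem \ref{thm: boundary action}, so the bulk of the machinery is already in place. First I would observe that the first half of Theorem \ref{thm: boundary action} uses only non-degeneracy and \emph{not} the conjugacy hypothesis: the one-edge graph of groups attached to $G_0\ast_H G_1$ is reduced and non-singular, so $X$ is non-singular, is not isomorphic to a line by Proposition \ref{prop: linear}, and has infinite boundary by Remark \ref{rmk: infinite bdry}; the word $w=g\,e\,h\,\bar e$ with $g\in\Sigma_e\setminus\{1\}$ and $h\in\Sigma_{\bar e}\setminus\{1\}$ is repeatable, so Proposition \ref{prop: repeatable hyperbolic} provides a hyperbolic element, every infinite normalized word flows to $e$, so Proposition \ref{prop: minimal on bdry} gives minimality of $G\curvearrowright\partial_\infty X$, and Propositions \ref{prop: minimal 1}, \ref{prop: minimal 2} and \ref{prop: minimal 3 and strong bdry} upgrade this to: $G\curvearrowright X$ is minimal and strongly hyperbolic, and $G\curvearrowright\overline{\partial_\infty X}$ is a strong boundary action, hence (Remark \ref{rmk: strong bdry is bdry}) a $G$-boundary.

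The only substantial point left is topological freeness of $G\curvearrowright\overline{\partial_\infty X}$, which by Corollary \ref{cor: bdry topo free} and Proposition \ref{prop: strongly faithful and topo free} is equivalent to strong faithfulness of $G\curvearrowright X$. So, given a finite $F\subset G\setminus\{1\}$, the task is to produce one infinite normalized word $\xi$ based at $v_0$ with $g\cdot\xi\neq\xi$ for every $g\in F$. I would split $F$ via the Tits trichotomy on $X$: a hyperbolic element of $F$ fixes only its two endpoints, a finite set of ends that $\xi$ can simply avoid; an elliptic element whose fixed subtree is a single vertex fixes no end at all (a $g$-fixed end would force a $g$-fixed ray, hence at least two fixed vertices); so the genuine obstructions come exactly from the elliptic elements whose fixed subtree contains an edge, i.e.\ the conjugates of nontrivial elements of the edge group $H$. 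For these the conjugacy hypothesis is exactly a (weak) malnormality of $H$: it forces the fixed subtrees of the finitely many dangerous elements to be thin enough that a suitably chosen eventually periodic end $\xi$ — of the form $\gamma\,(a\,e\,b\,\bar e)^\infty$, where $\gamma$ is furnished by applying the hypothesis to the finite set of $H$-parts of these elements, and $a\in\Sigma_e\setminus\{1\}$, $b\in\Sigma_{\bar e}\setminus\{1\}$ are picked outside the finitely many exceptional cosets dictated by the reduced forms of the elements of $F$ — escapes all of them at once. Checking $g\cdot\xi\neq\xi$ is then the normalization/cancellation bookkeeping of Remarks \ref{rmk: transfer to reduced and normalized word} and \ref{rmk: action on bdry of Bass-Serre tree}, carried out just as in Proposition \ref{prop: non locally finite topo free 1}.

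Granting strong faithfulness, $\overline{\partial_\infty X}$ is a topologically free $G$-boundary, so $G$ is $C^*$-simple by Theorem \ref{thm: C simple equivalence}. I expect the main obstacle to be precisely this last step: controlling, uniformly over all $g\in F$, how the reduced alternating form of $g$ over $G_0$ and $G_1$ rewrites an infinite reduced word after possibly infinitely many normalizations, and verifying that the single conjugator $\gamma$ produced by the hypothesis simultaneously defeats every conjugate-into-$H$ element of $F$ — this is the one place where the assumption $g^{-1}Fg\cap H=\emptyset$ is indispensable, in complete parallel with the HNN-extension and tubular-group arguments of Propositions \ref{prop: non locally finite topo free 1} and \ref{prop: tubular}. (The statement is quoted as \cite[Theorem 3.2]{I-O}, whose original proof instead verifies a Powers-type averaging property; the boundary route sketched here is the one that fits the present framework and reuses Theorem \ref{thm: boundary action} directly.)
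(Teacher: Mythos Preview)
The paper does not prove this statement; it is quoted verbatim from \cite[Theorem 3.2]{I-O} and used as a black box (in Lemma \ref{lem: edge important in out bs}). So there is no ``paper's own proof'' to compare against. You are aware of this and explicitly propose a boundary-action alternative in the spirit of Theorem \ref{thm: boundary action}; that is a legitimate and natural thing to attempt within this framework, and your first paragraph (deducing that $G\curvearrowright\overline{\partial_\infty X}$ is a strong boundary action from non-degeneracy alone) is entirely correct and is exactly the one-edge instance of the first half of Theorem \ref{thm: boundary action}.

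The gap is in your strong-faithfulness sketch. Your trichotomy is fine, and you correctly isolate the dangerous case: elliptic $g\in F$ fixing at least one edge, hence $g=\gamma_g h_g\gamma_g^{-1}$ with $h_g\in H$. But the proposed witness $\xi=\gamma\,(aeb\bar e)^\infty$, with $\gamma$ obtained by applying the hypothesis to $\{h_g\}$, does not do what you need. The hypothesis yields $\gamma^{-1}h_g\gamma\notin H$, i.e.\ $h_g$ fails to fix the single edge $\gamma\cdot e_0$; it says nothing about whether $h_g$ fixes the end $\gamma_g^{-1}\xi$, which is what $g\xi=\xi$ unravels to. The conjugators $\gamma$ and $\gamma_g$ are unrelated, and an element of $H$ can fail to fix one edge yet still have an unbounded fixed subtree with many (even uncountably many) fixed ends. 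So ``thin enough'' is doing work that the hypothesis, applied once, does not justify. The parallel you draw with Proposition \ref{prop: non locally finite topo free 1} is misleading: there the infinitely many cosets at a single vertex let you dodge a \emph{finite} set of constraints at the first letter of $\xi$, whereas here the obstruction is global along the ray.

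A route that does work is to argue topological freeness directly rather than strong faithfulness: show that if some nontrivial $g$ fixed a full cylinder $U(\mu)$, then after conjugating by $[\mu]$ one obtains a nontrivial element of a vertex group fixing every edge in an entire half-tree; iterating through alternating $G_0$- and $G_1$-cosets one is forced into $\bigcap_{\gamma\in G}\gamma H\gamma^{-1}$, and a single element there already violates the hypothesis with $F'=\{h\}$. This is essentially the content of \cite[Proposition 19]{H-P}, which the paper cites alongside Theorem \ref{thm: c star simple amalgamated free product} as giving equivalent criteria. Your honest flagging of ``the main obstacle'' is well placed; the specific construction you propose for $\xi$, however, does not close it.
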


\begin{lem}\label{lem: edge important in out bs}
    Let $\IG_1=(\Gamma_1, \CG)$ be the graph of groups in Figure \ref{fig: OutBS1}. Suppose $n=2$. Then $G_{v_0}*_{G_{e_0}} G_{v_2}$ is $C^*$-simple.
\end{lem}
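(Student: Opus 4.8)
### Proof plan for Lemma \ref{lem: edge important in out bs}

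The plan is to apply Theorem \ref{thm: c star simple amalgamated free product} to the amalgamated free product $G_{v_0}*_{G_{e_0}}G_{v_2}$, so the whole task reduces to a concrete combinatorial verification inside the two vertex groups. First I would set $n=2$ and write out the groups explicitly: $G_{v_0}=\langle\psi,\iota\mid\psi^{p}= \iota^2=1,\ \iota\psi=\psi^{-1}\iota\rangle\cong\Z_{p}\rtimes\Z_2$ (using $|n-1|=1$), $G_{v_2}=\langle\phi_2,\iota\mid\phi_2^{4}=\iota^2=1,\ \iota\phi_2=\phi_2^{-1}\iota\rangle\cong\Z_{4}\rtimes\Z_2$ (using $|n^2(n-1)|=4$), and the edge group $G_{e_0}\cong\Z_{1}\rtimes\Z_2\cong\Z_2$, embedded via $\alpha_{e_0}(y_0)=\iota\in G_{v_0}$ and $\alpha_{\bar e_0}(y_0)=\iota\in G_{v_2}$ (the $x_0$ part is trivial since $G_{e_0}\cong\Z_2$). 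So the common subgroup $H$ is the order-two subgroup $\langle\iota\rangle$, and one must check the product is non-degenerated, i.e. the indices are $p\geq 2$ (given, since $p>1$) and $[G_{v_2}:\langle\iota\rangle]=4\geq 3$, hence Definition \ref{defn: non-degenerated} is satisfied; actually for Theorem \ref{thm: c star simple amalgamated free product} we only need one index $\geq 3$ and the other $\geq 2$, which holds since $[G_{v_0}:\langle\iota\rangle]=p\geq 2$ and $[G_{v_2}:\langle\iota\rangle]=4$.

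The heart of the argument is the hypothesis of Theorem \ref{thm: c star simple amalgamated free product}: for every finite $F\subset H\setminus\{1\}$ — but here $H\setminus\{1\}=\{\iota\}$, so $F=\{\iota\}$ — I must find $g\in G_{v_0}*_{\langle\iota\rangle}G_{v_2}$ with $g^{-1}\iota g\notin\langle\iota\rangle$, equivalently $g^{-1}\iota g\neq\iota$ (it cannot equal $1$). I would take $g=\psi\phi_2$, an alternating-syllable element of length $2$. Since $\psi\notin\langle\iota\rangle$ and $\phi_2\notin\langle\iota\rangle$, normal-form theory for amalgamated products (Remark \ref{rmk: minimum length reduced words} / the usual Britton-type lemma) shows $g^{-1}\iota g=\phi_2^{-1}\psi^{-1}\iota\psi\phi_2=\phi_2^{-1}\psi^{-2}\iota\phi_2=\phi_2^{-1}\psi^{-2}\phi_2^{-1}\iota$ using $\iota\psi=\psi^{-1}\iota$ and $\iota\phi_2=\phi_2^{-1}\iota$; the syllable $\psi^{-2}$ lies in $G_{v_0}\setminus\langle\iota\rangle$ because $p>1$ forces $\psi^{-2}\neq 1$ and $\psi^{-2}$ is not the involution $\iota$ (it is a power of $\psi$), so the word $\phi_2^{-1}\psi^{-2}\phi_2^{-1}\iota$ is reduced of length $\geq 3$ in the amalgam and in particular is not the length-$\leq 1$ element $\iota$. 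Here a small case distinction is needed: if $p=2$ then $\psi^{-2}=1$ and this particular $g$ fails, so instead I would use $g=\phi_2\psi\phi_2$ (or simply $g=\phi_2$), giving $g^{-1}\iota g$ involving the syllable $\phi_2^{-2}$, which is a nontrivial non-involutive element of $G_{v_2}$ since $\phi_2$ has order $4$; this again yields a reduced word of length $\geq 1$ distinct from $\iota$. Either way the hypothesis is met.

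Having verified the hypothesis of Theorem \ref{thm: c star simple amalgamated free product}, I conclude $G_{v_0}*_{G_{e_0}}G_{v_2}$ is $C^*$-simple, which is the assertion. I expect the only genuinely delicate point to be the bookkeeping in the commutation relations combined with the order-of-$\phi_2$ and order-of-$\psi$ edge cases (notably $p=2$, where $\psi^2=1$); once the right conjugating element is chosen in each case, the conclusion that the resulting word is reduced and $\neq\iota$ follows immediately from the normal form theorem for amalgamated free products. I would organize the write-up as: (1) specialize $n=2$ and record the presentations; (2) identify $H=\langle\iota\rangle$ and check non-degeneracy; (3) exhibit $g$ (with the $p=2$ subcase) and compute $g^{-1}\iota g$, invoking the normal form theorem to see it leaves $H$; (4) apply Theorem \ref{thm: c star simple amalgamated free product}.
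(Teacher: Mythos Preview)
Your approach is correct and is essentially the same as the paper's: specialize $n=2$, identify $H=\langle\iota\rangle$, check non-degeneracy, and apply Theorem~\ref{thm: c star simple amalgamated free product} by exhibiting a conjugator that moves $\iota$ out of $\langle\iota\rangle$. The paper simply takes $g=\phi_2$ from the start and computes $\phi_2\iota\phi_2^{-1}=\phi_2^2\iota\neq\iota$ (using that $\phi_2$ has order $4$), which works uniformly for every $p\geq 2$ and avoids your case split; in fact your own computation for $g=\psi\phi_2$ in the $p=2$ case collapses to exactly this, so the claim ``this particular $g$ fails'' is not quite right---only the length-$3$ normal-form argument fails, while $g^{-1}\iota g=\phi_2^{-2}\iota\neq\iota$ still holds.
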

\begin{proof}
Suppose $n=2$. Then the group $G_{v_0}\simeq \Z_p\rtimes \Z_2$, the group $G_{v_2}=\Z_4\rtimes \Z_2$ and the group $G_{e_0}\simeq \Z_2$ equipped with monomorphisms $\alpha_{e_0}$ and $\alpha_{\bar{e}_0}$ are defined above. Since $|\Sigma_{e_0}|=p\geq 2$ and $|\Sigma_{\bar{e}_0}|=n^2=4$, the amalgamated free product $H=G_{v_0}*_{G_{e_0}} G_{v_2}$ is non-degenerated by Definition \ref{defn: non-degenerated}. Then, by Theorem \ref{thm: c star simple amalgamated free product}, it suffices to show that there exists $g\in H$ such that $\mathbb{Z}_2\cap g\mathbb{Z}_2g^{-1}=\{1\}$. Simply choose $g=\phi_2$. then one has
\[g\iota g^{-1}=\phi_2\iota \phi_2^{-1}=\phi_2 \phi_2\iota =\phi^2_2\iota \neq \iota.\]
This implies that $g\iota g^{-1}\notin \Z_2$
\end{proof}

\begin{lem}\label{lem: minimal out bs}
    Let $\IG=(\Gamma, \CG)$ be the original graph of groups for $\out(BS(p, np))$ as in Figure \ref{fig: Out(BS)}. Then  $\xi$ flows to $e$ for any infinite normalized word $\xi$ and $e\in E(\Gamma)$. The same statement also holds for the graph of groups $\IG_1=(\Gamma_1, \CG_1)$ as in Figure \ref{fig: OutBS1} for $\out(BS(p, np))$.
\end{lem}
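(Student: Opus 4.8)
The plan is to reduce the statement to a combinatorial claim about the ray $\Gamma$ and then verify it by an explicit navigation argument; the argument for $\IG$ and for $\IG_1$ is the same modulo relabelling. Since $\Gamma$ (Figure \ref{fig: Out(BS)}) is a ray it has no loop, so the exceptional case in Remark \ref{rmk: flowness reduced} (where $\xi$ only uses one edge) is vacuous, and it suffices to prove that $f$ flows to $e$ for every ordered pair of distinct edges $f,e\in E(\Gamma)$. Unravelling Definition \ref{defn:flowness}, this means: find a walk in $\Gamma$ of the form $e,e_1,\dots,e_n,f$ (so $t(e)=o(e_1)$, $t(e_i)=o(e_{i+1})$, $t(e_n)=o(f)$) together with group elements $g_i\in\Sigma_{e_i}$ and $g\in\Sigma_f$ so that $1_{o(e)}e\,g_1e_1\cdots g_ne_n\,gf$ contains no reversal; it is then automatically a path word. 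Using the standing convention $1_{o(e')}\in\Sigma_{e'}$, one checks that a reversal at a step ``$e'\,h\,e''$'' of such a word occurs exactly when $e''=\overline{e'}$ and $h=1$; hence a ``bounce'' $e'\overline{e'}$ in the walk is harmless whenever $|\Sigma_{\overline{e'}}|\ge 2$, in which case we insert a non-trivial coset representative, and the \emph{only} unavoidable reversals have the shape $\bar e_k\,1_{v_k}\,e_k$ with $k\ge1$.

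The next step is to record, from the description of $\IG$ preceding the lemma, the relevant transversal sizes: $|\Sigma_{e_0}|=p\ge2$, $|\Sigma_{\bar e_0}|=|n|\ge2$, $|\Sigma_{\bar e_k}|=|n|\ge2$ for all $k\ge1$, while $|\Sigma_{e_k}|=1$ for all $k\ge1$. The geometric heart of the argument is then a navigation principle on the ray $v_0,v_1,v_2,\dots$: from any vertex $v_m$ one may descend monotonically $\bar e_{m-1}\bar e_{m-2}\cdots\bar e_0$ to $v_0$ without any backtrack, then ``turn around'' there through the bounce $\bar e_0e_0$ (legal because $|\Sigma_{e_0}|\ge2$), and then ascend monotonically $e_0e_1e_2\cdots$ to any prescribed vertex, again without backtrack; in particular this ascent reaches $v_k$ through $e_{k-1}$ and not through $\bar e_k$, so the unavoidable-reversal obstruction at $f=e_k$ never arises.

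With these ingredients I would build $\nu$ uniformly as ``descend from $t(e)$ to $v_0$; bounce at $v_0$; ascend to $o(f)$; append $f$'', inserting a non-trivial coset representative at each backtrack and the trivial one elsewhere, and then do a short verification that no reversal remains. Only three places need genuine care, and each is resolved by the transversal sizes above: (i) the first step after $e$, which is a backtrack precisely when $t(e)=v_m$ with $e=e_{m-1}$ (handled by $|\Sigma_{\bar e_{m-1}}|\ge2$) or when $t(e)=v_0$, i.e. $e=\bar e_0$ (in which case one omits the descent and starts with the bounce $\bar e_0e_0$, handled by $|\Sigma_{e_0}|\ge2$); (ii) the case $o(f)=v_0$, i.e. $f=e_0$, where the descent already ends at $o(f)$ and one simply appends $e_0$ with a non-trivial representative; (iii) the last step, where if $f=e_k$ with $k\ge1$ the forced representative $g=1_{v_k}$ is nonetheless harmless because the walk reaches $v_k$ via $e_{k-1}$, and if $f\in\{e_0\}\cup\{\bar e_j:j\ge0\}$ one has $|\Sigma_f|\ge2$ and may adjust freely. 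This settles the claim for $\IG$.

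For $\IG_1$ (Figure \ref{fig: OutBS1}) the graph is again a ray, now on $v_0,v_2,v_3,v_4,\dots$, with $|\Sigma_{e_0}|=p\ge2$, the single modified embedding giving $|\Sigma_{\bar e_0}|=n^2\ge2$, and $|\Sigma_{\bar e_k}|=|n|\ge2$, $|\Sigma_{e_k}|=1$ for $k\ge2$; since this has exactly the same structure (a ray whose only trivial transversals sit at the ``rightward'' collapsible edges and whose leftmost bounce is legal), the identical navigation argument applies. I expect the only real difficulty to be bookkeeping: organizing the case distinction on which of the two edges terminating at $t(e)$ the edge $e$ is (and whether that vertex is the leftmost one) so that the first syllable of $\nu$ never forces a reversal, while tracking that routing through $v_0$ genuinely disposes of the obstruction at $f=e_k$, $k\ge1$.
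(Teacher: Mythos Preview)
Your proposal is correct and takes essentially the same approach as the paper: both reduce via Remark~\ref{rmk: flowness reduced} to showing $f$ flows to $e$ for distinct edges, both identify that the only obstruction is the reversal $\bar e_k\,1_{v_k}\,e_k$ for $k\ge1$ (since $|\Sigma_{e_k}|=1$ there), and both route around it using the legal bounce at $v_0$. The only difference is organizational: the paper splits into four cases according to the orientations of $e$ and $f$ and sometimes uses a direct monotone path (e.g.\ $\nu_{i,j}$ when $e=e_i$, $f=e_j$, $i<j$), whereas you use a single uniform ``descend to $v_0$, bounce, ascend'' recipe throughout. Your version is slightly cleaner to state; the paper's paths are shorter in the cases where no detour is needed; neither distinction is substantive.
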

\begin{proof}
We only prove the theorem for $\IG=(\Gamma, \CG)$ 
in Figure \ref{fig: Out(BS)} because the same arguments work for $\IG_1=(\Gamma_1, \CG_1)$ in Figure \ref{fig: OutBS1} as well. For simplicity, for $0\leq i<j$, we define path words 
    \[\nu_{i, j}=1_{v_{i+1}}e_{i+1}1_{v_{i+2}}e_{i+2}\dots 1_{v_{j-1}}e_{j-1}1_{v_{j}}\]
    and note that its inverse
    \[\gamma_{j, i}=\nu^{-1}_{i, j}=1_{v_j}\bar{e}_{j-1}1_{v_{j-2}}\dots 1_{v_{i+2}}\bar{e}_{i+1}1_{v_{i+1}}\]
    is still a path word.

    In addition, for $l\geq 1$ choose $h_l\in \Sigma_{\bar{e}_l}\setminus \{1_{v_{l+1}}\}$ and choose $h_0\in \Sigma_{\bar{e}_0}\setminus \{1_{v_{0}}\}$. Now for $l\geq 1$ define \[\mu_{l}=h_l\bar{e}_l1_{v_{l}}\bar{e}_{l-1}1_{v_{l-1}}\dots 1_{v_1}\bar{e}_0h_0.\]
   Now, it suffices to show $f$ flows to $e$ for any different $e, f\in E(\Gamma)$ in the sense of Definition \ref{defn:flowness} by Remark \ref{rmk: flowness reduced}.  There are four cases. 
    \begin{enumerate}[label=(\roman*)]
    \item Suppose $e=e_i$ and $f=e_j$. If $i<j$,  define $\nu=\nu_{i, j}$. Otherwise, if $j<i$, define
     $\nu=\mu_{i}e_0\nu_{0, j}$. 
    
    \item Suppose $e=\bar{e}_i, f=e_j$. Choose an $h_0\in \Sigma_{\bar{e}_0}\setminus \{1_{v_{0}}\}$. If $i>0$, we define $\nu=\mu_{i-1}e_0\nu_{0, j}$. Otherwise, if $i=0$, define $\nu=h_0e_0\nu_{0, j}$.

    \item Suppose $e=e_i, f=\bar{e}_j$. If $i<j$, choose a  $h_{j+1}\in \Sigma_{\bar{e}_j}\setminus \{1_{v_{j+1}}\}$ and define $\nu=\nu_{i, j+1}h_{j+1}$. 
    Otherwise, $i\geq j$ holds.  Choose an $h_{i+1}\in \Sigma_{\bar{e}_i}\setminus \{1_{v_{i+1}}\}$. If $i=j$, then define $\nu=h_{i+1}$ and if  $i>j$ then define $\nu=h_{i+1}\gamma_{i+1, j}$.

    \item Suppose $e=\bar{e}_i, f=\bar{e}_j$. If $0<i<j$, choose an  $h_{j+1}\in \Sigma_{\bar{e}_j}\setminus \{1_{v_{j+1}}\}$ and define $\nu=\mu_{i-1}e_0\nu_{0, j+1}h_{j+1}$. If $0=i<j$, define $\nu=h_0\nu_{0, j+1}h_{i+1}$ Otherwise, $i>j$ holds and define $\nu=\gamma_{i, j}$.
    \end{enumerate}
In any case, one has that $1_{o(e)}e\nu f$ is a path word, and thus by definition $f$ flows to $e$ as desired.
\end{proof}

Then we have the following result as an application of Theorem \ref{thm: boundary action}.
\begin{thm}\label{thm: out bs main}
Let $p>1$. The group $\out(BS(p, 2p))$ is $C^*$-simple.  Let $\IG=(\Gamma, \CG)$ and $\IG_1=(\Gamma_1, \CG_1)$ be  graphs of groups as in Figures \ref{fig: Out(BS)} and \ref{fig: OutBS1} for $\out(BS(p, 2p))$, respectively.  The crossed products  $C(\partial_\infty X_\IG)\rtimes_r \out(BS(p, 2p))$ and $C(\partial_\infty X_{\IG_1})\rtimes_r \out(BS(p, 2p))$ from  $\alpha: \out(BS(p, 2p))\curvearrowright \partial_\infty X_\IG$ and $\beta: \out(BS(p, 2p))\curvearrowright \partial_\infty X_{\IG_1}$ are unital Kirchberg algebras satisfying the UCT.  
\end{thm}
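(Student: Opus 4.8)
The plan is to verify the hypotheses of Theorem \ref{thm: boundary action} for both graph-of-groups decompositions $\IG$ and $\IG_1$ of $\out(BS(p,2p))$, using the already-established structural facts, and then to upgrade the resulting purely infinite simple separable crossed products to Kirchberg algebras satisfying the UCT by checking topological amenability of the boundary actions.

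\medskip

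First I would settle $C^*$-simplicity. The graph of groups $\IG$ is non-singular: every vertex $v_k$ has valence at least two (indeed, $v_0$ has valence $p + |n| = p+2 \geq 3$, and each $v_k$ for $k\geq 1$ has valence $1 + |n| = 3$), so no leaves occur and $X_\IG$ is non-singular in the sense of Definition \ref{defn: non singular and locally finite for graph}. Condition \ref{nonlinear} of Theorem \ref{thm: boundary action}, that $X_\IG$ is not a line, is Remark \ref{rmk: out bs not linear}. Condition \ref{word}, the existence of a repeatable word, follows because $\Gamma$ has an edge; concretely, with $n = 2$ one may take the repeatable word $w = 1_{v_0}e_0 s_1 \bar{e}_0$ for a suitable $s_1 \in \Sigma_{\bar e_0}\setminus\{1_{v_1}\}$ (which lies in $F(\IG_1)$ as a $\Gamma'$-word for the edge subgraph $\Gamma' = \{e_0,\bar e_0\}$). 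Condition \ref{minimal}, that every infinite normalized word flows to every edge, is exactly Lemma \ref{lem: minimal out bs}. This already gives that $\alpha$ is a strong boundary action. For the topological-freeness upgrade, I would pass to the subgraph $\Gamma_1' \subset \Gamma_1$ consisting of the single edge $e_0$ with vertices $v_0, v_2$: the subgraph is non-singular and $X_{\IG_1'}$ is not a line (since $|\Sigma_{e_0}| = p \geq 2$ and $|\Sigma_{\bar e_0}| = n^2 = 4$, the amalgam is non-degenerate, so its Bass–Serre tree is a non-linear tree by Remark \ref{rmk: infinite bdry}\ref{nondegenerate} and Proposition \ref{prop: linear}). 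The vertex groups $G_{v_0} \cong \Z_p \rtimes \Z_2$ and $G_{v_2} \cong \Z_4 \rtimes \Z_2$ are finite, hence amenable, and $\pi_1(\IG_1', v_0) = G_{v_0} *_{G_{e_0}} G_{v_2}$ is $C^*$-simple by Lemma \ref{lem: edge important in out bs}. Thus all hypotheses of the second half of Theorem \ref{thm: boundary action} are met (with $w$ chosen as the $\IG_1'$-word above), and we conclude $\out(BS(p,2p))$ is $C^*$-simple, and that $C(\overline{\partial_\infty X_{\IG_1}}) \rtimes_r \out(BS(p,2p))$ is unital simple separable purely infinite. Since $X_{\IG_1}$ is locally finite (all edge indices $p, 1, |n| = 2$ are finite and valences are finite), $\partial_\infty X_{\IG_1}$ is already compact and $\overline{\partial_\infty X_{\IG_1}} = \partial_\infty X_{\IG_1}$ by Remark \ref{rmk: action on bdry of tree}; the same local-finiteness remark applies to $\IG$.

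\medskip

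Next I would establish nuclearity and the UCT. Both $\IG$ and $\IG_1$ have \emph{all} vertex groups finite (the $G_{v_k}$ are all finite cyclic-by-$\Z_2$ groups), hence amenable. The action $\pi_1(\IG, v_0) \curvearrowright X_\IG$ is by automorphisms with finite (amenable) vertex stabilizers, so by \cite[Proposition 5.2.1, Lemma 5.2.6]{B-O} the boundary action $\out(BS(p,2p)) \curvearrowright \partial_\infty X_\IG$ is topologically amenable; likewise for $\IG_1$. Therefore, by Remark \ref{rmk: equivalence of dynamical properties}, the crossed products $C(\partial_\infty X_\IG) \rtimes_r \out(BS(p,2p))$ and $C(\partial_\infty X_{\IG_1}) \rtimes_r \out(BS(p,2p))$ are nuclear, and by \cite{Tu} they satisfy the UCT. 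Combining with the previous paragraph — to deduce the analogous statement for $X_\IG$ one runs the same argument directly on $\IG$, taking the subgraph $\Gamma' = \{e_0, \bar e_0\}$ of $\Gamma$ (note in the original $\IG$ one has $|\Sigma_{\bar e_0}| = |n| = 2$, so the amalgam $G_{v_0} *_{G_{e_0}} G_{v_1}$ with $|\Sigma_{e_0}| = p \geq 2$ is $C^*$-simple by the same $g = \phi_1$ computation as in Lemma \ref{lem: edge important in out bs}, and non-degenerate provided $p \geq 3$; for $p = 2$ one instead uses the decomposition $\IG_1$ whose transferred statement follows from the equivariant isomorphism $X_\IG \cong X_{\IG_1}$ of Remark \ref{rmk: reduced tree} / Theorem \ref{thm fundamental theorem in Bass serre}, so the two crossed products are isomorphic) — both crossed products are unital simple separable nuclear purely infinite, i.e.\ unital Kirchberg algebras, and they satisfy the UCT.

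\medskip

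\textbf{The main obstacle} I anticipate is the bookkeeping around the case $p = 2$ versus $p \geq 3$ in choosing the subgraph for the topological-freeness step: for $p \geq 3$ the one-edge subgraph $\{e_0, \bar e_0\}$ of the original $\IG$ already gives a non-degenerate amalgam with a $C^*$-simple fundamental group, but for $p = 2$ the index $|\Sigma_{e_0}| = 2$ forces us to work with $\IG_1$ (where $|\Sigma_{\bar e_0}| = 4$) and then transport the conclusion back to $X_\IG$ via the canonical equivariant isomorphism of Bass–Serre trees. A secondary point requiring care is confirming the repeatable word $w$ is simultaneously a $\IG'$-word (resp.\ $\IG_1'$-word) for the chosen one-edge subgraph and genuinely repeatable in the sense of Definition \ref{defn: repeatable}, i.e.\ that $g_1 e_1 \neq 1_{o(\bar e_n)} \bar e_n$; with $w = 1_{v_0} e_0 s_1 \bar e_0$ and $s_1 \neq 1_{v_1}$ this is immediate, but it should be stated. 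Everything else is a direct invocation of Theorem \ref{thm: boundary action}, Lemmas \ref{lem: edge important in out bs} and \ref{lem: minimal out bs}, Remark \ref{rmk: out bs not linear}, and the standard amenability/UCT machinery cited above.
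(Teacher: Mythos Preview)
Your argument for $\IG_1$ is essentially the paper's: the one-edge subgraph $\Gamma_1'=\{e_0,\bar e_0\}$, Lemma~\ref{lem: edge important in out bs}, Lemma~\ref{lem: minimal out bs}, and Theorem~\ref{thm: boundary action}. Two small slips there: your word $w=1_{v_0}e_0 s_1\bar e_0$ is \emph{not} repeatable, since Definition~\ref{defn: repeatable} requires $g_1e_1\neq 1_{o(\bar e_n)}\bar e_n$, and here $g_1e_1=1_{v_0}e_0=1_{o(e_0)}e_0$; the paper takes $w=g_0e_0g_1\bar e_0$ with $g_0\in\Sigma_{e_0}\setminus\{1_{v_0}\}$. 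Also, in $G_{v_1}$ (for $n=2$) one has $\phi_1^2=1$, so $\phi_1\iota\phi_1^{-1}=\iota$ and the ``same $g=\phi_1$ computation'' does not go through; for $p\geq 3$ you can instead take $g=\psi\in G_{v_0}$, where $\psi^2\neq 1$.

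The genuine gap is in your treatment of $\IG$ for $p=2$. The claim that $X_\IG\cong X_{\IG_1}$ equivariantly is false: collapse moves change the Bass--Serre tree (Remark~\ref{rmk: reduced tree} only asserts isomorphism of fundamental groups). Concretely, vertices of type $v_1$ in $X_\IG$ have valence $|\Sigma_{\bar e_0}|+|\Sigma_{e_1}|=2+1=3$, while the corresponding $v_2$-vertices in $X_{\IG_1}$ have valence $4+1=5$; the trees are not isomorphic, so you cannot transport the crossed-product conclusion this way. Moreover, for $p=2$ the one-edge subgraph of $\IG$ fails on two counts: $X_{\IG'}$ \emph{is} a line (both indices equal $2$, Proposition~\ref{prop: linear}(i)), and $G_{v_0}*_{G_{e_0}}G_{v_1}\cong(\Z_2\times\Z_2)*_{\Z_2}(\Z_2\times\Z_2)$ has $\iota$ central, hence is not $C^*$-simple.

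The paper avoids all of this by handling $\IG$ uniformly without the subgraph clause of Theorem~\ref{thm: boundary action}. Once $C^*$-simplicity of $\out(BS(p,2p))$ is established via $\IG_1$, one observes that the boundary action $\alpha$ on $\partial_\infty X_\IG$ is topologically amenable (finite vertex groups) and is a $\pi_1(\IG,v_0)$-boundary (first half of Theorem~\ref{thm: boundary action}, using Remark~\ref{rmk: out bs not linear} and Lemma~\ref{lem: minimal out bs}); then Remark~\ref{rmk: equivalence of C simple and topo free} forces $\alpha$ to be topologically free, and Theorem~\ref{thm: strong bdry action pure infinite} plus \cite{Tu} finish. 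This route works for all $p>1$ simultaneously and bypasses the need to compare the two trees.
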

\begin{proof}
    First, note that the graphs of groups $\IG$ and $\IG_1$ are locally finite and non-singular.  Remark \ref{rmk: reduced tree} shows that $\out(BS(p, 2p))\simeq \pi_1(\IG, v_0)\simeq \pi_1(\IG', v_0)$. Now, we work in $\Gamma_1$ and we denote by $\Gamma'_1$ to be the subgraph in $\Gamma_1$ consisting of $e_0$ together with its vertices $v_0$ and $v_2$. Write $\IG'_1=(\Gamma'_1, \CG'_1)$.  Recall $|\Sigma_{e_0}|=p>1$ and $|\Sigma_{\bar{e}_0}|=2^2=4$ so that one may choose $g_0\in \Sigma_{e_0}\setminus \{1_{v_0}\}$ and $g_1\in \Sigma_{\bar{e}_0}\setminus \{1_{v_2}\}$. Then define $w=g_0e_0g_1\bar{e}_0$, which is a repeatable word. In addition,  the existence of $e_0$ in $\Gamma'_1$ implies that $\partial_\infty X_{\IG'_1}$ is infinite by Remark \ref{rmk: infinite bdry} and thus $X_{\IG'_1}$ is not isomorphic to a line. Consequently, $X_{\IG_1}$ is not isomorphic to a line as well. Moreover, Lemma \ref{lem: minimal out bs} shows that $\xi$ flows to any $f$ for any $\IG_1$-infinite normalized word $\xi$ and $f\in E(\Gamma_1)$.
    Finally, Lemma \ref{lem: edge important in out bs} implies that $\pi_1(\IG'_1, v_0)= G_{v_0}*_{G_{e_0}} G_{v_2}$ is $C^*$-simple and $G_{v_0}$ and $G_{v_1}$ are finite groups and thus amenable. Then Theorem \ref{thm: boundary action} implies that the  $\out(BS(p, 2p))\simeq \pi_1(\IG_1, v_0)$ is $C^*$-simple and the crossed product $C^*$-algebra $A=C(\partial_\infty X_{\IG_1})\rtimes_r \out(BS(p, 2p))$ is a unital simple purely infinite separable $C^*$-algebra. On the other hand, since all $\IG_1$-vertex groups $G_{v_k}$ are finite, the action $\beta$ is actually amenable by  \cite[Proposition 5.2.1, Lemma 5.2.6]{B-O} and thus $A$ is nuclear by Remark \ref{rmk: equivalence of dynamical properties} and satisfies the UCT by \cite{Tu}.  Therefore, $A$ is a unital Kirchberg algebra satisfying the UCT.

    Now, we work in the graph of groups $\IG=(\Gamma, \CG)$. First, $\partial_\infty X_\IG$ is not isomorphic to a line by Remark \ref{rmk: out bs not linear}. Choose a $r\in \Sigma_{\bar{e}_0}\setminus \{1_{v_1}\}$ and the word $w'=g_0e_0r\bar{e}_0$ is a repeatable word. Then Lemma \ref{lem: minimal out bs} shows that $\xi$ flows to any $f$ for any $\IG$-infinite normalized word $\xi$ and $f\in E(\Gamma)$. Moreover, since $\pi_1(\IG, v_0)$ is shown to be $C^*$-simple and all vertex groups in $\IG$ are finite, the boundary action $\alpha$ is a topologically amenable topologically free strong boundary action and thus the crossed product $C(\partial_\infty X_\IG)\rtimes_r \out(BS(p, 2p))$ is a unital Kirchberg algebra satisfying the UCT by Theorem \ref{thm: boundary action}, Remark \ref{rmk: equivalence of C simple and topo free} and \cite{Tu}.
    \end{proof}

We remark that in the case $q=np$ and $n\neq 2$, Theorem \ref{thm: out bs main} is not true. This is mainly because the amalgamated free product $H=G_{v_0}*_{G_{e_0}} G_{v_2}$ can be written as 
\[(\Z_{p|n-1|}\rtimes \Z_2)*_{\Z_{|n-1|}\rtimes \Z_2} (\Z_{|n(n-1)|}\rtimes \Z_2)=(\Z_{p|n-1|}*_{\Z_{|n-1|}}\Z_{|n(n-1)|})\rtimes \Z_2\]
is not $C^*$-simple. Indeed, since $K=\Z_{p|n-1|}*_{\Z_{|n-1|}}\Z_{|n(n-1)|}$ contains an abelian normal subgroup $\Z_{|n-1|}$, the group $K$, as a finite index subgroup of $H$, which is not $C^*$-simple. Therefore $H$ is not $C^*$-simple by  \cite[Proposition 19(iii)]{Harpe}. Actually, the same argument shows that if $n\neq 2$, the group $\out(BS(p, np))$ is not $C^*$-simple using the following explicit presentation provided in \cite{C}.

\begin{thm}\cite[Theorem 4.4]{C}\label{thm: presentation of out bs}
    Let $q=pn$ such that $p, |n|\in \Z_{\geq 1}$ and write $BS(p,q)=\langle x,t\ |\ tx^pt^{-1}=x^q\rangle$. Then the automorphism group $\aut(BS(p, q))$ is generated by automorphisms $c_x, c_t, \psi, \iota$ and $\phi_k$ for $k\geq 1$ subjects to following relations
    \begin{align*}
        &c_tc^p_xc^{-1}_t=c^q_x, \ \   \iota^{-1}=\iota,\ \ 
        \iota c_x\iota=c_x^{-1}, \ \  \iota c_t\iota=c_t,\ \  \iota\psi\iota=\psi^{-1},\ \  \iota\phi_k\iota=\phi^{-1}_k\text{ for }k\geq 1,\\
        &\psi^p=\phi^n_1,\ \ \psi^{p(n-1)}=c_x^{-p},\ \ \psi c_x\psi^{-1}=c_x,\ \ \psi c_t\psi^{-1}=c_xc_t, \ \ \phi_kc_x\phi^{-1}_k=c_x\text{ for }k\geq 1,\\
        &\phi_kc_t\phi_k^{-1}=c_t^{-k}c_x^pc_t^{k+1} \text{ for }k\geq 1, \ \ \phi^n_{k+1}=\phi_k \text{ for }k\geq 1,
    \end{align*}
  where $c_x$ and $c_t$ are conjugation automorphisms by generators $x, t$ of $BS(p, q)$,  and  $\psi, \iota, \phi_k$ ($k\geq 1$) are outer automorphisms of $BS(p, q)$ defined in \cite[Subsection 4.3]{C}. Then the outer automorphism group $\out(BS(p, q))$ is generated by the image of $\iota, \psi$ and $\phi_k$ for $k\geq 1$ and has the form
   \[\out(BS(p, q))=((\mathbb{Z}_{|p(n-1)|})\ast_{\mathbb{Z}_{|n-1|}}(\mathbb{Z}[\frac{1}{|n|}]/|n(n-1)|\mathbb{Z}))\rtimes \mathbb{Z}_2.\]
   \end{thm}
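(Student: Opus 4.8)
The plan is to prove the presentation of $\aut(BS(p,q))$ first and then read off the structure of $\out(BS(p,q))$ by quotienting out the inner automorphisms; the second step is essentially formal, so the real work is the first. Throughout I take $q=pn$ with $p,|n|>1$, the interesting case (the others are covered by Remark~\ref{rmk: amenable outer auto of BS}).

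\textbf{Verifying generators and relations.} The first task is routine. One checks that each of $c_x,c_t,\psi,\iota$ and $\phi_k$ ($k\ge 1$) is a well-defined automorphism of $BS(p,q)=\langle x,t\mid tx^pt^{-1}=x^q\rangle$, i.e.\ that each map carries the relator to a relator and is invertible. For $\psi$ this reads $\psi(t)\psi(x)^p\psi(t)^{-1}=xtx^pt^{-1}x^{-1}=xx^qx^{-1}=x^q=\psi(x)^q$, and for $\phi_k$ one uses that $z_k:=t^{-k}x^pt^k$ commutes with $x^q$: indeed $z_k^{\,n^k}=x^p$, so $z_k$ commutes with $x^p$ and hence with $x^q=(x^p)^n$, which gives $\phi_k(t)x^p\phi_k(t)^{-1}=z_kx^qz_k^{-1}=x^q$. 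Invertibility follows by writing down explicit inverses, and the listed relations are then checked by direct computation in $BS(p,q)$. This reduces the theorem to showing that these elements \emph{generate} $\aut(BS(p,q))$ and that the listed relations are \emph{defining}.

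\textbf{Generation and completeness via a tree action.} Since $q=pn$ with $p,|n|>1$, the one-loop GBS splitting of $BS(p,q)$ is non-ascending in the sense of Definition~\ref{defn:ascending}, so $\langle x\rangle$ represents the unique commensurability class of maximal elliptic cyclic subgroups; consequently every automorphism of $BS(p,q)$ preserves this class, and the nested conjugates $t^{-k}\langle x^p\rangle t^k$ (equivalently, the elements $z_k$) record a canonical combinatorial datum. This is precisely what allows $\aut(BS(p,q))$, and not merely $BS(p,q)$, to act simplicially on a canonical tree closely related to the tree $X_{p,q}$ of \cite[Section~4]{C}: vertices and edges are indexed by the canonical subgroups attached to this chain, the quotient graph is the ray of Figure~\ref{fig: Out(BS)}, and the vertex and edge stabilizers are computed to be exactly the subgroups generated by the $\phi_k,\iota$ (together with $c_x,c_t$ in the case of $\aut$). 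Applying the Fundamental Theorem of Bass--Serre theory (Theorem~\ref{thm fundamental theorem in Bass serre}) identifies $\aut(BS(p,q))$ with the fundamental group of this graph of groups; as the ray has no loops, a presentation is given by the union of the vertex-group presentations and the edge-identification relations, and after simplification these are the relations in the statement. I expect this to be the main obstacle: one must actually prove that $\aut$ acts on this tree with the asserted quotient and stabilizers, which requires the commensurability-invariance of $\langle x\rangle$ together with a careful analysis of how an arbitrary automorphism moves the $z_k$ and the associated vertex groups --- i.e.\ a genuine study of the deformation space of $BS(p,q)$, not a formal manipulation.

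\textbf{Passing to $\out$.} Because $|n|>1$, the center of $BS(p,q)$ is trivial, so $\operatorname{Inn}(BS(p,q))\cong BS(p,q)=\langle c_x,c_t\rangle$, and adjoining $c_x=c_t=1$ to the presentation of $\aut(BS(p,q))$ yields a presentation of $\out(BS(p,q))$. The relations collapse to
\[
\iota^2=1,\quad \iota\psi\iota=\psi^{-1},\quad \iota\phi_k\iota=\phi_k^{-1},\quad \psi^{p(n-1)}=1,\quad \psi^p=\phi_1^{\,n},\quad \phi_{k+1}^{\,n}=\phi_k .
\]
So $\iota$ generates a $\Z_2$ acting by inversion, and it remains to identify $K:=\langle\psi,\phi_1,\phi_2,\dots\rangle$. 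Here $\langle\psi\rangle\cong\Z_{|p(n-1)|}$, while the directed system $\phi_k=\phi_{k+1}^{\,n}$ together with the derived relation $\phi_1^{\,n(n-1)}=1$ presents $\langle\phi_1,\phi_2,\dots\rangle$ as the colimit $\Z[\frac{1}{|n|}]/|n(n-1)|\Z$, and these two subgroups meet exactly in the cyclic group of order $|n-1|$ generated by $\psi^p=\phi_1^{\,n}$, with no further relations imposed between them; hence $K\cong\Z_{|p(n-1)|}\ast_{\Z_{|n-1|}}\bigl(\Z[\frac{1}{|n|}]/|n(n-1)|\Z\bigr)$. Since $\iota$ normalizes $K$ and inverts each generator, $\out(BS(p,q))\cong K\rtimes\Z_2$, which is the claimed form. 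This last step is pure bookkeeping once the presentation of $\aut(BS(p,q))$ is established.
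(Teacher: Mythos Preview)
This statement is not proved in the present paper at all: it is quoted verbatim as \cite[Theorem 4.4]{C} and used as a black box (see the proof of Theorem~\ref{thm: out bs C simple}). So there is no ``paper's own proof'' to compare against here; the relevant comparison is with Clay's original argument in \cite{C}.

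Your outline follows Clay's strategy faithfully. The key idea in \cite{C} is that when $p,|n|>1$ the outer automorphism group acts on a tree $X_{p,q}$ built from the deformation space of GBS splittings of $BS(p,q)$, with quotient the ray of Figure~\ref{fig: Out(BS)} and with the vertex and edge stabilizers recorded in the paragraphs preceding Theorem~\ref{thm: presentation of out bs}; Bass--Serre theory then yields the presentation. You have correctly identified this as the substantive step and flagged that it requires real work on the deformation space rather than formal manipulation. One small point of order: in \cite{C} it is $\out(BS(p,q))$, not $\aut(BS(p,q))$, that acts directly on $X_{p,q}$; the presentation of $\aut$ is obtained by pulling back along $\aut\to\out$ (equivalently, working with the graph of groups whose vertex groups are preimages of the $G_{v_k}$), rather than by first producing a tree action for $\aut$ and then quotienting. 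Your route --- $\aut$ first, then $\out$ --- is logically fine, but you should be aware that the canonical tree is for $\out$, so your ``$\aut$ acts on a tree closely related to $X_{p,q}$'' needs to be made precise as a lift. Your third step (reading off the amalgam structure of $K$ and the $\Z_2$-action) is correct bookkeeping.
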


Using Theorem \ref{thm: presentation of out bs}, we have the following complete characterization of $C^*$-simplicity.

\begin{thm}\label{thm: out bs C simple}
   $\out(BS(p, q))$ is $C^*$-simple if and only if $q=2p$ and $p>1$. 
\end{thm}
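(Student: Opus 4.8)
The plan is to prove the two directions separately, treating the ``if'' direction as a direct consequence of the work already assembled in this section and the ``only if'' direction by a normal-subgroup obstruction coming from the explicit presentation in Theorem \ref{thm: presentation of out bs}. First recall the reductions: by interchanging $t \leftrightarrow t^{-1}$ we may assume $1 \le p \le |q|$, and by Remark \ref{rmk: amenable outer auto of BS} the group $\out(BS(p,q))$ is (virtually) metabelian, hence amenable, hence \emph{not} $C^*$-simple, in every case except $q = pn$ with $p>1$ and $|n|>1$. Indeed an amenable group is $C^*$-simple only if it is trivial, and none of the groups $\Z_{2|p-q|}\rtimes\Z_2$, $\Z\rtimes(\Z_2\times\Z_2)$, $\Z_{2p}\rtimes\Z_2$, or (for $p=1$) the metabelian $\out(BS(1,q))$, is trivial. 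So it suffices to analyze $q = pn$ with $p>1$, $|n|>1$, and show $C^*$-simplicity holds precisely when $n = 2$.

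For the case $q = 2p$, $p > 1$: this is exactly the content of Theorem \ref{thm: out bs main}, which has already been established via Theorem \ref{thm: boundary action} applied to the reduced-after-one-collapse graph of groups $\IG_1$ in Figure \ref{fig: OutBS1}. So for this direction I would simply cite Theorem \ref{thm: out bs main}.

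For the case $q = pn$ with $|n| > 1$ but $n \ne 2$: the goal is to exhibit a nontrivial normal subgroup of $\out(BS(p,pn))$ that obstructs $C^*$-simplicity, using the standard fact (see \cite[Proposition 19(iii)]{Harpe}, already invoked in the paragraph following Theorem \ref{thm: presentation of out bs}) that a group with a nontrivial amenable normal subgroup — or more precisely a group which is not $C^*$-simple as a finite-index overgroup of a non-$C^*$-simple group — fails to be $C^*$-simple. By Theorem \ref{thm: presentation of out bs} we have
\[
\out(BS(p, pn)) = \bigl((\Z_{|p(n-1)|}) \ast_{\Z_{|n-1|}} (\Z[\tfrac{1}{|n|}]/|n(n-1)|\Z)\bigr) \rtimes \Z_2.
\]
Write $K = \Z_{|p(n-1)|} \ast_{\Z_{|n-1|}} \Z[\tfrac{1}{|n|}]/|n(n-1)|\Z$, an index-$2$ subgroup of $\out(BS(p,pn))$. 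The amalgamating subgroup $\Z_{|n-1|}$ is central in both factors $\Z_{|p(n-1)|}$ and $\Z[\tfrac{1}{|n|}]/|n(n-1)|\Z$ (they are abelian), hence is normal — in fact central — in the amalgam $K$; and since $n \ne 2$ we have $|n-1| \ge 2$, so $\Z_{|n-1|}$ is a nontrivial finite normal subgroup of $K$. A group with a nontrivial amenable normal subgroup is not $C^*$-simple, so $K$ is not $C^*$-simple, and therefore neither is its finite-index overgroup $\out(BS(p,pn))$ by \cite[Proposition 19(iii)]{Harpe}. (One small point to verify carefully: that $|n-1|\ge 2$ is automatic when $n\ne 2$ among integers with $|n|>1$, since the only integer $n$ with $|n|>1$ and $|n-1|\le 1$ is $n=2$; the values $n=0,\pm 1$ are excluded by $|n|>1$, and $n=-1$ gives $|n-1|=2$.) Combining the two directions: $\out(BS(p,q))$ is $C^*$-simple iff $q = 2p$ and $p > 1$.

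The main obstacle is essentially bookkeeping rather than a deep argument: one must be careful to check that the amenable/reduction cases of Remark \ref{rmk: amenable outer auto of BS} genuinely cover \emph{everything} outside $q=pn$, $p>1$, $|n|>1$ (in particular the $p = 1$ case and the $p = \pm q$ boundary cases), and one must cite the correct form of the ``no nontrivial amenable normal subgroup'' criterion and of the finite-index permanence result for (non-)$C^*$-simplicity. There is no new dynamics to run here — the positive direction is already done in Theorem \ref{thm: out bs main}, and the negative direction is a one-line structural observation on the amalgam from Theorem \ref{thm: presentation of out bs}.
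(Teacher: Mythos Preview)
Your proposal is correct and follows essentially the same approach as the paper: cite Theorem \ref{thm: out bs main} for the ``if'' direction, reduce via Remark \ref{rmk: amenable outer auto of BS} to $q=pn$ with $p>1$ and $|n|>1$, and for $n\ne 2$ observe from Theorem \ref{thm: presentation of out bs} that the index-$2$ amalgam contains the nontrivial abelian normal subgroup $\Z_{|n-1|}$, obstructing $C^*$-simplicity via \cite[Proposition 19(iii)]{Harpe}. Your write-up is in fact slightly more careful than the paper's in verifying that $|n-1|\ge 2$ for all relevant $n\ne 2$ and that the amenable cases are genuinely nontrivial.
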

\begin{proof}
If $q=2p$ and $p>1$, then Theorem  \ref{thm: out bs main} show that $\out(BS(p, q))$ is $C^*$-simple.    Now for the converse, suppose $\out(BS(p, q))$ is $C^*$-simple. Then this implies that $q=np$ for some $n\in \Z$ and $p>1$ because otherwise, $\out(BS(p, q))$ is amenable by Remark \ref{rmk: amenable outer auto of BS}. Then Theorem \ref{thm: presentation of out bs} shows 
    \[\out(BS(p, q))=((\mathbb{Z}_{|p(n-1)|})\ast_{\mathbb{Z}_{|n-1|}}(\mathbb{Z}[\frac{1}{|n|}]/|n(n-1)|\mathbb{Z}))\rtimes \mathbb{Z}_2.\]
    We write $N=(\mathbb{Z}_{|p(n-1)|})\ast_{\mathbb{Z}_{|n-1|}}(\mathbb{Z}[\frac{1}{|n|}]/|n(n-1)\mathbb{Z}|)$ for simplicity.  If $n\neq 2$, then $N$ is an amalgamated free product of two abelian groups. This implies that $\Z_{|n-1|}$ is a non-trivial normal abelian subgroup of $N$. On the other hand, since $N$ is a normal subgroup of $\out(BS(p, q))$ with finite index, the group $N$ is also $C^*$-simple by \cite[Proposition 19(iii)]{Harpe}. But this is a contradiction to the fact that the amenable radical of $N$ is non-trivial. Therefore, $n=2$ holds necessarily.
    \end{proof}

\begin{rmk}\label{rmk: out bs acy hyperbolic}
   Here is an alternative way to show the $C^*$-simplicity of $\out(BS(p, 2p))$. It follows from \cite[Theorem 2.1]{M-O}  that $\out(BS(p, 2p))$ is acylindrically hyperbolic once one has verified the minimality of the topological action $\out(BS(p, 2p))\curvearrowright \partial_\infty X_{\IG}$ as well as the action $\out(BS(p, 2p))\curvearrowright  X_{\IG}$ by automorphism by Lemma \ref{lem: minimal out bs}, Proposition \ref{prop: minimal on bdry} and Proposition \ref{prop: minimal 1}.  Therefore, the $C^*$-simplicity of $\out(BS(p, 2p))$ also follows from \cite{A-Da} if one may verify that it has the ICC property.
\end{rmk}




\section{$n$-dimensional Generalized Baumslag-Solitar groups}\label{sec: GBS}
This section is aiming to study $\text{GBS}_n$ graphs and $\text{GBS}_n$ groups. For simplicity, we call a graph of groups $\IG=(\Gamma, \CG)$ a \textit{$GBS_n$ graph of groups} if all vertex and edge groups are $\Z^n$. In general, we do not require that $\Gamma$ is finite for a $\text{GBS}_n$ graphs.   An $n$-\textit{generalized Baumslag-Solitar group} ($\text{GBS}_n$ group) is a fundamental group of a finite $\text{GBS}_n$ graph.  Note that $\text{GBS}_1$ group are nothing but usual GBS groups as we will address next.

\subsection{1-dimensional case}
The main theorem in this subsection, i.e., Theorem \ref{thm: GBS C star simple} on $C^*$-simplicity of (finitely generated) GBS groups have been proven in \cite[Proposition 9.1]{M-V} by a straightforward geometric analysis of actions on the tree together with Theorem \ref{thm: GBS topological freeness} established in \cite{B-M-P-S-T} on the topological freeness of the boundary action. On the other hand,  we will show Theorem \ref{thm: GBS C star simple} is another consequence of Theorem \ref{thm: boundary action} together with Theorem \ref{thm: GBS topological freeness}. Moreover, we will also demonstrate applications of Theorem \ref{thm: boundary action} to infinitely generated GBS groups.

A GBS group is said to be \textit{non-elementary} if it is not isomorphic to one of the following groups: $\mathbb{Z}$, $\mathbb{Z}^2$ or the Klein bottle group $BS(1, -1)=\Z\rtimes \Z$.

Let $\IG=(\Gamma, \CG)$ be a GBS graph of groups.  Since every group in $\mathcal{G}$ is isomorphic to $\mathbb{Z}$, then each inclusion map $\alpha_e: G_e \hookrightarrow G_{o(e)}$ is given by multiplication by a non-zero integer, which we denote by $\lambda(e)$. This data can be reflected by adding labels on the edges in $\Gamma$ by defining a function $\lambda: E(\Gamma)\rightarrow \mathbb{Z}\setminus\{0\}$, which is called the labeling function. Given a choice of generators of $G_e$ and $G_{t(e)}$, the inclusion map $\alpha_e: G_e \hookrightarrow G_{o(e)}$ is multiplication by $\lambda(e)$. By definition, for GBS graphs of groups, $|\Sigma_e|=|\lambda(e)|$ holds for any $e\in E(\Gamma)$. In particular, GBS graphs of groups are all locally finite.


Let $G$ be a non-elementary GBS group. It is well-known that there is a well-defined homomorphism $\Delta: G\rightarrow \mathbb{Q}^{\times}$, which is called the \textit{modular homomorphism} (see, e.g., \cite[Section 2]{L2}). In the context of GBS graphs of groups $\IG=(\Gamma, \CG)$, such modular homomorphism can be interpreted as $\Delta: \pi_1(\IG, v)\to \Q^{\times}$ by
\[w=g_1e_1\dots g_ne_ng\mapsto \prod_{i=1}^n \frac{\lambda(\overline{e}_i)}{\lambda(e_i)}\]
where $w\in \pi_1(\IG, v)$ is not necessarily in the normalized form. In addition, to be compatible with the definition of GBS graphs above, we also do not require the graph $\Gamma$ to be finite in the definition of the modular homomorphism. See also \cite[Section 7.1]{B-M-P-S-T}. The fundamental group $\pi_1(\IG, v)$ is \textit{unimodular} if $\Delta(\pi_1(\IG, v))\subseteq \{1,-1\}$. 


\begin{rmk}\label{rmk: unimordular characterization}
    Let $G$ be a non-elementary GBS group. The modular homomorphism $\Delta$ on $G$ does not depend on the choice of the graph of groups $\IG=(\Gamma, \CG)$ representing $G$. See \cite[Section 2]{L2}. The following are equivalent. See \cite[Section 2]{L2} and \cite[Proposition 4.1]{L2}.
    \begin{enumerate}[label=(\roman*)]
        \item $G$ is unimodular.
        \item $G$ has a non-trivial center.
        \item $G$ has a normal infinite cyclic group.
        \item $G$ is virtually $\F_n\times \Z$ for some $n\geq 2$.
        \item $G=\F_n\rtimes \Z$ where the action $\Z\curvearrowright \F_n$ is an outer automorphism of finite order. 
    \end{enumerate}
\end{rmk}

If the underlying graph $\Gamma$ in a non-singular GBS graph of groups $\IG=(\Gamma, \CG)$ is finite, one has the following nice characterization of topological freeness of boundary actions.

\begin{thm}\cite[Corollary 7.11]{B-M-P-S-T}\label{thm: GBS topological freeness}
Let $\IG=(\Gamma, \CG)$ be a non-singular GBS graph of groups such that $\Gamma$ is finite. Then the action $\alpha: \pi_1(\IG, v)\curvearrowright \partial_\infty X_\IG$ is topologically free if and only if $\pi_1(\IG, v)$ is not unimodular.
\end{thm}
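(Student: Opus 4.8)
Since this theorem is quoted from \cite{B-M-P-S-T}, the plan is to reconstruct the argument in the word/combinatorial language developed in this paper rather than in the groupoid language of the source. The setup is a non-singular $\mathrm{GBS}_1$ graph of groups $\IG=(\Gamma,\CG)$ with $\Gamma$ finite, so the Bass–Serre tree $X_\IG$ is locally finite and $\partial_\infty X_\IG$ is a compact metrizable totally disconnected space on which $G=\pi_1(\IG,v)$ acts by the explicit formula $s\cdot\xi=N(w\xi)$ from Remark \ref{rmk: action on bdry of Bass-Serre tree}. Recall also the modular homomorphism $\Delta\colon G\to\Q^\times$ with $\Delta(g_1e_1\cdots g_ne_n g)=\prod_i \lambda(\bar e_i)/\lambda(e_i)$, and that $G$ is unimodular iff $\Delta(G)\subseteq\{\pm1\}$ iff $G$ has a nontrivial center (Remark \ref{rmk: unimordular characterization}).

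First I would handle the ``only if'' direction: assume $G$ is unimodular and show the action is \emph{not} topologically free. Unimodularity gives, by Remark \ref{rmk: unimordular characterization}, a nontrivial central element $z\in G$, or equivalently a normal infinite cyclic subgroup; since $X_\IG$ is non-singular and not a line (the non-elementary/non-singular hypotheses rule out the degenerate cases of Proposition \ref{prop: linear}), one shows this forces either a global fixed point structure or a whole open set fixed pointwise by a nontrivial element. Concretely, a central $z$ of infinite order must act elliptically on $X_\IG$ (a central hyperbolic element would have a $G$-invariant axis, contradicting non-linearity when combined with the existence of a repeatable word and Lemma \ref{lem: coherent hyperbolic}), and an elliptic central element fixes a nonempty subtree; in the locally finite case the set of ends of that subtree is a nonempty open subset of $\partial_\infty X_\IG$ fixed pointwise by $z$, so $\fix_{\partial_\infty X_\IG}(z)$ has nonempty interior and the action is not topologically free.

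For the ``if'' direction, assume $G$ is not unimodular and prove topological freeness; by Corollary \ref{cor: bdry topo free} and Proposition \ref{prop: strongly faithful and topo free} it suffices (once minimality and strong hyperbolicity are in place, which follow as in the proof of Theorem \ref{thm: boundary action} from Lemma \ref{Lem: not loop repeatable} and Proposition \ref{prop: no loop minimal}/the flowness analysis) to prove the action $G\curvearrowright X_\IG$ is strongly faithful, i.e.\ for each finite $F\subseteq G\setminus\{1\}$ there is an infinite normalized word $\xi$ with $\gamma\cdot\xi\neq\xi$ for all $\gamma\in F$. The key point is that an element $\gamma\in G$ can fix an end $\xi=h_1f_1h_2f_2\cdots$ only in a very constrained way: tracking the normalization process of $w\xi$ shows that if $\gamma=[w]$ fixes $\xi$ then $\gamma$ must eventually ``shift along'' the ray $\xi$, which forces $\Delta(\gamma)=1$ together with $\gamma$ acting by translation length zero along the relevant geodesic — and for unimodular-obstruction reasons any $\gamma$ with $\Delta(\gamma)\notin\{\pm1\}$ cannot stabilize any end at all, while the finitely many $\gamma\in F$ with $\Delta(\gamma)\in\{\pm1\}$ can each be avoided by choosing the tail of $\xi$ to ``flow'' past the finite portion of $\Gamma$ that their normalized words involve (using Proposition \ref{prop: no loop minimal} and the flowness construction, exactly as in Lemma \ref{lem: strongly faithful permanent}). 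Assembling: pick $\xi$ whose edge-labels realize a modular value not compatible with any $\Delta(\gamma)$, $\gamma\in F$ with trivial-ish modulus, by exploiting that non-unimodularity means some loop in $\Gamma$ has $|\lambda(e)|\neq|\lambda(\bar e)|$, and iterating that loop in $\xi$ produces arbitrarily many ``unbalanced'' segments that no fixed $\gamma$ can reproduce.

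The main obstacle is the ``if'' direction: making precise the claim that a non-unimodular $G$ acts on $\partial_\infty X_\IG$ with all stabilizers of ends ``small,'' and in particular converting ``some loop has $|\lambda(e)|\neq|\lambda(\bar e)|$'' into an explicit choice of end $\xi$ avoiding a prescribed finite set $F$. This requires a careful bookkeeping of how the normalization map $N(w\cdot-)$ interacts with the labels along $\xi$ — essentially reproving the relevant part of \cite[Corollary 7.11]{B-M-P-S-T} in the present notation — whereas the ``only if'' direction is comparatively soft, resting only on the center/normal-cyclic characterization and the fact that a central elliptic element fixes an open set of ends. I would therefore spend most of the write-up on the strong-faithfulness argument, and simply cite Theorem \ref{thm: boundary action}'s proof for the minimality and strong-hyperbolicity prerequisites.
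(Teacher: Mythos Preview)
This theorem is quoted from \cite{B-M-P-S-T} without proof; the paper supplies no argument of its own, so there is nothing to compare against directly. You are attempting a reconstruction.

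Your ``only if'' direction has the right idea but is incomplete as stated. Once you know a nontrivial central $z$ is elliptic, observe that its fixed-point set $\fix_{X_\IG}(z)$ is a $G$-invariant subtree (for $g\in G$ one has $z(gx)=g(g^{-1}zg)x=g(zx)$ by centrality), and minimality of the tree action then forces $\fix_{X_\IG}(z)=X_\IG$; hence $z$ acts trivially on $\partial_\infty X_\IG$ and the action is not even faithful. Your weaker claim that the ends of the fixed subtree form a nonempty \emph{open} set is not justified without this $G$-invariance step.

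Your ``if'' direction has a genuine gap. You route everything through Proposition \ref{prop: strongly faithful and topo free}, which requires the tree action to be strongly hyperbolic and minimal, and you source minimality from Proposition \ref{prop: no loop minimal}. But Proposition \ref{prop: no loop minimal} explicitly excludes the ascending-loop case, and for $BS(1,n)$ with $|n|>1$ the group is non-unimodular (so the theorem asserts topological freeness) while the boundary carries a global fixed point (Remark \ref{rmk: ascending loop non minimal}); the action is therefore not strongly hyperbolic and Proposition \ref{prop: strongly faithful and topo free} simply does not apply. This case must be handled by a different argument, and nothing in your sketch covers it. Even in the non-ascending case your construction of $\xi$ is too vague: the claim that non-unimodularity yields a single edge-loop with $|\lambda(e)|\neq|\lambda(\bar e)|$ conflates edge-loops with cycles in $\Gamma$, and the assertion that elements with $\Delta(\gamma)\in\{\pm1\}$ can be ``avoided by flowness'' does not follow from Lemma \ref{lem: strongly faithful permanent} or anything else established here. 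The actual proof in \cite{B-M-P-S-T} proceeds through their groupoid-effectiveness framework and a direct analysis of stabilizers of cylinder sets, which sidesteps the strong-hyperbolicity hypothesis entirely.
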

We remark that  the finiteness of $\Gamma$ is necessary in Theorem \ref{thm: GBS topological freeness}. See Proposition \ref{prop: infinite GBS topo free} below.

Recall that elementary GBS groups are all virtually abelian. Remark \ref{rmk: unimordular characterization} implies that unimodular GBS groups have a non-trivial center. In addition, all BS groups $BS(1, n)$ are solvable and the infinite dihedral group $D_\infty$ is virtually $\Z$. Thus, it is direct to see that these groups are not $C^*$-simple. 

\begin{rmk}\label{rmk: gbs not linear}
    Let $\IG=(\Gamma, \CG)$ be the reduced GBS graph described in Proposition \ref{prop: linear} such that $X_\CG$ is isomorphic to a line. Observe that the fundamental group $\pi_1(\IG, v)$ is either of the form $\langle a, b|a^2=b^2\rangle$ or of the form $BS(1, 1)=\Z^2$ or $BS(1, -1)=\Z\rtimes \Z$. In addition, by defining $x=b^{-1}a$ by $t=b$, then the group  $\langle a, b|a^2=b^2\rangle=\langle x, t| t^{-1}xt=x^{-1}\rangle$ is also isomorphic to $BS(1, -1)$.    Thus, in the case that $X_\IG$ is isomorphic to a line, then $\pi_1(\IG, v)$ is elementary.
\end{rmk}

The following theorem was first proven in \cite[Proposition 9.1]{M-V} and we recover this in our framework.

\begin{thm}\label{thm: GBS C star simple}
    Let $G$ be a (finitely generated) GBS group. Then $G$ is $C^*$-simple if and only if $G$ is non-elementary, non-virtually $\F_m\times \Z$ ($m\geq 2$), and not isomorphic to $BS(1, n)$ for any $n\in \Z\setminus \{0\}$. 
\end{thm}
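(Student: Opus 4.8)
The plan is to reduce the statement to the combination of Theorem \ref{thm: boundary action} (our main tool for $C^*$-simplicity via boundary actions) and Theorem \ref{thm: GBS topological freeness} (the topological-freeness criterion of \cite{B-M-P-S-T}), together with the classification of ``bad'' GBS groups via Remark \ref{rmk: unimordular characterization}. The forward direction is the substantial one; the converse is essentially a list of non-examples. Throughout we may pass to a reduced graph of groups by Remark \ref{rmk: reduced tree} without changing the fundamental group, and GBS graphs are automatically locally finite since $|\Sigma_e|=|\lambda(e)|$.

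First I would dispose of the converse. Suppose $G$ is $C^*$-simple. If $G$ is elementary, then $G$ is $\Z$, $\Z^2$, or $BS(1,-1)=\Z\rtimes\Z$, all of which are amenable (hence have non-trivial amenable radical) and are therefore not $C^*$-simple, a contradiction. If $G$ is virtually $\F_m\times\Z$ for some $m\ge 2$, then by Remark \ref{rmk: unimordular characterization} it has a non-trivial center, so its amenable radical is non-trivial and $G$ is not $C^*$-simple, again a contradiction. Finally $BS(1,n)$ is solvable for every $n$, hence amenable and not $C^*$-simple. This shows that a $C^*$-simple GBS group must be non-elementary, non-virtually-$(\F_m\times\Z)$, and not isomorphic to any $BS(1,n)$.

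For the forward direction, let $G$ be a GBS group that is non-elementary, non-virtually $\F_m\times\Z$, and not of the form $BS(1,n)$; write $G=\pi_1(\IG,v)$ for a reduced GBS graph of groups $\IG=(\Gamma,\CG)$ with $\Gamma$ finite (a GBS group is finitely generated by definition, and every graph of groups reduces to one whose underlying finite graph we keep finite). By Remark \ref{rmk: gbs not linear}, since $G$ is non-elementary the Bass-Serre tree $X_\IG$ is not isomorphic to a line, so hypothesis \ref{nonlinear} of Theorem \ref{thm: boundary action} holds; and since $\IG$ is reduced it is non-singular by Remark \ref{rmk: reduced advantage}. Because $\Gamma$ contains at least one edge (as $G$ is not $\Z$), Lemma \ref{Lem: not loop repeatable} provides a $\IG$-repeatable word $w$, giving hypothesis \ref{word}. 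For hypothesis \ref{minimal}, Proposition \ref{prop: no loop minimal} shows that every infinite normalized word flows to every edge unless $\IG$ is an ascending loop; but an ascending loop gives $\pi_1(\IG,v)\cong BS(1,n)$ for some $n$ (or $BS(1,\pm1)$), which is excluded. Hence all three hypotheses of the first part of Theorem \ref{thm: boundary action} are satisfied, so $\alpha\colon G\curvearrowright\overline{\partial_\infty X_\IG}$ is a strong boundary action, and in particular a $G$-boundary action. Now Theorem \ref{thm: GBS topological freeness} applies since $\IG$ is non-singular and $\Gamma$ is finite: the action $\alpha$ on $\partial_\infty X_\IG$ is topologically free precisely because $G$ is not unimodular, which by Remark \ref{rmk: unimordular characterization} is exactly the hypothesis that $G$ is not virtually $\F_m\times\Z$. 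By Corollary \ref{cor: bdry topo free} the action on $\overline{\partial_\infty X_\IG}$ is then topologically free as well, and by Theorem \ref{thm: C simple equivalence} (parts (i) and (v)) the group $G$ is $C^*$-simple.

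The main obstacle is the careful bookkeeping in the forward direction: one must verify that the excluded families (elementary groups, $BS(1,n)$, and unimodular / virtually $\F_m\times\Z$ groups) account for exactly the failures of hypotheses \ref{nonlinear}–\ref{minimal} of Theorem \ref{thm: boundary action} and of the non-unimodularity needed in Theorem \ref{thm: GBS topological freeness}. Concretely, the delicate point is identifying the reduced GBS graphs of groups whose only bad feature is being an ascending loop — these yield $BS(1,n)$ — and confirming via Remark \ref{rmk: gbs not linear} that a line-shaped $X_\IG$ forces $G$ to be elementary; once these two identifications are in hand, the rest is an assembly of the quoted results. I would not need to invoke Theorem \ref{thm: boundary action}'s second (strong-faithfulness) part here, since topological freeness is supplied directly by Theorem \ref{thm: GBS topological freeness}; this streamlines the argument but is worth noting so the reader sees why the amenability-of-vertex-groups hypothesis of that second part is not needed.
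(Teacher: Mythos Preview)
Your proposal is correct and follows essentially the same route as the paper's proof: reduce to a finite reduced GBS graph, verify the hypotheses of Theorem~\ref{thm: boundary action} via Remark~\ref{rmk: gbs not linear}, Lemma~\ref{Lem: not loop repeatable}, and Proposition~\ref{prop: no loop minimal}, then invoke Theorem~\ref{thm: GBS topological freeness} together with Remark~\ref{rmk: unimordular characterization} for topological freeness. The only differences are cosmetic: you spell out the converse explicitly (the paper just calls it ``clear''), and your appeal to Corollary~\ref{cor: bdry topo free} is harmless but unnecessary here since GBS graphs are locally finite, so $\partial_\infty X_\IG$ is already compact and equals $\overline{\partial_\infty X_\IG}$.
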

\begin{proof}
It suffices to show if a (finitely generated) GBS group $G$ satisfies the conditions above, then it is $C^*$-simple as the converse is clear.
    Let $G$ be such a group. Choose a GBS graph of groups $\IG=(\Gamma, \CG)$ such that $\Gamma$ is finite. Then Remark \ref{rmk: reduced tree} implies that one may assume $\Gamma$ is reduced and thus non-singular as the reduction of a GBS graph is still a GBS graph. In addition, the reduced GBS graph $\IG$ cannot be a vertex only without any edges because otherwise, $G\simeq \Z$ is elementary. Moreover, by Remark \ref{rmk: gbs not linear}, the tree $X_\CG$ is not isomorphic to a line because $G$ is not elementary. 
    Furthermore, Lemma \ref{Lem: not loop repeatable} shows that there exists a repeatable $\IG$-path word $w$. Denote by $v=o(w)$.   
    Since $G$ is not isomorphic to $BS(1, n)$, the graph of groups $\IG$ is not just an ascending loop only. Then Proposition \ref{prop: no loop minimal} implies that $\xi$ flows to $e$ for any $\IG$-infinite normalized word $\xi$ and edge $e\in E(\Gamma)$.  Then Theorem \ref{thm: boundary action} shows that $\beta: G=\pi_1(\IG, v)\curvearrowright \partial_\infty X_\IG$ is a strong boundary action and thus a $G$-boundary action by Remark \ref{rmk: strong bdry is bdry}.  Then, Remark \ref{rmk: unimordular characterization} entails that $G=\pi_1(\IG, v)$ is not unimodular and then Theorem \ref{thm: GBS topological freeness} shows that  $\beta$ is a topologically free.  Therefore, the action $\beta: G=\pi_1(\IG, v)\curvearrowright \partial_\infty X_\IG$ is a topological free $G$-boundary action. Therefore $G$ is $C^*$-simple.
\end{proof}

\begin{cor}\label{cor: boundary action of c star simple GBS}
    Let $G$ be GBS group that is  non-elementary, non-virtually $\F_m\times \Z$ ($m\geq 2$), and is not isomorphic to $BS(1, n)$ for any $n\in \Z\setminus \{0\}$. Suppose $\IG=(\Gamma, \CG)$ is a reduced GBS graph of groups representing $G$. Then $C(\partial_\infty X_\IG)\rtimes_r G$ is a unital Kirchberg algebra satisfying the UCT. 
\end{cor}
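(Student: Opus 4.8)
The plan is to combine Theorem \ref{thm: GBS C star simple} (which establishes $C^*$-simplicity and identifies the boundary action $\beta\colon G\curvearrowright\partial_\infty X_\IG$ as a topologically free $G$-boundary action) with the pure infiniteness and nuclearity machinery recalled in the preliminaries. Since $\IG$ is a reduced GBS graph of groups and $G$ is not isomorphic to $BS(1,n)$, nor elementary, nor virtually $\F_m\times\Z$, the proof of Theorem \ref{thm: GBS C star simple} already shows that $\IG$ is non-singular, $X_\IG$ is not a line, there is a repeatable $\IG$-word, and every infinite normalized word flows to every edge; thus by Theorem \ref{thm: boundary action} the action $\beta\colon G\curvearrowright\overline{\partial_\infty X_\IG}$ is a strong boundary action, and by Theorem \ref{thm: GBS topological freeness} together with Proposition \ref{prop: topo free permanant} and Corollary \ref{cor: bdry topo free} it is topologically free.

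First I would invoke Theorem \ref{thm: strong bdry action pure infinite}: a topologically free strong boundary action on a compact Hausdorff space yields a simple and purely infinite reduced crossed product, and since $X_\IG$ is countable, $\overline{\partial_\infty X_\IG}$ (equivalently $\partial_\infty X_\IG$, which is compact because GBS graphs of groups are locally finite) is metrizable, so the crossed product is also separable. Next I would address nuclearity: every vertex group of $\IG$ is $\Z$, hence amenable, so by \cite[Proposition 5.2.1, Lemma 5.2.6]{B-O} the action $\beta$ is topologically amenable; then Remark \ref{rmk: equivalence of dynamical properties} gives that $C(\partial_\infty X_\IG)\rtimes_r G$ is nuclear. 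Being simple, separable, nuclear and purely infinite, it is a unital Kirchberg algebra. Finally, the UCT follows from \cite{Tu} applied to the (topologically) amenable action, exactly as in the proof of Theorem \ref{thm: tubular} and Theorem \ref{thm: out bs main}.

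There is essentially no new obstacle here: the work has all been done in Theorem \ref{thm: GBS C star simple} and the cited structural results. The one point requiring a small remark is the identification of the compactification $\overline{\partial_\infty X_\IG}$ with $\partial_\infty X_\IG$ itself — since a GBS graph of groups is locally finite, so is its Bass-Serre tree by Remark \ref{rmK: locally finite and non singular}, hence $\partial_\infty X_\IG$ is already compact and the bar is superfluous; this is why the statement is phrased without a bar over $\partial_\infty X_\IG$. The only other mild care is to record the unitality (which is automatic since $\partial_\infty X_\IG$ is compact, so $C(\partial_\infty X_\IG)$ is unital). I would then simply chain these observations:

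\begin{proof}
By Theorem \ref{thm: GBS C star simple} and its proof, $\IG$ being reduced is non-singular, $X_\IG$ is not isomorphic to a line, and $\beta\colon G=\pi_1(\IG, v)\curvearrowright \partial_\infty X_\IG$ is a topologically free strong boundary action. Since $\IG$ is a GBS graph of groups, it is locally finite, so $X_\IG$ is locally finite by Remark \ref{rmK: locally finite and non singular} and hence $\partial_\infty X_\IG=\overline{\partial_\infty X_\IG}$ is a compact metrizable space (note $X_\IG$ is countable). Therefore Theorem \ref{thm: strong bdry action pure infinite} implies that $A=C(\partial_\infty X_\IG)\rtimes_r G$ is a unital simple separable purely infinite $C^*$-algebra. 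Moreover, all vertex groups of $\IG$ are isomorphic to $\Z$ and thus amenable, so the boundary action $\beta$ is topologically amenable by \cite[Proposition 5.2.1, Lemma 5.2.6]{B-O}; hence $A$ is nuclear by Remark \ref{rmk: equivalence of dynamical properties} and satisfies the UCT by \cite{Tu}. Consequently $A$ is a unital Kirchberg algebra satisfying the UCT.
\end{proof}
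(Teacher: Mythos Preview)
Your proof is correct and follows essentially the same approach as the paper's own proof: invoke Theorem \ref{thm: GBS C star simple} to get a topologically free strong boundary action, use amenability of the vertex groups $\Z$ to obtain topological amenability of the action (hence nuclearity and the UCT via \cite{B-O} and \cite{Tu}), and conclude Kirchberg via Theorem \ref{thm: strong bdry action pure infinite}. Your version is simply more explicit about the local finiteness ensuring compactness of $\partial_\infty X_\IG$, which the paper leaves implicit.
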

\begin{proof}
    In Theorem \ref{thm: GBS C star simple}, we have shown that $G\curvearrowright \partial_\infty X_\IG$ is a topological free strong boundary action. Moreover, since all vertex and edge stabilizers are isomorphic to $\Z$, the action is (topological) amenable. Then the $C^*$-algebra $A=C(\partial_\infty X_\IG)\rtimes_r G$ is a unital Kirchberg algebra satisfying the UCT by Theorem \ref{thm: boundary action}, Remark \ref{rmk: equivalence of dynamical properties} and \cite{Tu}. 
\end{proof}

To end this subsection, we provide new $C^*$-simple groups from the GBS graph of groups $\IG=(\Gamma, \CG)$ with infinite underlying graph  $\Gamma$, in which case, $\pi_1(\IG, v)$ is infinitely generated.

\begin{prop}\label{prop: infinite GBS topo free}
    Let $\IG=(\Gamma, \CG)$ be a graph of groups such that $|\Sigma_e|\geq 2$ for any $e\in E(\Gamma)$. Suppose there exists a vertex $v\in V(\Gamma)$ emitting infinitely many edges, i.e. there are infinitely many different $e_i\in E(\Gamma)$ for $i\in \N$ such that $o(e_i)=v$. Suppose for any finite set  $F\subset G_v\setminus \{1_v\}$ and finite $E\subset \{e\in E(\Gamma):o(e)=v\}$, there exists an edge $f$ with $o(f)=v$ such that $f\notin E$ and $g\notin \alpha_f(G_f)$ for any $g\in F$. 
    Then $\pi_1(\IG, v)\curvearrowright \partial_\infty X_\IG$ is strongly faithful.
    \end{prop}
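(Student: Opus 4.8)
The plan is to imitate the proof of Proposition \ref{prop: non locally finite topo free 1}, with the single edge of infinite index used there replaced by a suitably chosen edge among the infinitely many edges issuing from $v$. Take $v$ as the base vertex. As in the other strong faithfulness proofs, it suffices to show that for every finite $F=\{\gamma_1,\dots,\gamma_n\}\subset\pi_1(\IG,v)\setminus\{1\}$ there is an infinite normalized word $\xi\in\partial_\infty X_\IG$ with $o(\xi)=v$ and $\gamma_i\cdot\xi\neq\xi$ for all $i$. Write $\gamma_i=[w_i]$ with $w_i$ normalized in the sense of Definition \ref{defn: normalized word}, and split the indices: let $J=\{i:\ell(w_i)=0\}$, so $w_i=g_i\in G_v\setminus\{1_v\}$ for $i\in J$, and let $I$ be its complement, so for $i\in I$ one has $w_i=g_{1,i}e_{1,i}\cdots g_{m_i,i}e_{m_i,i}g_{m_i+1,i}$ with $m_i\ge 1$; since $\gamma_i\in\pi[v,v]$, both $e_{1,i}$ and $\bar{e}_{m_i,i}$ are edges with origin $v$.

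First I would apply the hypothesis to the finite set of edges $E:=\{e_{1,i}:i\in I\}\cup\{\bar{e}_{m_i,i}:i\in I\}$ (all with origin $v$) together with the finite set $F_0:=\{g_i:i\in J\}\subset G_v\setminus\{1_v\}$, obtaining an edge $f$ with $o(f)=v$, with $f\notin E$ (hence $f\neq e_{1,i}$ and $f\neq\bar{e}_{m_i,i}$ for every $i\in I$), and with $g\notin\alpha_f(G_f)$ for every $g\in F_0$. Using $|\Sigma_f|\ge 2$ and $|\Sigma_{\bar{f}}|\ge 2$, I would then take $\xi$ to be an infinite normalized word starting with $1_vf$, e.g. obtained by alternating along $f$ and $\bar{f}$, $\xi=1_vfh_1\bar{f}h_2fh_3\bar{f}\cdots$, with each coefficient chosen different from the relevant identity so that no reversal occurs; under the identification in Remark \ref{rmk: action on bdry of Bass-Serre tree} this is a point of $\partial_\infty X_\IG$ with origin $v$.

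The verification then proceeds in two cases. For $j\in J$ one has $\gamma_j\cdot\xi=N(g_jfh_1\bar{f}\cdots)$; writing $g_j\in s_j\alpha_f(G_f)$ with $s_j\in\Sigma_f$, the condition $g_j\notin\alpha_f(G_f)$ gives $s_j\neq 1_v$, so $\gamma_j\cdot\xi$ begins with $s_jf$ while $\xi$ begins with $1_vf$, whence $\gamma_j\cdot\xi\neq\xi$. For $i\in I$ the role of the condition $f\neq\bar{e}_{m_i,i}$ is that in the word $w_i\xi=g_{1,i}e_{1,i}\cdots e_{m_i,i}g_{m_i+1,i}fh_1\bar{f}\cdots$ no reversal is created at the junction $e_{m_i,i}g_{m_i+1,i}f$; since $w_i$ and $1_vf h_1\bar f\cdots$ are each already reduced, $w_i\xi$ is reduced, so normalization leaves its edge sequence unchanged and $\gamma_i\cdot\xi=N(w_i\xi)$ begins with the edge $e_{1,i}$. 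As $f\notin E$ forces $e_{1,i}\neq f$, again $\gamma_i\cdot\xi\neq\xi$. This gives a $\xi$ working for all of $F$, so the action is strongly faithful.

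I expect the one delicate point to be the normalization bookkeeping for $i\in I$: namely the claim that excluding the single edge $f=\bar{e}_{m_i,i}$ already prevents the normalization of $w_i\xi$ from cancelling back as far as $e_{1,i}$. This is exactly the place where one must argue carefully with the reduction procedure of Remark \ref{rmk: transfer to reduced and normalized word}, the point being that the only adjacency in $w_i\xi$ which is not already an adjacency inside the (reduced) words $w_i$ or $\xi$ is the junction one, and we have ruled that out. The remaining manipulations are routine applications of the normalization procedures in Remarks \ref{rmk: transfer to reduced and normalized word} and \ref{rmk: action on bdry of Bass-Serre tree}.
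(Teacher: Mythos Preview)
Your proof is correct and follows essentially the same approach as the paper: choose $f$ avoiding the first and last edges of the long $w_i$'s and with $\alpha_f(G_f)$ missing the short $w_i$'s, then take $\xi$ to be a $f$--$\bar f$ zigzag starting with $1_vf$. Your bookkeeping is in fact slightly cleaner than the paper's, since you restrict $F_0$ to the length-zero words (so $F_0\subset G_v\setminus\{1_v\}$ is automatic), whereas the paper puts all terminal letters $g_i$ into its set $F$.
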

\begin{proof}
Choose the $v$ as our base vertex. Let $\gamma_1, \dots, \gamma_n\in \pi_1(\IG, v)$ be non-trivial elements such that every $\gamma_i=[w_i]$, where $w_i$ is a normalized word. Then either $w_i=g_i$ for some $g_i\in G_v\setminus \{1_v\}$ or 
   \[w_i= g_{1, i}e_{1, i}\dots g_{m_i, i}e_{m_i, i}g_i\]  
  such that $o(w_i)=o(e_{1, i})=t(e_{m_i,i})=v$.  Our goal is to find a $\xi\in \partial_\infty X_\CG$ such that $\gamma_i\cdot \xi\neq \xi$ for any $1\leq i\leq n$.  Define $F=\{g_i: 1\leq i\leq n\}$ and $E=\{e_{1, i}, \bar{e}_{m_i, i}: 1\leq i\leq n\}$. Then by assumption, there exists an  edge $f$ with $o(f)=v$ such that $f\notin E$ and $g\notin \alpha_f(G_f)$ for any $g\in F$.

  Now define an infinite normalized word $\xi \in \partial_\infty X_\IG$ of the form
    \[\xi=1_vfh\bar{f}sfh\bar{f}sf\dots\]
    such that $s\in \Sigma_f\setminus \{1_{v}\}$ and $h\in \Sigma_{\bar{f}}\setminus \{1_{t(f)}\}$. Then for any $1\leq i\leq n$, if $w_i=g_i\neq 1_v$, one has
    \[\gamma_i\cdot \xi=N(g_ifh\bar{f}sf\dots).\]
    Since $g_ifh\bar{f}sf\dots$ is already reduced, the $\gamma_i\cdot \xi$  is of the form
    \[r_ifh_{1, i}\bar{f}s_{1, i}f\dots\]
    as an infinite normalized word for some $r_i\in \Sigma_f\setminus \{1_v\}$ such that $r_i\alpha_f(G_f)=g_i\alpha_f(G_f)\neq \alpha_f(G_f)$ and proper $h_{k, i}\in \Sigma_{\bar{f}}\setminus \{1_{t(f)}\}$ and $s_{k, i}\in \Sigma_f\setminus \{1_{o(f)}\}$ by the choice of $f$.  This implies that $\gamma_i\cdot \xi\neq \xi$ because $r_i\neq 1_v$.    
    
    Now, for $\gamma_i=g_{1, i}e_{1, i}\dots g_{m_i, i}e_{m_i, i}g_i$. Note that
    \[\gamma_i\cdot \xi=N(g_{1, i}e_{1, i}\dots g_{m_i, i}e_{m_i, i}g_ifh\bar{f}\dots).\]
Because $f\neq \bar{e}_{m_i, i}$, the word $g_{1, i}e_{1, i}\dots g_{m_i, i}e_{m_i, i}g_ifh\bar{f}\dots$ is an infinite reduced word in the sense of Definition \ref{defn: infinite normalized word}. Therefore,  one has
    \[\gamma_i\cdot \xi=g_{1, i}e_{1, i}\dots g_{m_i, i}e_{m_i, i}r_ifh_{1, i}\bar{f}s_{1,i}fh_{2, i}\bar{f}\dots\]
     for some $r_i\in \Sigma_f$  and proper $h_{k, i}\in \Sigma_{\bar{f}}\setminus \{1_{t(f)}\}$ and $s_{k, i}\in \Sigma_f\setminus \{1_{o(f)}\}$ for $k\in \N$  This verifies that $\gamma_i\cdot \xi\neq \xi$ because $f$ is not equal to any $e_{1, i}$ for $1\leq i\leq n$.
\end{proof}

\subsubsection*{Octopus GBS graph groups}
The following GBS graph consisting of one vertex $v$ emitting infinitely many edges $e_1, e_2\dots, $ such that $|\Sigma_{e_i}|=i+1\geq 2$ and $|\Sigma_{\bar{e}_i}|=2$, is named by the \textit{Octopus} graph. 

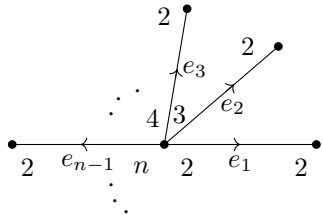
\begin{figure}[ht]
    \centering
\begin{tikzpicture}
      \node[label={above, yshift=0cm:}] at (3.3,2.4) {$\cdot$};
      \node[label={above, yshift=0cm:}] at (3.4,2.6) {$\cdot$};
      \node[label={above, yshift=0cm:}] at (3.65,2.7) {$\cdot$};      \node[label={above, yshift=0cm:}] at (3.3,1.45) {$\cdot$};
      \node[label={above, yshift=0cm:}] at (3.35,1.25) {$\cdot$};
      \node[label={above, yshift=0cm:}] at (3.5,1.1) {$\cdot$};

      \tikzset{enclosed/.style={draw, circle, inner sep=0pt, minimum size=.1cm, fill=black}}
      
      \node[enclosed,black, label={right, yshift=.2cm:}] at (2,2) {};
     
      \draw(2,2) -- (4,2) node[midway, right] {};

      \node[enclosed,black, label={right, yshift=.2cm:}] at (4,2) {};
      \node[enclosed,black, label={right, yshift=.2cm:}] at (5.5,3.3) {};
      \node[enclosed,black, label={right, yshift=.2cm:}] at (6,2) {};
      \node[enclosed,black, label={right, yshift=.2cm:}] at (4.3,3.8) {};

      \draw(4,2) -- (5.5,3.3) node[midway, right] {};
      \draw(4,2) -- (4.3,3.8) node[midway, right] {};
      \draw(4,2) -- (6,2) node[midway, right] {};

      \node[label={above, yshift=0cm:}] at (3.7,1.7) {\small$n$};
      \node[label={above, yshift=0cm:}] at (4.2,2.4) {\small$3$};
      \node[label={above, yshift=0cm:}] at (2.2,1.7) {\small$2$};

      \node[label={above, yshift=0cm:}] at (5,1.75) {\small$e_1$};
      \node[label={above, yshift=0cm:}] at (4.9,2.5) {\small$e_2$};
      \node[label={above, yshift=0cm:}] at (4.4,3) {\small$e_3$};
      \node[label={above, yshift=0cm:}] at (3,1.75) {\small$e_{n-1}$};

      \node[label={above, yshift=0cm:}] at (5.1,3.3) {\small$2$};
      \node[label={above, yshift=0cm:}] at (4.3,1.7) {\small$2$};
      \node[label={above, yshift=0cm:}] at (5.8,1.7) {\small$2$};
      \node[label={above, yshift=0cm:}] at (3.85,2.35) {\small$4$};
      \node[label={above, yshift=0cm:}] at (4,3.7) {\small$2$};

      \draw[->] (4.9, 2) -- (5, 2);
      \draw[->] (4.81, 2.7) -- (4.92, 2.8);
      \draw[->] (4.15, 2.9) -- (4.17, 3);
      \draw[->] (3, 2) -- (2.9, 2);

\end{tikzpicture}
    \caption{Octopus graph}
    \label{fig: oct}
\end{figure}

\begin{prop}\label{prop: non locally finite GBS}
    Let $\IG=(\Gamma, \CG)$ be a GBS octopus graph of groups as in Figure \ref{fig: oct} above. Then $\pi_1(\IG, v)\curvearrowright \partial_\infty X_\IG$ is a topological free strong boundary action. As consequences, $\pi_1(\IG, v)$ is $C^*$-simple and $C(\overline{\partial_\infty X_\IG})\rtimes_r \pi_1(\IG, v)$ is a unital Kirchberg algebra satisfying the UCT.
\end{prop}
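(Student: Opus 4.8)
The plan is to verify the hypotheses of Theorem \ref{thm: boundary action} to obtain the strong boundary action, and then to establish topological freeness by means of the dedicated criterion Proposition \ref{prop: infinite GBS topo free} rather than Theorem \ref{thm: GBS topological freeness}; the latter is unavailable here since the octopus graph $\Gamma$ is infinite and the tree $X_\IG$ is not locally finite (indeed, the central vertex of $X_\IG$ has infinite valence).

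First I would record the structural facts. Since $|\Sigma_{e_i}|=i+1\ge 2$ and $|\Sigma_{\bar e_i}|=2\ge 2$ for every $i$, no monomorphism $\alpha_{\bar e_i}$ or $\alpha_{e_i}$ is surjective, so no edge is collapsible and $\IG$ is reduced, hence non-singular in the sense of Definition \ref{defn: nonsingular}. The edge $e_2$ is non-degenerated ($|\Sigma_{e_2}|=3$, $|\Sigma_{\bar e_2}|=2$, $o(e_2)\ne t(e_2)$), so $\partial_\infty X_\IG$ is infinite by Remark \ref{rmk: infinite bdry}\ref{nondegenerate}, and thus $X_\IG$ is not isomorphic to a line by Proposition \ref{prop: linear}. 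Lemma \ref{Lem: not loop repeatable} (applied to the non-loop edge $e_1$) supplies a repeatable $\IG$-word $w$ with $o(w)=v$, and since $\IG$ is reduced and not an ascending loop, Proposition \ref{prop: no loop minimal} gives that every infinite normalized word flows to every edge. So the three hypotheses of the first part of Theorem \ref{thm: boundary action} hold, yielding that $\pi_1(\IG, v)\curvearrowright\overline{\partial_\infty X_\IG}$ is a strong boundary action; moreover, as shown inside that proof, $\pi_1(\IG, v)\curvearrowright X_\IG$ is a minimal strongly hyperbolic action.

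Next, topological freeness. I would invoke Proposition \ref{prop: infinite GBS topo free} with base vertex $v$: the central vertex emits infinitely many edges, $|\Sigma_e|\ge 2$ everywhere, and the only remaining point is the escaping condition. Identifying $G_v$ with $\Z$ and $\alpha_{e_i}(G_{e_i})$ with $(i+1)\Z$, given a finite $F\subset\Z\setminus\{0\}$ and a finite $E$, one picks $i$ with $i+1>\max_{g\in F}|g|$ and $e_i\notin E$; then no nonzero element of $F$ lies in $(i+1)\Z$, so $f=e_i$ works. Hence $\pi_1(\IG, v)\curvearrowright X_\IG$ is strongly faithful, and since this action is minimal and strongly hyperbolic, Proposition \ref{prop: strongly faithful and topo free} makes the induced action on $\partial_\infty X_\IG$ topologically free; Corollary \ref{cor: bdry topo free} (equivalently Proposition \ref{prop: topo free permanant}) then transfers topological freeness to $\pi_1(\IG, v)\curvearrowright\overline{\partial_\infty X_\IG}$.

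For the consequences: $\overline{\partial_\infty X_\IG}$ is a topologically free $\pi_1(\IG, v)$-boundary, so $\pi_1(\IG, v)$ is $C^*$-simple by Theorem \ref{thm: C simple equivalence}; being a topologically free strong boundary action on a metrizable space, it gives a unital simple separable purely infinite crossed product by Theorem \ref{thm: strong bdry action pure infinite}; and since all vertex and edge groups are $\Z$, hence amenable, the action is topologically amenable by \cite[Proposition 5.2.1, Lemma 5.2.6]{B-O}, so the crossed product is nuclear by Remark \ref{rmk: equivalence of dynamical properties} and satisfies the UCT by \cite{Tu}, i.e.\ it is a unital Kirchberg algebra satisfying the UCT. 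The step requiring the most care is keeping the boundary objects straight: because $X_\IG$ is not locally finite, $\partial_\infty X_\IG$ need not be compact, so "strong boundary action'' and "Kirchberg algebra'' must be read for the shadow compactification $\overline{\partial_\infty X_\IG}$, with Corollary \ref{cor: bdry topo free}/Proposition \ref{prop: topo free permanant} used to pass topological freeness between $\partial_\infty X_\IG$ and $\overline{\partial_\infty X_\IG}$; the octopus-specific input (the escaping condition of Proposition \ref{prop: infinite GBS topo free}) is otherwise elementary once $G_v=\Z$ is exploited.
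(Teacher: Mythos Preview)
Your proposal is correct and follows essentially the same approach as the paper: verify the hypotheses of Theorem \ref{thm: boundary action} via Proposition \ref{prop: linear}, Lemma \ref{Lem: not loop repeatable}, and Proposition \ref{prop: no loop minimal}, then establish strong faithfulness via Proposition \ref{prop: infinite GBS topo free} and upgrade to topological freeness by Proposition \ref{prop: strongly faithful and topo free}. The only cosmetic difference is in the escaping argument: the paper picks a large prime $p$ so that $\alpha_{e_{p-1}}(G_{e_{p-1}})=p\Z$ avoids $F$, whereas you pick any $i$ with $i+1>\max_{g\in F}|g|$; both work, and your choice is if anything simpler. Your version is also more explicit than the paper about nuclearity and the UCT (via amenable vertex groups, \cite[Proposition 5.2.1, Lemma 5.2.6]{B-O}, and \cite{Tu}), and about the $\partial_\infty X_\IG$ versus $\overline{\partial_\infty X_\IG}$ distinction, which is appropriate since $X_\IG$ is not locally finite here.
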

\begin{proof}
    First, the graph of groups $\IG$ is reduced. Then Propositions \ref{prop: linear}, \ref{prop: no loop minimal} and Lemma \ref{Lem: not loop repeatable} implies that 
    \begin{enumerate}[label=(\roman*)]
        \item $X_\IG$ is not isomorphic to a line;
        \item  there exists a repeatable word $w\in \pi_1(\IG, v)$ with $o(w)=v$;
        \item $\xi$ flows to $f$ holds for any infinite normalized word $\xi$ and edge $f\in E(\Gamma)$.
        \end{enumerate}
Then Theorem \ref{thm: boundary action} implies that $\pi_1(\IG, v)\curvearrowright\overline{\partial_\infty X_\IG}$ is a strong boundary action.  On the other hand, let $F\subset G_v\setminus\{1_v\}\simeq \Z\setminus \{0\}$ and $E\subset \{f\in E(\Gamma): o(f)=v\}$ be finite set. Then choose a large enough prime number $p\notin F$ such that $e_{p-1}\notin E$. Define $f=e_{p-1}$ and then $\alpha_f(G_f)=p\Z$ and therefore $m\notin \alpha_f(G_f)$ for any $m\in F$. Thus, Proposition \ref{prop: infinite GBS topo free} shows that $\pi_1(\IG, v)\curvearrowright X_\IG$ is strongly faithful and thus $\pi_1(\IG, v)\curvearrowright \overline{\partial_\infty X_\IG}$ is topologically free by Proposition \ref{prop: strongly faithful and topo free}.  Therefore, $\pi_1(\IG, v)$ is $C^*$-simple by Theorem \ref{thm: C simple equivalence} because $\overline{\partial_\infty X_\IG}$ is a topologically free $\pi_1(\IG, v)$-boundary action. Moreover, the reduced crossed product $C(\overline{\partial_\infty X_\IG})\rtimes_r \pi_1(\IG, v)$ is a unital simple purely infinite $C^*$-algebra by Theorem \ref{thm: strong bdry action pure infinite}.     
\end{proof}

Note that $\pi_1(\IG, v)$ in Proposition \ref{prop: non locally finite GBS} is also unimodular. Therefore, Theorem \ref{thm: GBS topological freeness} does not hold if $\Gamma$ is infinite.

\subsection{n-dimensional case}\label{subsec: gbsn}
In this subsection, we study $n$-dimensional GBS groups. Denote by $v_i=(0,\dots, 0, 1, 0,\dots, 0)$ the $i$-th generator for $\Z^n$ for $i=1,\dots, n$ in this subsection. Let $e$ be an edge in a $\text{GBS}_n$ graph. For monomorphisms $\alpha_e$ and $\alpha_{\bar{e}}$, denote by $\alpha_e(v_i)=(k_{i, 1}, k_{i, 2}, \dots, k_{i, n})\in \Z^n$ and $\alpha_{\bar{e}}(v_i)=(l_{i, 1},\dots, l_{i, n})\in \Z^n$ for $i=1,\dots, n$. We denote by $A_{e}=[k_{i, j}]$ and $A_{\bar{e}}=[l_{i, j}]$ the $n\times n$ matrices assigned to $\alpha_e$ and $\alpha_{\bar{e}}$, respectively.  Note that $A_e, A_{\bar{e}}\in \operatorname{GL}(n, \Q)$ as $\alpha_e$ and $\alpha_{\bar{e}}$ are also $\Z$-module monomorphisms. In this subsection, we also denote $\vec{0}$ for the neutral element in $\Z^n$.

\begin{lem}\label{lem: hnn GBSn}
    Let $\IG=(\Gamma, \CG)$ be a graph of groups such that $\Gamma$ is a non-ascending loop $e$ with $o(e)=t(e)=v$ such that $G_e\simeq \Z^n$ and $G_v\simeq \Z^n$.   Suppose for any  $x\in \Z^n\setminus \{\vec{0}\}$, there exits an $m\geq 1$ such that $(A_e^{-1}A_{\bar{e}})^m x\notin \Z^n$.     
 Then $\pi_1(\IG, v)$ is $C^*$-simple. 
\end{lem}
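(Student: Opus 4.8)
The plan is to reduce the statement to the $C^*$-simplicity criterion for HNN extensions recorded in Theorem \ref{thm: c star simple HNN}, following the same strategy as in the proof of Proposition \ref{prop: tubular}. Here $\pi_1(\IG, v)$ is the HNN extension $G_v*_{\alpha_e(G_e)}$, which we present as $\langle \Z^n, t \mid t^{-1}\alpha_e(x)t = \alpha_{\bar e}(x)\ \text{for } x\in\Z^n\rangle$ with stable letter $t=e$ and $G_v\simeq G_e\simeq \Z^n$. Since the loop $e$ is non-ascending, neither $\alpha_e$ nor $\alpha_{\bar e}$ is onto $G_v$, so this HNN extension is non-ascending; hence by Theorem \ref{thm: c star simple HNN} it suffices to produce, for every finite set $F\subset \alpha_e(G_e)\setminus\{1_v\}$, an element $g\in\pi_1(\IG,v)$ with $gFg^{-1}\cap\alpha_e(G_e)=\emptyset$.

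The key step is a matrix computation describing how powers of the stable letter conjugate $\alpha_e(G_e)$. Writing elements of $\Z^n$ as column vectors we have $\alpha_e(x)=A_e x$, $\alpha_{\bar e}(x)=A_{\bar e}x$, and I set $M=A_e^{-1}A_{\bar e}$. The defining relation gives $t^{-1}\alpha_e(x)t=\alpha_{\bar e}(x)$, and $\alpha_{\bar e}(x)=A_{\bar e}x=A_e(Mx)=\alpha_e(Mx)$ precisely when $Mx\in\Z^n$. Iterating this and tracking reversals via the reduction procedure of Remark \ref{rmk: transfer to reduced and normalized word}, one shows by induction on $m$ that for $x\in\Z^n$ and $m\ge 1$: if $M^jx\in\Z^n$ for all $1\le j\le m$ then $t^{-m}\alpha_e(x)t^m=\alpha_e(M^m x)$; and otherwise, if the chain $x, Mx, M^2x,\dots$ first leaves $\Z^n$ at step $r+1\le m$, then $t^{-m}\alpha_e(x)t^m$ reduces to $t^{-(m-r)}\alpha_e(M^r x)t^{m-r}$ and then to $t^{-(m-r-1)}\alpha_{\bar e}(M^r x)t^{m-r-1}$, a reduced word with $\alpha_{\bar e}(M^r x)\notin\alpha_e(G_e)$ that either still contains the stable letter $t$ or, when $r=m-1$, equals $\alpha_{\bar e}(M^{m-1}x)\in G_v\setminus\alpha_e(G_e)$. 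In all of these cases $t^{-m}\alpha_e(x)t^m\notin\alpha_e(G_e)$; in particular, if $M^m x\notin\Z^n$ then $t^{-m}\alpha_e(x)t^m\notin\alpha_e(G_e)$.

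To conclude, let $F\subset\alpha_e(G_e)\setminus\{1_v\}$ be finite. By injectivity of $\alpha_e$ we may write $F=\alpha_e(F')$ for a finite set $F'\subset\Z^n\setminus\{\vec 0\}$. By hypothesis there is $m\ge 1$ with $M^m x\notin\Z^n$ for every $x\in F'$. Taking $g=t^{-m}$, the previous paragraph shows $gFg^{-1}=t^{-m}Ft^m$ is disjoint from $\alpha_e(G_e)$, so Theorem \ref{thm: c star simple HNN} yields that $\pi_1(\IG,v)$ is $C^*$-simple.

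I expect the only genuine obstacle to be the inductive bookkeeping in the second paragraph: carefully counting how many reversals are available in $t^{-m}\alpha_e(x)t^m$ and confirming that once the orbit $x, Mx, M^2x,\dots$ leaves $\Z^n$ the leftover reduced word has positive length in the stable letter (or lands in $G_v\setminus\alpha_e(G_e)$). This is the $\Z^n$-analogue of the coprimality analysis carried out in Proposition \ref{prop: tubular}, but somewhat cleaner, since the hypothesis is phrased directly in terms of the rational matrix $M=A_e^{-1}A_{\bar e}$.
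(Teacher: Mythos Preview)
Your proof is correct and follows the same strategy as the paper: apply Theorem \ref{thm: c star simple HNN} by conjugating $\alpha_e(G_e)$ by a power $t^{-m}$ of the stable letter and computing the result via the matrix $M=A_e^{-1}A_{\bar e}$. Your explicit case analysis of the first exit time $r+1\le m$ of the orbit $(M^j x)_j$ from $\Z^n$ is in fact slightly more careful than the paper's own version, which chooses ``the smallest $m$'' and applies the inductive formula $e^{-m}A_e x\,e^{m}=A_{\bar e}M^{m-1}x$ without separately discussing what happens when some particular $x\in F$ has its orbit leave $\Z^n$ before step $m$.
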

\begin{proof}
    In light of Theorem \ref{thm: c star simple HNN}, it suffices to show for any $F\subset \alpha_e(G_e)\setminus \{1_v\}=\Z^n\setminus \{\vec{0}\}$, there exits a $\IG$-normalized word 
    $y=g_1e^{\epsilon_1}g_2e^{\epsilon_2}\dots g_me^{\epsilon_m}g_{m+1}$ 
such that $N(yxy^{-1})\notin \alpha_e(G_e)$ for any $x\in F$, where $g_i\in G_v$, $\epsilon_i\in \{1, -1\}$ for $i=1,\dots, m+1$.
    
Write $x\in G_e$ as a column vector in $\Z^n$. Then $\alpha_e(x)=A_ex$ and $\alpha_{\bar{e}}(x)=A_{\bar{e}}x$. This implies that
\[e^{-1}A_exe=A_{\bar{e}}x.\]
Moreover, we claim that for $k\geq 1$, one has $e^{-k}A_exe^k= A_{\bar{e}}(A^{-1}_eA_e)^{k-1}x$ whenever $e^{-l}A_exe^l\in \alpha_e(G_e)$ holds for any $l<k$.

Indeed, by induction, suppose $e^{-l}A_exe^l=A_{\bar{e}}(A^{-1}_eA_{\bar{e}})^{l-1}x\in \alpha_e(G_e)$ holds for $l\geq 1$. Then there exists a $u\in G_e$ such that $A_{\bar{e}}(A^{-1}_eA_{\bar{e}})^{l-1}x=A_eu$, which implies that $u=(A^{-1}_eA_{\bar{e}})^lx$. Therefore, one has
\[e^{-l-1}A_exe^{l+1}=e^{-1}A_eue=A_{\bar{e}}u=A_{\bar{e}}(A^{-1}_eA_{\bar{e}})^lx.\]
This finishes the induction.

Then by the assumption, for and $x\in \Z^n\setminus \{\vec{0}\}$, choose the smallest $m_x\geq 1$ such that  $(A_e^{-1}A_{\bar{e}})^m x\notin \Z^n$. This implies the element
\[g_x\coloneqq e^{-{m_x}}A_exe^{m_x}=A_{\bar{e}}(A^{-1}_eA_{\bar{e}})^{m_x-1}x\in G_v\setminus \alpha_e(G_e).\]
Then for any $m>m_x$, one has 
\[e^{-m}A_exe^m=e^{-(m-m_x)}g_xe^{m-m_x}=1_v\bar{e}\dots1_v\bar{e}g_xe1_v\dots 1_ve\]
is a reduced word that is not in $\alpha_e(G_e)$ as $g_x\in G_v\setminus \alpha_e(G_e)$. Therefore, Let $F\subset \alpha_e(G_e)\setminus \{1_v\}$, choose $y=e^m$ for a large enough $m$ and the above implies that $N(yxy^{-1})\notin \alpha_e(G_e)$ for any $x\in F$.
Thus, the group $\pi_1(\IG, v)$ is $C^*$-simple.
\end{proof}

We note that Lemma \ref{lem: hnn GBSn} has generalized the ``if'' part for a non-solvable  $BS(k, l)$ in \cite[Theorem 3 (iii)]{H-P}. Indeed, for such $BS(k, l)$, the matrix $A_e=k$ and $A_{\bar{e}}=l$ are integers that not equal $1$ or $-1$. Then if $|k|\neq |l|$, without loss of generality, one may assume $|k|>|l|$, which implies $A^{-1}_eA_{\bar{e}}=l/k$. Then for any $x\in \Z\setminus \{0\}$, one may always choose a large $m\geq 1$ such that $(l/k)^mx\notin \Z$.

Then as a direct application of Theorem \ref{thm: reduced graph C star simple}, we have the following for $\text{GBS}_n$ groups.

\begin{thm}\label{thm: main GBSn}
    Let $\IG=(\Gamma, \CG)$ be a $\text{GBS}_n$ graph of groups containing a non-ascending loop $e$ such that for any $x\in \Z^n\setminus \{\vec{0}\}$, there exits an $m\geq 1$ such that $(A_e^{-1}A_{\bar{e}})^m x\notin \Z^n$.     
 Then $\pi_1(\IG, v)$ is $C^*$-simple. Morevoer, the crossed product $C(\overline{\partial_\infty X_\IG})\rtimes_r\pi_1(\IG, v)$ is a unital Kirchberg $C^*$-algebra satisfying the UCT.
 \end{thm}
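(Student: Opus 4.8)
The plan is to reduce this theorem directly to the combination of Lemma~\ref{lem: hnn GBSn} and Theorem~\ref{thm: reduced graph C star simple}, so almost all the work has already been isolated into those two statements. First I would observe that a $\text{GBS}_n$ graph of groups is automatically reduced: every $\alpha_e$ maps $\Z^n$ into $\Z^n$ as a $\Z$-module monomorphism of infinite index (its image is a proper sublattice, since $A_e \in \operatorname{GL}(n,\Q)$ but $A_e$ is not invertible over $\Z$ unless $\alpha_e$ is onto, and even in the surjective case one checks the collapsibility criterion separately), so $|\Sigma_e| = [\Z^n : \alpha_e(\Z^n)] \geq 2$ for every edge, and in particular there is no collapsible edge and the graph is non-singular. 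Actually the cleanest route is: the hypothesis supplies a non-ascending loop $e$, which is automatically not collapsible (loops cannot be collapsed by Definition~\ref{defn: reduced graph}), and the flowness and repeatability needed are supplied by the general reduced-graph machinery once we know $\IG$ is reduced; if $\IG$ happened not to be reduced we first pass to its reduction, which is still a $\text{GBS}_n$ graph by Remark~\ref{rmk: reduced tree} and still contains this loop (collapsing other edges does not destroy a loop). So without loss of generality $\IG$ is reduced.

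Next I would apply Lemma~\ref{lem: hnn GBSn} to the one-loop subgraph $\Gamma'$ consisting of the loop $e$ together with its vertex $v = o(e) = t(e)$: the hypothesis on $(A_e^{-1}A_{\bar e})^m$ is exactly the hypothesis of that lemma, so $\pi_1(\IG', v) = G_v *_{\alpha_e(G_e)}$ is a non-ascending HNN extension which is $C^*$-simple. Since $G_v \simeq \Z^n$ is amenable, condition (ii) of Theorem~\ref{thm: reduced graph C star simple} (equivalently, the ``loop'' branch of Theorem~\ref{thm: A}) is satisfied: $\IG$ is a reduced graph of groups containing a non-ascending loop $e$ with $o(e) = t(e)$ such that $G_{o(e)}$ is amenable and $G_{o(e)} *_{\alpha_e(G_e)}$ is $C^*$-simple. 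Theorem~\ref{thm: reduced graph C star simple} then yields that $\pi_1(\IG, v)$ is $C^*$-simple and that $C(\overline{\partial_\infty X_\IG}) \rtimes_r \pi_1(\IG, v)$ is a unital simple separable purely infinite $C^*$-algebra.

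It remains to upgrade ``simple separable purely infinite'' to ``Kirchberg algebra satisfying the UCT'', i.e.\ to add nuclearity and the UCT. This is where I would invoke topological amenability of the boundary action: all vertex and edge groups are $\Z^n$, hence amenable, so by \cite[Proposition 5.2.1, Lemma 5.2.6]{B-O} (cf.\ the analogous steps in the proofs of Theorems~\ref{thm: tubular} and~\ref{thm: out bs main}) the action $\pi_1(\IG,v) \curvearrowright \overline{\partial_\infty X_\IG}$ is topologically amenable; Remark~\ref{rmk: equivalence of dynamical properties} then gives nuclearity of the crossed product, and \cite{Tu} gives the UCT. Combining with the previous paragraph, $C(\overline{\partial_\infty X_\IG}) \rtimes_r \pi_1(\IG,v)$ is a unital Kirchberg algebra satisfying the UCT, as claimed.

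I do not expect a serious obstacle: the substantive content (the dynamical criterion of Theorem~\ref{thm: boundary action}, the reduced-graph packaging in Theorem~\ref{thm: reduced graph C star simple}, and the matrix computation in Lemma~\ref{lem: hnn GBSn}) is all already in place. The only point requiring mild care is the bookkeeping that a $\text{GBS}_n$ graph with a non-ascending loop can be assumed reduced without losing that loop, and that $X_\IG$ is not a line in this situation — but the latter is immediate since a non-ascending loop $e$ has $|\Sigma_e| \geq 2$ and $|\Sigma_{\bar e}| \geq 2$, so Remark~\ref{rmk: infinite bdry}(ii) forces $\partial_\infty X_\IG$ to be infinite, hence (via Proposition~\ref{prop: linear} or directly) $X_\IG \not\simeq \mathbb{R}$. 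So the only ``hard part'' is really just assembling the cited ingredients in the right order.
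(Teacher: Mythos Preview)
Your proposal is correct and takes essentially the same approach as the paper: apply Lemma~\ref{lem: hnn GBSn} to see the HNN extension over the loop is $C^*$-simple, feed this into condition (ii) of Theorem~\ref{thm: reduced graph C star simple}, and then upgrade to Kirchberg plus UCT via topological amenability of the boundary action (using \cite[Proposition 5.2.1, Lemma 5.2.6]{B-O} and \cite{Tu}). Your discussion of first passing to a reduced graph while retaining the loop is in fact more careful than the paper's one-line proof, which silently applies Theorem~\ref{thm: reduced graph C star simple} without commenting on reducedness; note however that your opening claim that a $\text{GBS}_n$ graph is \emph{automatically} reduced is not quite right (one can have $\det A_f = \pm 1$ for some non-loop edge $f$), so the reduction step you sketch afterwards is the correct fix.
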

 \begin{proof}
     In light of Lemma \ref{lem: hnn GBSn} and Theorem \ref{thm: reduced graph C star simple}, it is left to show $C(\overline{\partial_\infty X_\IG})\rtimes_r\pi_1(\IG, v)$ is nulcear. But this follows from Remark \ref{rmk: equivalence of dynamical properties} as the boundary action $\pi_1(\IG, v)\curvearrowright\partial_\infty X_{\IG}$ is topological amenable by  \cite[Proposition 5.2.1, Lemma 5.2.6]{B-O} since all vertex groups are $\Z^n$.
     \end{proof}

\begin{eg}\label{eg: gbsn diagonal}
  We provide some elementary concrete $\text{GBS}_n$ examples satisfying Theorem \ref{thm: main GBSn}.  Let $e$ be a loop.  Suppose $A_e=\diag(k_1,\dots, k_n)$ and $A_{\bar{e}}=\diag(l_1,\dots,l_n)$ are matrices in $\text{GL}(n, \Q)$. Then $A^{-1}_eA_{\bar{e}}=\diag(l_1/k_1,\dots, l_n/k_n)$ and thus for any $m\geq 1$ and $x=(x_1,\dots, x_n)\in \Z^n$ one has 
  \[(A^{-1}_eA_{\bar{e}})^mx=(l^m_1x_1/k^m_1,\dots, l^m_nx_m/k^m_n).\]
Therefore, if there exists an $i\leq n$ such that $|k_i|\neq |l_i|$, then for any finite $x\in \Z^n$, there exists a large enough $m$ such that $(A^{-1}_eA_{\bar{e}})^mx\notin \Z^n$ for any $x\in F$. Therefore, in this case, the $\text{GBS}_n$ graph of groups $\IG$ satisfies Theorem \ref{thm: main GBSn}.
\end{eg}

We now provide more complicated examples.

\begin{prop}\label{prop: hnn special gbs2}
    Let $\IG=(\Gamma, \CG)$ be a $\text{GBS}_2$ graph of groups such that $\Gamma$ is a non-ascending loop $e$ with $o(e)=t(e)=v$ and $G_v, G_e$ are all isomorphic to $\Z^2$. Suppose the matrix $M=A^{-1}_eA_{\bar{e}}$ is a unitary of the form
    \[\begin{bmatrix}
        \cos\theta & \sin\theta\\
        -\sin\theta & \cos\theta
    \end{bmatrix}
    \]
    in which $\theta$ is irrational. Then $\pi_1(\IG, v)$ is $C^*$-simple.
\end{prop}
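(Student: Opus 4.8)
The plan is to reduce everything to Lemma \ref{lem: hnn GBSn}. Set $M=A_e^{-1}A_{\bar{e}}$, the given rational rotation matrix. By that lemma, it suffices to prove the following purely arithmetic statement: \emph{for every finite set $F\subset\Z^2\setminus\{\vec 0\}$ there exists an integer $m\ge 1$ with $M^m x\notin\Z^2$ for all $x\in F$.} Once this is verified, $C^*$-simplicity of $\pi_1(\IG,v)$ is immediate; and if one also wants the Kirchberg/UCT conclusion it is obtained verbatim as in Theorem \ref{thm: main GBSn}, using that $\Z^2$ is amenable, so the boundary action $\pi_1(\IG,v)\curvearrowright\partial_\infty X_\IG$ is topologically amenable by \cite[Proposition 5.2.1, Lemma 5.2.6]{B-O} and hence the crossed product is nuclear by Remark \ref{rmk: equivalence of dynamical properties} and satisfies the UCT by \cite{Tu}.

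To establish the arithmetic statement I would use three elementary facts. First, $M$ is orthogonal, hence a Euclidean isometry of $\R^2$, so $\|M^m x\|=\|x\|$ for every $m\in\Z$ and every $x\in\R^2$. Second, for $x\in\Z^2\setminus\{\vec 0\}$ the number $\|x\|^2=x_1^2+x_2^2$ is a positive integer, and the circle of radius $\|x\|$ about the origin contains only finitely many points of $\Z^2$ (a positive integer admits only finitely many representations as an ordered sum of two squares). Third --- and this is the only place the hypothesis on $\theta$ is used --- since $\theta$ is irrational, $M$ is a rotation of infinite order, and a planar rotation other than the identity fixes no nonzero vector; hence $M$ acts freely on $\R^2\setminus\{\vec 0\}$, so for each $x\ne\vec 0$ the assignment $m\mapsto M^m x$ is injective and the $M$-orbit of $x$ is infinite.

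Combining these: fix $x\in F$. Its orbit $\{M^m x:m\in\Z\}$ is an infinite subset of the circle $\{y:\|y\|=\|x\|\}$, which contains only finitely many lattice points; by injectivity of $m\mapsto M^m x$, the exponent set $B_x=\{m\in\Z:M^m x\in\Z^2\}$ is therefore finite. Then $B=\bigcup_{x\in F}B_x$ is finite (as $F$ is), and any $m\ge 1$ with $m\notin B$ satisfies the required conclusion. Lemma \ref{lem: hnn GBSn} then finishes the proof.

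I do not expect a serious obstacle here; the subtle point to emphasize is precisely the injectivity of $m\mapsto M^m x$, which fails for finite-order rotations: without it the ``bad'' exponents $B_x$ could form an infinite arithmetic progression, and no admissible $m$ would be guaranteed. A secondary bookkeeping point is to make sure the reduction to Lemma \ref{lem: hnn GBSn} correctly matches $M=A_e^{-1}A_{\bar{e}}$ with the matrix appearing in the lemma's hypothesis, and that $m$ may be taken $\ge 1$ --- which is clear since $B$ is finite.
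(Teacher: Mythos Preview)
Your argument is correct, and the reduction to Lemma~\ref{lem: hnn GBSn} is exactly what the paper does as well. The difference lies entirely in how the arithmetic condition is verified. The paper chooses $m$ so that $M^m$ is close to (but distinct from) the identity, using density of the half-orbit $\{e^{im\theta}\}_{m\ge 0}$ in $\T$; it then argues via a sign case analysis that one coordinate of $M^m x$ lies strictly between two consecutive integers. Your route is more structural: the $M$-orbit of each nonzero $x$ is infinite (infinite-order rotation acts freely on $\R^2\setminus\{\vec 0\}$), lies on a fixed circle, and that circle meets $\Z^2$ in only finitely many points, so the bad exponent set $B_x$ is finite by injectivity of $m\mapsto M^m x$. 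This avoids the approximation and the case analysis entirely, and makes transparent precisely which feature of $\theta$ is used (infinite order of $M$, nothing more). Both arguments implicitly rely on the same interpretation of ``$\theta$ irrational,'' namely that $\theta/\pi\notin\Q$ so that $M$ has infinite order; this is forced anyway since $M\in\operatorname{GL}(2,\Q)$, and it is exactly the hypothesis the paper invokes when it asserts minimality of the irrational rotation.
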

\begin{proof}
    We apply Lemma \ref{lem: hnn GBSn}. By induction, for the $M$ and an integer $k\geq 1$, one has 
     \[M^k=\begin{bmatrix}
        \cos k\theta & \sin k\theta\\
        -\sin k\theta & \cos k\theta
    \end{bmatrix}.
    \]
 Now, for $x=(y, z)\in \Z^2$, this implies that 
 \[M^kx=\begin{bmatrix}
       z\sin(k\theta)+y\cos(k\theta)\\
       z\cos(k\theta)-y\sin(k\theta)
   \end{bmatrix}\]
Now, note that the irrational rotation by $\theta$ on the unit circle $\T$ is minimal and thus for this $\Z$-minimal action, it is a standard fact that its any half-orbit $e^{in\theta}$, for $n\in \N$, is still dense in $\T$. 
   
Fix an $0<\epsilon<1/4$. For any $x=(y, z)\in \Z^2\setminus \{(0, 0)\}$, choose an $m\in \N$ large enough such that $\sin(m\theta), \cos(m\theta)>0$,  $|\sin(m\theta)y|<\epsilon$, $|\sin(m\theta)z|<\epsilon$, $|\cos(m\theta)z-z|<\epsilon$ and $|\cos(m\theta)y-y|<\epsilon$. If one of $y, z$ is $0$, then it is direct to see $M^kx\notin \Z^2$. Suppose $yz>0$ holds. Then, either
\[z>\cos(m\theta)z-\sin(m\theta)y>z-2\epsilon\]
when $y, z>0$, or
\[z+2\epsilon>\cos(m\theta)z-\sin(m\theta)y>z\]
when $y,z<0$.
This implies that $\cos(m\theta)z-\sin(m\theta)y\notin \Z$.
For the case $yz<0$, the similar argument also shows that $z\sin(n\theta)+y\cos(n\theta)\notin \Z$.
Therefore, in any case, one has $M^mx\notin \Z^2$, which has verified the condition in Lemma \ref{lem: hnn GBSn}. Thus $\pi_1(\IG, v)$ is $C^*$-simple.
\end{proof}

\begin{rmk}\label{rmk: LM group}
    An example satisfying Proposition \ref{prop: hnn special gbs2} is the famous \textit{Leary-Minasyan} group 
\[G_{P}=\langle t, a, b| [a, b], ta^2b^{-1}t^{-1}=a^2b, tab^2t^{-1}=a^{-1}b^2 \rangle\]
introduced in \cite{LM}. By the presentation of $G_{P}$, the group $G_{P}=\pi_1(\IG, v)$, where $\IG$ consists of one loop $e$ with the relation
   \[ta^5t^{-1}=a^3b^4\text{ and } tb^5t^{-1}=a^{-4}b^3,\] 
   which implies that $M=A^{-1}_eA_{\bar{e}}=\begin{bmatrix}
       3/5 & -4/5\\
       4/5 & 3/5
   \end{bmatrix}$ satisfying the assumption of Proposition \ref{prop: hnn special gbs2}. Therefore,  Leary-Minasyan group $G_{P}$ is $C^*$-simple.
   \end{rmk}

As a direct corollary of Theorem \ref{thm: main GBSn}, Proposition \ref{prop: hnn special gbs2} and Remark \ref{rmk: LM group}, we have the following.

\begin{cor}\label{cor: gbs2 including LM}
Let $\IG=(\Gamma, \CG)$ be a $\text{GBS}_2$ graph of groups containing a non-ascending loop $e$ such that   $M=A^{-1}_eA_{\bar{e}}$ is a unitary in $\operatorname{GL}(2, \Q)$ of the form
    \[\begin{bmatrix}
        \cos\theta & \sin\theta\\
        -\sin\theta & \cos\theta
    \end{bmatrix}
    \]
    in which $\theta$ is irrational (e.g., the loop $e$ as a subgraph yielding Leary-Minasyan group $G_p$ ).  Then $\pi_1(\IG, v)$ is $C^*$-simple. Morevoer, the $C*$-algebra $C(\overline{\partial_\infty X_\IG})\rtimes_r\pi_1(\IG, v)$ is a unital Kirchberg $C^*$-algebra satisfying the UCT.
\end{cor}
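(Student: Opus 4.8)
The plan is to obtain the statement as an immediate consequence of Proposition~\ref{prop: hnn special gbs2} together with Theorem~\ref{thm: main GBSn}. What Theorem~\ref{thm: main GBSn} requires of the non-ascending loop $e$ is precisely the combinatorial condition that for every finite $F\subset\Z^2\setminus\{\vec{0}\}$ there is an $m\geq1$ with $(A_e^{-1}A_{\bar e})^m x\notin\Z^2$ for all $x\in F$. Since $M=A_e^{-1}A_{\bar e}$ is the rotation matrix displayed in the statement and $\theta$ is an irrational rotation, the computation carried out in the proof of Proposition~\ref{prop: hnn special gbs2} — which uses only the matrices $A_e,A_{\bar e}$ of the loop $e$ and hence is insensitive to the rest of $\Gamma$ — establishes exactly this condition: using that the orbit $\{m\theta\bmod 2\pi:m\in\N\}$ is dense, one picks $m$ so that $|\sin(m\theta)\,y|$, $|\sin(m\theta)\,z|$, $|1-\cos(m\theta)|\,|y|$ and $|1-\cos(m\theta)|\,|z|$ are all less than $\tfrac14$ for every $(y,z)\in F$, and then $M^m(y,z)\notin\Z^2$. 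Feeding this into Theorem~\ref{thm: main GBSn} gives at once that $\pi_1(\IG,v)$ is $C^*$-simple and that $C(\overline{\partial_\infty X_\IG})\rtimes_r\pi_1(\IG,v)$ is a unital Kirchberg algebra satisfying the UCT; so the body of the proof is little more than ``apply Proposition~\ref{prop: hnn special gbs2} and Theorem~\ref{thm: main GBSn}.''

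For the parenthetical Leary-Minasyan example $G_P$, I would record, as in Remark~\ref{rmk: LM group}, that $G_P=\langle t,a,b\mid[a,b],\ ta^2b^{-1}t^{-1}=a^2b,\ tab^2t^{-1}=a^{-1}b^2\rangle$ is the fundamental group of a one-loop $\text{GBS}_2$ graph of groups with vertex group $\langle a,b\rangle\cong\Z^2$, for which $A_e=5I$ and $M=A_e^{-1}A_{\bar e}=\begin{bmatrix}3/5&-4/5\\4/5&3/5\end{bmatrix}$, a rotation matrix lying in $\operatorname{GL}(2,\Q)$. It then remains only to confirm that $\theta$ here is an irrational rotation, that is $\theta/\pi\notin\Q$; this is Niven's theorem, since $\cos\theta=3/5\notin\{0,\pm\tfrac12,\pm1\}$. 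One should also note that the loop is genuinely non-ascending: a surjective $\Z$-module endomorphism of $\Z^2$ has determinant $\pm1$, whereas $\det A_e=\det A_{\bar e}=25$, so neither $\alpha_e$ nor $\alpha_{\bar e}$ is onto.

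I do not anticipate any real difficulty — the corollary is a repackaging of Theorem~\ref{thm: main GBSn} and Proposition~\ref{prop: hnn special gbs2}, and the write-up is a few lines. The single point meriting a sentence of care is the meaning of ``$\theta$ irrational'' as used in Proposition~\ref{prop: hnn special gbs2} — namely that rotation by $\theta$ is a minimal $\Z$-action on the circle, equivalently $\theta/(2\pi)\notin\Q$ — and its verification for the Leary-Minasyan matrix via Niven's theorem, which is the only non-bookkeeping computation and is entirely routine.
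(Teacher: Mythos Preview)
Your proposal is correct and matches the paper's approach exactly: the paper states the corollary as a direct consequence of Theorem~\ref{thm: main GBSn}, Proposition~\ref{prop: hnn special gbs2}, and Remark~\ref{rmk: LM group}, with no further argument. Your added clarification that ``$\theta$ irrational'' means $\theta/\pi\notin\Q$ (so that the rotation is minimal), and your invocation of Niven's theorem for the Leary--Minasyan matrix, are welcome bits of precision that the paper leaves implicit.
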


\begin{rmk}\label{rmk: non ah gbs}
    We  note that every (finitely generated) $\text{GBS}_n$ group $G$ is not acylindrically hyperbolic because any vertex group $\Z^n$ can be verified to be s-normal in $G$ and this seems to be a standard fact (see \cite[Section 4.2]{Button2}). Moreover, the same approach also shows that  the  GBS octopus graph of groups in Proposition \ref{prop: non locally finite GBS} is not acylindrically hyperbolic. Moreover, the Leary-Minasyan group $G_P$ is shown to be not virtually hierarchically hyperbolic  by \cite[Theorem 5.3]{Button2} as well. 
    \end{rmk}

\section{Acknowledgements}
The authors would like to thank Petr Naryshkin, Tron Omland, and Jack Spielberg for helpful comments. In addition, the authors are very grateful to Ashot Minasyan and Motiejus Valiunas for kindly communicating with them that $C^*$-simplicity of one-relator groups and $\text{GBS}_1$ groups have been proven in \cite{M-V}, bringing their attention to \cite{Button3}, and other useful comments on acylindrical hyperbolic groups after the first version of the paper is posted on arXiv. Moreover, the authors would like to thank the anonymous referee for very helpful comments on the paper.  W. Y. is partially supported by National Key R \& D Program of China (SQ2020YFA070059) and  National Natural Science Foundation of China (No. 12131009 and No.12326601).




\begin{thebibliography}{10}
\bibitem{A-Da}C. R. Abbott and F. Dahmani. Property $P_{naive}$ for cylindrically hyperbolic groups. \textit{Math. Z.} \textbf{291} (2019), 555-568.

	\bibitem{A-D}C. Anantharaman-Delaroche. Purely infinite $C^*$-algebras arising from dynamical systems. \textit{Bull. Soc. Math. France} \textbf{125} (1997), 199-225.
	
	\bibitem{A-P-T}P. Ara, F. Perera and A. S. Toms. \textit{K-theory for operator algebras. Classification of $C^\ast$-algebras}, page 1-71 in: \textit{Aspects of Operator Algebras and Applications}. P. Ara, F. Lled\'{o}, and F. Perera (eds.). Contemporary Mathematics vol.534, Amer. Math. Soc., Providence RI, 2011.
	
	\bibitem{A-S}R. J. Archbold and J. S. Spielberg. Topologically free actions and ideals in discrete $C^\ast$-dynmaical systems. \textit{Proc. Edinburgh Math. Soc.} (2) \textbf{37} (1994), 119-124.
		
	\bibitem{Ba}H. Bass. Covering theory for graphs of groups. \textit{J. Pure Appl. Algebra} \textbf{89} (1991), 3-47.


    \bibitem{B-B}N. Brady and M. R. Bridson. There is only one gap in the isoperimetric spectrum. \textit{Geom. Func. Anal.} \textbf{10} (2000), 1053-1070.


	\bibitem{B-K-K-O}E. Breuillard, M. Kalantar, M. Kennedy and N. Ozawa. $C^*$-simplicity and the unique trace property for discrete groups. \textit{Publ. Math. Inst. Hautes \'{E}tud. Sci.} \textbf{126}(1) (2017), 35-71.

	
	\bibitem{B-H}M. R. Bridson and A. H\"{a}fliger. \textit{Metric Spaces of Non-Positive Curvature}. Springer, 2009.
	
	
	\bibitem{B-O}N. Brown and N. Ozawa. \textit{$C^*$-algebras and Finite Dimensional Approximations}, Graduate Studies in Mathematics, vol. 88, American Math. Soc., Providence, 2008.
	
	\bibitem{B-M-P-S-T}N. Brownlowe, A. Mundey, D. Pask, J. Spielberg, A. Thomas. $C^*$-algebras associated to graphs of groups. \textit{Adv. Math.} \textbf{316} (2017), 114-186.
	
	\bibitem{B-I-O}R. S. Bryder, N. A. Ivanov and T. Omland. $C^*$-simplicity of HNN extensions and groups acting on tress. \textit{Ann. Inst. Fourier (Grenoble)} \textbf{70} (2020), 1497-1543.

\bibitem{Button3}J. O. Button. Balanced groups and graphs of groups with infinite cyclic edge groups. arXiv: 1509.05688.

    \bibitem{Button1}J. O. Button. Tubular free by cyclic groups and the strongest Tits alternative. arXiv: 1510.05842. 

    \bibitem{Button2}J. O. Button. Generalized Baumslag-Solitar groups and Hierarchically hyperbolic groups. arXiv: 2208.12688.

    \bibitem{CR-K-Z} M. Casals-Ruiz, I. Kazachkov and A. Zakharov. Commensurability of Baumslag-Solitar groups.
    \textit{Indiana Univ. Math. J.} \textbf{70} (2021) 2527-2555.

    \bibitem{Cashen}C. H. Cashen. Quasi-isometries between tubular groups. \textit{Groups Geom. Dyn.} \textbf{4} (2010) 473-516.


 	\bibitem{C}M. Clay. Deformation spaces of $G$-trees and automorphisms of Baumslag–Solitar groups. \textit{Groups Geom. Dyn.} \textbf{3} (2009), 39-69.

  \bibitem{Co}D. J. Collins. The automorphism towers of some one-relator groups. \textit{Proc. Lond. Math. Soc.} \textbf{36} (1978), 480-493.

  \bibitem{C-V}Y. Cornulier and A. Valette. On equivalent embeddings of generalized Baumslag-Solitar groups. \textit{Geom. Dedicata} \textbf{175} (2015), 385-401.

  \bibitem{D-G-O}F. Dahmani, V. Guirardel and D. Osin. Hyperbolically embedded subgroups and rotating families in groups acting on hyperbolic spaces. \textit{Mem. Amer. Math. Soc.} \textbf{245}(1156), v+152(2017).

\bibitem{F-L-M-S}P. Fima, F. Le Ma\^itre, S. Moon and Y. Stalder. A characterization of high transitivity for groups acting on trees. \textit{Discrete Anal.}, 2022:8, 63pp.

    \bibitem{Fo}M. Forester. On uniqueness of JSJ decompositions of finitely generated groups. \textit{Comment. Math. Helv.} \textbf{78} (2003), 740-751.

    \bibitem{M06} M. Forester.  Splittings of generalized Baumslag-Solitar groups.    \textit{Geom. Dedicata.} \textbf{121} (2006), 43-59.

    

 
	\bibitem{F}H. Furstenberg. A Poisson formula for semi-simple Lie groups. \textit{Ann. of Math.} (2) \textbf{77} (1963), 335-386.
	
	\bibitem{G-G-K-N}E. Gardella, S. Geffen, J. Kranz and P. Naryshkin. Classifiability of crossed products by nonamenable groups. \textit{J. Reine. Angew. Math.} \textbf{797} (2023), 285-312.

    \bibitem{G-H-M-R}N. D. Gilbert, J. Howie, V. Metaftsis and E. Raptis. Tree actions of automorphism groups. \textit{J. Group Theory} \textbf{3} (2000), 213-223.	
	\bibitem{Gla1}S. Glasner. Topological dynamics and group theory. \textit{Trans. Amer. Math. Soc.} \textbf{187} (1974), 327-334.
	\bibitem{Gla2}S. Glasner. \textit{Proximal flows}, Lecture Notes in Mathematics, vol. 517, Springer-Verlag, Berlin-New York, 1976.	

    \bibitem{H-O}M. Hall and D. Osin. Transitivity degrees of countable groups and Acylindrical hyperbolicity. \textit{Isr. J. Math.}\textbf{216}(2016), 307-353.
	
	\bibitem{Harpe}P. de La Harpe. On simplicity of reduced  $C^*$-algebras of groups. \textit{Bull. London Math. Soc.} \textbf{39} (2007), 1-26.

 \bibitem{H-P}P. de La Harpe and J.-P. Pr\'{e}aux. $C^*$-simple groups: amalgamated free products, HNN extensions, and fundamental groups of $3$-manifolds. \textit{J. Topol. Anal.} \textbf{3} (2011), 451-489.
 
    \bibitem{I-O}N. A. Ivanov and T. Omland. $C^{\ast}$-simplicity of free products with amalgamation and radical classes of groups. \textit{J. Funct. Anal.} \textbf{272} (2017), 3712-3741.
 

	\bibitem{J-R}P. Jolissaint and G. Robertson. Simple purely infinite $C^\ast$-algebras and $n$-filling actions. \textit{J. Funct. Anal.} \textbf{175} (2000), 197-213.

 
	\bibitem{K-K}M. Kalantar and M. Kennedy. Boundaries of reduced $C^*$-algebras of discrete groups. \textit{J. Reine. Angew. Math.} \textbf{727} (2017), 247-267.

\bibitem{Kir}E. Kirchberg. The classification of purely infinite $C^*$-algebras using Kasparov’s theory. Preprint.

	


	
	\bibitem{K-Rord}E. Kirchberg and M. R{\o}rdam. Non-simple purely infinite $C^\ast$-algebras. \textit{Amer. J. Math.} \textbf{122}(3) (2000), 637-666.
	
	\bibitem{Kir-Rord}E. Kirchberg and M. R{\o}rdam. Infinite non-simple $C^\ast$-algebras: Absorbing the Cuntz algebra $\mathcal{O}_\infty$. \textit{Adv. Math.} \textbf{167} (2002), 195-264.
	
	\bibitem{L-S}M. Laca and J. Spielberg. Purely infinite $C^\ast$-algebras from boundary actions of discrete groups. \textit{J. Reine. Angew. Math.} \textbf{480}(1996), 125-139.

 \bibitem{L-M}A. Le Boudec and N. Matt Bon. Subgroup dynamics and $C^*$-simplicity of groups of homeomorphisms. \textit{Ann. Sci. \'{E}c. Norm. Sup\'{er}.} \textbf{51} (2018), 557-602.

 \bibitem{LM} I. J. Leary and A. Minasyan. Commensurating HNN extensions: nonpositive curvature and biautomaticicty. \textit{Geom. Topol.}\textbf{25} (2021), 1819-1860.
 
 	\bibitem{L}G. Levitt. Characterizing rigid simplicial actions on trees. In \textit{Geometric methods in group theory} \textbf{372} of \textit{Contemp. Math.} (2005), 27-33.
 
\bibitem{L2}G. Levitt. Generalized Baumslag-Solitar groups: rank and finite index subgroups. \textit{Ann. Inst. Fourier (Grenoble)} \textbf{65} (2015), 725-762.
	
	
	\bibitem{M2}X. Ma. Comparison and pure infiniteness of crossed products. \textit{Trans. Amer. Math. Soc.} \textbf{372} (2019),  7497-7520.

    \bibitem{M-W-Y}X. Ma, D. Wang and W. Yang. Boundary actions of $\cat$ spaces: topological freeness and their $C^\ast$-algebras. arXiv: 2510.05669. 
	
	

     

\bibitem{M-O}A. Minasyan and D. Osin. Acylindrical hyperbolicity of groups acting on trees. \textit{Math. Ann. }\textbf{362}(2015), 1055-1105.

\bibitem{M-V}A. Minasyan and M. Valiunas. Right-angled Artin subgroups and free products in one-relator groups. arXiv: 2404.15479.


\bibitem{M-S}N. Monod and Y. Shalom. Cocycle superrigity and bounded cohomology for negatively curved spaces. \textit{J. Differ. Geom.} \textbf{67}(2004), 395-455.  
 


\bibitem{Osin}D. Osin. Acylindrically hyperbolic groups. \textit{Trans. Amer. Math. Soc.} \textbf{368} (2016), 851-888.   

\bibitem{O}N. Ozawa. Proximality and selflessness for group $C^*$-algebras. arXiv: 2508.07938.

	\bibitem{Phillips}N. C. Phillips. A classification theorem for nuclear purely infinite simple $C^*$-algebras. \textit{Doc. Math.} \textbf{5} (2000), 49-114.
\bibitem{Powers}R. T. Powers. Simplicity of the $C^*$-algebra associated with the free group on two generators. \textit{Duke Math. J.}\textbf{42} (1975), 151-156.	
	
\bibitem{Raum}S. Raum. $C^*$-simplicity of locally compact Powers groups. \textit{J. reine angew. Math.} \textbf{748} (2019), 173–205.  

\bibitem{Ro}L. Robert. Selfless $C^*$-algebras. \textit{Adv. Math.} \textbf{478}(2025), 110409.

\bibitem{Ser}J.-P. Serre. \textit{Trees}. Springer-Verlag, Berlin-Heidelberg-New York, 1980.

    \bibitem{Su}Y. Suzuki. Elementary constructions of non-discrete $C^*$-simple groups. \textit{Proc. Amer. Math. Soc.}\textbf{145} (2017), 1369-1371.

\bibitem{Tits}J. Tits. Sur le groupe des automorphismes d'un arbre, in \textit{M\'{e}mories d\'{e}di\'{e}s a Georges de Rham,} publi\'{e}s par Andr{e} Haefliger et Raghavan Narasimhan (Springer, 1970),  pp. 188-211.

	\bibitem{Tu}J.-L. Tu. La conjecture de Baum-Connes pour les feuilletages moyennables, \textit{K-theory} \textbf{17}(1999), 215-264.

 \bibitem{W} D. Wang. The isomorphism problem for small-rose generalized Baumslag-Solitar groups. To appear in \textit{J. Algebra}, doi:10.1016/j.jalgebra.2024.07.038.	

\bibitem{Wise1}D. T. Wise. A non-Hopfian automatic group. \textit{J. Algebra} \textbf{180} (1996), 845-847.

 \bibitem{Wise}D. T. Wise. Cubular Tubular groups.\textit{Trans. Amer. Math. Soc.} \textbf{366} (2014), no. 10,  5503-5521.

    
\end{thebibliography}
\end{document}